\crefname{equation}{}{} 
\crefname{enumi}{}{} 
\crefname{figure}{Figure}{Figures}
\theoremstyle{plain}
\newtheorem{lemma}{Lemma}[section]
\newtheorem{theorem}[lemma]{Theorem}
\newtheorem{remark}[lemma]{Remark}
\newtheorem{example}{Experiment}[section]
\theoremstyle{definition}
\newtheorem{examplequad}{Example}[section]
\numberwithin{equation}{section}
\newcommand{\lip}{\mathrm{Lip}}
\newcommand{\R}{\mathbb{R}}
\newcommand{\N}{\mathbb{N}}
\newcommand{\eps}{{\varepsilon}}
\DeclareMathOperator*{\essinf}{ess\,\inf}
\DeclareMathOperator*{\esssup}{ess\,\sup}
\newcommand{\sgn}[1]{\mathrm{sign}\left(#1\right)}
\newcommand{\pt}{\partial_t}
\newcommand{\px}{\partial_x }
\newcommand{\pxx}{\partial_{xx}^2}
\newcommand{\norm}[1]{\left\lVert#1\right\rVert}
\newcommand{\dt}{{\, \mathrm{d}t}}
\newcommand{\dx}{{\, \mathrm{d}x}}
\newcommand{\dz}{{\, \mathrm{d}z}}
\renewcommand{\d}{\mathrm{d}}
\newcommand{\dd}{\,\mathrm{d}}
\begin{document}


\title[Nonlocal AC Schemes]{Asymptotically compatible entropy-consistent discretization for a class of nonlocal conservation laws}

\author[N.~De Nitti]{Nicola De Nitti}
\address[N.~De Nitti]{Università di Pisa, Dipartimento di Matematica, Largo Bruno Pontecorvo 5, 56127 Pisa, Italy.}
\email[]{nicola.denitti@unipi.it}

\author[K.~Huang]{Kuang Huang}
\address[K.~Huang]{The Chinese University of Hong Kong, Department of Mathematics, Shatin, New Territories, Hong Kong SAR, P.\,R.~China.}
\email[]{kuanghuang@cuhk.edu.hk}

\keywords{Nonlocal conservation laws, nonlocal flux, nonlocal LWR model, asymptotically compatible numerical discretization, finite volume schemes, nonlocal-to-local limit, traffic flow models.}

\subjclass[2020]{%
35L65, 
65M08, 
65M12, 
65M15, 
35B40, 
76A30
.}

\begin{abstract}
We consider a class of nonlocal conservation laws modeling traffic flows, given by
\( \partial_t \rho_\eps + \partial_x(V(\rho_\eps \ast \gamma_\eps) \rho_\eps) = 0\) with a suitable convex kernel $\gamma_\eps$, and its Godunov-type numerical discretization. We prove that, as the nonlocal parameter $\eps$ and mesh size $h$ tend to zero simultaneously, the discrete approximation $W_{\eps,h}$ of $W_\eps \coloneqq \rho_\eps \ast \gamma_\eps$ converges to the entropy solution of the (local) scalar conservation law \( \partial_t \rho + \partial_x(V(\rho) \rho) = 0\), with an explicit convergence rate estimate of order $\eps+h+\sqrt{\eps\, t}+\sqrt{h\,t}$. In particular, with an exponential kernel, we establish the same convergence result for the discrete approximation $\rho_{\eps,h}$ of $\rho_\eps$, along with an $\mathrm{L}^1$-contraction property for $W_\eps$. The key ingredients in proving these results are uniform $\mathrm{L}^\infty$- and TV-estimates that ensure compactness of approximate solutions, and discrete entropy inequalities that ensure the entropy admissibility of the limit solution.
\end{abstract}

\maketitle

\section{Introduction}
\label{sec:intro}

\subsection{Nonlocal conservation laws and the singular limit problem}
\label{ssec:model}

\textit{Nonlocal conservation laws} have proven effective for diverse applications, including traffic flow, supply chains, crowd dynamics, opinion formation,  spectrum of large random matrices, chemical engineering processes, sedimentation, slow erosion of granular matter, materials with fading memory effects, and conveyor belt dynamics (see, e.\,g., \cite{KEIMER2023} for a recent survey). In particular, in this paper, we focus on a nonlocal version of a macroscopic traffic flow model (as introduced by Lighthill--Whitham--Richards, \cite{MR72606,MR75522}): the traffic density $\rho_\eps: \R_+ \times \R \to \R$ satisfies the Cauchy problem
\begin{align}\label{eq:cle}
\begin{cases}
\partial_t \rho_\eps(t,x)  + \partial_x\Big(V\big(W_{\eps}[\rho_\eps](t,x))\,\rho_\eps(t,x)\Big)   	= 0,	& (t,x)  \in \R_+ \times \R, \\
\rho_\eps(0,x) = \rho_0(x),	&  x\in\R,
\end{cases}
\end{align}
where the \textit{initial datum}\footnote{~With slight abuse of notation, we use $\rho_0$ to denote the initial datum, where the subscript does not correspond to $\eps=0$ in $\rho_\eps$.} 
\begin{align}\label{ass:ic}
    \rho_0 \in \mathrm{L}^\infty(\R), \qquad  0 \leq \rho_0 \leq 1, \qquad \mathrm{TV}(\rho_0) < + \infty,
\end{align}
represents the initial traffic density, where $\rho=0$ indicates empty-road traffic and $\rho=1$ indicates bump-to-bump traffic; 
the \textit{velocity function} 
\begin{align}\label{ass:V}
V \in \lip([0,1]) \quad  \text{ and } \quad V' \leq 0 \text{ in $[0,1]$},
\end{align}
is decreasing (i.\,e., the higher the density of cars on the road,  the lower their speed); 
\begin{align}\label{eq:W} 
    W_{\eps}[\rho_\eps](t,x)\coloneqq\frac{1}{\eps}\int_{x}^{\infty} \gamma \left(\frac{x-y}{\eps}\right)\rho_\eps(t,y)\dd y , \qquad (t,x)\in \R_+ \times \R,
\end{align}
is the \emph{nonlocal impact} that decides the car speed $v\coloneqq V(W_{\eps}[\rho_\eps])$; and the \emph{nonlocal kernel} 
\begin{align}\label{ass:general-k}
\begin{aligned}
& \gamma \in \mathrm{BV}(\R) \cap \mathrm{L}^\infty(\R),  \quad \operatorname{supp}\gamma \subset ]-\infty,0], \quad  \gamma \ge 0, \\ & \gamma \text{ non-decreasing and convex in $]-\infty,0]$}, \quad \int_{-\infty}^0 \gamma(z) \, \d z = 1,
\end{aligned}
\end{align} 
rescaled as $\gamma_\eps \coloneqq \tfrac{1}{\varepsilon} \gamma\left(\tfrac{\cdot}{\eps}\right)$ with the  \emph{nonlocal horizon parameter} $\eps >0$, which serves as a characteristic length scale for the nonlocal effect encoded by $\gamma_\eps$. For traffic flow modeling, it is reasonable to assume that \(\gamma\) is anisotropic and, in particular, supported
in $]-\infty,0]$ and non-decreasing. This means that the drivers adjust their speed based only on the ``downstream'' traffic density (i.\,e., only looking forward) and give it more consideration the closer it is to their position.

For the existence, uniqueness, and stability of \textit{weak} (distributional) solutions of  the nonlocal conservation law \cref{eq:cle} with the nonlocal impact defined by \cref{eq:W}, we refer to \cite{zbMATH06756308,bressan2020traffic,zbMATH07615111} and references therein.  Notably, an important feature of \cref{eq:cle} is that those well-posedness results do not require an entropy condition, owing to the nonlocal effect.

The problem of the convergence of $\{\rho_{\varepsilon}\}_{\varepsilon >0}$, as $\varepsilon \searrow 0$ (i.\,e., as the kernel $\gamma_\eps$ converges to a Dirac delta distribution\footnote{~We say that the family $\{\gamma_\varepsilon\}_{\varepsilon >0}$ converges to the Dirac delta $\delta_0$ (or is an \textit{approximation to the identity} (of the convolution product); see \cite[\S\,II.6.31]{MR1157815}) if 
$
\lim_{\varepsilon \to 0} \int_{\R} \varphi(z) \gamma_\varepsilon(z) \, \mathrm d z = \varphi(0)
$
for all test functions $\varphi \in \mathrm{C}(\mathbb R)$.
}) 
to the (unique\footnote{~We refer to \cite{MR1304494,MR3443431} for the well-posedness of entropy solutions of \cref{eq:cl}. 
}) entropy solution $\rho$ of the conservation law 
\begin{align}\label{eq:cl}
    \begin{cases}
        \pt \rho(t,x) + \px\Big(V(\rho(t,x))\,\rho(t,x)\Big) = 0, & (t,x) \in \R_+ \times \R, \\
        \rho(0,x) = \rho_0(x), & x \in\R,
    \end{cases}
\end{align}
has drawn much attention in the last few years. The aim is to bridge the gap between nonlocal and local modeling of traffic flow and other phenomena described by conservation laws.

First, in \cite{MR3342191}, this convergence was observed numerically. However, in \cite{MR3961295}, several counterexamples showed that it does not hold in general for physically unreasonable kernels (e.\,g., in the context of traffic modeling, kernels looking backward and forward, or only backward). On the other hand, positive results on the nonlocal-to-local convergence were obtained in more particular situations: in \cite{MR1704419} for even convolution kernels, provided that the limit entropy solution is smooth; in \cite{MR3944408}, for a large class of nonlocal conservation laws with monotone initial data, exploiting the fact that monotonicity is preserved throughout the evolution; and, in \cite{MR4300935}, under the assumptions that the initial datum has bounded total variation, is bounded away from zero, and satisfies a one-sided Lipschitz condition, and the kernel grows at most exponentially (that is, there exists $D>0$ such that \(\gamma(z) \le D \, \gamma'(z)\), for a.\,e.~$z \in ]-\infty, 0[$). In \cite{MR4110434,MR4283539}, Bressan and Shen proved a convergence result for the exponential kernel $\gamma \coloneqq \mathds{1}_{]-\infty,0]}(\cdot)\exp(\cdot)$, provided that the initial datum is bounded away from zero and has bounded total variation, by reformulating the nonlocal conservation law as a hyperbolic system with a relaxation term. If the initial datum is not bounded away from zero, then, as observed in \cite{MR4300935}, establishing compactness properties of $\{\rho_{\varepsilon}\}_{\varepsilon >0}$ is difficult because the total variation of $\rho_\varepsilon$, denoted  $\mathrm{TV}(\rho_\varepsilon)$, may blow up. 

To overcome these restrictions on initial data, it is convenient to work with the family $\{W_{\varepsilon}\}_{\varepsilon >0}$ instead (where we use the simplified notation $W_\eps \coloneqq W_\eps[\rho_\eps]$), which has better stability and convergence properties.
It was demonstrated in \cite[Theorem 1.1]{MR4553943} that $\mathrm{TV}(W_\varepsilon) \le \mathrm{TV}(\rho_0)$ holds, under the assumptions \cref{ass:ic}, \cref{ass:V}, and \cref{ass:general-k}.\footnote{~On the other hand, \cite[Theorem 1.4]{MR4553943} shows that, without the convexity assumption in \cref{ass:general-k} (which is not entirely standard in traffic flow modeling), $\mathrm{TV}(W_\varepsilon)$ may increase.\label{ft:convexity}} This estimate yields strong convergence in \( \mathrm{L}^1_{\mathrm{loc}} \) of the family \( \{W_\varepsilon\}_{\varepsilon > 0} \) to a limit function, which, in turn, is shown to be the  entropy solution $\rho$ of \cref{eq:cl} (see, e.\,g., \cite[Theorem 1.2]{MR4553943}). 

In summary, as established in \cite[Theorem 1.3]{MR4553943}, $\{W_\eps\}_{\eps >0}$ converges strongly in $\mathrm{L}^1_{\mathrm{loc}}(\R_+\times\mathbb{R})$ to the entropy solution $\rho$ of \cref{eq:cl} as $\varepsilon \searrow 0$; moreover, if the first moment of $\gamma$ is finite, i.\,e., 
\begin{align}\label{ass:momentum}
    \gamma(z) \, z \in \mathrm{L}^1(\R),
\end{align}
then the following convergence rate estimate holds:
\begin{equation}\label{eq:rate-cont}
\|W_\eps(t, \cdot)- \rho(t,\cdot)\|_{\mathrm{L}^1} \leq K \left( \eps + \sqrt{\eps\, t} \right) \text{TV}(\rho_0), \quad \text{for every } \eps > 0 \text{ and a.\,e.~$t > 0$},
\end{equation}
where the constant $K>0$ depends only on $\gamma$ and $V$. 
The main tool in proving these results is the fact that (by \cite[Eq.~(3.1)]{MR4553943}) $W_\eps$ solves  \begin{align}\label{eq:Wevo-2}
\begin{cases}
\displaystyle  \partial_t W_{\eps}(t,x)+V\left(W_{\eps}(t,x)\right) \partial_x W_{\eps}(t,x) & {} \\ 
\displaystyle \quad=\frac{1}{\eps^2} \int_x^{\infty} \gamma^{\prime}\left(\frac{x-y}{\eps}\right)\big( V\left(W_{\eps}(t, x)\right)-V\left(W_{\eps}(t, y)\right) \big) \rho_{\eps}(t, y) \, \mathrm d y, & (t,x) \in \R_+ \times \R, \\ 
  \displaystyle  W_{\eps}(0,x)=\frac{1}{\eps}\int_{x}^{\infty}\gamma\left(\frac{x-y}{\eps}\right)\rho_{0}(y)\dd y, & x\in\R.
   \end{cases}
\end{align}

The TV-estimate $\mathrm{TV}(W_\varepsilon) \le \mathrm{TV}(\rho_0)$ and the convergence result for $W_\eps$ were established earlier in \cite[Theorems 3.2 and 4.2]{MR4651679} in the particular case where $\gamma \coloneqq \mathds{1}_{]-\infty,0]}(\cdot)\exp(\cdot)$, leading to the identity  
\begin{align}\label{eq:WWxq}
    \partial_{x}W_{\eps}[\rho_{\eps}](t,x)&=\partial_{x}\, \left( \frac{1}{\eps}\int_{x}^{\infty}\exp\left(\frac{x-y}{\eps}\right)\rho_{\eps}(t,y)\dd y \right) =\frac{1}{\eps}W_{\eps}[\rho_{\eps}](t,x)-\frac{1}{\eps}\rho_{\eps}(t,x).
\end{align}
The analysis is based on the following evolution equation for $W_\eps$ (see \cite[Lemma 3.1]{MR4651679}):
\begin{align}\label{eq:Wevo}
\begin{cases}
\partial_t W_\eps(t,x)+V(W_\eps(t,x)) \partial_x W_\eps(t,x) & {}\\
   \displaystyle \quad  =-\frac{1}{\eps}\int_{x}^{\infty}\exp\left(\frac{x-y}{\eps}\right) V'(W_\eps(t,y))\partial_y W_\eps(t,y)W_\eps(t,y)\dd y,  & (t,x) \in \R_+ \times \R,\\
  \displaystyle  W_{\eps}(0,x)=\frac{1}{\eps}\int_{x}^{\infty}\exp\left(\frac{x-y}{\eps}\right)\rho_{0}(y)\dd y, & x\in\R,
   \end{cases}
\end{align}
which is analogous to \cref{eq:Wevo-2}, but written purely in $W_\varepsilon$ owing to \cref{eq:WWxq}.
In this case, the combination of \cref{eq:WWxq} and the \( \mathrm{TV} \)-estimate on $W_{\varepsilon}$ further allows us to deduce the convergence of \( \{\rho_\varepsilon\}_{\varepsilon > 0} \) to the same limit as \( \{W_\varepsilon\}_{\varepsilon > 0} \) (see \cite[Corollary 4.1]{MR4651679}) as $\eps\searrow0$. 

Initial data with unbounded variation can also be addressed (in specific cases) using an Ole\u{\i}nik-type regularization effect, as demonstrated in \cite{MR4855163,MR4656976}. 
Moreover, results on the singular limit problem for certain classes of nonlocal hyperbolic systems are available in \cite{MR4731171,CocDeN24,marconi2025}, while different types of nonlocal approximations have been studied in \cite{MR4857742,MR4720572,MR4638873,ghoshal2025nonconservativenonlocalapproximationburgers}.  
Furthermore, the study of the singular limit in the presence of artificial viscosity (which is relevant because many numerical tests used to conjecture the convergence results employed a (dissipative) Lax--Friedrichs scheme) has also been investigated, subject to a suitable \textit{balance condition} between viscosity and nonlocal parameters, in \cite{MR3961295,MR4340167,zbMATH07315482,MR715133,MR4265719}.
We refer to \cite{colombo2023overviewlocallimitnonlocal,KEIMER2023} for further discussion, results, and references.

\subsection{Numerical discretizations for nonlocal conservation laws}
\label{ssec:numerical-history}

A substantial body of literature addresses numerical discretizations for conservation laws with nonlocal fluxes, including first-order finite-volume methods such as the Lax--Friedrichs scheme (cf.~\cite{MR3342191,MR3447130,aggarwal2015nonlocal,chiarello2018global}) and the Godunov scheme (cf.~\cite{MR3917881}), the second-order Nessyahu--Tadmor central scheme (cf.~\cite{kurganov2009non,betancourt2011nonlocal,goatin2016well}), and higher-order WENO and DG methods (cf.~\cite{MR3759879,MR3934112}). See \cite{JF2023,MR4742183} for discussions on a broader class of finite-volume methods and \cite{MR4700412,MR4839139} for convergence rate results therein.
We also refer to \cite[Chapter~3]{Pflug2018} or \cite[Section~5]{KEIMER2023} for a scheme based on the method of characteristics, and to \cite{chiarello2020micro,ridder2019traveling,goatin2017traffic,radici2021entropysolutionsnonlocalscalar,DiFrancesco2025,MR3356989,MR3906267} for particle discretizations.
These methods often extend to a broader class of nonlocal conservation laws, with \cref{eq:cle} as a special case, and have been employed to establish well-posedness of the underlying continuous problems. In the associated numerical analysis, stability and convergence are generally established for a fixed $\eps>0$.

However, a recurring issue in these results is that stability estimates such as TV-bounds and entropy inequalities typically deteriorate as $\varepsilon$ vanishes, with no uniform convergence rates available in terms of both the nonlocal horizon parameter $\varepsilon$ and the mesh size $h$. This challenge is particularly pronounced in contrast to local conservation laws like \cref{eq:cl}, where monotone finite-volume schemes enjoy maximum principles, total variation diminishing (TVD) properties, and discrete entropy inequalities. These properties ensure uniform stability estimates on numerical solutions, which are essential for convergence analysis (including convergence rates). However, the presence of the nonlocal effect in \cref{eq:cle} disrupts this monotonicity even for first-order finite-volume schemes, leading to stability estimates that lose uniformity in the singular limit as $\varepsilon \searrow 0$, rendering them incompatible with the uniform stability estimates and convergence results from the continuous problem \cref{eq:cle} (as discussed in \cref{ssec:model}, e.\,g., TVD properties for $W_\varepsilon$). 
This underscores a significant gap between numerical and analytical stability and convergence properties in the singular limit.
Developing a numerical discretization that remains stable and accurate in the singular limit for \cref{eq:cle} is therefore of both theoretical and practical importance.

\subsection{Novel contributions}
\label{ssec:novelty}

The gap in the literature highlighted in \cref{ssec:numerical-history} motivates us to study \emph{asymptotically compatible} (AC) numerical discretizations for problem \cref{eq:cle} and their convergence rates. The main goal of AC discretizations is to use a uniform mesh across all $\eps>0$ while accurately capturing the system's behavior in the limit $\eps \searrow 0$. This concept is illustrated in the following diagram:
\begin{center}
    \begin{tikzpicture}
    \matrix (m) [matrix of math nodes,row sep=4em,column sep=5em,minimum width=3em]
    {
        \mathcal P_{\eps,h} & \mathcal P_h \\
        \mathcal P_\eps & \mathcal P \\};
    \path[-stealth]
    (m-1-1) edge node [left] {\tiny $ h \searrow 0 $} (m-2-1)
    edge  node [above] {\tiny $\eps \searrow 0$} (m-1-2)
    edge[bend right=2] node [above right] {\tiny $\eps,h \searrow 0$} (m-2-2)
    (m-2-1.east) edge node [above] {\tiny $\eps \searrow 0$}
    node [above] {} (m-2-2)
    (m-1-2) edge node [right] {\tiny $ h \searrow 0 $} (m-2-2)
    ;
    \end{tikzpicture}
\end{center}
Here, $\mathcal{P}_{\eps}$ stands for the nonlocal problem \cref{eq:cle} with the nonlocal parameter $\eps >0$; $\mathcal{P}_{\eps,h}$ is a consistent numerical discretization of $\mathcal{P}_{\eps}$ with the mesh size $h>0$; $\mathcal P$ is the local problem \cref{eq:cl}; and $\mathcal P_h$ is a suitable discretization for $\mathcal P$. In this diagram:
\begin{itemize}
    \item the arrow from $\mathcal{P}_{\eps}$ to $\mathcal{P}$ denotes the singular limit of the nonlocal problem as $\eps \searrow 0$, with an established convergence rate of order $\eps + \sqrt{\eps\, t}$ (see \cref{eq:rate-cont});
    \item the arrow from $\mathcal{P}_{\eps,h}$ to $\mathcal{P}_\eps$ indicates the numerical convergence of the nonlocal discretization to the nonlocal problem for a fixed $\eps>0$ (cf.~the references cited in \cref{ssec:numerical-history});
    \item the arrow from $\mathcal{P}_{\eps,h}$ to $\mathcal{P}_h$ captures the relation between the nonlocal and local discretizations as $\eps \searrow 0$ for a fixed $h > 0$;
    \item the arrow from $\mathcal{P}_h$ to $\mathcal{P}$ then represents the well-established numerical convergence for the local problem, with a rate of order $h + \sqrt{h\, t}$ (see, e.\,g., \cite{MR3443431}).
\end{itemize}
A numerical discretization is said to be asymptotically compatible if it ensures the convergence from $\mathcal{P}_{\eps,h}$ to $\mathcal{P}$ as $\eps,\, h \searrow 0$ \emph{along any limit paths}, thereby making the diagram commutative.
Studies on AC numerical discretizations for \cref{eq:cle} are scarce in literature. To the authors' knowledge, the only result is in \cite{MR4742183}, considering restrictive initial data that satisfy a one-sided Lipschitz condition and are bounded away from zero, within the framework of \cite{MR4300935}.

In this work, we consider the following \textit{Godunov-type} numerical scheme\footnote{Following \cite{MR3917881}, we term the scheme a Godunov-type (or simply Godunov) scheme, interpreted as considering the Riemann problem for \cref{eq:cle} with $V=V(W_{\eps}[\rho_\eps])$ as a given velocity field; it can also be viewed as an upwinding scheme, as \cref{eq:cle} is linear in $\rho_\eps$ with the given velocity field.} for \cref{eq:cle} and \cref{eq:W}:
\begin{align} \label{eq:godunov}
\rho_j^{n+1}&=\rho_j^n+\lambda (\rho_{j-1}^n V(W_j^n) -\rho_j^n V(W_{j+1}^n)), && j \in \mathbb Z, \ n\geq0,\\
 \label{eq:num_nonlocal_W}
    W_j^n &= \sum_{k=0}^\infty \gamma_k^{\eps,h} \rho_{j+k}^n, && j \in \mathbb Z, \ n\geq0,
\end{align}
where $\lambda\coloneqq \tau/h$ is the \textit{CFL} (\textit{Courant--Friedrichs--Levy}, \cite{MR1512478,MR213764}) \textit{ratio}, and $\{\gamma_k^{\eps,h}\}_{k\ge0}$ is a sequence of numerical quadrature weights such that 
$$
\gamma_{\eps,h} \coloneqq \sum_{k=0}^\infty \gamma_k^{\eps,h} \, \mathds{1}_{]- \frac{(k+1)\eps}h, - \frac{k\eps}h]}
$$
is a piecewise constant approximation of the nonlocal kernel $\gamma_\eps =\tfrac{1}{\varepsilon} \gamma\left(\tfrac{\cdot}{\eps}\right)$ on the spatial mesh grids with mesh size $h$. 
We omit the superscripts $\eps,\,h$ in $\rho_j^n$ and $W_j^n$ but keep in mind that they depend on both $\eps$ and $h$.
The initial condition is discretized as
\begin{align}\label{eq:ini_cond_discrete}
    \rho_j^0 = \frac1h\int_{(j-\frac12)h}^{(j+\frac12)h} \rho_0(x)\dx, \quad j\in\mathbb{Z}.
\end{align}
Inspired by singular limit results for \cref{eq:cle}, we focus on the discretized nonlocal impact $\{W_j^n\}_{j \in \mathbb Z}^{n \ge 0}$.
From \crefrange{eq:godunov}{eq:num_nonlocal_W}, we derive the following time-step update for $\{W_j^n\}_{j \in \mathbb Z}^{n \ge 0}$:
\begin{align}\label{eq:W-2}
    W_j^{n+1} = W_j^n + \lambda \sum_{k=0}^\infty \gamma_k^{\eps,h} \left( \rho_{j+k-1}^n V(W_{j+k}^n) - \rho_{j+k}^n V(W_{j+k+1}^n) \right), \qquad j \in \mathbb Z, \ n\geq0.
\end{align}

The main contributions of this work are threefold. First, we establish the total variation diminishing (TVD) property for the discretized nonlocal impact $\{W_j^n\}_{j \in \mathbb Z}^{n \ge 0}$ through a refined analysis that exploits the nonlocal kernel's convexity, overcoming limitations of standard monotonicity arguments or nonlocal versions of Harten's lemma (e.\,g., those used in \cite{du2017nonlocal,fjordholm2021second}) for \cref{eq:cle}. This TVD property ensures uniform TV-bounds across $\mathcal{P}_{\eps,h}$, $\mathcal{P}_\eps$, $\mathcal{P}_h$, and $\mathcal{P}$. Second, we introduce a novel entropy condition for \cref{eq:cle}:
\begin{align*}
 \pt \eta(W_\eps) + \partial_x \psi(W_\eps) &\le \px \big(\eta'(W_\eps)   \big(V(W_\eps)W_\eps - (V(W_\eps) \rho_\eps) \ast \gamma_\eps\big)\big) \\ & \qquad +
     \partial_x \big( \big( \rho_\eps H_\eta(W_\eps(t,x)\mid W_\eps(t,\cdot)) \big) \ast\gamma_\eps \big) ,
\end{align*}
where $\psi'(\xi)\coloneqq\eta'(\xi)\,(\xi\,V(\xi))'$, $H_\eta(a\mid b) \coloneqq  I_\eta(b) - I_\eta(a) - V(b)(\eta'(b)-\eta'(a))$, and $I_\eta'(\xi) \coloneqq  \eta''(\xi)V(\xi)$.
This condition is compatible with the entropy condition for the scalar conservation law \cref{eq:cl} in the limit $\eps\searrow0$, and it allows for the following discrete Kru\v{z}kov-type entropy inequality (see \cref{lm:discrete-entropy-2}):
\begin{align*}
    &\frac{|W_j^{n+1}-c| -|W_j^n-c|}{\tau} + \frac{\Psi_{j+1/2}^n - \Psi_{j-1/2}^n}{h} \leq 0, \qquad \mathrm{for\ all}\  c\in\mathbb{R}, \\
    &\mathrm{with} \quad \Psi_{j-1/2}^n \coloneqq |W_{j-1}^n-c|V(c) - \sum_{k=0}^\infty \gamma_k^{\eps,h} \rho_{j+k-1}^n \left|V(W_{j+k}^n)-V(c)\right|,
\end{align*}
using Kru\v{z}kov's entropy function $\eta(\xi)\coloneqq|\xi-c|$. This form of entropy condition, novel in the literature (see \cref{rem:entr_lit}), ensures entropy admissibility for the limit of $\mathcal{P}_{\eps,h}$ and, combined with uniform TV-bounds, establishes convergence from $\mathcal{P}_{\eps,h}$ to $\mathcal{P}$ (i.\,e., the scheme \crefrange{eq:godunov}{eq:ini_cond_discrete} is asymptotically compatible).
Third, applying Kuznetsov’s argument with this entropy condition, we derive an asymptotically compatible convergence rate of order $\eps + h + \sqrt{\eps\,t} + \sqrt{h\,t}$ for $\mathcal{P}_{\eps,h}$ to $\mathcal{P}$. This rate is consistent with the known convergence rates for $\mathcal{P}_\eps$ to $\mathcal{P}$ and $\mathcal{P}_h$ to $\mathcal{P}$.

\section{Main result and outline of the paper}
\label{ssec:main-outline}

We begin by specifying the conditions imposed on the numerical quadrature weights, which will be used in the formulation and analysis of the numerical scheme \crefrange{eq:godunov}{eq:ini_cond_discrete}. We assume that the family of quadrature weights  $\{\gamma_k^{\eps,h}\}_{k\ge0}$ satisfies the following conditions: 
\begin{align}\label{ass:pos-mono}
&\gamma_k^{\eps,h} \geq \gamma_{k+1}^{\eps,h}\geq 0, \qquad \text{for all } k\geq0;\\
&\sum_{k=0}^{\infty} \gamma_k^{\eps,h} = 1; \label{ass:normalize}\\ 
&\gamma_{k-1}^{\eps,h}+\gamma_{k+1}^{\eps,h}-2\gamma_k^{\eps,h}\geq 0, \qquad \text{for all } k\geq1; \label{ass:convexity} \\
\label{eq:quad_weight_localize}
&\lim_{R\to\infty}\sup_{\eps,h>0} \sum_{k=0}^\infty \mathds{1}_{\frac{kh}\eps\ge R} \gamma^{\eps,h}_k = 0;
\\
&\sum_{k=0}^{\infty} k \gamma_k^{\eps,h} \leq c_\gamma \frac{\eps}h, \qquad \text{with $c_\gamma>0$ depending only on $\gamma$.} \label{ass:gamma-mom}
\end{align}

Our main result is the convergence, in the strong topology of $\mathrm{L}^1_{\mathrm{loc}}(\R_+\times \R)$, of the piecewise constant reconstruction of $\{W_j^n\}_{j \in \mathbb Z}^{n \ge 0}$ to the unique entropy solution of \cref{eq:cl} as $\eps,h \searrow 0$ \emph{along any limiting paths}, with an $\mathrm{L}^1$-convergence rate estimate.

\begin{theorem} \label{th:main-2} 
Let us assume that \crefrange{ass:ic}{ass:V} hold, the quadrature weights satisfy \crefrange{ass:pos-mono}{eq:quad_weight_localize}, and the CFL condition 
\begin{align}\label{ass:CFL}
   \lambda \Big(\norm{V}_{\mathrm{L}^\infty}+2\|V'\|_{\mathrm{L}^\infty}\Big) \leq 1
\end{align}
holds, with the CFL ratio $\lambda\coloneqq \tau/h$ fixed. Let us consider the numerical solutions $\{\rho_j^n\}_{j\in\mathbb{Z}}^{n\geq0}$ and $\{W_j^n\}_{j\in\mathbb{Z}}^{n\geq0}$ constructed with the numerical scheme \crefrange{eq:godunov}{eq:ini_cond_discrete}, and let $W_{\eps,h}$ be the piecewise constant reconstruction of $\{W_j^n\}_{j\in\mathbb{Z}}^{n\geq0}$, i.\,e., 
\begin{align}\label{eq:W_piece_recon}
    W_{\eps,h} \coloneqq \sum_{n=0}^\infty \sum_{j\in\mathbb{Z}} W_j^n \,\cdot\, \mathds{1}_{[n\tau,(n+1)\tau[\times[(j-\frac12)h,(j+\frac12)h[}.
\end{align}
Then, as $\eps,h\searrow 0$, the approximate solution $W_{\eps,h}$ converges strongly in $\mathrm{L}^1_{\mathrm{loc}}$ to the unique entropy solution $\rho$ of \cref{eq:cl}. 
Moreover, assuming \cref{ass:gamma-mom}, the following error estimate holds: 
\begin{align}\label{eq:rate-thm}
\|W_{\eps,h}(t,\cdot)-\rho(t,\cdot)\|_{\mathrm{L}^1} \leq K\left( \eps+h+\sqrt{\eps\, t}+\sqrt{h\,t} \right)\, \mathrm{TV}(\rho_0), \quad \textnormal{for every } \eps,h > 0, \, t > 0,
\end{align}
where the constant $K>0$ only depends on $\lambda$, $\|V\|_{\mathrm{L}^\infty}$, $\|V'\|_{\mathrm{L}^\infty}$, and $c_\gamma$ (as specified in \cref{ass:gamma-mom}).
\end{theorem}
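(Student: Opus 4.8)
The plan is to run the standard compactness-plus-consistency programme for finite-volume schemes, but carried out on the variable $W_{\eps,h}$ rather than $\rho_{\eps,h}$, and to close the quantitative estimate via Kuznetsov's doubling-of-variables argument applied to the discrete entropy inequality stated in the introduction. First I would establish the three uniform a priori bounds on $\{W_j^n\}$: (i) an $\mathrm{L}^\infty$-bound $0\le W_j^n\le 1$ (maximum principle for the $W$-update \eqref{eq:W-2}, using the CFL condition \eqref{ass:CFL}, $V'\le 0$, and the fact that the quadrature weights are nonnegative and sum to one); (ii) the TVD estimate $\mathrm{TV}(W^{n+1})\le \mathrm{TV}(W^n)\le \mathrm{TV}(\rho_0)$, which the introduction advertises as the first main contribution and which rests on the convexity assumption \eqref{ass:convexity} on the weights together with the monotonicity \eqref{ass:pos-mono}; and (iii) an $\mathrm{L}^1$-in-time equicontinuity estimate of the form $\|W^{n+1}-W^n\|_{\mathrm{L}^1}\le C\tau\,\mathrm{TV}(\rho_0)$, obtained directly from \eqref{eq:W-2} by summing in $j$, using (ii) and the normalization \eqref{ass:normalize}. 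These three bounds, plus the localization property \eqref{eq:quad_weight_localize} to control tails, give via a Kolmogorov--Riesz/Helly-type argument that $\{W_{\eps,h}\}$ is precompact in $\mathrm{L}^1_{\mathrm{loc}}(\R_+\times\R)$, so every sequence $\eps,h\searrow0$ has a subsequence converging to some $W\in \mathrm{L}^\infty\cap \mathrm{BV}$.

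Next I would identify the limit. Using \eqref{eq:ini_cond_discrete} and the definition of $W_j^0$ as the discrete convolution of $\rho_0$ against $\gamma_{\eps,h}$, together with \eqref{eq:quad_weight_localize} and $\mathrm{TV}(\rho_0)<\infty$, the initial reconstruction $W_{\eps,h}(0,\cdot)\to\rho_0$ in $\mathrm{L}^1_{\mathrm{loc}}$. To pin down the equation and entropy admissibility, I would pass to the limit in the discrete Kru\v{z}kov inequality
\[
\frac{|W_j^{n+1}-c|-|W_j^n-c|}{\tau}+\frac{\Psi_{j+1/2}^n-\Psi_{j-1/2}^n}{h}\le 0
\]
with $\Psi_{j-1/2}^n=|W_{j-1}^n-c|V(c)-\sum_{k\ge0}\gamma_k^{\eps,h}\rho_{j+k-1}^n|V(W_{j+k}^n)-V(c)|$. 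The key observation is that as $\eps,h\searrow0$ the nonlocal average $\sum_k\gamma_k^{\eps,h}\rho_{j+k-1}^n$ and the weighted term $\sum_k\gamma_k^{\eps,h}\rho_{j+k-1}^n|V(W_{j+k}^n)-V(c)|$ both concentrate: since $\|\rho_{\eps,h}-W_{\eps,h}\|_{\mathrm{L}^1_{\loc}}\to0$ (this follows from the TV-bound on $W$ and the structure of the discrete convolution, exactly as in the continuous identity \eqref{eq:WWxq}-type argument, controlled by the first-moment bound \eqref{ass:gamma-mom} or by \eqref{eq:quad_weight_localize} for the qualitative statement), the numerical entropy flux converges to the Kru\v{z}kov flux $\mathrm{sign}(W-c)(W V(W)-c V(c))$ and the inequality becomes $\pt|W-c|+\px\big(\mathrm{sign}(W-c)(f(W)-f(c))\big)\le0$ in $\mathscr D'$, with $f(\xi)=\xi V(\xi)$. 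Hence $W$ is the unique Kru\v{z}kov entropy solution $\rho$ of \eqref{eq:cl}, and by uniqueness the whole family converges, not just a subsequence.

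For the quantitative rate \eqref{eq:rate-thm}, I would apply Kuznetsov's lemma with the doubling of variables, comparing the piecewise-constant $W_{\eps,h}$ against the exact entropy solution $\rho$ via a mollifier at scales $\Delta x,\Delta t$. The discrete entropy inequality produces, upon integrating against the Kuznetsov test function, a defect consisting of: the time/space discretization error $O(h+\sqrt{h t})$ familiar from the local Godunov analysis (see the reference to \cite{MR3443431} in the introduction); a nonlocal defect coming from replacing $\sum_k\gamma_k^{\eps,h}\rho_{j+k-1}^n|V(W_{j+k}^n)-V(c)|$ by $\rho_j^n|V(W_j^n)-V(c)|$, whose size is controlled by the first-moment weight $\sum_k k\gamma_k^{\eps,h}\le c_\gamma\eps/h$ times TV, giving $O(\eps+\sqrt{\eps t})$; and the initial error $\|W_{\eps,h}(0,\cdot)-\rho_0\|_{\mathrm{L}^1}=O((\eps+h)\mathrm{TV}(\rho_0))$ from the discrete convolution against $\gamma_{\eps,h}$. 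Optimizing the Kuznetsov parameters $\Delta x\sim\Delta t$ balances these against the $\mathrm{TV}$-modulus of continuity and yields the claimed $K(\eps+h+\sqrt{\eps t}+\sqrt{h t})\mathrm{TV}(\rho_0)$. The main obstacle I anticipate is the bookkeeping in the nonlocal part of the Kuznetsov estimate: one must show that the extra ``relative-entropy'' flux term $\big(\rho_\eps H_\eta(W_\eps(t,x)\mid W_\eps(t,\cdot))\big)\ast\gamma_\eps$ appearing in the novel entropy formulation contributes only at order $\eps$ (not $\sqrt\eps$ or worse) — this is where the convexity of $\gamma$ and the sign of $V'$ must be used delicately, together with the TVD bound, to absorb the double-integral error rather than letting it accumulate. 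Everything else is a careful but routine transcription of the local-scheme Kuznetsov argument with the nonlocal weights carried along.
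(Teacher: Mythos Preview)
Your overall architecture --- uniform $\mathrm{L}^\infty$, TVD, and time-equicontinuity bounds on $W_{\eps,h}$, then Helly compactness, then entropy identification via the discrete Kru\v{z}kov inequality, then Kuznetsov --- is precisely the paper's route. There is, however, one genuine gap.

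You assert that $\|\rho_{\eps,h}-W_{\eps,h}\|_{\mathrm{L}^1_{\mathrm{loc}}}\to 0$, justifying it ``exactly as in the continuous identity \eqref{eq:WWxq}-type argument''. But \eqref{eq:WWxq}, namely $\partial_x W_\eps=(W_\eps-\rho_\eps)/\eps$, is \emph{specific to the exponential kernel}; for a general convex kernel no such relation holds, and the paper establishes the $\mathrm{L}^1$-deviation between $\rho_{\eps,h}$ and $W_{\eps,h}$ only in the exponential case (\cref{lm:limit-same}, \cref{sec:exp}). The natural estimate $\sum_j|W_j-\rho_j|\le \sum_k\gamma_k\sum_j|\rho_{j+k}-\rho_j|\le \big(\sum_k k\gamma_k\big)\,\mathrm{TV}(\rho)$ fails here because you have a TV-bound only on $W$, not on $\rho$ --- and indeed $\mathrm{TV}(\rho_{\eps,h})$ may blow up, which is exactly why the analysis is carried out on $W$ in the first place.

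The paper sidesteps this entirely. In both the entropy-admissibility proof (\cref{lem:entropy_admissibility-2}) and the Kuznetsov estimate (\cref{th:rate}), the nonlocal term $\sum_k\gamma_k^{\eps,h}\rho_{j+k-1}^n|V(W_{j+k}^n)-V(c)|$ is \emph{not} compared to $\rho_j^n|V(W_j^n)-V(c)|$ but rewritten as
\[
|V(W_j^n)-V(c)|\,\underbrace{\textstyle\sum_k\gamma_k^{\eps,h}\rho_{j+k-1}^n}_{=\,W_{j-1}^n}
\;+\;\sum_k\gamma_k^{\eps,h}\rho_{j+k-1}^n\Big(|V(W_{j+k}^n)-V(c)|-|V(W_j^n)-V(c)|\Big),
\]
so that the $\rho$-weights recombine into $W_{j-1}^n$ and the remainder is bounded by $\|V'\|_{\mathrm{L}^\infty}\sum_k\gamma_k^{\eps,h}|W_{j+k}^n-W_j^n|$, involving only $W$-differences. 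This is then controlled by $\mathrm{TV}(W)$ together with the moment bound \eqref{ass:gamma-mom} (for the rate) or by Fr\'echet--Kolmogorov and \eqref{eq:quad_weight_localize} (for the qualitative limit). The same device, plus a summation-by-parts identity (\cref{lm:use_for_tv}) handling the $(\gamma_k-\gamma_{k+1})$-weighted sums, drives the Kuznetsov bookkeeping. Finally, your anticipated difficulty with the $H_\eta$ relative-entropy flux is a non-issue: for Kru\v{z}kov entropies the discrete inequality in \cref{lm:discrete-entropy-2} is proved directly, and no $H_\eta$ term appears in either \cref{lem:entropy_admissibility-2} or \cref{th:rate}.
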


We prove \cref{th:main-2} in \cref{sec:proof-2}. This theorem extends the result of \cite{MR4742183} to initial data satisfying only \cref{ass:ic}, with general convex kernels and nonlinear velocity functions. The work \cite{MR4742183} focuses exclusively on the convergence of $\rho_{\varepsilon,h}$; its result and \cref{th:main-2} are complementary, with neither containing the other. In \cref{th:main-1} presented below in \cref{sec:exp}, we provide the analogue of \cref{th:main-2} for the exponential kernel $\gamma \coloneqq \mathds{1}_{]-\infty,0]}(\cdot)\exp(\cdot)$, where we can additionally establish convergence of $\rho_{\varepsilon,h}$ as $\varepsilon,h \searrow 0$.

\subsection{Discussions on the main result}

In this subsection, we provide remarks on the assumptions on quadrature weights and on how our main result relates to existing convergence rate results for local and nonlocal conservation laws.

Let us first discuss the assumptions on quadrature weights. The conditions \crefrange{ass:pos-mono}{ass:normalize} specify non-negativity, monotonicity, and normalization requirements (analogous to \cref{ass:general-k}), which are needed to establish a maximum principle for the numerical scheme \crefrange{eq:godunov}{eq:ini_cond_discrete}. The convexity condition \cref{ass:convexity} is needed to prove that the scheme is TVD, analogous to the corresponding condition in \cref{ass:general-k} in the continuous setting. The condition \cref{eq:quad_weight_localize} is motivated by a characterization of approximations to the Dirac delta distribution\footnote{~We recall that a family $\{K_n\}_{n >0} \subset \mathrm{L}^1(\mathbb R)$ converges to the Dirac delta $\delta_0$ if 
\[ K_n \ge 0, \qquad 
\|K_n\|_{\mathrm{L}^1} = 1, \qquad \int_{-\infty}^{-R} K_n(z) \, \mathrm d z + \int_{R}^{+\infty} K_n(z) \, \mathrm d z \to 0 \text{ as $n \to \infty$ for every fixed $R>0$.}
\]} and is used in the convergence proof.
Finally, the momentum condition in \cref{ass:gamma-mom}---which actually implies \cref{eq:quad_weight_localize} and is analogous to \cref{ass:momentum}---is required to establish the convergence rate. 

The choice of quadrature weights for a given kernel is non-unique. Here, we give two examples.
\begin{examplequad}\label{ex:exact}
The \emph{exact quadrature weights}
\begin{align}\label{eq:quad-2}
    \gamma_k^{\eps,h} = \int_{-(k+1)h}^{-kh} \frac1\eps \gamma\left(\frac{z}{\eps}\right) \dd z = \int_{-(k+1)h/\eps}^{-kh/\eps} \gamma(z) \, \mathrm d z,
\end{align}
satisfy \crefrange{ass:pos-mono}{ass:gamma-mom}, with $c_\gamma\coloneqq \int_{\mathbb{R}} |z|\gamma(z)\, \dz$ being the first order moment of $\gamma$.
\end{examplequad}

\begin{examplequad}\label{ex:Riemann} 
The \emph{normalized Riemann quadrature weights}
\begin{align}\label{eq:quad-3}
\gamma_k^{\eps,h} = \frac1{\sum_{k=0}^\infty \tilde{\gamma}_k^{\eps,h}}\tilde{\gamma}_k^{\eps,h} \qquad \text{with } \qquad \tilde{\gamma}_k^{\eps,h} = \frac{h}\eps\, \gamma\left(-\frac{kh}\eps\right),
\end{align}
satisfy \crefrange{ass:pos-mono}{ass:gamma-mom} too, where $\gamma(0)$ is taken as $\gamma(0-)$ when $k=0$, and $c_\gamma\coloneqq 2\int_{\mathbb{R}} |z|\gamma(z)\, \mathrm d z$.
\end{examplequad}

Next, we offer some remarks on the limiting cases for the convergence result in \cref{th:main-2}.

\begin{remark}[Limit for $\varepsilon \searrow 0$, when $h>0$ is fixed]\label{rk:hfixed}
Let us fix $h>0$ and let $\varepsilon\searrow0$ in the numerical scheme \crefrange{eq:godunov}{eq:ini_cond_discrete}. Under \cref{ass:normalize}, the limit of \cref{eq:num_nonlocal_W} gives $W_j^n=\rho_j^n$, thus \cref{eq:godunov} becomes
\begin{align}\label{eq:local_scheme}
    \rho_j^{n+1} = \rho_j^n + \lambda (\rho_{j-1}^n V(\rho_j^n) - \rho_j^n V(\rho_{j+1}^n)),
\end{align}
which together with \cref{eq:ini_cond_discrete} is a monotone scheme for the local conservation law \cref{eq:cl}.
Moreover, suppose that the kernel $\gamma$ is supported on a finite interval, without loss of generality, let us assume that $\operatorname{supp}\gamma\subset[-1,0]$. Then the scheme \crefrange{eq:godunov}{eq:ini_cond_discrete} reduces to \cref{eq:local_scheme} (with the initial condition \cref{eq:ini_cond_discrete}) when $\eps\le h$.  
\end{remark}

\begin{remark}[Limit for $h \searrow 0$, when $\varepsilon>0$ is fixed]\label{rk:epsfixed}
Assuming
    \begin{align}
    \frac{h}{\varepsilon}\gamma\left(-\frac{(k+1)h}\varepsilon\right)\le\gamma_k^{\eps,h}\le \frac{h}{\varepsilon}\gamma\left(-\frac{kh}\varepsilon\right) \label{ass:consist},
\end{align}
the numerical scheme \crefrange{eq:godunov}{eq:ini_cond_discrete} is consistent with the nonlocal conservation law \cref{eq:cle} for any fixed $\varepsilon >0$  (see \cite[Section 1]{MR4742183}, in particular the discussion surrounding Assumption 3, for details on the role of \cref{ass:consist}). 

We note that the assumption \cref{ass:consist}, together with \cref{ass:general-k}, implies \cref{ass:pos-mono} and \cref{eq:quad_weight_localize}; moreover, when combined with \cref{ass:momentum}, it implies \cref{ass:gamma-mom}.
\end{remark}

Finally, in the next remark, we discuss the optimality of the convergence rate in \cref{th:main-2}, inspired by the rich literature on convergence rates of monotone approximations of (local) scalar conservation laws.

\begin{remark}[Convergence rates and monotone approximations of local conservation laws]\label{rk:local-convergence-references}

For local conservation laws, the vanishing viscosity approximation (obtained by adding a parabolic regularization term $\eps \pxx \rho_\eps$) and monotone numerical schemes are known to converge to the entropy solution with at most first-order accuracy (see \cite{MR551288,MR413526}). 

Although viscous approximations and monotone schemes are formally first-order, they may lose half-order accuracy across shocks. Indeed, Kuznetsov's $O(\sqrt{\eps})$ or $O(\sqrt{h})$ convergence rates (see \cite{MR483509} and also \cite{MR811184,MR679435,MR1111445}) is indeed optimal for all monotone approximations applied to linear advection equations (see in \cite{MR1270625}) as well as for genuinely nonlinear fluxes in the case of BV data (see \cite{MR1480382}). 

For the special case of monotonic initial data, rates of $O(\eps|\log \eps|)$ or $O(h |\log h|)$ have been obtained and are optimal (see \cite{MR929127}). This also holds for piecewise smooth initial data, as proven in \cite{MR1397446,MR1451109,MR1646744}, where the rate is actually $O(\eps)$ or $O(h)$ for initial data with non-interacting shocks, provided that no shocks form at later times as well.

In \cref{ssec:num-convergence}, we present numerical investigations of the convergence rates. For Riemann shock initial data, the rate appears to reach $O(\varepsilon + h)$, whereas for Riemann rarefaction initial data it lies between $O(\varepsilon + h)$ and $O(\sqrt{\varepsilon} + \sqrt{h})$. A more detailed study of the optimality of the rate in \cref{eq:rate-thm} will be addressed in future work.

\end{remark}

\subsection{Outline of the paper}
\label{ssec:outline}

We present the outline of the paper and the structure of the main theorems and lemmas, as illustrated in \cref{fig:logic_diagram}.

In \crefrange{ssec:l-infty}{ssec:TVD}, under appropriate CFL conditions, we establish the maximum principle (\cref{lem:maxm_prcp}), the total variation diminishing (TVD) property (\cref{lm:TVD-x-2}), and a temporal total variation estimate (\cref{lm:TVD-t-general}) for the approximate solution $W_{\eps,h}$. From these stability properties, by Helly's compactness theorem (see, e.\,g., \cite[Theorem 2.3, p.~14]{MR1816648}), we deduce the convergence of $W_{\eps,h}$ (up to subsequences) to a limit point $\rho^*$ in \cref{lm:limit}. Then, in \cref{ssec:entropy-consistency-2}, we use a discrete entropy inequality (\cref{lm:discrete-entropy-2}) to show that $\rho^*$ coincides with the entropy solution $\rho$ of \cref{eq:cl} (\cref{lem:entropy_admissibility-2}).

In \cref{ssec:kuz}, we derive a convergence rate estimate (\cref{th:rate}) using a Kuznetsov-type lemma (recalled in \cref{lm:kut}) through careful estimation of a ``relative entropy''.
Finally, in \cref{ssec:conclusion-of-proof}, we assemble all the preceding components to complete the proof of \cref{th:main-2}.

In the particular case of an exponential kernel $\gamma \coloneqq \mathds{1}_{]-\infty,0]}(\cdot)\exp(\cdot)$, we also prove the convergence of $\rho_{\eps,h}$, a piecewise constant reconstruction of $\{\rho_j^n\}_{j \in \mathbb Z}^{n \ge 0}$. The statement of the result, \cref{th:main-1}, and its proof are given in \cref{sec:exp}: it follows from \cref{th:main-2} upon noticing a suitable $\mathrm{L}^1$-deviation estimate between $\rho_j^n$ and $W_j^n$ (see \cref{lm:limit-same}). We particularly highlight that, in this case, the TVD property can be shown more straightforwardly (see \cref{lm:TVD-x}) and that $\{\rho_j^n\}_{j \in \mathbb Z}^{n \ge 0}$ also satisfies a suitable discrete entropy inequality (see \cref{lm:entro-disc-2} in \cref{ssec:entropy-consistency}). Furthermore, in \cref{ssec:L1-contr}, we establish an $\mathrm{L}^1$-contraction property for the evolution equation \cref{eq:Wevo} satisfied by $W_\varepsilon$ (see \cref{th:L1.contr}) and discuss its consequences.

In \cref{sec:nums}, we present comprehensive numerical experiments to illustrate our main results and suggest further conjectures. In \cref{sec:conclusion}, we conclude the paper with a summary of our main findings and directions for future work.

\begin{figure}[htbp]
    \centering
    \scalebox{0.88}{
    \begin{tikzpicture}[
        node distance=1.5cm and 1.5cm,
        lemma/.style={rectangle, draw, rounded corners, minimum height=1cm, minimum width=3cm, align=center, font=\small},
        theorem/.style={rectangle, draw, double, minimum height=1cm, minimum width=3cm, align=center, font=\small, fill=gray!20},
        arrow/.style={-Stealth, thick},
        every node/.style={font=\small},
        invisible/.style={inner sep=0, minimum size=0, draw=none, fill=none}
    ]
    \node[lemma] (L31) {Maximum principle \\ \cref{lem:maxm_prcp}};
    
    \node[lemma, below left=1cm and -1.4cm of L31] (CK_TV) {TV-estimates \\ \cref{lm:TVD-x-2}, \cref{lm:TVD-t-general}};
    \node[lemma, below right=1cm and -1.4cm of L31] (CK_Entr) {Entropy inequality \\ \cref{lm:discrete-entropy-2}};
    \node[lemma, below left=3.2cm and -1.4cm of L31] (CK_Conv) {Convergence \\ \cref{lm:limit}, \cref{lem:entropy_admissibility-2}};
    \node[lemma, below right=3.2cm and -1.4cm of L31] (CK_Rate) {Convergence rate \\ \cref{th:rate}};
    \node[theorem, below=5.3cm of L31] (CK_Thm) {\cref{th:main-2}};

    \node[invisible, below=0.95cm of L31] (CK_Row1t) {};
    \node[invisible, below=2.05cm of L31] (CK_Row1b) {};
    \node[invisible, below=3.2cm of L31] (CK_Row2t) {};
    \node[invisible, below=4.3cm of L31] (CK_Row2b) {};

    \node[theorem, right=5cm of L31] (EK_L1) {$\mathrm{L}^1$-contraction \\ \cref{th:L1.contr}};
    \node[lemma, below left=1cm and -1.4cm of EK_L1] (EK_TV) {TV-estimates \\ \cref{lm:TVD-x}, \cref{lm:TVD-t-general}};
    \node[lemma, below right=1cm and -1.4cm of EK_L1] (EK_Entr) {Entropy inequality \\ \cref{lm:entro-disc-2}};
    \node[theorem, right=5cm of CK_Thm] (EK_Thm) {\cref{th:main-1}}; 
    \node[lemma, above=1.1cm of EK_Thm] (EK_Dev) {Deviation between $\rho_j^n$ and $W_j^n$ \\ \cref{lm:limit-same}};

    \draw[arrow] (L31) -- (CK_Row1t);
    \draw[arrow] (CK_Row1b) -- (CK_Row2t);
    \draw[arrow] (CK_Row2b) -- (CK_Thm);
    
    \draw[arrow] (EK_Dev) -- (EK_Thm);
    \draw[arrow] (CK_Thm) -- (EK_Thm);

    \node at (0, 1.2) {\textbf{Convex kernel}};
    \node at (8, 1.2) {\textbf{Exponential kernel}};
\end{tikzpicture}
}
    \caption{Logical structure of the paper: dependencies of the main theorems and lemmas for convex (\textsc{left}) and exponential (\textsc{right}) kernels.}
    \label{fig:logic_diagram}
\end{figure}
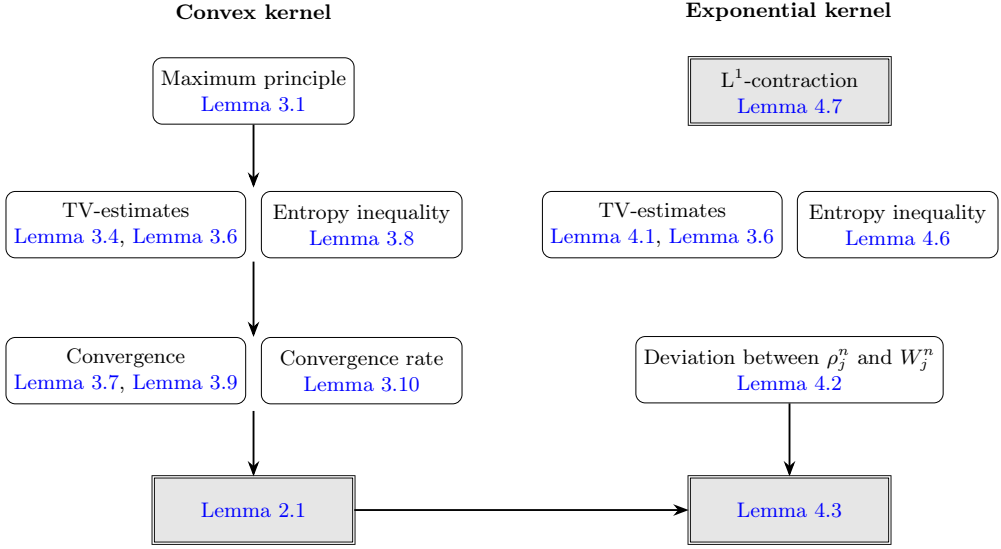

\section{Proof of the main theorem}
\label{sec:proof-2}

We will start by proving the strong pre-compactness of the family $\{W_{\eps,h}\}_{\eps,h>0}$ in $\mathrm{L}^1_{\mathrm{loc}}(\R)$. To this end, in \cref{ssec:l-infty}, we prove uniform $\mathrm{L}^\infty$-bounds through a maximum principle; in \cref{ssec:TVD}, we prove that $W_{\eps,h}$ is \emph{total variation diminishing} (TVD), implying uniform total variation estimates. These estimates imply the convergence of $W_{\eps,h}$ (up to subsequences) to a limit point $\rho^\ast$ strongly in $\mathrm{L}^1_{\mathrm{loc}}$ as $\eps,h\searrow0$. To show that this limit point $\rho^\ast$ is the entropy solution of the local scalar conservation law \cref{eq:cl}, we will introduce a discrete entropy inequality for $W_{\eps,h}$ in \cref{ssec:entropy-consistency-2}. Finally, in \cref{ssec:kuz}, we will adapt Kuznetsov's argument \cite{MR483509} to derive a convergence rate estimate in $\mathrm{L}^1$. Combining these ingredients, in \cref{ssec:conclusion-of-proof}, we will complete the proof of \cref{th:main-2}.

\subsection{Maximum principle and uniform \texorpdfstring{$\mathrm{L}^\infty$}{L-infinity}-bounds} 
\label{ssec:l-infty}
As a first step, we establish a maximum principle for the numerical scheme \crefrange{eq:godunov}{eq:ini_cond_discrete}. Similar results have been shown for a broader class of numerical schemes in \cite{MR3447130,JF2023,MR4742183}. For completeness, we state the result here and provide a proof.

\begin{lemma}[Maximum principle]\label{lem:maxm_prcp}
Let us assume that \crefrange{ass:ic}{ass:V} hold, the quadrature weights satisfy \crefrange{ass:pos-mono}{ass:normalize}, and the CFL condition
\begin{align}\label{ass:CFL-max}
\lambda \Big(\|V\|_{\mathrm{L}^{\infty}} + \|V'\|_{\mathrm{L}^{\infty}}\Big) \leq1
\end{align} holds. Let $\{\rho_j^n\}_{j\in\mathbb{Z}}^{n\geq0}$ and $\{W_j^n\}_{j\in\mathbb{Z}}^{n\geq0}$ be the numerical solutions constructed with the numerical scheme \crefrange{eq:godunov}{eq:ini_cond_discrete}. Then the following uniform bounds hold:
\begin{align*}
    \rho_{\mathrm{min}} \leq \rho_j^n \leq \rho_{\mathrm{max}} \qquad \textnormal{for all } j\in\mathbb{Z}, \ n\geq0,
\end{align*}
where $\rho_{\mathrm{min}}\coloneqq\inf_{x\in\mathbb{R}}\rho_0(x)\geq0$ and $\rho_{\mathrm{max}}\coloneqq\sup_{x\in\mathbb{R}}\rho_0(x)\leq1$.
\end{lemma}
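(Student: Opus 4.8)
The plan is to prove the maximum principle by an induction on the time level $n$, showing that the update formula \cref{eq:godunov} can be written as a convex combination (i.e., a monotone update) under the CFL condition \cref{ass:CFL-max}. First I would assume the bounds $\rho_{\mathrm{min}} \le \rho_i^n \le \rho_{\mathrm{max}}$ hold for all $i \in \mathbb Z$ at time level $n$; the base case $n=0$ follows immediately from \cref{eq:ini_cond_discrete} since an average of $\rho_0$ over a cell lies between $\inf \rho_0$ and $\sup \rho_0$. The key observation is that, by \crefrange{ass:pos-mono}{ass:normalize}, $W_j^n = \sum_{k\ge 0}\gamma_k^{\eps,h}\rho_{j+k}^n$ is itself a convex combination of values $\rho_i^n$, so $\rho_{\mathrm{min}} \le W_j^n \le \rho_{\mathrm{max}}$ as well; in particular all the $W_j^n$ appearing in \cref{eq:godunov} lie in $[0,1]$, so $V$ and $V'$ are evaluated only where the hypotheses \cref{ass:V} apply.

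Next I would rewrite the update \cref{eq:godunov} in incremental (Harten) form. Writing
\begin{align*}
\rho_j^{n+1} = \rho_j^n + \lambda\big(\rho_{j-1}^n V(W_j^n) - \rho_j^n V(W_{j+1}^n)\big),
\end{align*}
I group the terms to exhibit the coefficient of each $\rho_i^n$. The coefficient of $\rho_{j-1}^n$ is $\lambda V(W_j^n) \ge 0$ (using $V\ge 0$, which follows from \cref{ass:V} together with the physically relevant normalization — or, if $V$ is not assumed nonnegative a priori, one absorbs this into the estimate using $\|V\|_{\mathrm{L}^\infty}$). The coefficient of $\rho_j^n$ is $1 - \lambda V(W_{j+1}^n)$, which is nonnegative provided $\lambda \|V\|_{\mathrm{L}^\infty} \le 1$. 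However, the subtlety is that $W_j^n$ and $W_{j+1}^n$ themselves depend on $\rho_j^n$ through the quadrature, so the true coefficient of $\rho_j^n$ after expanding $W_j^n = \gamma_0^{\eps,h}\rho_j^n + \dots$ picks up an additional contribution of $\lambda \rho_{j-1}^n (V(W_j^n)-V(W_j^{n,\text{without }\rho_j^n}))$; one controls this via the Lipschitz bound $|V(a)-V(b)| \le \|V'\|_{\mathrm{L}^\infty}|a-b|$, and since $\rho_{j-1}^n, \gamma_0^{\eps,h} \le 1$ the extra term is bounded by $\lambda\|V'\|_{\mathrm{L}^\infty}$. This is precisely why the CFL condition carries the extra $\|V'\|_{\mathrm{L}^\infty}$ term. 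Collecting, all coefficients are nonnegative and — by a telescoping/normalization check using $\sum_k \gamma_k^{\eps,h}=1$ — they sum to $1$, so $\rho_j^{n+1}$ is a convex combination of the $\rho_i^n$'s, hence lies in $[\rho_{\mathrm{min}}, \rho_{\mathrm{max}}]$.

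The main obstacle I anticipate is bookkeeping the exact dependence of the update coefficients on $\rho_j^n$ through the nonlocal terms $W_j^n$ and $W_{j+1}^n$: the naive reading of \cref{eq:godunov} suggests a three-point stencil, but because $W_j^n$ and $W_{j+1}^n$ are infinite sums $\sum_k \gamma_k^{\eps,h}\rho_{j+k}^n$, the update really couples $\rho_j^{n+1}$ to all $\rho_i^n$ with $i \ge j-1$. The cleanest way around this, rather than a literal expansion, is a direct monotonicity argument: define $H_j(\dots,\rho_{j-1},\rho_j,\rho_{j+1},\dots) \coloneqq \rho_j + \lambda(\rho_{j-1}V(W_j)-\rho_j V(W_{j+1}))$ as a function of the whole sequence, compute $\partial H_j/\partial \rho_i$ for each $i$, and check it is $\ge 0$ under \cref{ass:CFL-max} — using $\gamma_k^{\eps,h}\le\gamma_0^{\eps,h}\le 1$ and $\rho_i^n \le 1$ from the inductive hypothesis to bound the $V'$-terms. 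Since $H_j$ is nondecreasing in every argument and $H_j(\dots,c,c,c,\dots) = c$ for any constant $c$ (again by $\sum_k\gamma_k^{\eps,h}=1$), applying $H_j$ to the constant sequences $\rho_{\mathrm{min}}$ and $\rho_{\mathrm{max}}$ and sandwiching yields $\rho_{\mathrm{min}} \le \rho_j^{n+1}\le\rho_{\mathrm{max}}$, closing the induction. Finally, $0 \le \rho_{\mathrm{min}}$ and $\rho_{\mathrm{max}} \le 1$ come directly from \cref{ass:ic}.
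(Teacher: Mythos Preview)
Your monotonicity approach has a genuine gap: the scheme is \emph{not} monotone in the downstream variables $\rho_i^n$ for $i \ge j+1$. These variables enter $H_j$ only through $W_j^n$ and $W_{j+1}^n$, and for $i \ge j+1$ one computes
\[
\frac{\partial H_j}{\partial \rho_i} = \lambda\,\rho_{j-1}^n\,V'(W_j^n)\,\gamma_{i-j}^{\eps,h} \;-\; \lambda\,\rho_j^n\,V'(W_{j+1}^n)\,\gamma_{i-j-1}^{\eps,h}.
\]
Since $V' \le 0$, the first term is $\le 0$ and the second is $\ge 0$; if, say, $\rho_j^n = 0$ and $\rho_{j-1}^n > 0$, the derivative is strictly negative. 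Hence $H_j$ is not nondecreasing in every argument, and neither the sandwich $H_j(\dots,\rho_{\min},\dots)\le H_j\le H_j(\dots,\rho_{\max},\dots)$ nor the convex-combination reading is available. Your first approach has the same blind spot: you check only the coefficients of $\rho_{j-1}^n$ and $\rho_j^n$ and never confront the downstream indices, which is exactly where the obstruction lives. The paper in fact flags this phenomenon in \cref{ssec:numerical-history}: the nonlocal coupling destroys monotonicity even for first-order schemes.

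The paper's proof sidesteps monotonicity entirely by a direct comparison with the constant $\rho_{\max}$. After writing
\[
\rho_j^{n+1} - \rho_{\max} \le (\rho_j^n - \rho_{\max})\big(1 - \lambda V(W_{j+1}^n)\big) + \lambda\,\rho_{\max}\big(V(W_j^n) - V(W_{j+1}^n)\big),
\]
it uses the \emph{monotonicity of the weights} in \cref{ass:pos-mono} (not just the normalization) to obtain the signed bound
\[
W_j^n - W_{j+1}^n = \gamma_0^{\eps,h}\rho_j^n + \sum_{k\ge 0}(\gamma_{k+1}^{\eps,h}-\gamma_k^{\eps,h})\rho_{j+k+1}^n \;\ge\; \gamma_0^{\eps,h}(\rho_j^n - \rho_{\max}),
\]
which, combined with $V' \le 0$, turns the term $\lambda\rho_{\max}(V(W_j^n)-V(W_{j+1}^n))$ into another nonpositive multiple of $(\rho_j^n - \rho_{\max})$. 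The CFL condition \cref{ass:CFL-max} then makes the total coefficient of $(\rho_j^n - \rho_{\max})$ nonnegative, closing the induction. This signed estimate on $W_j^n - W_{j+1}^n$ is the ingredient your argument lacks.
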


\begin{proof}
We show $\rho_j^n \leq \rho_{\mathrm{max}}$ by induction. The base step for $n = 0$ follows from the definition of $\rho_{\mathrm{max}}$. Now, assuming that the result holds for $n$, we prove it for $n+1$. We have
\begin{align*}
\rho_j^{n+1} - \rho_{\mathrm{max}} &= (\rho_j^n - \rho_{\mathrm{max}}) \left(1 - \lambda V(W_{j+1}^n)\right) + \lambda \rho_{j-1}^n V(W_j^n) - \lambda \rho_{\mathrm{max}} V(W_{j+1}^n) \\
&\leq (\rho_j^n - \rho_{\mathrm{max}}) \left(1 - \lambda V(W_{j+1}^n)\right) + \lambda \rho_{\mathrm{max}}  \left( V(W_j^n) - V(W_{j+1}^n) \right).
\end{align*}
Using \cref{eq:num_nonlocal_W}, we deduce that 
\begin{align*}
    W_j^n - W_{j+1}^n &= \sum_{k=0}^\infty \gamma_k^{\eps,h} (\rho_{j+k}^n - \rho_{j+k+1}^n) 
    \\ &= \gamma_0^{\eps,h} \rho_j^n + \sum_{k=0}^\infty (\gamma_{k+1}^{\eps,h} - \gamma_k^{\eps,h}) \rho^n_{j+k+1} \\
    &\ge \gamma_0^{\eps,h} \rho_j^n + \rho_{\mathrm{max}} \sum_{k=0}^\infty (\gamma_{k+1}^{\eps,h}-\gamma_k^{\eps,h}) = \gamma_0^{\eps,h} (\rho_j^n - \rho_{\mathrm{max}}).
\end{align*}
We recall that $V' \leq 0$ and deduce that
\begin{align*}
    \rho_j^{n+1} - \rho_{\mathrm{max}} \leq (\rho_j^n - \rho_{\mathrm{max}}) \left(1 - \lambda V(W_{j+1}^n) - \lambda \rho_{\mathrm{max}}  \|V'\|_{\mathrm{L}^\infty} \gamma_0^{\eps,h} \right) \leq 0, 
\end{align*}
provided that $1 - \lambda V(W_{j+1}^n) - \lambda \rho_{\mathrm{max}}  \|V'\|_{\mathrm{L}^\infty} \gamma_0^{\eps,h} \geq 1 - \lambda \left(\|V\|_{\mathrm{L}^{\infty}} + \|V'\|_{\mathrm{L}^{\infty}}\right)\geq0$ due to the CFL condition \cref{ass:CFL-max}.
The lower bound estimate $\rho_j^n \geq \rho_{\mathrm{min}}$ can be proved in a similar manner.
\end{proof}

In particular, \cref{lem:maxm_prcp} implies that the numerical scheme preserves positivity. Consequently, it also conserves the $\mathrm{L}^1$-norm.\footnote{~In \cref{ass:ic}, we do not assume that $\|\rho_0\|_{\mathrm{L}^1}$ is finite; so, in principle, both sides of \cref{eq:L1_bound} could be infinity.}

\begin{lemma}[$\mathrm{L}^1$-conservation]\label{cor:L1_bound}
Under the conditions of \cref{lem:maxm_prcp}, we have
\begin{equation}
h\sum_{j\in\mathbb{Z}} |\rho_j^n| = h\sum_{j\in\mathbb{Z}} |\rho_j^0|\qquad \textnormal{for all } j\in\mathbb{Z}, \ n\geq0. \label{eq:L1_bound}
\end{equation}
\end{lemma}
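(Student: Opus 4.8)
The plan is to deduce $\mathrm{L}^1$-conservation directly from the conservation-law structure of the Godunov scheme \cref{eq:godunov}, using the positivity established in \cref{lem:maxm_prcp}. First I would observe that, by \cref{lem:maxm_prcp}, we have $\rho_j^n \geq \rho_{\mathrm{min}} \geq 0$ for all $j$ and $n$, so that $|\rho_j^n| = \rho_j^n$ everywhere, and \cref{eq:L1_bound} reduces to showing $h\sum_{j\in\mathbb{Z}} \rho_j^{n+1} = h\sum_{j\in\mathbb{Z}} \rho_j^{n}$ for each $n\geq0$, from which the claim follows by induction on $n$.

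The key step is the telescoping argument. Rewriting \cref{eq:godunov} in conservation form, with numerical flux $F_{j-1/2}^n \coloneqq \rho_{j-1}^n V(W_j^n)$, we have $\rho_j^{n+1} = \rho_j^n - \lambda (F_{j+1/2}^n - F_{j-1/2}^n)$. Summing over $j \in \mathbb{Z}$ and multiplying by $h$ gives
\begin{align*}
    h\sum_{j\in\mathbb{Z}} \rho_j^{n+1} = h\sum_{j\in\mathbb{Z}} \rho_j^{n} - \lambda h \sum_{j\in\mathbb{Z}} \big(F_{j+1/2}^n - F_{j-1/2}^n\big),
\end{align*}
and the last sum telescopes to zero provided the series converges absolutely so that rearrangement is legitimate.

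The main obstacle — really the only subtle point — is justifying that the series manipulations are valid, since we do not assume $\|\rho_0\|_{\mathrm{L}^1} < +\infty$ (as the footnote to the statement points out, both sides of \cref{eq:L1_bound} may be infinite). I would handle this by a standard truncation: fix $N \in \mathbb{N}$ and sum \cref{eq:godunov} over $|j| \leq N$, which telescopes to $h\sum_{|j|\le N} \rho_j^{n+1} = h\sum_{|j|\le N}\rho_j^n - \lambda h (F_{N+1/2}^n - F_{-N-1/2}^n)$; since all terms are non-negative one can pass to the limit $N\to\infty$ in the partial sums on each side using the monotone convergence theorem, and the boundary flux terms are controlled (they are non-negative and bounded by $\|V\|_{\mathrm{L}^\infty}\rho_{j\pm}^n$, whose tails vanish whenever the total mass is finite; when it is infinite, both sides are $+\infty$ and there is nothing to prove). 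Alternatively, and perhaps more cleanly, one simply notes that in the finite-mass case the tails $\rho_{\pm N}^n \to 0$ as $N \to \infty$ — this follows because the scheme propagates mass with finite speed from an $\mathrm{L}^1$ initial datum — so the boundary terms vanish in the limit, yielding equality; and in the infinite-mass case the identity holds trivially with both sides equal to $+\infty$. Either way, combining the per-step conservation with induction on $n$ completes the proof.
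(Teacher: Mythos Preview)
Your proposal is correct and follows essentially the same approach as the paper: use the positivity from \cref{lem:maxm_prcp} to drop the absolute values, then sum the conservation-form scheme over $j$ and telescope. The paper's own proof is a two-line computation that omits the discussion of series convergence you include; your truncation argument is a reasonable addition, though the paper treats the identity as formal (with the footnote acknowledging both sides may be $+\infty$).
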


\begin{proof}
Owing to \cref{lem:maxm_prcp}, we have $\rho_j^n \ge 0$ for all $j \in \mathbb Z$ and $n \ge 0$. We can then compute 
\begin{align*}
h \sum_j \rho_j^{n+1} = h \sum_j \left( \rho_j^n + \lambda \rho_{j-1}^n V(W_j^n) -\lambda \rho_j^n V(W_{j+1}^n)  \right) = h \sum_j \rho_j^n,
\end{align*}
which yields \cref{eq:L1_bound}. 
\end{proof}

\begin{remark}[$\mathrm{L}^\infty$-bound and $\mathrm{L}^1$-conservation for $W_j^n$]\label{rk:W}
As a direct consequence of \crefrange{lem:maxm_prcp}{cor:L1_bound} and of the conditions \crefrange{ass:pos-mono}{ass:normalize} on quadrature weights, we deduce that $\{W_j^n\}_{j\in\mathbb{Z}}^{n\geq0}$ also satisfies
\begin{align}
    \rho_{\min} \le W_j^n \le \rho_{\max} \quad \textnormal{and}\quad h\sum_{j\in\mathbb{Z}} |W_j^n| = h\sum_{j\in\mathbb{Z}} |\rho_j^0| \qquad \textnormal{for all}\ j\in\mathbb{Z}, \ n\geq0,
\end{align}
with $\rho_{\mathrm{min}}\coloneqq\inf_{x\in\mathbb{R}}\rho_0(x)\geq0$ and $\rho_{\mathrm{max}}\coloneqq\sup_{x\in\mathbb{R}}\rho_0(x)\leq1$.
\end{remark}

\subsection{TVD property and limits of approximate solutions}
\label{ssec:TVD}

In this subsection, we show that the scheme \cref{eq:W-2} for $W$ exhibits the TVD property with respect to the spatial variable, leading us to derive a uniform estimate of the total variation of $W_{\eps,h}$ in space and time.

\begin{lemma}[TVD in space]\label{lm:TVD-x-2}
Let us assume that \crefrange{ass:ic}{ass:V} hold, the quadrature weights satisfy \crefrange{ass:pos-mono}{ass:convexity}, and the CFL condition \cref{ass:CFL} holds. Let $\{\rho_j^n\}_{j\in\mathbb{Z}}^{n\geq0}$ and $\{W_j^n\}_{j\in\mathbb{Z}}^{n\geq0}$ be the numerical solutions constructed with the numerical scheme \crefrange{eq:godunov}{eq:ini_cond_discrete}. Then the following spatial $\mathrm{TV}$-estimate holds: 
\begin{align}\label{eq:TVD-x-2}
\sum_{j\in\mathbb{Z}} |W_{j+1}^{n+1}-W_j^{n+1}| \le \sum_{j\in\mathbb{Z}} |W_{j+1}^n-W_j^n| \leq \mathrm{TV}(\rho_0), \qquad \textnormal{for all $n\ge0$.}
\end{align}
\end{lemma}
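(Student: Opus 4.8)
The plan is to track the evolution of the spatial differences $\Delta_{j+1/2}^n:=W_{j+1}^n-W_j^n$. Writing $q_m^n:=\rho_m^n\,V(W_{m+1}^n)$ for the Godunov flux appearing in the $\rho$-update \cref{eq:godunov}, a direct computation from \cref{eq:W-2} gives
\begin{align*}
W_{j+1}^{n+1}-W_j^{n+1}=\big(W_{j+1}^n-W_j^n\big)-\lambda\sum_{k=0}^\infty\gamma_k^{\eps,h}\big(q_{j+k+1}^n-2q_{j+k}^n+q_{j+k-1}^n\big),
\end{align*}
so that the differences of $W$ evolve under a $\gamma^{\eps,h}$-weighted average of the centered second differences of $q$. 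The goal is then to show that this update does not increase $\sum_j|\Delta_{j+1/2}^n|$.

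First I would perform a discrete summation by parts in $k$. The monotonicity \cref{ass:pos-mono} of the weights produces the nonnegative increments $\beta_k:=\gamma_k^{\eps,h}-\gamma_{k+1}^{\eps,h}\ge 0$ (with $\sum_{k\ge0}\beta_k=\gamma_0^{\eps,h}$), and writing $q_{m+1}-2q_m+q_{m-1}=(q_{m+1}-q_m)-(q_m-q_{m-1})$ turns the correction into $\lambda\gamma_0^{\eps,h}(q_j^n-q_{j-1}^n)-\lambda\sum_{k\ge1}\beta_{k-1}(q_{j+k}^n-q_{j+k-1}^n)$. Next, using the mean-value factorization $V(W_{m+1}^n)-V(W_m^n)=\widehat V_m^n\,(W_{m+1}^n-W_m^n)$ with $\widehat V_m^n\in[-\|V'\|_{\mathrm{L}^\infty},0]$, I would split each increment $q_m^n-q_{m-1}^n$ into a part proportional to $\Delta_{m-1/2}^n=W_{m+1}^n-W_m^n$ and a part proportional to the (uncontrolled) difference $\rho_m^n-\rho_{m-1}^n$. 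The $\Delta$-part assembles into a Harten-type incremental form, $\Delta_{j+1/2}^{n+1}=\sum_{m}a_{j,m}\,\Delta_{m+1/2}^n+(\text{remainder})$, in which every coefficient $a_{j,m}$ is nonnegative: the off-diagonal ones are $-\lambda\beta_{k-1}\rho^n\widehat V^n\ge0$, while the diagonal one is of the form $1+\lambda\gamma_0^{\eps,h}\rho^n\widehat V^n\ge1-\lambda\|V'\|_{\mathrm{L}^\infty}\ge0$ by the maximum principle \cref{lem:maxm_prcp}, the normalization \cref{ass:normalize}, and the CFL condition \cref{ass:CFL}; moreover $\sum_j a_{j,m}=1$ thanks to $\sum_{k\ge1}\beta_{k-1}=\gamma_0^{\eps,h}$, so this part is by itself TVD.

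The heart of the matter is the remaining part, which is built from the differences $\rho_m^n-\rho_{m-1}^n$: estimating it crudely would cost $\mathrm{TV}(\rho_\eps^n)$, which is exactly the quantity allowed to blow up as $\eps\searrow0$. This is where the convexity \cref{ass:convexity} is indispensable — it is what makes $k\mapsto\beta_k$ itself non-increasing — and it permits a second summation by parts, after which these $\rho$-difference terms collapse into a discrete-divergence structure (one checks in particular that their sum over $j$ vanishes, reflecting the conservativity of the $W$-update) that can be reorganized together with the incremental core so that the $\ell^1$ norm $\sum_j|\Delta_{j+1/2}^n|$ is not enlarged. Granting this, the triangle inequality yields $\sum_j|\Delta_{j+1/2}^{n+1}|\le\sum_j|\Delta_{j+1/2}^n|$, the first inequality in \cref{eq:TVD-x-2}.

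Finally, iterating this down to $n=0$ and using \cref{eq:num_nonlocal_W}, \cref{ass:normalize}, and the fact that cell-averaging \cref{eq:ini_cond_discrete} does not increase the total variation,
\begin{align*}
\sum_j|W_{j+1}^0-W_j^0|=\sum_j\Big|\sum_{k\ge0}\gamma_k^{\eps,h}\big(\rho_{j+1+k}^0-\rho_{j+k}^0\big)\Big|\le\sum_{k\ge0}\gamma_k^{\eps,h}\,\mathrm{TV}\big(\{\rho^0_j\}_{j}\big)\le\mathrm{TV}(\rho_0),
\end{align*}
which gives the second inequality in \cref{eq:TVD-x-2}. The step I expect to be the main obstacle is precisely the reorganization of the $\rho$-difference terms: one must show that the convexity of the discretized kernel forces exactly the cancellations that prevent these $\eps$-nonuniform contributions from feeding the spatial total variation of $W_{\eps,h}$ — this is what distinguishes the argument from the standard monotone/Harten estimates, which fail here.
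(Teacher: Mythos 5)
Your setup is sound: the second-difference form of the update, the summation by parts in $k$ using \cref{ass:pos-mono}, the verification that the $\Delta$-part is an incremental form with nonnegative coefficients and unit column sums, and the initial-data estimate at the end are all correct and consistent with what the paper does. But the lemma's actual content — the treatment of the terms built from $\rho_m^n-\rho_{m-1}^n$ — is not proved; you explicitly write ``Granting this'' at precisely the step where the argument must be carried out. Worse, the heuristic you offer for why it should work (that these terms form a ``discrete-divergence structure'' whose ``sum over $j$ vanishes, reflecting conservativity'') cannot close the gap: a zero-sum remainder can still increase $\sum_j|\Delta_{j+1/2}|$, so conservativity alone never yields a TVD bound. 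Second differences of $\gamma^{\eps,h}$ do appear after your proposed second summation by parts (with $\beta_{k-1}-\beta_k=\gamma_{k-1}^{\eps,h}-2\gamma_k^{\eps,h}+\gamma_{k+1}^{\eps,h}\ge0$ by \cref{ass:convexity}), but the resulting terms carry factors $\rho_{j+k}^nV(W_{j+k}^n)$ and do not visibly cancel against your $\Delta$-part; showing that they do is exactly the missing proof.

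For comparison, the paper avoids splitting $q_m^n-q_{m-1}^n$ into a $\Delta$-part and a $\rho$-difference part. Instead it subtracts the \emph{local} velocity $V(W_j^n)$ from each far-field $V(W_{j+k+1}^n)$, arriving at a remainder of the form $\lambda\sum_{k\ge1}(2\gamma_k^{\eps,h}-\gamma_{k-1}^{\eps,h}-\gamma_{k+1}^{\eps,h})\,\rho_{j+k}^n\,(V(W_{j+k+1}^n)-V(W_j^n))$ plus a clean transport term. The two ingredients that make this remainder harmless are (i) \cref{lm:use_for_tv}, a telescoping-plus-reindexing bound that converts $\sum_j\rho_{j+k}^n|V(W_{j+k+1}^n)-V(W_j^n)|$ into $\sum_j\bigl(\sum_{l=0}^k\rho_{j+l}^n\bigr)|V(W_{j+1}^n)-V(W_j^n)|$, and (ii) the exact identity $W_j^n=\sum_k\gamma_k^{\eps,h}\rho_{j+k}^n$, which, after a further summation by parts in $k$, turns the convexity-weighted partial sums of $\rho$ back into $W_{j+1}^n-W_j^n+(2\gamma_0^{\eps,h}-\gamma_1^{\eps,h})\rho_j^n$ — precisely the quantity that cancels the excess in the diagonal coefficient (this is also where the stronger CFL condition \cref{ass:CFL}, with the factor $2\|V'\|_{\mathrm{L}^\infty}$, is consumed). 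Neither of these mechanisms appears in your proposal, so as written the proof is incomplete at its central step.
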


Before proving this TVD property, we first prove a lemma that will be used in the proof and subsequent results.

\begin{lemma}\label{lm:use_for_tv}
Under the conditions of \cref{lem:maxm_prcp}, we have
\begin{align}\label{eq:TV-p1}
    \sum_{j\in\mathbb{Z}}\rho_{j+k}^n |V(W_{j+k+1}^n) -V(W_j^n)| \leq \sum_{j\in\mathbb{Z}} \left(\sum_{l=0}^k \rho_{j+l}^n\right) |V(W_{j+1}^n) -V(W_j^n)|.
\end{align}
\end{lemma}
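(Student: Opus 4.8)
The plan is to prove the inequality by a telescoping argument: I would write the difference $V(W_{j+k+1}^n) - V(W_j^n)$ as the telescoping sum $\sum_{l=0}^{k} \big(V(W_{j+l+1}^n) - V(W_{j+l}^n)\big)$, so that
\[
\sum_{j\in\mathbb{Z}} \rho_{j+k}^n\, |V(W_{j+k+1}^n) - V(W_j^n)| \le \sum_{j\in\mathbb{Z}} \sum_{l=0}^{k} \rho_{j+k}^n\, |V(W_{j+l+1}^n) - V(W_{j+l}^n)|
\]
by the triangle inequality. The goal is then to reorganize the double sum on the right so that each ``elementary'' difference $|V(W_{m+1}^n) - V(W_m^n)|$ collects exactly the weight $\sum_{l=0}^{k}\rho_{m-l}^n = \sum_{l=0}^{k}\rho_{j+l}^n$ when $m = j+l$. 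Concretely, for fixed $l$, reindex the inner sum via $j' = j + l$ (equivalently, $j = j' - l$); since the sum over $j$ ranges over all of $\mathbb{Z}$, the shift is harmless, and the term becomes $\rho_{j' + k - l}^n\, |V(W_{j'+1}^n) - V(W_{j'}^n)|$. Swapping the order of summation, the right-hand side equals
\[
\sum_{j'\in\mathbb{Z}} |V(W_{j'+1}^n) - V(W_{j'}^n)| \sum_{l=0}^{k} \rho_{j' + k - l}^n,
\]
and substituting $l \mapsto k - l$ in the inner sum turns $\sum_{l=0}^{k}\rho_{j'+k-l}^n$ into $\sum_{l=0}^{k}\rho_{j'+l}^n$, which is precisely the coefficient appearing in \cref{eq:TV-p1} after renaming $j'$ back to $j$. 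This completes the chain of inequalities.

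The only subtlety — and the step I would be most careful about — is the legitimacy of the index shifts and the interchange of the (infinite) order of summation. These are justified because every summand is nonnegative: by \cref{lem:maxm_prcp} we have $\rho_j^n \ge \rho_{\min} \ge 0$ for all $j$ and $n$, and the factors $|V(W_{\cdot}^n) - V(W_{\cdot}^n)|$ are trivially nonnegative. Hence Tonelli's theorem applies to the double series of nonnegative terms, so reindexing and reordering are valid without any convergence concerns (both sides may a priori be $+\infty$, but the inequality still holds, and in fact both are finite by the TV-bounds established later). No structural property of the scheme beyond nonnegativity of $\rho_j^n$ is used; in particular, the monotonicity/convexity assumptions on the quadrature weights play no role here — they enter only in the subsequent proof of \cref{lm:TVD-x-2}, which invokes this lemma. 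I expect the write-up to be short, with the main content being the bookkeeping of the double sum rather than any hard estimate.
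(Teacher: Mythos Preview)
Your proposal is correct and matches the paper's proof essentially line for line: telescope $V(W_{j+k+1}^n)-V(W_j^n)$, apply the triangle inequality, swap the sums, shift $j\mapsto j-l$, and relabel $l\mapsto k-l$. The paper omits your explicit justification of the interchange via nonnegativity (from \cref{lem:maxm_prcp}), but that is indeed the only place the hypotheses enter.
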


\begin{proof}
A direct calculation gives
    \begin{align*}
    \sum_{j\in\mathbb{Z}}\rho_{j+k}^n |V(W_{j+k+1}^n) -V(W_j^n)| &\leq \sum_{j\in\mathbb{Z}}\rho_{j+k}^n \sum_{l=0}^k |V(W_{j+l+1}^n) -V(W_{j+l}^n)| \\
    &= \sum_{l=0}^k \sum_{j\in\mathbb{Z}} \rho_{j+k}^n |V(W_{j+l+1}^n) -V(W_{j+l}^n)| \\
    &= \sum_{l=0}^k \sum_{j\in\mathbb{Z}} \rho_{j+k-l}^n |V(W_{j+1}^n) -V(W_j^n)|  \\
    &= \sum_{j\in\mathbb{Z}} \left(\sum_{l=0}^k \rho_{j+l}^n\right) |V(W_{j+1}^n) -V(W_j^n)|. 
\end{align*}
\end{proof}

\begin{proof}[Proof of \cref{lm:TVD-x-2}]
From \cref{eq:W-2}, we have 
\begin{align*}
    &W_{j+1}^{n+1} - W_j^{n+1} \\
    &= W_{j+1}^n - W_j^n - \lambda \sum_{k=0}^\infty \gamma_k^{\eps,h} \rho_{j+k+1}^n V(W_{j+k+2}^n) + 2 \lambda \sum_{k=0}^\infty \gamma_k^{\eps,h} \rho_{j+k}^n V(W_{j+k+1}^n) \\ & \qquad - \lambda \sum_{k=0}^\infty \gamma_k^{\eps,h} \rho_{j+k-1}^n V(W_{j+k}^n) \\
    &= W_{j+1}^n - W_j^n + \lambda \left(2\gamma_0^{\eps,h} - \gamma_1^{\eps,h}\right) \rho_j^n V(W_{j+1}^n) - \lambda \gamma_0^{\eps,h} \rho_{j-1}^n V(W_j^n) \\
    &\qquad + \lambda \sum_{k=1}^\infty \left(2\gamma_k^{\eps,h}-\gamma_{k-1}^{\eps,h}-\gamma_{k+1}^{\eps,h}\right) \rho_{j+k}^n V(W_{j+k+1}^n) \\
    &= W_{j+1}^n - W_j^n + \lambda \left(2\gamma_0^{\eps,h} - \gamma_1^{\eps,h}\right) \rho_j^n V(W_{j+1}^n) - \lambda \gamma_0^{\eps,h} \rho_{j-1}^n V(W_j^n) \\ & \qquad  + \lambda V(W_j^n) \left( 2\left(W_j^n-\gamma_0^{\eps,h} \rho_j^n\right) - W_{j+1}^n  - \left(W_{j-1}^n - \gamma_0^{\eps,h} \rho_{j-1}^n - \gamma_1^{\eps,h} \rho_j^n\right) \right) \\
    &\qquad + \lambda \sum_{k=1}^\infty \left(2\gamma_k^{\eps,h}-\gamma_{k-1}^{\eps,h}-\gamma_{k+1}^{\eps,h}\right) \rho_{j+k}^n \left(V(W_{j+k+1}^n) -V(W_j^n)\right) \\
    &= W_{j+1}^n - W_j^n + \lambda \left(2\gamma_0^{\eps,h} - \gamma_1^{\eps,h}\right) \rho_j^n \left(V(W_{j+1}^n) - V(W_j^n)\right) + V(W_j^n) (2W_j^n - W_{j-1}^n - W_{j+1}^n) \\
    &\qquad + \lambda \sum_{k=1}^\infty \left(2\gamma_k^{\eps,h}-\gamma_{k-1}^{\eps,h}-\gamma_{k+1}^{\eps,h}\right) \rho_{j+k}^n \left(V(W_{j+k+1}^n) -V(W_j^n)\right) \\
    &= \lambda V(W_j^n)(W_j^n - W_{j-1}^n) + \left( 1-\lambda V(W_j^n) + \lambda \left(2\gamma_0^{\eps,h}-\gamma_1^{\eps,h}\right) \rho_j^n\alpha_j^n \right) (W_{j+1}^n - W_j^n) \\
    &\qquad + \lambda \sum_{k=1}^\infty \left(2\gamma_k^{\eps,h}-\gamma_{k-1}^{\eps,h}-\gamma_{k+1}^{\eps,h}\right) \rho_{j+k}^n \left(V(W_{j+k+1}^n) -V(W_j^n)\right),
\end{align*}
where $\alpha_j^n\coloneqq\frac{V(W_{j+1}^n) - V(W_j^n)}{W_{j+1}^n - W_j^n}$. From this, we obtain
\begin{align}\label{eq:tv_tmp}
    \sum_{j\in\mathbb{Z}} |W_{j+1}^{n+1} - W_j^{n+1}| &\leq \sum_{j\in\mathbb{Z}}\!\Big( 1 + \lambda \Big(2\gamma_0^{\eps,h}-\gamma_1^{\eps,h}\Big) \rho_j^n\alpha_j^n + \lambda(V(W_{j+1}^n)-V(W_j^n)) \Big) |W_{j+1}^n \!-\! W_j^n| \\
    &\qquad + \lambda \sum_{k=1}^\infty \Big(\gamma_{k-1}^{\eps,h}+\gamma_{k+1}^{\eps,h}-2\gamma_k^{\eps,h}\Big) \sum_{j\in\mathbb{Z}}\rho_{j+k}^n \left|V(W_{j+k+1}^n) -V(W_j^n)\right|, \notag
\end{align}
where we have used the CFL condition \cref{ass:CFL} to ensure 
\[ 
1 + \lambda \left(2\gamma_0^{\eps,h}-\gamma_1^{\eps,h}\right) \rho_j^n\alpha_j^n + \lambda\left(V(W_{j+1}^n)-V(W_j^n)\right)\geq0,
\] 
and the condition \cref{ass:convexity} to give $\gamma_{k-1}^{\eps,h}+\gamma_{k+1}^{\eps,h}-2\gamma_k^{\eps,h}\geq0$.

For the second term on the right-hand side of \cref{eq:tv_tmp}, owing to \cref{eq:TV-p1} in \cref{lm:use_for_tv}, we have 
\begin{align*}
    &\sum_{k=1}^\infty \left(\gamma_{k-1}^{\eps,h}+\gamma_{k+1}^{\eps,h}-2\gamma_k^{\eps,h}\right) \sum_{j\in\mathbb{Z}}\rho_{j+k}^n \left|V(W_{j+k+1}^n) -V(W_j^n)\right| \\
    &\leq \sum_{k=1}^\infty \left(\gamma_{k-1}^{\eps,h}+\gamma_{k+1}^{\eps,h}-2\gamma_k^{\eps,h}\right) \sum_{j\in\mathbb{Z}} \left(\sum_{l=0}^k \rho_{j+l}^n\right) \left|V(W_{j+1}^n) -V(W_j^n)\right| \\
    &= \sum_{j\in\mathbb{Z}} \left( \sum_{k=1}^\infty \left(\gamma_{k+1}^{\eps,h}-\gamma_k^{\eps,h}\right) \sum_{l=0}^k \rho_{j+l}^n - \sum_{k=1}^\infty \left(\gamma_k^{\eps,h}-\gamma_{k-1}^{\eps,h}\right) \sum_{l=0}^k \rho_{j+l}^n \right) \left|V(W_{j+1}^n) -V(W_j^n)\right| \\
    &= \sum_{j\in\mathbb{Z}} \left( -\sum_{k=0}^\infty \left(\gamma_{k+1}^{\eps,h}-\gamma_k^{\eps,h}\right) \rho_{j+k+1}^n - \left(\gamma_1^{\eps,h}-\gamma_0^{\eps,h}\right) \rho_j^n \right) \left|V(W_{j+1}^n) -V(W_j^n)\right| \\
    &= \sum_{j\in\mathbb{Z}} \left( W_{j+1}^n-W_j^n + \left(2\gamma_0^{\eps,h}-\gamma_1^{\eps,h}\right)\rho_j^n \right) \left|V(W_{j+1}^n)-V(W_j^n)\right| \\
    &= \sum_{j\in\mathbb{Z}} -\alpha_j^n \left( W_{j+1}^n-W_j^n+ \left(2\gamma_0^{\eps,h}-\gamma_1^{\eps,h}\right)\rho_j^n \right) \left|W_{j+1}^n-W_j^n\right| \\
    &= \sum_{j\in\mathbb{Z}} \left( V(W_j^n)-V(W_{j+1}^n) - \left(2\gamma_0^{\eps,h}-\gamma_1^{\eps,h}\right)\rho_j^n\alpha_j^n \right) |W_{j+1}^n-W_j^n|.
\end{align*}
In the last two lines, we have used the fact that $\alpha_j^n=\frac{V(W_{j+1}^n)-V(W_j^n)}{W_{j+1}^n-W_j^n}\leq0$.

Combining the above estimates, we deduce that
\begin{align*}
    \sum_{j\in\mathbb{Z}} |W_{j+1}^{n+1} - W_j^{n+1}| \leq& \sum_{j\in\mathbb{Z}}\left( 1 + \lambda \left(2\gamma_0^{\eps,h}-\gamma_1^{\eps,h}\right) \rho_j^n\alpha_j^n + \lambda\left(V(W_{j+1}^n)-V(W_j^n)\right) \right) |W_{j+1}^n - W_j^n| \\
    &\qquad + \lambda \sum_{j\in\mathbb{Z}} \left( V(W_j^n)-V(W_{j+1}^n) - \left(2\gamma_0^{\eps,h}-\gamma_1^{\eps,h}\right)\rho_j^n\alpha_j^n \right) |W_{j+1}^n-W_j^n| \\
   &= \sum_{j\in\mathbb{Z}} |W_{j+1}^n-W_j^n| .
\end{align*}
Therefore, we conclude that 
\[
\sum_{j\in\mathbb{Z}} |W_{j+1}^{n} - W_j^{n}| \leq \sum_{j\in\mathbb{Z}} |W_{j+1}^0-W_j^0| \leq \sum_{j \in \mathbb Z} |\rho^0_{j+1} - \rho^0_j| \leq \mathrm{TV}(\rho_0), \qquad \text{for all $n\ge0$.}
\]

\end{proof}

From \cref{lm:TVD-x-2}, we can derive a temporal total variation estimate. 

\begin{lemma}[Temporal TV-estimate]\label{lm:TVD-t-general}
Under the conditions of \cref{lm:TVD-x-2},  the following temporal $\mathrm{TV}$-estimate holds:  
\begin{align}\label{eq:TVD-t-general}
    \sum_{j\in\mathbb{Z}} |W_j^{n+1}-W_j^n| \leq \lambda \left(\|V\|_{\mathrm{L}^{\infty}} + \|V'\|_{\mathrm{L}^{\infty}}\right) \mathrm{TV}(\rho_0), \qquad \textnormal{for all $n\ge0$.}
\end{align}
\end{lemma}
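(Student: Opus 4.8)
The plan is to put the update \cref{eq:W-2} into conservative form and track increments. Writing $q_m^n\coloneqq\rho_{m-1}^nV(W_m^n)$, equation \cref{eq:W-2} reads $W_j^{n+1}-W_j^n=\lambda\sum_{k=0}^\infty\gamma_k^{\eps,h}(q_{j+k}^n-q_{j+k+1}^n)$, and a summation by parts in $k$---justified because $\gamma_k^{\eps,h}\to0$ as $k\to\infty$ (by \crefrange{ass:pos-mono}{ass:normalize}) and $q_m^n$ is bounded (by \cref{lem:maxm_prcp})---turns this into
\[
W_j^{n+1}-W_j^n=\lambda\sum_{k=1}^\infty c_k^{\eps,h}\bigl(q_j^n-q_{j+k}^n\bigr),\qquad c_k^{\eps,h}\coloneqq\gamma_{k-1}^{\eps,h}-\gamma_k^{\eps,h}\ge0 .
\]
From \crefrange{ass:pos-mono}{ass:normalize} one also records the two bookkeeping facts $\sum_{k\ge1}c_k^{\eps,h}=\gamma_0^{\eps,h}$ and $\sum_{k\ge1}k\,c_k^{\eps,h}\le\sum_{m\ge0}\gamma_m^{\eps,h}=1$.

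The crucial step is to decompose
\[
q_j^n-q_{j+k}^n=\bigl(\rho_{j-1}^n-\rho_{j+k-1}^n\bigr)V(W_j^n)+\rho_{j+k-1}^n\bigl(V(W_j^n)-V(W_{j+k}^n)\bigr),
\]
and to notice that the weighted $\rho$-differences collapse onto a single $W$-increment: using $W_\ell^n=\sum_{k\ge0}\gamma_k^{\eps,h}\rho_{\ell+k}^n$ one checks the identity $\sum_{k\ge1}c_k^{\eps,h}(\rho_{j+k-1}^n-\rho_{j-1}^n)=W_j^n-W_{j-1}^n$. Substituting,
\[
W_j^{n+1}-W_j^n=\lambda\,V(W_j^n)\bigl(W_{j-1}^n-W_j^n\bigr)+\lambda\sum_{k=1}^\infty c_k^{\eps,h}\,\rho_{j+k-1}^n\bigl(V(W_j^n)-V(W_{j+k}^n)\bigr).
\]
For the first term one uses $|V(W_j^n)|\le\|V\|_{\mathrm{L}^\infty}$; for the second, the maximum principle $0\le\rho_{j+k-1}^n\le1$ (\cref{lem:maxm_prcp}) together with $|V(W_j^n)-V(W_{j+k}^n)|\le\|V'\|_{\mathrm{L}^\infty}\sum_{l=0}^{k-1}|W_{j+l+1}^n-W_{j+l}^n|$. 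Taking absolute values, summing over $j\in\mathbb{Z}$, using translation invariance (so that $\sum_j|W_{j+l+1}^n-W_{j+l}^n|=\sum_j|W_{j+1}^n-W_j^n|$ for each $l$), and then $\sum_{k\ge1}k\,c_k^{\eps,h}\le1$, we arrive at
\[
\sum_{j\in\mathbb{Z}}|W_j^{n+1}-W_j^n|\le\lambda\bigl(\|V\|_{\mathrm{L}^\infty}+\|V'\|_{\mathrm{L}^\infty}\bigr)\sum_{j\in\mathbb{Z}}|W_{j+1}^n-W_j^n| .
\]
An appeal to the spatial bound $\sum_j|W_{j+1}^n-W_j^n|\le\mathrm{TV}(\rho_0)$ from \cref{lm:TVD-x-2} then yields \cref{eq:TVD-t-general}.

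The one real obstacle is the algebraic identity in the middle step. A crude estimate of $q_j^n-q_{j+k}^n$ would leave behind the total variation of the discrete density $\{\rho_j^n\}_{j\in\mathbb{Z}}$, which is not controlled uniformly as $\eps\searrow0$; it is precisely the cancellation that trades increments of $\rho^n$ for increments of $W^n$---whose total variation is controlled by \cref{lm:TVD-x-2}---that makes the constant in \cref{eq:TVD-t-general} independent of $\eps$ and $h$. The summation by parts, the bookkeeping of the coefficients $c_k^{\eps,h}$, and the Lipschitz estimates are otherwise routine.
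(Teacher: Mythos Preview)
Your proof is correct and follows essentially the same route as the paper: both arrive at the identity
\[
W_j^{n+1}-W_j^n=\lambda\,V(W_j^n)\bigl(W_{j-1}^n-W_j^n\bigr)+\lambda\sum_{k\ge1}\bigl(\gamma_{k-1}^{\eps,h}-\gamma_k^{\eps,h}\bigr)\rho_{j+k-1}^n\bigl(V(W_j^n)-V(W_{j+k}^n)\bigr),
\]
and then bound the first term by $\|V\|_{\mathrm{L}^\infty}$ and the second by $\|V'\|_{\mathrm{L}^\infty}$ times the spatial TV of $W^n$. The only minor difference is in the second bound: you use $\rho_{j+k-1}^n\le1$ directly and the bookkeeping identity $\sum_{k\ge1}k\,c_k^{\eps,h}=1$, whereas the paper invokes \cref{lm:use_for_tv} to rearrange the $\rho$-weights into $W_j^n$ and then uses $W_j^n\le1$. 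Your version is slightly more self-contained here, but the paper's choice is natural since \cref{lm:use_for_tv} is already needed for \cref{lm:TVD-x-2} and \cref{th:rate}.
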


\begin{proof}
It follows from \cref{eq:W-2} and \cref{eq:num_nonlocal_W} that
\begin{align*}
    W_j^{n+1} &= W_j^n + \lambda ( W_{j-1}^n - W_j^n ) V(W_j^n)  \\
    &\qquad + \lambda \sum_{k=0}^\infty \gamma_k^{\eps,h} \Big( \rho_{j+k-1}^n \left( V(W_{j+k}^n) - V(W_j^n) \right) - \rho_{j+k}^n \left( V(W_{j+k+1}^n) - V(W_j^n) \right) \Big) \\
    &= W_j^n + \lambda ( W_{j-1}^n - W_j^n ) V(W_j^n) + \lambda \sum_{k=0}^\infty \left(\gamma_{k+1}^{\eps,h}-\gamma_k^{\eps,h}\right) \rho_{j+k}^n \left( V(W_{j+k+1}^n) - V(W_j^n) \right),
\end{align*}
where in the last step we have used the summation by parts.
Then we have
\begin{align*}
    \sum_{j\in\mathbb{Z}} |W_j^{n+1}-W_j^n| &\leq \lambda \norm{V}_{\mathrm{L}^\infty} \sum_{j\in\mathbb{Z}} |W_j^n-W_{j-1}^n| \\ & \qquad  + \lambda \sum_{k=0}^\infty \left(\gamma_k^{\eps,h}-\gamma_{k+1}^{\eps,h}\right) \sum_{j\in\mathbb{Z}} \rho_{j+k}^n \left| V(W_{j+k+1}^n) - V(W_j^n) \right|.
\end{align*}
Next, we use \cref{eq:TV-p1} in \cref{lm:use_for_tv} to deduce
\begin{align*}
    &\sum_{k=0}^\infty \left(\gamma_k^{\eps,h}-\gamma_{k+1}^{\eps,h}\right) \sum_{j\in\mathbb{Z}} \rho_{j+k}^n \left| V(W_{j+k+1}^n) - V(W_j^n) \right| \\
    &\leq \sum_{j\in\mathbb{Z}} \left( \sum_{k=0}^\infty \left(\gamma_k^{\eps,h}-\gamma_{k+1}^{\eps,h}\right) \left(\sum_{l=0}^k \rho_{j+l}^n\right) \right) |V(W_{j+1}^n) -V(W_j^n)| \\
    &= \sum_{j\in\mathbb{Z}} \left( \gamma_0\rho_j^n + \sum_{k=0}^\infty \gamma_{k+1}^{\eps,h} \rho_{j+k+1}^n \right) |V(W_{j+1}^n) -V(W_j^n)| \\
    &= \sum_{j\in\mathbb{Z}} W_j^n |V(W_{j+1}^n) -V(W_j^n)| \\
    &\leq \|V'\|_{\mathrm{L}^\infty} \sum_{j\in\mathbb{Z}} |W_{j+1}^n-W_j^n|.
\end{align*}
Then, using \cref{lm:TVD-x-2}, we conclude that \cref{eq:TVD-t-general} holds.
\end{proof}

From the uniform $\mathrm{L}^\infty$-bounds and total variation estimates for $\{W_j^n\}_{j\in\mathbb{Z}}^{n\geq0}$ obtained in the previous lemmas, we are able to show the strong convergence in $\mathrm{L}^1_{\mathrm{loc}}$ and almost everywhere of the piecewise constant reconstruction of $\{W_j^n\}_{j\in\mathbb{Z}}^{n\geq0}$ to a limit point, up to a subsequence. 

\begin{lemma}[Convergence] \label{lm:limit}
Let us assume that \crefrange{ass:ic}{ass:V} hold, the quadrature weights satisfy \crefrange{ass:pos-mono}{ass:convexity}, and the CFL condition \cref{ass:CFL} holds with the CFL ratio $\lambda\coloneqq \tau/h$ fixed. Let $\{\rho_j^n\}_{j\in\mathbb{Z}}^{n\geq0}$ and $\{W_j^n\}_{j\in\mathbb{Z}}^{n\geq0}$ be the numerical solutions constructed with the numerical scheme \crefrange{eq:godunov}{eq:ini_cond_discrete}. Then the approximate solution $W_{\eps,h}$ constructed from $\{W_j^n\}_{j\in\mathbb{Z}}^{n\geq0}$ using \cref{eq:W_piece_recon}, as $\eps,h \searrow 0$, converges strongly in $\mathrm{L}^1_{\mathrm{loc}}(\R_+\times \R)$ and almost everywhere, up to a subsequence, to a limit point $\rho^*\in \mathrm{L}^1_{\mathrm{loc}}(\R_+\times \R)$. Moreover, $0 \le \rho^* \le 1$ almost everywhere.  
\end{lemma}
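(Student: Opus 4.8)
The plan is to combine the uniform $\mathrm{L}^\infty$-bounds from \cref{lem:maxm_prcp} (via \cref{rk:W}) with the spatial and temporal total-variation estimates from \cref{lm:TVD-x-2} and \cref{lm:TVD-t-general}, and then invoke a Helly-type compactness argument (see, e.\,g., \cite[Theorem 2.3, p.~14]{MR1816648}) to extract a convergent subsequence. First, I would observe that the piecewise-constant reconstruction $W_{\eps,h}$ defined by \cref{eq:W_piece_recon} satisfies $\rho_{\min}\le W_{\eps,h}\le\rho_{\max}$, and in particular $0\le W_{\eps,h}\le1$ uniformly in $\eps,h>0$, directly from \cref{rk:W}. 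Next, I would translate the discrete estimates into estimates on the reconstruction: for each fixed time slice $t\in[n\tau,(n+1)\tau[$, the spatial total variation of $W_{\eps,h}(t,\cdot)$ equals $\sum_{j}|W_{j+1}^n-W_j^n|\le\mathrm{TV}(\rho_0)$ by \cref{lm:TVD-x-2}, so $W_{\eps,h}$ has spatial total variation bounded uniformly in $\eps,h$, and $t$.

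For the time regularity, I would use \cref{lm:TVD-t-general} to control the modulus of continuity in time in the $\mathrm{L}^1_{\mathrm{loc}}$-sense: for $s<t$ lying in time intervals indexed by $m\le n$, a telescoping estimate together with the bound $h\sum_{j}|W_j^{\ell+1}-W_j^\ell|\le\lambda\big(\|V\|_{\mathrm{L}^\infty}+\|V'\|_{\mathrm{L}^\infty}\big)\,h\,\mathrm{TV}(\rho_0)$ and the relation $\tau=\lambda h$ gives
\begin{align*}
\|W_{\eps,h}(t,\cdot)-W_{\eps,h}(s,\cdot)\|_{\mathrm{L}^1(-R,R)} \le C\,(|t-s|+\tau)\,\mathrm{TV}(\rho_0),
\end{align*}
where $C$ depends only on $\|V\|_{\mathrm{L}^\infty}$, $\|V'\|_{\mathrm{L}^\infty}$, and $\lambda$. (One also needs to account for the discretization of the initial datum \cref{eq:ini_cond_discrete}, for which $\mathrm{TV}(\rho_j^0)\le\mathrm{TV}(\rho_0)$ and the $\mathrm{L}^1$-approximation $\rho_j^0\to\rho_0$ are standard.) These are exactly the hypotheses of the Helly/Aubin--Lions-type compactness criterion: a family uniformly bounded in $\mathrm{L}^\infty$, with uniformly bounded spatial total variation, and with a uniform $\mathrm{L}^1_{\mathrm{loc}}$-modulus of continuity in time, is relatively compact in $\mathrm{L}^1_{\mathrm{loc}}(\R_+\times\R)$. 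Hence, along some sequence $(\eps_k,h_k)\to(0,0)$, $W_{\eps_k,h_k}\to\rho^*$ strongly in $\mathrm{L}^1_{\mathrm{loc}}(\R_+\times\R)$, and, passing to a further subsequence, almost everywhere.

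Finally, the bound $0\le\rho^*\le1$ a.\,e.\ is inherited from the uniform pointwise bounds $0\le W_{\eps_k,h_k}\le1$ by almost-everywhere convergence. The main (and essentially only) subtlety is to phrase the time-continuity estimate so that it genuinely controls $W_{\eps,h}$ across the jump times $t=n\tau$ uniformly as $h\searrow0$ — that is, to ensure the extra additive $\tau$ (equivalently $\lambda h$) in the modulus of continuity vanishes in the limit and does not interfere with the compactness criterion; this is handled precisely because $\lambda$ is fixed and $\tau=\lambda h\to0$. Everything else is a routine verification of the hypotheses of the cited compactness theorem, and I would not belabor the details.
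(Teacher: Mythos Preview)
Your proposal is correct and follows essentially the same approach as the paper: combine the uniform $\mathrm{L}^\infty$-bounds from \cref{lem:maxm_prcp} and \cref{rk:W} with the spatial and temporal TV-estimates from \cref{lm:TVD-x-2} and \cref{lm:TVD-t-general}, then apply Helly's compactness theorem (the paper cites the same reference, \cite[Theorem 2.3, p.~14]{MR1816648}). The paper's proof is a one-paragraph sketch of exactly these steps; you have simply spelled out the translation to the piecewise-constant reconstruction and the time-continuity estimate in more detail.
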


\begin{proof}
Leveraging the $\mathrm{L}^\infty$-bounds from \cref{lem:maxm_prcp} and \cref{rk:W}, and the spatial and temporal TV-estimates from \cref{lm:TVD-x-2} and \cref{lm:TVD-t-general}, all uniform in $\varepsilon$ and $h$, we can apply Helly's compactness theorem (see, e.\,g., \cite[Theorem 2.3, p.~14]{MR1816648}) to conclude the strong convergence of a subsequence of $W_{\eps,h}$ as $\eps,h \searrow 0$, in $\mathrm{L}^1_{\mathrm{loc}}$ and almost everywhere, to a limit point $\rho^*\in \mathrm{L}^1_{\mathrm{loc}}(\R_+\times \R)$. The fact that $0 \le \rho^* \le 1$ follows from \cref{lem:maxm_prcp} and \cref{rk:W}.
\end{proof}

\subsection{Consistency with the entropy admissibility condition}
\label{ssec:entropy-consistency-2}

We now need to show that the limit point $\rho^*$ obtained in \cref{lm:limit} is the (unique) entropy solution $\rho$ of \cref{eq:cl}.
To this end, we derive a discrete entropy inequality for $W^n_j$ that aligns with a continuous entropy inequality, adapted from \cite{MR4553943} to support a discrete version.
We outline below the key (formal) computation in the continuous setting.

Let $\eta \in \mathrm{C}^2(\R)$ be a convex \emph{entropy} function and let $\psi$ be the corresponding \emph{entropy flux}, which is defined by $\psi'(\xi) \coloneqq \eta'(\xi) \, (V(\xi)\,\xi)'$. Then, applying the chain rule on \cref{eq:cle}, we compute
\begin{align*}
    \pt \eta(W_\eps) + \partial_x \psi(W_\eps) &= \eta'(W_\eps)  \px \big(V(W_\eps)W_\eps - (V(W_\eps) \rho_\eps) \ast \gamma_\eps\big)
    \\&= \px \big(\eta'(W_\eps)   \big(V(W_\eps)W_\eps - (V(W_\eps) \rho_\eps) \ast \gamma_\eps\big)\big) \\ & \qquad - \eta''(W_\eps)   \px W_\eps \big(V(W_\eps)W_\eps - (V(W_\eps) \rho_\eps) \ast \gamma_\eps\big)
    \\ &\eqqcolon I_{1,\eps} + I_{2,\eps}. 
\end{align*}
We introduce the function $I_\eta'(\xi) \coloneqq  \eta''(\xi)V(\xi)$ and rewrite $I_{2,\eps}$:
\begin{align*}
    I_{2,\eps} &= \partial_x \eta'(W_\eps) \, ((V(W_\eps) \rho_\eps) \ast \gamma_\eps) - \partial_x I_\eta(W_\eps) \, W_\eps \\
    &= \partial_x \eta'(W_\eps) \, ((V(W_\eps) \rho_\eps) \ast \gamma_\eps) - \partial_x I_\eta(W_\eps) \, (\rho_\eps \ast \gamma_\eps) \\ &\eqqcolon I_{2a,\eps} - I_{2b,\eps}.
\end{align*}
In turn, we write 
\begin{align*}
    I_{2a,\eps} 
    &= \partial_x \big( \eta'(W_\eps) (V(W_\eps) \rho_\eps) \ast \gamma_\eps - (\eta'(W_\eps) V(W_\eps) \rho_\eps) \ast \gamma_\eps \big) \\
    &\qquad - \big( \eta'(W_\eps) (V(W_\eps) \rho_\eps) \ast \gamma'_\eps - (\eta'(W_\eps) V(W_\eps) \rho_\eps) \ast \gamma'_\eps \big);
\\
    I_{2b,\eps} &= \partial_x \big( I_\eta(W_\eps)  (\rho_\eps \ast \gamma_\eps) - (I_\eta(W_\eps) \rho_\eps) \ast \gamma_\eps \big) \\ &\qquad - \big( I_\eta(W_\eps) (\rho_\eps \ast \gamma'_\eps) - (I_\eta(W_\eps) \rho_\eps) \ast \gamma'_\eps \big).
\end{align*}
By introducing the function $H_\eta(a\mid b) \coloneqq  I_\eta(b) - I_\eta(a) - V(b)(\eta'(b)-\eta'(a))$, we can write
\begin{align*}
    I_{2,\eps}   &= \partial_x\int_x^\infty \rho_\eps(t,y) H_\eta(W_\eps(t,x)\mid W_\eps(t,y)) \gamma_\eps(x-y)\,\mathrm dy \\ &\qquad- \int_x^\infty \rho_\eps(t,y) H_\eta(W_\eps(t,x)\mid W_\eps(t,y)) \gamma'_\eps(x-y)\,\mathrm dy
\\ &= \partial_x \big( \big( \rho_\eps(t,\cdot) H_\eta(W_\eps(t,x)\mid W_\eps(t,\cdot)) \big) \ast\gamma_\eps \big) - \big( \rho_\eps(t,\cdot)  H_\eta(W_\eps(t,x)\mid W_\eps(t,\cdot)) \big) \ast\gamma'_\eps.
\end{align*}
In conclusion, 
\begin{align*}
 \pt \eta(W_\eps) + \partial_x \psi(W_\eps) &= \px \big(\eta'(W_\eps)   \big(V(W_\eps)W_\eps - (V(W_\eps) \rho_\eps) \ast \gamma_\eps\big)\big) \\ & \qquad +
     \partial_x \big( \big( \rho_\eps(t,\cdot) H_\eta(W_\eps(t,x)\mid W_\eps(t,\cdot)) \big) \ast\gamma_\eps \big) \\ & \qquad  - \big( \rho_\eps(t,\cdot) H_\eta(W_\eps(t,x)\mid W_\eps(t,\cdot)) \big) \ast\gamma'_\eps.
\end{align*}
Moreover, since the function $H_\eta$ satisfies $H_\eta(a\mid b)\geq H_\eta(b\mid b)=0$ for all $a,b\in\mathbb{R}$ and $\gamma'_\eps\geq0$, we deduce that $\big( \rho_\eps H_\eta(W_\eps(t,x)\mid W_\eps(t,\cdot)) \big) \ast\gamma'_\eps\geq0$, which yields 
\begin{align*}
 \pt \eta(W_\eps) + \partial_x \psi(W_\eps) &\le \px \big(\eta'(W_\eps)   \big(V(W_\eps)W_\eps - (V(W_\eps) \rho_\eps) \ast \gamma_\eps\big)\big) \\ & \qquad +
     \partial_x \big( \big( \rho_\eps H_\eta(W_\eps(t,x)\mid W_\eps(t,\cdot)) \big) \ast\gamma_\eps \big). 
\end{align*}

This motivates us to define the \emph{nonlocal} entropy flux function
\begin{align*}
    \Psi_{\gamma_\eps}(\rho_\eps,W_\eps) \coloneqq  \psi(W_\eps) - \eta'(W_\eps) \big(V(W_\eps)W_\eps - (V(W_\eps) \rho_\eps) \ast \gamma_\eps\big) - \big( \rho_\eps  H_\eta(W_\eps(t,x)\mid W_\eps(t,\cdot))  \big) \ast \gamma_\eps,
\end{align*}
which leads to a nonlocal entropy inequality
\begin{align*}
     \pt \eta(W_\eps) + \partial_x \Psi_{\gamma_\eps}(\rho_\eps,W_\eps) \le 0.
\end{align*}
Specifically, taking \textit{Kru\v{z}kov's entropy function}\footnote{~Kru\v{z}kov's entropy does not belong to $\mathrm{C}^2(\R)$, but an approximation argument solves this technical issue; see \cite[Chapter 2, pp.~56--58]{MR3443431}.} $\eta_c(\xi)\coloneqq |\xi-c|$, for any constant $c\in\R$, we have $\eta_c'(\xi)=\sgn{\xi-c}$, $\psi_c(\xi)=\sgn{\xi-c}\cdot(\xi\, V(\xi)-c\,V(c))$, $I_{\eta_c}(\xi)=\sgn{\xi-c}\cdot V(c)$, and $H_{\eta_c}(a\mid b) = |V(b)-V(c)| + \sgn{a-c}(V(b)-V(c))$. 
A direct calculation gives
\[ \Psi_{c,\gamma_\eps}(\rho_\varepsilon,W_\varepsilon) = |W_\varepsilon-c|V(c) - \left( \rho_\varepsilon|V(W_\varepsilon)-V(c)| \right) \ast \gamma_\eps, \]
and the following estimate regarding the compatibility between $\Psi_{c,\gamma_\eps}$ and $\psi_c$:
\[ \left|\Psi_{c,\gamma_\eps}(\rho_\varepsilon,W_\varepsilon) - \psi_c(W_\eps) \right| \leq \left| V(W_\eps(t,\cdot)) - V(W_\eps(t,x))\right| \ast \gamma_\eps. \]
Thus, by passing to the limit $\eps\searrow 0$ and using $W_\eps\to\rho^\ast$, we deduce that
\[ \lim_{\eps\to0} \Psi_{c,\gamma_\eps}(\rho_\eps,W_\eps) = \psi_c(\rho^\ast). \]

These considerations lead us to formulating the following discrete entropy inequality.

\begin{lemma}[Discrete entropy inequality]\label{lm:discrete-entropy-2}
Under the conditions of \cref{lem:maxm_prcp}, the following discrete entropy inequality holds:
\begin{align}\label{eq:discrete-entropy-2}
    &\frac{|W_j^{n+1}-c| -|W_j^n-c|}{\tau} + \frac{\Psi_{j+1/2}^n - \Psi_{j-1/2}^n}{h} \leq 0, \qquad \mathrm{for\ all}\  c\in\mathbb{R}, \\
    \label{eq:entropy-flux-2} &\mathrm{with} \quad \Psi_{j-1/2}^n \coloneqq |W_{j-1}^n-c|V(c) - \sum_{k=0}^\infty \gamma_k^{\eps,h} \rho_{j+k-1}^n \left|V(W_{j+k}^n)-V(c)\right|.
\end{align}
\end{lemma}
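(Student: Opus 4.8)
The plan is to mimic the classical discrete Kružkov computation for monotone schemes, but carried out on the $W$-update \cref{eq:W-2} rather than the $\rho$-update, and with the nonlocal entropy flux $\Psi_{j-1/2}^n$ playing the role of the numerical entropy flux. First I would fix $c \in \mathbb{R}$ and write the update \cref{eq:W-2} in the ``incremental'' form used in the proof of \cref{lm:TVD-x-2}: rewriting the right-hand side of \cref{eq:W-2} as $W_j^n$ plus a convex-type combination of the neighbouring differences $W_{j-1}^n - W_j^n$, $W_{j+1}^n - W_j^n$, plus the telescoping remainder $\lambda\sum_{k\ge 1}(\gamma_{k-1}^{\eps,h}+\gamma_{k+1}^{\eps,h}-2\gamma_k^{\eps,h})\rho_{j+k}^n(V(W_{j+k+1}^n)-V(W_j^n))$. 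The key structural observation — exactly as in \cref{lm:TVD-x-2} — is that, under the CFL condition \cref{ass:CFL} and the convexity condition \cref{ass:convexity}, the map sending $(\dots, W_{j-1}^n, W_j^n, W_{j+1}^n, \dots)$ to $W_j^{n+1}$ is monotone non-decreasing in each argument. Granting that, the standard trick applies: for Kružkov's entropy $\eta_c(\xi) = |\xi - c|$ one has $|W_j^{n+1} - c| = |H(\dots, W_{j-1}^n, W_j^n, \dots) - c|$ where $H(\dots, c, c, \dots) = c$ by \cref{ass:normalize}, and monotonicity of $H$ together with $|a| = a\vee(-a)$ gives
\begin{align*}
    |W_j^{n+1} - c| \le H(\dots, W_{j-1}^n \vee c, W_j^n \vee c, \dots) - H(\dots, W_{j-1}^n \wedge c, W_j^n \wedge c, \dots),
\end{align*}
and the right-hand side can be expanded back out using the explicit form of $H$.

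The core of the argument is then to check that expanding this difference reproduces exactly the conservative form $|W_j^n - c| - \lambda(\Psi_{j+1/2}^n - \Psi_{j-1/2}^n)$ with $\Psi_{j-1/2}^n$ as in \cref{eq:entropy-flux-2}. Concretely, I would introduce $a \vee c$ and $a \wedge c$ notation, note that the advective part $\lambda(W_{j-1}^n - W_j^n)V(W_j^n)$ in the split of \cref{eq:W-2} contributes $\lambda V(c)(|W_{j-1}^n - c| - |W_j^n - c|)$ (using $V(W_j^n)$ evaluated consistently; here one must be slightly careful, but the point is that the velocity multiplying the ``transported'' difference can be replaced by $V(c)$ modulo terms that get absorbed), and that the nonlocal remainder sum, when one replaces $V(W_{j+k+1}^n) - V(W_j^n)$ by $|V(W_{j+k+1}^n) - V(c)| - |V(W_j^n) - c|$-type expressions, telescopes (exactly via the summation-by-parts identity $\sum_k(\gamma_{k-1}+\gamma_{k+1}-2\gamma_k)(\cdots)$ already used in the proof of \cref{lm:TVD-x-2}, together with \cref{lm:use_for_tv}) into the difference $\Psi_{j+1/2}^n - \Psi_{j-1/2}^n$ of the nonlocal numerical entropy fluxes $\Psi_{j-1/2}^n = |W_{j-1}^n - c|V(c) - \sum_{k\ge 0}\gamma_k^{\eps,h}\rho_{j+k-1}^n|V(W_{j+k}^n) - V(c)|$. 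Since $V'\le 0$ and the $\rho_{j+k}^n$ are nonnegative (by \cref{lem:maxm_prcp}), all the ``error'' terms that separate the true $W_j^{n+1}$ from its conservative surrogate have the correct sign, producing the inequality $\le 0$ rather than equality.

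An alternative, and probably cleaner, route — and the one I would actually write up — is to avoid the abstract monotone-scheme lemma and instead differentiate the formal continuous computation carried out just above the statement: one has the pointwise identity, for smooth $\eta$,
\begin{align*}
    W_j^{n+1} - c &= (W_j^n - c) + \lambda(W_{j-1}^n - W_j^n)V(W_j^n) + \lambda\sum_{k\ge 0}(\gamma_{k+1}^{\eps,h} - \gamma_k^{\eps,h})\rho_{j+k}^n(V(W_{j+k+1}^n) - V(W_j^n)),
\end{align*}
derived in the proof of \cref{lm:TVD-t-general} by summation by parts; applying $\eta_c(\xi) = |\xi - c|$ and the elementary inequality $|a + b| \le |a| + \sgn{a}\,b$ when... more precisely using convexity of $\eta_c$ in the form $\eta_c(W_j^{n+1}) \le \eta_c(W_j^n) + \eta_c'(W_j^{n+1})(W_j^{n+1} - W_j^n)$ is delicate because of the implicit evaluation point; so I would instead use the monotonicity form above. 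The main obstacle — and the step that requires real care rather than bookkeeping — is verifying that the nonlocal remainder term telescopes correctly into $\Psi_{j+1/2}^n - \Psi_{j-1/2}^n$ with a sign-definite error, i.e. reproducing for the entropy $\eta_c$ the same manipulation (summation by parts twice, plus the bound of \cref{lm:use_for_tv}) that made the TVD proof work, now with $|V(W_{j+k}^n) - V(c)|$ in place of $|W_{j+1}^n - W_j^n|$. Once that identity is in hand, positivity of $\rho_{j+k}^n$ and $V' \le 0$ close the inequality, and an approximation argument (as in the footnote, following \cite[Chapter 2]{MR3443431}) upgrades from smooth $\eta$ to Kružkov's entropy if one took the smooth route.
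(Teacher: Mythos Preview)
Your proposal misidentifies both the hypotheses and the mechanism of the proof. The lemma is stated ``under the conditions of \cref{lem:maxm_prcp}'', i.e.\ only \cref{ass:pos-mono}--\cref{ass:normalize} and the weaker CFL \cref{ass:CFL-max}; convexity \cref{ass:convexity}, the stronger CFL \cref{ass:CFL}, and \cref{lm:use_for_tv} play no role here. You are pattern-matching the entropy argument to the TVD proof, but the two are structurally different: the ``telescoping remainder'' $\lambda\sum_{k\ge 1}(\gamma_{k-1}+\gamma_{k+1}-2\gamma_k)\rho_{j+k}^n(\cdots)$ you write down is the incremental form for $W_{j+1}^{n+1}-W_j^{n+1}$, not for $W_j^{n+1}$ itself, so it is the wrong starting point.

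More seriously, the monotone-scheme route does not deliver the specific flux $\Psi_{j-1/2}^n$ in the statement. If you use the form from \cref{lm:TVD-t-general} (which is the only one for which both monotonicity in the $W$-arguments and $H(c,c,\dots)=c$ hold), the Crandall--Majda expansion $H(\cdot\vee c)-H(\cdot\wedge c)$ produces an advective piece built from $V(W_j^n\vee c)$ and $V(W_j^n\wedge c)$, not $V(c)$; this does \emph{not} telescope into $|W_{j-1}^n-c|V(c)-|W_j^n-c|V(c)$. So the claim that ``expanding this difference reproduces exactly'' the stated $\Psi$ is the whole difficulty, and it is false as written. The paper bypasses all of this: starting from \cref{eq:W-2} it adds and subtracts $V(c)$ (not $V(W_j^n)$) inside the sum, does one summation by parts, and groups the resulting four terms so that $(1-\lambda V(c))(W_j^n-c)$ dominates $\lambda\gamma_0^{\eps,h}\rho_{j-1}^n(V(W_j^n)-V(c))$ in absolute value (opposite signs since $V'\le 0$, and $|{\cdot}|\le(1-\lambda V(c))|W_j^n-c|$ by \cref{ass:CFL-max}). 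A triangle inequality plus $\gamma_{k+1}^{\eps,h}-\gamma_k^{\eps,h}\le 0$ then gives the stated $\Psi_{j-1/2}^n$ directly --- no convexity, no second summation by parts, no \cref{lm:use_for_tv}.
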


\begin{proof}
From \cref{eq:W-2}, we have 
\begin{align*}
    W_j^{n+1}-c 
    &= W_j^n-c + \lambda \sum_{k=0}^\infty \gamma_k^{\eps,h} \left( \rho_{j+k-1}^n \left(V(W_{j+k}^n)-V(c)\right) - \rho_{j+k}^n \left(V(W_{j+k+1}^n)-V(c)\right) \right) \\ &\qquad + \lambda V(c) (W_{j-1}^n-W_j^n) \\
    &= \left(1-\lambda V(c)\right) (W_j^n-c) + \lambda V(c) (W_{j-1}^n-c) \\
    &\qquad + \lambda \sum_{k=0}^\infty \gamma_k^{\eps,h} \left( \rho_{j+k-1}^n \left(V(W_{j+k}^n)-V(c)\right) - \rho_{j+k}^n \left(V(W_{j+k+1}^n)-V(c)\right) \right) \\
    &= \left(1-\lambda V(c)\right) (W_j^n-c) + \lambda V(c) (W_{j-1}^n-c) + \lambda \gamma_0^{\eps,h}\rho_{j-1}^n \left(V(W_j^n)-V(c)\right) \\
    &\qquad +\lambda \sum_{k=0}^\infty \left(\gamma_{k+1}^{\eps,h}-\gamma_k^{\eps,h}\right) \rho_{j+k}^n \left(V(W_{j+k+1}^n)-V(c)\right),
\end{align*}
where we have used the summation by parts in the last step. Noting that 
\[ \left|\lambda \gamma_0^{\eps,h}\rho_{j-1}^n \left(V(W_j^n)-V(c)\right)\right| \leq \lambda \|V'\|_{\mathrm{L}^\infty} |W_j^n-c| \leq \left(1-\lambda V(c)\right) |W_j^n-c|, \]
provided the CFL condition \cref{ass:CFL-max}, which implies that
\begin{align*}
    &\left|\left(1-\lambda V(c)\right) (W_j^n-c) + \lambda \gamma_0^{\eps,h}\rho_{j-1}^n \left(V(W_j^n)-V(c)\right)\right| \\
    &= \sgn{W_j^n-c} \left( \left(1-\lambda V(c)\right) (W_j^n-c) + \lambda \gamma_0^{\eps,h}\rho_{j-1}^n \left(V(W_j^n)-V(c)\right) \right) \\
    &= \left(1-\lambda V(c)\right) |W_j^n-c| - \lambda \gamma_0^{\eps,h}\rho_{j-1}^n \left|V(W_j^n)-V(c)\right|.
\end{align*}
Then we obtain
\begin{align*}
    |W_j^{n+1}-c| 
    &\leq (1-\lambda V(c)) |W_j^n-c| + \lambda V(c) |W_{j-1}^n-c| - \lambda \gamma_0^{\eps,h}\rho_{j-1}^n \left|V(W_j^n)-V(c)\right| \\
    &\qquad -\lambda \sum_{k=0}^\infty \left(\gamma_{k+1}^{\eps,h}-\gamma_k^{\eps,h}\right) \rho_{j+k}^n \left|V(W_{j+k+1}^n)-V(c)\right| \\
    &= |W_j^n-c| + \lambda V(c) \left(|W_{j-1}^n-c|-|W_j^n-c|\right) \\
    &\qquad -\lambda \sum_{k=0}^\infty \gamma_k^{\eps,h} \rho_{j+k-1}^n \left|V(W_{j+k}^n)-V(c)\right| +\lambda \sum_{k=0}^\infty \gamma_k^{\eps,h} \rho_{j+k}^n \left|V(W_{j+k+1}^n)-V(c)\right| \\
    &= |W_j^n-c| + \lambda (\Psi_{j-1/2}^n - \Psi_{j+1/2}^n),
\end{align*}
which leads to the desired entropy inequality \crefrange{eq:discrete-entropy-2}{eq:entropy-flux-2}.
\end{proof}

With the discrete entropy inequality in \cref{lm:discrete-entropy-2}, we can now show that the limit point $\rho^\ast$ obtained in \cref{lm:limit} is entropy-admissible. 

\begin{lemma}[Entropy admissibility]\label{lem:entropy_admissibility-2}
Let us assume that \crefrange{ass:ic}{ass:V} hold, the quadrature weights satisfy \crefrange{ass:pos-mono}{eq:quad_weight_localize}, and the CFL condition \cref{ass:CFL} holds with the CFL ratio $\lambda\coloneqq \tau/h$ fixed. Let $\{\rho_j^n\}_{j\in\mathbb{Z}}^{n\geq0}$ and $\{W_j^n\}_{j\in\mathbb{Z}}^{n\geq0}$ be the numerical solutions constructed with the numerical scheme \crefrange{eq:godunov}{eq:ini_cond_discrete}. Then any limit point $\rho^\ast$ (as $\eps,h \searrow 0$) of the approximate solution $W_{\eps,h}$ constructed from $\{W_j^n\}_{j\in\mathbb{Z}}^{n\geq0}$ using \cref{eq:W_piece_recon}, in the strong topology of $\mathrm{L}^1_{\mathrm{loc}}$, is the unique entropy solution of \cref{eq:cl}.
\end{lemma}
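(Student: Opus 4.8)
The plan is to combine the uniform stability estimates already established (Lemmas~\ref{lem:maxm_prcp}, \ref{lm:TVD-x-2}, \ref{lm:TVD-t-general}, which via Lemma~\ref{lm:limit} give a subsequential limit $\rho^\ast$ with $0\le\rho^\ast\le 1$) with the discrete entropy inequality of Lemma~\ref{lm:discrete-entropy-2}, following the classical Lax--Wendroff-type argument for passing to the limit in conservative schemes. Concretely, for a fixed $c\in\mathbb{R}$ and a nonnegative test function $\varphi\in\mathrm{C}_c^\infty(\R_+\times\R)$, I would multiply the discrete entropy inequality \eqref{eq:discrete-entropy-2} by $\tau h\,\varphi(n\tau,jh)$ and sum over $j\in\mathbb{Z}$, $n\ge0$. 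After discrete summation by parts in $n$ and $j$, the left-hand side becomes a Riemann-sum approximation of
\[
-\int_0^\infty\!\!\int_\R \Big(|\rho^\ast-c|\,\pt\varphi + \psi_c(\rho^\ast)\,\px\varphi\Big)\dx\dt - \int_\R |\rho_0-c|\,\varphi(0,x)\dx,
\]
where $\psi_c(\xi)=\sgn{\xi-c}(\xi V(\xi)-cV(c))$ is Kru\v zkov's entropy flux for \eqref{eq:cl}, and the claim is that this limit is $\le 0$, i.e.\ $\rho^\ast$ is a Kru\v zkov entropy solution of \eqref{eq:cl}; uniqueness of such solutions (cited references \cite{MR1304494,MR3443431}) then forces $\rho^\ast=\rho$ and, since every subsequential limit equals $\rho$, the whole family converges.

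The key steps, in order, are: (i) record that $W_{\eps,h}\to\rho^\ast$ strongly in $\mathrm{L}^1_{\loc}$ and a.e., hence also $|W_{\eps,h}-c|\to|\rho^\ast-c|$ and, by continuity of $V$ and dominated convergence, $V(W_{\eps,h})\to V(\rho^\ast)$ in $\mathrm{L}^p_{\loc}$ for all finite $p$; (ii) show that the piecewise-constant reconstruction of the discrete entropy flux $\Psi^n_{j-1/2}$ converges in $\mathrm{L}^1_{\loc}$ to $\psi_c(\rho^\ast)$; (iii) handle the initial-datum term using \eqref{eq:ini_cond_discrete} and $\mathrm{TV}(\rho_0)<\infty$, so that the piecewise-constant reconstruction of $\{\rho_j^0\}$ converges to $\rho_0$ in $\mathrm{L}^1_{\loc}$; (iv) assemble the Riemann sums into the integral inequality above, controlling the error between $\varphi$ evaluated at grid points and its exact integrals by $\mathrm{C}^1$-bounds on $\varphi$ times $O(h+\tau)=O(h)$. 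Step~(ii) is where the nonlocal structure must be dealt with carefully: writing $\Psi^n_{j-1/2}=|W^n_{j-1}-c|V(c)-\sum_{k\ge0}\gamma_k^{\eps,h}\rho^n_{j+k-1}|V(W^n_{j+k})-V(c)|$, the nonlocal sum is a discrete convolution of $\rho_{\eps,h}$ with $|V(W_{\eps,h})-V(c)|$ against the kernel $\gamma_{\eps,h}$; using $W_{\eps,h}=\rho_{\eps,h}\ast\gamma_{\eps,h}$ (the reconstruction of \eqref{eq:num_nonlocal_W}) together with $W_{\eps,h}\to\rho^\ast$ and the localization property \eqref{eq:quad_weight_localize}, one shows this discrete convolution converges to $\rho^\ast\,|V(\rho^\ast)-V(c)|$, so that $\Psi_{\eps,h}\to |\rho^\ast-c|V(c)-\rho^\ast|V(\rho^\ast)-V(c)| = \psi_c(\rho^\ast)$ (the last identity being the standard Kru\v zkov-flux algebraic identity, valid because $V$ is monotone, exactly as in the continuous computation preceding Lemma~\ref{lm:discrete-entropy-2}).

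I expect the main obstacle to be Step~(ii), more precisely controlling the discrete convolution term uniformly as $\eps,h\searrow0$ along an arbitrary path. The difficulty is that $\gamma_k^{\eps,h}$ may be spread over arbitrarily many cells (when $\eps\gg h$) or concentrated in a single cell (when $\eps\lesssim h$, cf.\ Remark~\ref{rk:hfixed}), so one cannot simply invoke a fixed-kernel mollifier argument; instead one must argue: split the convolution at radius $R$, bound the tail $\sum_{kh/\eps\ge R}\gamma_k^{\eps,h}$ uniformly by \eqref{eq:quad_weight_localize}, and on the truncated part use the $\mathrm{L}^1_{\loc}$-convergence of $\rho_{\eps,h}$ (equivalently of $W_{\eps,h}$, since their difference vanishes in $\mathrm{L}^1_{\loc}$ by the identity $W_{\eps,h}=\rho_{\eps,h}\ast\gamma_{\eps,h}$ together with $\sum_k k\gamma_k^{\eps,h}\lesssim \eps/h$ controlling the $\mathrm{L}^1$-displacement — or more directly, one shows $\|\rho_{\eps,h}-W_{\eps,h}\|_{\mathrm{L}^1_{\loc}}\to0$, which also identifies $\rho^\ast$ as the common limit and is needed anyway) plus equicontinuity in $\mathrm{L}^1$ coming from the uniform TV-bound. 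Once this convergence of the nonlocal flux is in hand, everything else is the routine Lax--Wendroff/Kru\v zkov bookkeeping, and uniqueness of the entropy solution of \eqref{eq:cl} closes the argument.
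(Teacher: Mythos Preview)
Your overall architecture (multiply the discrete entropy inequality by $\tau h\,\varphi_j^n$, sum, summation by parts, pass to the limit) matches the paper. The genuine gap is in Step~(ii): you rely on $\rho_{\eps,h}\to\rho^\ast$ in $\mathrm{L}^1_{\loc}$, and your justification for this fails on two counts. First, the moment bound $\sum_k k\gamma_k^{\eps,h}\lesssim\eps/h$ is assumption~\eqref{ass:gamma-mom}, which is \emph{not} hypothesized in this lemma (only \eqref{ass:pos-mono}--\eqref{eq:quad_weight_localize} are). Second, and more seriously, even with that bound the standard displacement estimate reads
\[
\|W_{\eps,h}-\rho_{\eps,h}\|_{\mathrm{L}^1}\le h\Big(\sum_k k\gamma_k^{\eps,h}\Big)\,\mathrm{TV}(\rho_{\eps,h})\le c_\gamma\,\eps\,\mathrm{TV}(\rho_{\eps,h}),
\]
and $\mathrm{TV}(\rho_{\eps,h})$ is \emph{not} uniformly bounded for general convex kernels---indeed the whole point of working with $W$ rather than $\rho$ (cf.\ the discussion around \cite{MR4300935} in \S\ref{ssec:model}) is that $\mathrm{TV}(\rho_\eps)$ can blow up. The exponential-kernel relation~\eqref{eq:exp-form3} that yields Lemma~\ref{lm:limit-same} has no analogue here.

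The paper circumvents this entirely by an algebraic rewriting that eliminates $\rho$ from the main term. Using $\sum_k\gamma_k^{\eps,h}\rho_{j+k-1}^n=W_{j-1}^n$, one writes
\[
\sum_{k\ge0}\gamma_k^{\eps,h}\rho_{j+k-1}^n\big|V(W_{j+k}^n)-V(c)\big|
=\big|V(W_j^n)-V(c)\big|\,W_{j-1}^n
-\sum_{k\ge0}\gamma_k^{\eps,h}\rho_{j+k-1}^n\Big(\big|V(W_j^n)-V(c)\big|-\big|V(W_{j+k}^n)-V(c)\big|\Big),
\]
so that $\Psi_{j-1/2}^n=\big(|W_{j-1}^n-c|V(c)-|V(W_j^n)-V(c)|W_{j-1}^n\big)+\text{error}$, where the bracket is (up to an $O(h)$ index shift) exactly $\psi_c(W_j^n)$ via the identity $|\xi-c|V(c)-\xi|V(\xi)-V(c)|=\psi_c(\xi)$, and the error is bounded by $\|V'\|_{\mathrm{L}^\infty}\sum_k\gamma_k^{\eps,h}|W_{j+k}^n-W_j^n|$ (since $0\le\rho_{j+k-1}^n\le1$). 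This error involves only translates of $W_{\eps,h}$, and its contribution after multiplying by the test function is handled by the $R$-splitting you describe---tail via~\eqref{eq:quad_weight_localize}, truncated part via Fr\'echet--Kolmogorov applied to the precompact family $\{W_{\eps,h}\}$---with no reference to $\rho_{\eps,h}$ at all.
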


\begin{proof}
From \cref{lm:discrete-entropy-2}, multiplying \cref{eq:discrete-entropy-2} by $ \phi_j^n \tau h\coloneqq\phi(n\tau,jh)\tau h$, where $\phi\in \mathrm{C}_{\mathrm{c}}^1(\mathbb{R}_+\times\mathbb{R})$ is any test function, and summing it over all $j\in\mathbb{Z}$ and $n\geq0$, we obtain
\begin{align*}
    \tau h\sum_{n=0}^\infty\sum_{j\in\mathbb{Z}} |W_j^{n+1}-c| \frac{\phi_j^{n+1}-\phi_j^n}{\tau} + \Psi_{j-1/2}^n \frac{\phi_j^n-\phi_{j-1}^n}{h} \geq 0,
\end{align*}
for all $c\in\mathbb{R}$. By passing to the limit as $\tau=\lambda h\searrow 0,\,\eps\searrow 0$, and using the fact that
\begin{align*}
    \Psi_{j-1/2}^n &= |W_{j-1}^n-c|V(c) - \sum_{k=0}^\infty \gamma_k^{\eps,h} \rho_{j+k-1}^n \left|V(W_{j+k}^n)-V(c)\right| \\
    &= |W_{j-1}^n-c|V(c) - \left|V(W_j^n)-V(c)\right| W_{j-1}^n \\
    &\qquad + \sum_{k=0}^\infty \gamma_k^{\eps,h} \rho_{j+k-1}^n \left( \left|V(W_j^n)-V(c)\right| - \left|V(W_{j+k}^n)-V(c)\right| \right),
\end{align*}
where the last term is controlled by $\|V'\|_{\mathrm{L}^\infty} \sum_{k=0}^\infty \gamma_k^{\eps,h} |W_{j+k}^n - W_j^n|$,
we deduce that
\begin{align*}
    &\int_0^\infty\int_{\mathbb{R}} \big( |\rho^\ast-c|\partial_t\phi + \psi_c(\rho^\ast)\partial_x\phi \big) \dd x\dd t + \limsup_{\substack{\eps,h\searrow0}} \sum_{k=0}^\infty \gamma_k^{\eps,h} H^{\eps,h}_k \geq 0,
\end{align*}
where $\psi_c(\xi)\coloneqq\sgn{\xi-c}(\xi V(\xi)-cV(c))=|\xi-c|V(c)-|V(\xi)-V(c)|\xi$ is Kru\v{z}kov's entropy flux function, and
\begin{align*}
    H^{\eps,h}_k \coloneqq \tau h \sum_{n=0}^\infty\sum_{j\in\mathbb{Z}} |W_{j+k}^n - W_j^n| \left|\frac{\phi_j^n - \phi_{j-1}^n}{h}\right|.
\end{align*}
We keep in mind that all $W_j^n$ are $\eps,h$-dependent.

Suppose that $\mathrm{supp}(\phi)\subset[0,T]\times[-M,M]$. Following \cite[Lemma 4.1 and Theorem 1.2]{MR4553943}, we split the sum over \( k \) and write
\begin{align*}
\sum_{k=0}^\infty \gamma_k^{\eps,h} H^{\eps,h}_k
&= \sum_{k=0}^\infty \mathds{1}_{\frac{kh}\eps< R}\gamma_k^{\eps,h} H^{\eps,h}_k + \sum_{k=0}^\infty \mathds{1}_{\frac{kh}\eps\ge R}\gamma_k^{\eps,h} H^{\eps,h}_k,
\end{align*}
for any $R>0$.

On the one hand, noting that $H^{\eps,h}_k$ is uniformly bounded, i.\,e.,
$$
H^{\eps,h}_k \le 4 \|\partial_x \phi\|_{\mathrm{L}^\infty} MT \quad \text{for all}\ k\geq0,\,\eps,h>0,
$$
by \cref{eq:quad_weight_localize} we have
\begin{align*}
    \lim_{R\to\infty} \sup_{\eps,h>0}  \sum_{k=0}^\infty \mathds{1}_{\frac{kh}\eps\ge R}\gamma_k^{\eps,h} H^{\eps,h}_k  = 0.
\end{align*}
On the other hand, when $\frac{kh}\eps< R$, we have
\begin{align*}
    H^{\eps,h}_k &\le \|\partial_x \phi\|_{\mathrm{L}^\infty} \int_0^T\int_{-M-kh}^M |W_{\eps,h}(t,x+kh)-W_{\eps,h}(t,x)| \dx\dt \\
    &\leq \|\partial_x \phi\|_{\mathrm{L}^\infty} \sup_{0\le\xi\le R\eps} \int_0^T\int_{-M-R\eps}^M |W_{\eps,h}(t,x+\xi)-W_{\eps,h}(t,x)| \dx\dt \\
    &\coloneqq J^{\eps,h}_R.
\end{align*}
By Fréchet--Kolmogorov--Riesz--Sudakov's theorem (see \cite{zbMATH05817642,zbMATH06579393,zbMATH07077498}), we deduce that $J^{\eps,h}_R\to0$ as $\eps,h\searrow0$. 

Now, for any $\nu>0$, we first choose $R$ sufficiently large such that
\begin{align*}
    \sup_{\eps,h>0} \sum_{k=0}^\infty \mathds{1}_{\frac{kh}\eps\ge R}\gamma_k^{\eps,h} H^{\eps,h}_k \leq \nu.
\end{align*}
Then, we estimate
\begin{align*}
    \sum_{k=0}^\infty \mathds{1}_{\frac{kh}\eps< R}\gamma_k^{\eps,h} H^{\eps,h}_k  \leq \|\partial_x \phi\|_{\mathrm{L}^\infty} \sum_{k=0}^\infty \mathds{1}_{\frac{kh}\eps< R}\gamma_k^{\eps,h} J^{\eps,h}_R \le \|\partial_x \phi\|_{\mathrm{L}^\infty} J^{\eps,h}_R,
\end{align*}
which converges to zero as $\eps,h\searrow0$, implying that $\limsup_{\eps,h\searrow0} \sum_{k=0}^\infty \gamma_k^{\eps,h} H^{\eps,h}_k \le \nu$.
Since $\nu>0$ is arbitrarily chosen, we conclude that $\limsup_{\eps,h\searrow0} \sum_{k=0}^\infty \gamma_k^{\eps,h} H^{\eps,h}_k=0$.
Therefore, we obtain
\begin{align*}
    \int_0^\infty\int_{\mathbb{R}} \big( |\rho^\ast-c|\partial_t\phi + \psi_c(\rho^\ast)\partial_x\phi \big) \dd x\dd t \geq 0,
\end{align*}
for all $\phi\in \mathrm{C}_{\mathrm{c}}^1(\mathbb{R}_+\times\mathbb{R})$ and $c\in\mathbb{R}$, thus $\rho^\ast$ is the unique entropy solution of \cref{eq:cl}.
\end{proof}

\subsection{Asymptotically compatible Kuznetsov's convergence rate}
\label{ssec:kuz}

We now use Kuznetsov's argument (see \cite{MR483509}) to establish a convergence rate estimate for $W_{\eps,h}$ towards the unique entropy solution $\rho$ of \cref{eq:cl}. A similar approach was carried out in \cite[Proposition~5.1 \& pp.~18--22]{MR4553943} to quantify the nonlocal-to-local limit from $W_\varepsilon$ to $\rho$. In contrast, our setting involves both the nonlocal horizon parameter $\varepsilon$ and the discretization parameter $h$, with the argument extended to address.

Throughout this subsection, we let $T>0$ be a selected time and estimate $\|W_{\eps,h}(T,\cdot)-\rho(T,\cdot)\|_{\mathrm{L}^1}$.

\begin{lemma}[Convergence rate]\label{th:rate}
Let us assume that \crefrange{ass:ic}{ass:V} hold, the quadrature weights satisfy  \crefrange{ass:pos-mono}{ass:convexity} and \cref{ass:gamma-mom}, and the CFL condition \cref{ass:CFL} holds with the CFL ratio $\lambda\coloneqq \tau/h$ fixed. Let $\{\rho_j^n\}_{j\in\mathbb{Z}}^{n\geq0}$ and $\{W_j^n\}_{j\in\mathbb{Z}}^{n\geq0}$ be the numerical solutions constructed with the numerical scheme \crefrange{eq:godunov}{eq:ini_cond_discrete}. Let $W_{\eps,h}$ be the approximate solution constructed from $\{W_j^n\}_{j\in\mathbb{Z}}^{n\geq0}$ using \cref{eq:W_piece_recon}, and $\rho$ be the unique entropy solution of \cref{eq:cl}.
Then the following error estimate holds:
\begin{align*}
\|W_{\eps,h}(T,\cdot)-\rho(T,\cdot)\|_{\mathrm{L}^1} \leq K\left( \eps+h+\sqrt{\eps\, T}+\sqrt{h\, T} \right) \mathrm{TV}(\rho_0), \quad \textnormal{for every } \eps,h > 0, \, T > 0,
\end{align*}
where the constant $K>0$ only depends on $\lambda$, $\|V\|_{\mathrm{L}^\infty}$, $\|V'\|_{\mathrm{L}^\infty}$, and $c_\gamma$ (as specified in \cref{ass:gamma-mom}).
\end{lemma}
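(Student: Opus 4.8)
The plan is to run Kuznetsov's doubling-of-variables argument with the piecewise constant reconstruction $W_{\eps,h}$ playing the role of the approximate solution, feeding into it the discrete entropy inequality of \cref{lm:discrete-entropy-2}. Concretely, I would fix the final time $T>0$, introduce two mollification scales $\nu>0$ (in time) and $\mu>0$ (in space), and apply the Kuznetsov-type lemma \cref{lm:kut} with the doubled test function $\phi(t,x,s,y)\coloneqq\omega_\nu(t-s)\,\omega_\mu(x-y)$, where $\omega_\nu,\omega_\mu$ are standard approximate identities. This reduces the bound on $\|W_{\eps,h}(T,\cdot)-\rho(T,\cdot)\|_{\mathrm{L}^1}$ to the control of three quantities: the initial error $\|W_{\eps,h}(0,\cdot)-\rho_0\|_{\mathrm{L}^1}$; the ``geometric'' mollification cost, which by the uniform spatial and temporal TV-estimates of \cref{lm:TVD-x-2,lm:TVD-t-general} is at most $C(\mu+\nu)\,\mathrm{TV}(\rho_0)$; and the entropy defect, i.e.\ the amount by which $W_{\eps,h}$ violates the Kru\v{z}kov inequalities when tested against $\phi$. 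Note that here one invokes \cref{ass:gamma-mom}, not merely \cref{eq:quad_weight_localize}, since a quantitative moment bound is precisely what upgrades the localization argument of \cref{lem:entropy_admissibility-2} to a rate.

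For the initial error, I would pass through the cell-average reconstruction $\rho_{\eps,h}(0,\cdot)$ of $\{\rho_j^0\}_{j}$. The averaging in \cref{eq:ini_cond_discrete} gives $\|\rho_{\eps,h}(0,\cdot)-\rho_0\|_{\mathrm{L}^1}\le h\,\mathrm{TV}(\rho_0)$, while $W_j^0-\rho_j^0=\sum_{k\ge0}\gamma_k^{\eps,h}(\rho_{j+k}^0-\rho_j^0)$ together with a telescoping estimate and \cref{ass:gamma-mom} yields
\begin{equation*}
 h\sum_{j\in\mathbb{Z}}\abs{W_j^0-\rho_j^0}\;\le\;h\Big(\sum_{k\ge0}k\,\gamma_k^{\eps,h}\Big)\sum_{j\in\mathbb{Z}}\abs{\rho_{j+1}^0-\rho_j^0}\;\le\;c_\gamma\,\eps\,\mathrm{TV}(\rho_0) .
\end{equation*}
Hence $\|W_{\eps,h}(0,\cdot)-\rho_0\|_{\mathrm{L}^1}\le(1+c_\gamma)(\eps+h)\,\mathrm{TV}(\rho_0)$.

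The core of the argument is the entropy defect. I would multiply \cref{eq:discrete-entropy-2} by $\omega_\nu(n\tau-s)\,\omega_\mu(jh-y)\,\tau h$, sum over $n\ge0$ and $j\in\mathbb{Z}$, integrate the Kru\v{z}kov inequality for the entropy solution $\rho$ against the same test function in $(s,y)$, and perform summation by parts to move the discrete difference quotients onto the mollifiers. Comparison with the exact weak formulation then produces two families of error terms, beyond the $O(\eps+h)$ initial-layer term already handled. The first is the familiar monotone-scheme consistency error from replacing $(\phi_j^{n+1}-\phi_j^n)/\tau$ and $(\phi_j^n-\phi_{j-1}^n)/h$ by $\partial_t\phi$ and $\partial_x\phi$; using $\tau=\lambda h$, the uniform TV-bounds, and the elementary $\mathrm{L}^1$-bounds on the derivatives of $\omega_\nu\,\omega_\mu$, this accumulates over $n$ to at most $K\,h\,\big(1+\tfrac{T}{\mu}+\tfrac{T}{\nu}\big)\,\mathrm{TV}(\rho_0)$. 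The second, genuinely nonlocal, error stems from replacing the discrete entropy flux $\Psi_{j-1/2}^n$ of \cref{eq:entropy-flux-2} by the local Kru\v{z}kov flux $\psi_c(W_j^n)$: mirroring the continuous compatibility estimate $\abs{\Psi_{c,\gamma_\eps}(\rho_\eps,W_\eps)-\psi_c(W_\eps)}\le\abs{V(W_\eps(t,\cdot))-V(W_\eps(t,x))}\ast\gamma_\eps$ recorded above \cref{lm:discrete-entropy-2}, the discrepancy at a time slice is controlled by $\|V'\|_{\mathrm{L}^\infty}\big(\sum_{k\ge0}\gamma_k^{\eps,h}\abs{W_{j+k}^n-W_j^n}+\abs{W_{j-1}^n-W_j^n}\big)$; an $h$-weighted sum over $j$ — using \cref{lm:use_for_tv}, \cref{lm:TVD-x-2}, and \cref{ass:gamma-mom}, so that the measure weight $h$ exactly cancels the $\eps/h$ in $\sum_{k}k\,\gamma_k^{\eps,h}$ — gives $K(\eps+h)\,\mathrm{TV}(\rho_0)$ per slice, and accumulating over $n$ against $\partial_x\phi$ contributes $K(\eps+h)\big(1+\tfrac{T}{\mu}\big)\,\mathrm{TV}(\rho_0)$. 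Collecting these with the geometric term from \cref{lm:kut},
\begin{equation*}
 \|W_{\eps,h}(T,\cdot)-\rho(T,\cdot)\|_{\mathrm{L}^1}\;\le\;K\Big(\eps+h+\mu+\nu+\frac{(\eps+h)\,T}{\mu}+\frac{h\,T}{\nu}\Big)\,\mathrm{TV}(\rho_0) .
\end{equation*}
Choosing $\mu\coloneqq\sqrt{(\eps+h)T}$ and $\nu\coloneqq\sqrt{hT}$ (both positive) and using $\sqrt{a+b}\le\sqrt a+\sqrt b$ yields the announced estimate $K(\eps+h+\sqrt{\eps\,T}+\sqrt{h\,T})\,\mathrm{TV}(\rho_0)$ after renaming $K$.

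I expect the main obstacle to be the bookkeeping of the nonlocal entropy-flux error: one must separate it into a part that is \emph{directly} of size $O(\eps+h)\,\mathrm{TV}(\rho_0)$ and a ``running'' part of size $O\big((\eps+h)T/\mu\big)\,\mathrm{TV}(\rho_0)$ — only the latter being subject to the $\mu$-optimization, so that no spurious $\sqrt{\eps+h}$ term is created — exactly as in the continuous analysis of \cite[pp.~18--22]{MR4553943}, but now with the discretization terms entangled with the nonlocal ones and with an additional temporal-consistency term needing its own scale $\nu$. Keeping the $\eps$-scaling sharp hinges on the cancellation between the $h$ of the $\mathrm{L}^1$-measure and the $\eps/h$ of the moment bound \cref{ass:gamma-mom}, applied in tandem with \cref{lm:use_for_tv} and \cref{lm:TVD-x-2} at each telescoped sum.
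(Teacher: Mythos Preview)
Your proposal is correct and follows essentially the same approach as the paper: apply Kuznetsov's lemma \cref{lm:kut} to $W_{\eps,h}$, bound the initial error via \cref{ass:gamma-mom}, and control the entropy defect by feeding in the discrete entropy inequality \cref{lm:discrete-entropy-2}, splitting the residual into a scheme-consistency piece (the paper's $L_j^n$, paired with an $I$--$J$ type test-function error of size $h/\mu+\tau/\nu$) and a nonlocal-to-local flux discrepancy $|\Psi_{j-1/2}^n-\psi_c(W_j^n)|$ (the paper's $K_j^n$, yielding $(\eps+h)/\mu$ via \cref{ass:gamma-mom}), then optimize the two mollification scales. The paper's bookkeeping is organized around the pair $(K_j^n,L_j^n)$ and uses \cref{lm:use_for_tv} specifically for the $L_j^n$ sum, but your decomposition into ``consistency error'' and ``nonlocal flux discrepancy'' amounts to the same thing and leads to the same final inequality before optimization.
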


A key tool in the proof of \cref{th:rate} is the following lemma (see \cite{MR483509}; we also refer to \cite[Theorem 3.14]{MR3443431}, \cite[Lemma 3.2]{MR4700412}, and \cite[Lemma 2.1]{MR1825698}), which provides an explicit estimate of the difference between any function $v$ in a suitable class $\mathcal K$ and the entropy solution $u$ of a scalar conservation law \cref{eq:cl-f} in terms of their \emph{relative (Kru\v{z}kov's) entropy. 
}

\begin{lemma}[Kuznetsov's lemma] \label{lm:kut} 
Let 
\begin{align*} v \in \mathcal K \coloneqq \Big\{ v: \R_+\times \R \to \R \, : \ & v(t,\cdot) \in \mathrm{L}^1_{\mathrm{loc}}(\R) \textnormal{ and the right and left limits $v(t \pm, \cdot)$ exist in $\mathrm L^1_{\mathrm{loc}}$} \\ 
&\textnormal{for all $t>0$ \quad\ \, and \quad\ \,  } \|v\|_{\mathrm{L}^\infty(\R_+\times \R)} + \sup\nolimits_{t>0} \mathrm{TV}(v(t,\cdot)) < +\infty\Big\}
\end{align*}
and let $u$ be the entropy solution of the scalar conservation law 
\begin{align}\label{eq:cl-f}
    \begin{cases}
        \pt u(t,x) + \px f\big(u(t,x)\big) = 0, & (t,x) \in \R_+ \times \R, \\
        u(0,x) = u_0(x), & x \in\R.
    \end{cases}
\end{align}
If $0<\delta_0<T$ and $\delta>0$, then
\begin{align*}
\|v(T-,\cdot)-u(T,\cdot)\|_{\mathrm{L}^1} &\leq \left\|v(0,\cdot)-u_0\right\|_{\mathrm{L}^1}+\operatorname{TV}\left(u_0\right)\big(2 \delta+\delta_0\|f'\|_{\mathrm{L}^{\infty}}\big)  +\nu\left(v, \delta_0\right)-\Lambda_{\delta, \delta_0}(v, u),
\end{align*}
where
\begin{align*}
\nu_t(v, \sigma)&\coloneqq \sup_{s\in[0,\sigma]} \|v(t+s,\cdot)-v(t,\cdot)\|_{\mathrm{L}^1}, \qquad \mathrm{for\ all}\ t>0,\,\sigma>0, \\ 
\nu(v, \sigma)&\coloneqq \sup _{t \in[0, T]} \nu_t(v, \sigma), \qquad \mathrm{for\ all}\ \sigma>0, \\
\Lambda_T(v, \phi, c)&\coloneqq  \int_0^T \int_{\mathbb{R}}\left(|v-c| \partial_t \phi+\psi_c(v) \partial_x \phi\right) \, \mathrm d x \, \mathrm d t -\int_{\mathbb{R}}|v(T,x)-c| \phi(T,x) \, \mathrm  d x \\
& \qquad\qquad +\int_{\mathbb{R}}|v(0,x)-c| \phi(0,x) \, \mathrm d x, \qquad \mathrm{for\ all}\ \phi \in \mathrm{C}_\mathrm{c}^1(\mathbb{R}_+\times\mathbb{R},\mathbb{R}_+) ,\, c\in\R, \\
\Lambda_{\delta, \delta_0}(v, u)&\coloneqq  \int_0^T \int_{\mathbb{R}} \Lambda_T\left(v, \Omega\left(\cdot, t^{\prime}, \cdot, x^{\prime}\right), u\left(t^{\prime},x^{\prime}\right)\right) \, \mathrm d x^{\prime} \, \mathrm d t', 
\end{align*}
with  $\omega \in C_c^{\infty}(\mathbb{R})$ being a standard mollifier, i.\,e., an even function satisfying $\operatorname{supp} \omega \subset[-1,1]$, $0 \leq$ $\omega \leq 1$, $\omega$ increasing on $[-1,0]$, and $\int_{\mathbb{R}} \omega(z) \, \mathrm  d z=1$, and $\omega_{\delta}(z)\coloneqq\frac{1}{\delta} \omega\left(\frac{z}{\delta}\right)$, 
$$
\Omega\left(t, t^{\prime},x, x^{\prime}\right)\coloneqq\omega_{\delta_0}\left(t-t^{\prime}\right) \omega_{\delta}\left(x-x^{\prime}\right), \qquad\left(t, t^{\prime},x, x^{\prime}\right) \in \mathbb{R}^4,
$$
and $\psi_c(\xi)\coloneqq\sgn{\xi-c}(f(\xi)-f(c))$ is Kru\v{z}kov's entropy flux function.
\end{lemma}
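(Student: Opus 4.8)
\emph{Approach: Kru\v{z}kov's doubling of variables.} The plan is to combine the entropy inequality satisfied by the exact solution $u$ with Kru\v{z}kov's doubling-of-variables device; the functional $\Lambda_{\delta,\delta_0}$ then appears as the only quantity through which the approximation $v$ enters. Write $q(a,b)\coloneqq\sgn{a-b}(f(a)-f(b))$ for the (symmetric) Kru\v{z}kov flux, so that $\psi_b(a)=\psi_a(b)=q(a,b)$. Since the entropy solution $u$ of \cref{eq:cl-f} lies in $\mathrm C([0,\infty);\mathrm L^1_{\mathrm{loc}})$ with $u(0,\cdot)=u_0$, a routine time cutoff turns the Kru\v{z}kov inequality into $\Lambda_T(u,\phi,c)\ge 0$ for every $\phi\in\mathrm C^1_{\mathrm c}(\mathbb R_+\times\mathbb R,\mathbb R_+)$ and every $c\in\mathbb R$. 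I would apply this with the frozen level $c=v(t,x)$ and test function $\phi(t',x')=\Omega(t,t',x,x')$, and integrate over $(t,x)\in(0,T)\times\mathbb R$, obtaining
\[
\mathcal I\coloneqq\int_0^T\!\!\int_{\mathbb R}\Lambda_T\big(u,\Omega(t,\cdot,x,\cdot),v(t,x)\big)\,\mathrm dx\,\mathrm dt\ \ge\ 0 .
\]

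\emph{Cancellation of the interior terms.} Next I would add $\mathcal I$ to $\Lambda_{\delta,\delta_0}(v,u)$ and expand each into an interior quadruple integral plus boundary contributions at $t,t'\in\{0,T\}$. The interior integrands cancel identically: the absolute-value terms carry the factor $(\partial_t+\partial_{t'})\Omega=0$, while the flux terms, which are equal by the symmetry $\psi_{v}(u)=\psi_{u}(v)=q(u,v)$, carry $(\partial_x+\partial_{x'})\Omega=0$. Denoting by $B^u_0,B^u_T$ (resp.\ $B^v_0,B^v_T$) the initial and terminal boundary terms arising from the $u$-functional (resp.\ $v$-functional), this gives
\[
0\ \le\ \mathcal I\ =\ (B^u_0+B^v_0)-(B^u_T+B^v_T)-\Lambda_{\delta,\delta_0}(v,u),
\]
whence $B^u_T+B^v_T\le B^u_0+B^v_0-\Lambda_{\delta,\delta_0}(v,u)$.

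\emph{Estimating the boundary terms.} It then remains to sandwich the boundary contributions: I would prove
\[
B^u_T+B^v_T\ \ge\ \|v(T-,\cdot)-u(T,\cdot)\|_{\mathrm L^1}-\delta\,\mathrm{TV}(u_0)-\tfrac12\nu(v,\delta_0)-\tfrac12\|f'\|_{\mathrm L^\infty}\delta_0\,\mathrm{TV}(u_0)
\]
and the symmetric upper bound $B^u_0+B^v_0\le\|v(0,\cdot)-u_0\|_{\mathrm L^1}+\delta\,\mathrm{TV}(u_0)+\tfrac12\nu(v,\delta_0)+\tfrac12\|f'\|_{\mathrm L^\infty}\delta_0\,\mathrm{TV}(u_0)$. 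The ingredients are: (i) $\int_0^T\omega_{\delta_0}(t-T)\,\mathrm dt=\int_0^T\omega_{\delta_0}(t)\,\mathrm dt=\tfrac12$, valid because $\delta_0<T$ and $\omega$ is even, which accounts for every factor $\tfrac12$; (ii) the mollification bound $\iint|g(x)-g(x')|\,\omega_\delta(x-x')\,\mathrm dx\,\mathrm dx'\le\delta\,\mathrm{TV}(g)$ applied to $g=u(T,\cdot)$ and $g=u_0$, together with $\mathrm{TV}(u(t,\cdot))\le\mathrm{TV}(u_0)$; (iii) the bounds $\|v(t,\cdot)-v(T-,\cdot)\|_{\mathrm L^1}\le\nu(v,\delta_0)$ for $|t-T|\le\delta_0$ and $\|v(t,\cdot)-v(0,\cdot)\|_{\mathrm L^1}\le\nu(v,\delta_0)$ for $|t|\le\delta_0$; and (iv) the $\mathrm L^1$-time-Lipschitz estimate $\|u(t,\cdot)-u(s,\cdot)\|_{\mathrm L^1}\le\|f'\|_{\mathrm L^\infty}|t-s|\,\mathrm{TV}(u_0)$ for the entropy solution, used to move $u$ from a time $t'$ near $0$ or $T$ back to $0$ or $T$. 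Substituting these into the inequality of the previous step and collecting the errors produces exactly $\mathrm{TV}(u_0)\big(2\delta+\delta_0\|f'\|_{\mathrm L^\infty}\big)+\nu(v,\delta_0)$, which is the asserted bound.

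\emph{Main obstacle.} The conceptual core --- the exact cancellation of the interior integrals --- is short once the entropy/flux symmetry and the antisymmetry of $\Omega$ under $(t,x)\leftrightarrow(t',x')$ are noticed. The genuinely delicate point is the bookkeeping in the last step: since the time mollifier covers only a half-neighbourhood of $t=T$ (and likewise of $t=0$), the $\mathrm L^1$-distance at time $T$ is recovered only by \emph{combining} $B^u_T$ and $B^v_T$, each carrying a factor $\tfrac12$, and one must keep careful track of how the spatial mollification error, the temporal modulus $\nu$, and the time-Lipschitz bound for $u$ are distributed across the four boundary terms so that the errors collapse to precisely the stated form. This step also silently uses the standard regularity theory of scalar entropy solutions --- continuity in time with values in $\mathrm L^1_{\mathrm{loc}}$, the $\mathrm{TV}$-diminishing property, and the Lipschitz-in-time $\mathrm L^1$-bound --- which must be invoked at the right places.
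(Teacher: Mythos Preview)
The paper does not prove this lemma: it is stated as a known tool and attributed to Kuznetsov's original article and to standard references (Holden--Risebro, etc.), with no proof given. Your proposal correctly reconstructs the classical doubling-of-variables argument that appears in those references---using the entropy inequality for $u$ with frozen level $c=v(t,x)$, the symmetry $\psi_u(v)=\psi_v(u)$ and the antisymmetry of $\Omega$ to cancel the interior quadruple integrals, and then bookkeeping the four boundary terms via the half-mass of the time mollifier, the spatial mollification bound, the modulus $\nu$, and the $\mathrm L^1$-time-Lipschitz estimate for $u$---so there is nothing to compare beyond noting that your outline matches the textbook proof the paper cites.
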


\begin{proof}[Proof of \cref{th:rate}]
We first assume $T=N\tau$ for $N\in\N_+$ and apply \cref{lm:kut} to $v=W_{\eps,h}$ and $u=\rho$, using the flux function $f(\rho)=\rho V(\rho)$ and Kru\v{z}kov's entropy flux function $\psi_c(\xi)=\sgn{\xi-c}(\xi V(\xi) - cV(c))$. We denote $t_n\coloneqq n\tau$ for $n\geq0$ and $x_{j-1/2}\coloneqq(j-\frac12)h$ for $j\in\mathbb{Z}$.

It is straightforward to verify that $W_{\eps,h}\in\mathcal{K}$. Then \cref{lm:kut} gives
\begin{align*}
\|W_{\eps,h}(T-,\cdot)-\rho(T,\cdot)\|_{\mathrm{L}^1} &\leq \left\|W_{\eps,h}(0,\cdot)-\rho_0\right\|_{\mathrm{L}^1}+\operatorname{TV}\left(\rho_0\right)\left(2 \delta+\delta_0\|V\|_{\mathrm{W}^{1,\infty}}\right) \\
&\qquad +\nu\left(W_{\eps,h}, \delta_0\right)-\Lambda_{\delta, \delta_0}(W_{\eps,h}, \rho),
\end{align*}
for any $0<\delta_0<T$ and $\delta>0$, where $\nu$ and $\Lambda_{\delta, \delta_0}$ are as defined in the statement of \cref{lm:kut}.

\uline{Step 1.} For the first term $\left\|W_{\eps,h}(0,\cdot)-\rho_0\right\|_{\mathrm{L}^1}$, we have
\begin{align*}
    \left\|W_{\eps,h}(0,\cdot)-\rho_0\right\|_{\mathrm{L}^1} \leq \left\|W_{\eps,h}(0,\cdot)-\rho_{\eps,h}(0,\cdot)\right\|_{\mathrm{L}^1} + \left\|\rho_{\eps,h}(0,\cdot)-\rho_0\right\|_{\mathrm{L}^1},
\end{align*}
where
\begin{align*}
    \left\|W_{\eps,h}(0,\cdot)-\rho_{\eps,h}(0,\cdot)\right\|_{\mathrm{L}^1} = h\sum_{j\in\mathbb{Z}} |W_j^0-\rho_j^0| \leq h \sum_{k=0}^\infty k\gamma_k^{\eps,h}\sum_{j\in\mathbb{Z}} |\rho_{j+1}^0-\rho_j^0| \leq c_\gamma\eps \, \mathrm{TV}(\rho_0),
\end{align*}
with $c_\gamma$ specified in \cref{ass:gamma-mom}, and
\begin{align*}
    \left\|\rho_{\eps,h}(0,\cdot)-\rho_0\right\|_{\mathrm{L}^1} = \sum_{j\in\mathbb{Z}} \int_{x_{j-1/2}}^{x_{j+1/2}} |\rho_0(x)-\rho_j^0|\dx \leq h \, \mathrm{TV}(\rho_0).
\end{align*}
Therefore, we have
\begin{align*}
    \left\|W_{\eps,h}(0,\cdot)-\rho_0\right\|_{\mathrm{L}^1} \leq (c_\gamma\eps+h)\, \mathrm{TV}(\rho_0).
\end{align*}

\uline{Step 2.} For the term $\nu(W_{\eps,h}, \delta_0)=\sup_{t\in[0,T]}\nu_t(W_{\eps,h}, \delta_0)$, we first estimate
\[ \nu_t(W_{\eps,h}, \delta_0)= \sup_{0\leq s\leq\delta_0} \|W_{\eps,h}(t+s,\cdot)-W_{\eps,h}(t,\cdot)\|_{\mathrm{L}^1}. \]
For any $t\in[0,T]$ and $s\in[0,\delta_0]$, suppose that $t\in[t_m,t_{m+1}[$ and $t+s\in[t_n,t_{n+1}[$. Owing to the temporal TV-estimate \cref{eq:TVD-t-general} in \cref{lm:TVD-t-general}, we have
\begin{align*}
    \|W_{\eps,h}(t+s,\cdot)-W_{\eps,h}(t,\cdot)\|_{\mathrm{L}^1} = \sum_{j\in\mathbb{Z}} |W_j^n-W_j^m| h \leq \lambda (n-m)h \norm{V}_{\mathrm{W}^{1,\infty}} \mathrm{TV}(\rho_0).
\end{align*}
Noting that $(n-m-1)\tau\le s$ and $\tau=\lambda h$, and taking the supremum over $s\in[0,\delta_0]$ and $t\in[0,T]$ in the above inequality, we obtain that
\begin{align*}
    \nu\left(W_{\eps,h}, \delta_0\right) \leq (\delta_0+\tau) \norm{V}_{\mathrm{W}^{1,\infty}} \mathrm{TV}(\rho_0).
\end{align*}

\uline{Step 3.} For the term $\Lambda_{\delta, \delta_0}(W_{\eps,h}, \rho)$, we first consider
\begin{align*}
    \Lambda_T(W_{\eps,h}, \phi, c)= & \int_0^T \int_{\mathbb{R}}\left(|W_{\eps,h}-c| \partial_t \phi+\psi_c(W_{\eps,h}) \partial_x \phi\right) \, \mathrm d x \, \mathrm d t \\
& \qquad -\int_{\mathbb{R}}|W_{\eps,h}(T,x)-c| \phi(T,x) \, \mathrm  d x+\int_{\mathbb{R}}|W_{\eps,h}(0,x)-c| \phi(0,x) \, \mathrm d x,
\end{align*}
where $\phi\in\mathrm{C}^1_{\mathrm{c}}(\mathbb{R}_+\times\mathbb{R},\mathbb{R}_+)$ and $c\in\R$.
We have
\begin{align*}
    &\int_0^T \int_{\mathbb{R}}\left(|W_{\eps,h}-c| \partial_t \phi+\psi_c(W_{\eps,h}) \partial_x \phi\right) \, \mathrm d x \, \mathrm d t \\
    &= \sum_{n=0}^{N-1}\sum_{j\in\mathbb{Z}} |W_j^n-c| \int_{x_{j-1/2}}^{x_{j+1/2}} \phi(t_{n+1},x)-\phi(t_n,x) \dx + \psi_c(W_j^n) \int_{t_n}^{t_{n+1}} \phi(t,x_{j+1/2})-\phi(t,x_{j-1/2})\dt,
\end{align*}
and
\begin{align*}
    \int_{\mathbb{R}}|W_{\eps,h}(T,x)-c| \phi(T,x) \, \mathrm  d x = \sum_{j\in\mathbb{Z}} |W_j^N-c| \int_{x_{j-1/2}}^{x_{j+1/2}} \phi(t_N,x) \dx, \\
    \int_{\mathbb{R}}|W_{\eps,h}(0,x)-c| \phi(0,x) \, \mathrm  d x = \sum_{j\in\mathbb{Z}} |W_j^0-c| \int_{x_{j-1/2}}^{x_{j+1/2}} \phi(t_0,x) \dx.
\end{align*}
Using summation by parts, we obtain
\begin{align*}
    &\sum_{n=0}^{N-1}\sum_{j\in\mathbb{Z}} |W_j^n-c| \int_{x_{j-1/2}}^{x_{j+1/2}} \phi(t_{n+1},x)-\phi(t_n,x) \dx  \\
    &= \sum_{n=0}^{N-1} \sum_{j\in\mathbb{Z}} \left( |W_j^n-c| - |W_j^{n+1}-c| \right) \int_{x_{j-1/2}}^{x_{j+1/2}} \phi(t_{n+1},x) \dx \\
    &\qquad + \sum_{j\in\mathbb{Z}} |W_j^0-c| \int_{x_{j-1/2}}^{x_{j+1/2}} \phi(t_0,x) \dx - \sum_{j\in\mathbb{Z}} |W_j^N-c| \int_{x_{j-1/2}}^{x_{j+1/2}} \phi(t_N,x) \dx.
\end{align*}
and
\begin{align*}
    &\sum_{n=0}^{N-1}\sum_{j\in\mathbb{Z}} \psi_c(W_j^n) \int_{t_n}^{t_{n+1}} \phi(t,x_{j+1/2})-\phi(t,x_{j-1/2})\dt \\
    &= \sum_{n=0}^{N-1}\sum_{j\in\mathbb{Z}} \left( \psi_c(W_j^n)-\psi_c(W_{j+1}^n) \right) \int_{t_n}^{t_{n+1}} \phi(t,x_{j+1/2}) \dt.
\end{align*}
Therefore, we deduce that
\begin{align*}
    \Lambda_T(W_{\eps,h}, \phi, c) &= \sum_{n=0}^{N-1} \sum_{j\in\mathbb{Z}} \left( |W_j^n-c| - |W_j^{n+1}-c| \right) \int_{x_{j-1/2}}^{x_{j+1/2}} \phi(t_{n+1},x) \dx \\
    &\qquad + \sum_{n=0}^{N-1}\sum_{j\in\mathbb{Z}} \left( \psi_c(W_j^n)-\psi_c(W_{j+1}^n) \right) \int_{t_n}^{t_{n+1}} \phi(t,x_{j+1/2}) \dt.
\end{align*}
Next, we use the discrete entropy inequality \crefrange{eq:discrete-entropy-2}{eq:entropy-flux-2} in \cref{lm:discrete-entropy-2} to obtain
\begin{align*}
    \Lambda_T(W_{\eps,h}, \phi, c) &\geq \sum_{n=0}^{N-1} \sum_{j\in\mathbb{Z}} \lambda (\Psi_{j+1/2}^n - \Psi_{j-1/2}^n) \int_{x_{j-1/2}}^{x_{j+1/2}} \phi(t_{n+1},x) \dx \\
    &\quad + \sum_{n=0}^{N-1}\sum_{j\in\mathbb{Z}} \left( \psi_c(W_j^n)-\psi_c(W_{j+1}^n) \right) \int_{t_n}^{t_{n+1}} \phi(t,x_{j+1/2}) \dt \\
    &= \sum_{n=0}^{N-1} \sum_{j\in\mathbb{Z}} \left( \Psi_{j-1/2}^n - \psi_c(W_j^n) \right) \int_{t_n}^{t_{n+1}} \phi(t,x_{j-1/2}) - \phi(t,x_{j+1/2}) \dt \\
    &\quad + \sum_{n=0}^{N-1}\sum_{j\in\mathbb{Z}} \left( \Psi_{j+1/2}^n - \Psi_{j-1/2}^n \right) \left( \lambda \int_{x_{j-1/2}}^{x_{j+1/2}} \phi(t_{n+1},x) \dx - \int_{t_n}^{t_{n+1}} \phi(t,x_{j+1/2}) \dt \right),
\end{align*}
where we have used summation by parts in the last step, and $\Psi_{j-1/2}^n$ is as defined in \cref{eq:entropy-flux-2}.

Now we take the particular test function $\Omega(t,t',x,x') = \omega_{\delta_0}(t-t')\omega_\delta(x-x')$
and estimate
\begin{align}\label{eq:estm_LamT}
    &\Lambda_T(W_{\eps,h}, \Omega(\cdot,t',\cdot,x'), c) \\
    &\geq -\sum_{n=0}^{N-1} \sum_{j\in\mathbb{Z}} \left| \Psi_{j-1/2}^n - \psi_c(W_j^n) \right| I_j^n(t',x') - \sum_{n=0}^{N-1}\sum_{j\in\mathbb{Z}} \left| \Psi_{j+1/2}^n - \Psi_{j-1/2}^n \right| J_j^n(t',x'), \notag
\end{align}
where
\begin{align*}
    I_j^n(t',x') &\coloneqq \left| \omega_\delta(x_{j-1/2}-x') - \omega_\delta(x_{j+1/2}-x') \right| \int_{t_n}^{t_{n+1}} \omega_{\delta_0}(t-t') \dt, \\
    J_j^n(t',x') &\coloneqq \left| \lambda \omega_{\delta_0}(t_{n+1}-t') \int_{x_{j-1/2}}^{x_{j+1/2}} \omega_\delta(x-x') \dx - \omega_\delta(x_{j+1/2}-x') \int_{t_n}^{t_{n+1}} \omega_{\delta_0}(t-t') \dt \right|.
\end{align*}
To proceed, we need to derive estimates on $\left|\Psi_{j-1/2}^n - \psi_c(W_j^n)\right|$ and $\left|\Psi_{j+1/2}^n - \Psi_{j-1/2}^n\right|$.

For $\left|\Psi_{j-1/2}^n - \psi_c(W_j^n)\right|$, we observe that $\psi_c(W_j^n) = |W_j^n-c|V(c) - W_j^n\left|V(W_j^n)-V(c)\right|$,
thus
\begin{align*}
    \left|\Psi_{j-1/2}^n - \psi_c(W_j^n)\right| &\leq |W_j^n-W_{j-1}^n| V(c) + |W_j^n-W_{j-1}^n|\left|V(W_j^n)-V(c)\right| \\
    &\qquad + \sum_{k=0}^\infty \gamma_k^{\eps,h} \rho_{j+k-1}^n \left|V(W_{j+k}^n)-V(W_j^n)\right| \\
    &\leq 2\norm{V}_{\mathrm{L}^{\infty}} |W_j^n-W_{j-1}^n| + \|V'\|_{\mathrm{L}^\infty} \sum_{k=0}^\infty \gamma_k^{\eps,h} |W_{j+k}^n-W_j^n| \\
    &\eqqcolon K_j^n.
\end{align*}
For $\left|\Psi_{j+1/2}^n - \Psi_{j-1/2}^n\right|$, we have
\begin{align*}
    \left|\Psi_{j+1/2}^n - \Psi_{j-1/2}^n\right| &\leq |W_j^n-W_{j-1}^n| V(c) + |W_j^n-W_{j-1}^n|\left|V(W_j^n)-V(c)\right| \\
    &\quad + \Bigg| \sum_{k=0}^\infty \gamma_k^{\eps,h} \rho_{j+k-1}^n \left(\left|V(W_{j+k}^n)-V(c)\right|-\left|V(W_j^n)-V(c)\right|\right)  \\
    &\qquad\qquad - \sum_{k=0}^\infty \gamma_k^{\eps,h} \rho_{j+k}^n \left(\left|V(W_{j+k+1}^n)-V(c)\right|-\left|V(W_j^n)-V(c)\right|\right) \Bigg| \\
    &\leq 2\norm{V}_{\mathrm{L}^{\infty}} |W_j^n-W_{j-1}^n| \\
    &\quad + \Bigg| \sum_{k=0}^\infty \left(\gamma_{k+1}^{\eps,h}-\gamma_k^{\eps,h}\right) \rho_{j+k}^n \left(\left|V(W_{j+k+1}^n)-V(c)\right|-\left|V(W_j^n)-V(c)\right|\right) \Bigg| \\
    &\leq 2\norm{V}_{\mathrm{L}^{\infty}} |W_j^n-W_{j-1}^n| + \sum_{k=0}^\infty \left(\gamma_k^{\eps,h}-\gamma_{k+1}^{\eps,h}\right) \rho_{j+k}^n \left|V(W_{j+k+1}^n)-V(W_j^n)\right| \\
    &\eqqcolon L_j^n.
\end{align*}
Noting that $K_j^n$ and $L_j^n$ are independent of the choice of $c$, we take $c=\rho(t',x')$ in $\Lambda_T$ and integrate $\Lambda_T(W_{\eps,h}, \Omega(\cdot,t',\cdot,x'),\rho(t',x'))$ with respect to $t',x'$. Then the estimate \cref{eq:estm_LamT} yields
\begin{align*}
    &\int_0^T\int_{\mathbb{R}} \Lambda_T(W_{\eps,h}, \Omega(\cdot,t',\cdot,x'),\rho(t',x')) \dx'\dt' \\
    &\geq -\sum_{n=0}^{N-1} \sum_{j\in\mathbb{Z}} K_j^n \int_0^T\int_{\mathbb{R}} I_j^n(t',x') \dx'\dt' - \sum_{n=0}^{N-1} \sum_{j\in\mathbb{Z}} L_j^n \int_0^T\int_{\mathbb{R}} J_j^n(t',x') \dx'\dt',
\end{align*}
where we have
\begin{align*}
    \int_0^T\int_{\mathbb{R}} I_j^n(t',x') \dx'\dt' &\leq \tau\int_{\mathbb{R}} \left| \omega_\delta(x_{j-1/2}-x') - \omega_\delta(x_{j+1/2}-x') \right| \dx' \\
    &\leq \mathrm{TV}(\omega_\delta)h\tau \leq \frac{2h\tau}{\delta},
\end{align*}
and
\begin{align*}
    \int_0^T\int_{\mathbb{R}} J_j^n(t',x') \dx'\dt' &\leq \lambda \int_{x_{j-1/2}}^{x_{j+1/2}} \int_{\mathbb{R}} \left| \omega_\delta(x-x') - \omega_\delta(x_{j+1/2}-x') \right| \dx' \dx \\
    &\quad + \int_{t_n}^{t_{n+1}} \int_0^T \left| \omega_{\delta_0}(t-t') - \omega_{\delta_0}(t_{n+1}-t') \right| \dt' \dt \\
    &\leq \lambda\, \mathrm{TV}(\omega_\delta) \int_{x_{j-1/2}}^{x_{j+1/2}} |x-x_{j+1/2}| \dx + \mathrm{TV}(\omega_{\delta_0}) \int_{t_n}^{t_{n+1}} |t-t_{n+1}| \dt \\
    &\leq \frac{h\tau}{\delta} + \frac{\tau^2}{\delta_0},
\end{align*}
implying that
\begin{align*}
    \int_0^T\int_{\mathbb{R}} \Lambda_T(W_{\eps,h}, \Omega(\cdot,t',\cdot,x'),\rho(t',x')) \dx'\dt' \geq -\frac{2h\tau}{\delta} \sum_{n=0}^{N-1} \sum_{j\in\mathbb{Z}} K_j^n - \left( \frac{h\tau}{\delta} + \frac{\tau^2}{\delta_0} \right) \sum_{n=0}^{N-1} \sum_{j\in\mathbb{Z}} L_j^n.
\end{align*}
We then estimate
\begin{align*}
    \sum_{j\in\mathbb{Z}} K_j^n &= 2\norm{V}_{\mathrm{L}^{\infty}} \sum_{j\in\mathbb{Z}} |W_j^n-W_{j-1}^n| + \|V'\|_{\mathrm{L}^\infty} \sum_{k=0}^\infty \gamma_k^{\eps,h} \sum_{j\in\mathbb{Z}} |W_{j+k}^n-W_j^n| \\
    &= \left( 2\norm{V}_{\mathrm{L}^{\infty}} + \|V'\|_{\mathrm{L}^\infty} \sum_{k=0}^\infty k\gamma_k^{\eps,h} \right) \sum_{j\in\mathbb{Z}} |W_{j+1}^n-W_j^n| \\
    &\leq \left( 2\norm{V}_{\mathrm{L}^{\infty}} + c_\gamma \|V'\|_{\mathrm{L}^\infty} \frac{\eps}h \right) \mathrm{TV}(\rho_0),
\end{align*}
with $c_\gamma$ specified in \cref{ass:gamma-mom}, and
\begin{align*}
    \sum_{j\in\mathbb{Z}} L_j^n &= 2\norm{V}_{\mathrm{L}^{\infty}} \sum_{j\in\mathbb{Z}} |W_j^n-W_{j-1}^n| + \sum_{k=0}^\infty \left(\gamma_k^{\eps,h}-\gamma_{k+1}^{\eps,h}\right) \sum_{j\in\mathbb{Z}} \rho_{j+k}^n \left|V(W_{j+k+1}^n) -V(W_j^n)\right|.
\end{align*}
Using \cref{eq:TV-p1} in \cref{lm:use_for_tv}, we derive
\begin{align*}
    &\sum_{k=0}^\infty (\gamma_k^{\eps,h}-\gamma_{k+1}^{\eps,h}) \sum_{j\in\mathbb{Z}} \rho_{j+k}^n \left| V(W_{j+k+1}^n) - V(W_j^n) \right| \\
    &\leq \sum_{j\in\mathbb{Z}} \left( \sum_{k=0}^\infty \left(\gamma_k^{\eps,h}-\gamma_{k+1}^{\eps,h}\right) \left(\sum_{l=0}^k \rho_{j+l}^n\right) \right) \left|V(W_{j+1}^n) -V(W_j^n)\right| \\
    &= \sum_{j\in\mathbb{Z}} W_j^n \left|V(W_{j+1}^n) -V(W_j^n)\right| \\
    &\leq \|V'\|_{\mathrm{L}^\infty} \sum_{j\in\mathbb{Z}} |W_{j+1}^n-W_j^n|,
\end{align*}
and thus
\begin{align*}
    \sum_{j\in\mathbb{Z}} L_j^n \leq \left( 2\norm{V}_{\mathrm{L}^{\infty}} + \|V'\|_{\mathrm{L}^\infty} \right) \sum_{j\in\mathbb{Z}} |W_{j+1}^n-W_j^n| \leq \left( 2\norm{V}_{\mathrm{L}^{\infty}} + \|V'\|_{\mathrm{L}^\infty} \right) \mathrm{TV}(\rho_0) .
\end{align*}
Finally, we deduce the following estimate on $\Lambda_{\delta,\delta_0}$:
\begin{align*}
    \Lambda_{\delta,\delta_0}(W_{\eps,h},\rho) &= \int_0^T\int_{\mathbb{R}} \Lambda_T(W_{\eps,h}, \Omega(\cdot,t',\cdot,x'),\rho(t',x')) \dx'\dt' \\
    &\geq -\frac{2h\tau}{\delta} N \left( 2\norm{V}_{\mathrm{L}^{\infty}} + c_\gamma \|V'\|_{\mathrm{L}^\infty} \frac{\eps}h \right) \mathrm{TV}(\rho_0) \\
    &\qquad - \left( \frac{h\tau}{\delta} + \frac{\tau^2}{\delta_0} \right) N \left( 2\norm{V}_{\mathrm{L}^{\infty}} + \|V'\|_{\mathrm{L}^\infty} \right) \mathrm{TV}(\rho_0) \\
    &= - T \left( \left(\frac{6h}{\delta}+\frac{2\tau}{\delta_0}\right) \norm{V}_{\mathrm{L}^{\infty}} + \left(\frac{2c_\gamma\eps}{\delta}+\frac{h}{\delta}+\frac{\tau}{\delta_0}\right)\|V'\|_{\mathrm{L}^\infty}\right) \mathrm{TV}(\rho_0).
\end{align*}

\uline{Step 4.} Putting together the above estimates, we have
\begin{align*}
&\|W_{\eps,h}(T-,\cdot)-\rho(T,\cdot)\|_{\mathrm{L}^1} \\
&\leq  (c_\gamma\eps+h) \mathrm{TV}(\rho_0) + \left(2 \delta+\delta_0\|V\|_{\mathrm{W}^{1,\infty}}\right) \mathrm{TV}(\rho_0) + (\delta_0+\tau) \norm{V}_{\mathrm{W}^{1,\infty}} \mathrm{TV}(\rho_0) \\
&\qquad + T \left( \left(\frac{6h}{\delta}+\frac{2\tau}{\delta_0}\right) \norm{V}_{\mathrm{L}^{\infty}} + \left(\frac{2c_\gamma\eps}{\delta}+\frac{h}{\delta}+\frac{\tau}{\delta_0}\right)\|V'\|_{\mathrm{L}^\infty}\right) \mathrm{TV}(\rho_0).
\end{align*}
By taking appropriate $\delta$ and $\delta_0$ to optimize the right-hand side, we conclude that
\begin{align}\label{eq:conv_rate}
    \|W_{\eps,h}(T-,\cdot)-\rho(T,\cdot)\|_{\mathrm{L}^1} \leq K\left( \eps+h+\sqrt{\eps\, T}+\sqrt{h\,T} \right)\, \mathrm{TV}(\rho_0),
\end{align}
for all $\eps,h>0$ and $T=N\tau>0$, where the constant $K>0$ only depends on $\lambda$, $\|V\|_{\mathrm{L}^\infty}$, $\|V'\|_{\mathrm{L}^\infty}$, and $c_\gamma$.

For an arbitrary time $T>0$, let $N\in\N_+$ be chosen such that $N\tau\le T<(N+1)\tau$. Then, by the construction of $W_{\eps,h}$ in \cref{eq:W_piece_recon}, we have $W_{\eps,h}(T,\cdot)=W_{\eps,h}(N\tau,\cdot)$. A triangle inequality yields
\begin{align*}
\|W_{\eps,h}(T,\cdot)-\rho(T,\cdot)\|_{\mathrm{L}^1} \leq& \|W_{\eps,h}(N\tau-,\cdot)-\rho(N\tau,\cdot)\|_{\mathrm{L}^1} \\
&\qquad + \|W_{\eps,h}(N\tau,\cdot)-W_{\eps,h}(N\tau-,\cdot)\|_{\mathrm{L}^1} + \|\rho(T,\cdot)-\rho(N\tau,\cdot)\|_{\mathrm{L}^1},
\end{align*}
where the first term on the right-hand side is bounded by $K\left( \eps+h+\sqrt{\eps\, T}+\sqrt{h\,T} \right)\, \mathrm{TV}(\rho_0)$ using \cref{eq:conv_rate}, the second term by $\tau\cdot\norm{V}_{\mathrm{W}^{1,\infty}} \mathrm{TV}(\rho_0)$ using the temporal TV-estimate \cref{eq:TVD-t-general} in \cref{lm:TVD-t-general}, and the last term also by $\tau\cdot\norm{V}_{\mathrm{W}^{1,\infty}} \mathrm{TV}(\rho_0)$ using a temporal TV-estimate of $\rho$ (see, e.\,g., \cite[Lemma 7.5]{MR4789921}), leading to the desired conclusion.

\end{proof}

\begin{remark}[Comparison with the literature]
\label{rem:entr_lit}
Let us offer some additional comments comparing our version of Kuznetsov's argument to the ones in  \cite{MR4700412,MR4553943}.

First, we stress that \cite{MR4700412} deals with 
\begin{align}\label{eq:cle_general}
\begin{cases}
\partial_t \rho_\eps(t,x)  + \partial_x\Big(V\big(W_{\eps}[\rho_\eps](t,x))\,f(\rho_\eps(t,x))\Big)   	= 0,	& (t,x)\in  \R_+ \times \R, \\
\rho_\eps(0,x) = \rho_0(x),	&  x\in\R,
\end{cases}
\end{align}
while both \cite{MR4553943} and this work focus on a special case of \cref{eq:cle_general} where $f(\xi) \coloneqq \xi$, giving \cref{eq:cle}.

There are subtle differences between \cref{eq:cle_general} and the special case \cref{eq:cle}: when $f$ is a nonlinear function, \cref{eq:cle_general} is a nonlinear conservation law even if $V\big(W_{\eps}[\rho_\eps])$ is replaced by a given velocity field. As a result, weak solutions of \cref{eq:cle_general} are non-unique in general; then \cite{MR4700412} specifies the following entropy condition to select a physically meaningful one:
\begin{align}\label{eq:entropy_d1}
\begin{aligned}
&\int_0^\infty\int_{\R} \eta(\rho_\eps)\partial_t\phi + \psi(\rho_\eps) V\big(W_\eps[\rho_\eps]\big) \partial_x\phi \,\dx\,\dt \\
& \qquad -\int_0^\infty\int_{\R}  \big( \eta'(\rho_\eps)f(\rho_\eps) - \psi(\rho_\eps) \big) \partial_x V\big(W_\eps[\rho_\eps]\big) \phi \,\dx\,\dt + \int_\R \eta(\rho_0(x))\phi(0,x)\,\mathrm dx \geq0, \end{aligned}
\end{align}
where $(\eta,\psi)$ is an entropy-entropy flux pair with $\psi'(\xi)=\eta'(\xi)f'(\xi)$, and the convergence rate estimate of numerical schemes is via Kuznetsov-type arguments based on \cref{eq:entropy_d1}.

In contrast, the weak solutions of \cref{eq:cle} are unique (as shown in \cite{zbMATH06756308}) and there is no need for entropy conditions and \cref{eq:entropy_d1} is automatically satisfied by the unique weak solution. In \cite{MR4553943} and this work, the concern is about showing that the local entropy condition holds for the local limit of nonlocal solutions of \cref{eq:cle}, i.\,e., 
\begin{align}\label{eq:entropy_d2}
    \int_0^\infty\int_{\R}  \eta(\rho^\ast)\partial_t\phi + \psi(\rho^\ast)\partial_x\phi \,\dx\,\dt + \int_{\R} \eta(\rho_0(x))\phi(0,x)\,\mathrm dx \geq0,
\end{align}
where $(\eta,\psi)$ is an entropy-entropy flux pair with $\psi'(\xi) = \eta'(\xi)\,(V(\xi)\,\xi)'$.

In the case where $f(\xi)\coloneqq \xi$, the entropy conditions \cref{eq:entropy_d1} and \cref{eq:entropy_d2} are incompatible: one cannot directly pass a limit from \cref{eq:entropy_d1} to \cref{eq:entropy_d2} because $\partial_x V\big(W_\eps[\rho_\eps]\big)$ becomes singular as $\eps\searrow0$. 
\end{remark}

\subsection{Completion of the proof}
\label{ssec:conclusion-of-proof}

Putting together the lemmas in the previous subsections, we are ready to complete the proof of \cref{th:main-2}.

\begin{proof}[Proof of \cref{th:main-2}]
As shown in \cref{lm:limit}, the approximate solution $W_{\varepsilon,h}$ is strongly pre-compact in $\mathrm{L}^1_{\mathrm{loc}}$, thus having a limit point as $\eps,h\searrow0$. The fact that the limit point is the (unique) entropy solution of \cref{eq:cl} is proven in \cref{lem:entropy_admissibility-2}. As a consequence of the uniqueness of entropy solutions, we deduce (owing to Uryshon's subsequence principle) that the whole family (not just up to subsequences) converges. Finally, the convergence rate estimate is proven in \cref{th:rate}.
\end{proof}

\section{The exponential kernel case}
\label{sec:exp}

For the exponential kernel $\gamma=\mathds{1}_{]-\infty,0]}(\cdot)\exp(\cdot)$, the exact quadrature weights from \cref{ex:exact} and the normalized Riemann quadrature weights from \cref{ex:Riemann} coincide, with the common expression given by
\begin{align}\label{eq:quad}
    \gamma_k^{\eps,h} = \int_{-(k+1)h}^{-kh} \frac1\eps e^{\frac{z}{\eps}} \dd z = e^{-\frac{kh}\eps} \left( 1 - e^{-\frac{h}\eps} \right),
\end{align}
yielding 
\begin{align*}
    \gamma_0^{\eps,h} = 1 - e^{-\frac{h}\eps}, \quad \gamma_{k+1}^{\eps,h} = e^{-\frac{h}\eps} \gamma_k^{\eps,h} \qquad \text{for}\ k\geq0.
\end{align*}
With these quadrature weights, it follows from \cref{eq:num_nonlocal_W} that
\begin{align} \label{eq:exp-form1a}
    \frac{W_{j+1}^n - W_j^n}{h} &= \frac{e^{\frac{h}\eps} - 1}{h} (W_j^n - \rho_j^n) \\
    &\Big(\approx \frac1\eps (W_j^n - \rho_j^n) \quad \text{ provided that $h\ll\eps$}\Big), \notag 
\end{align}
which is analogous to \cref{eq:WWxq}.

In general, we may assume that the quadrature weights $\{\gamma_k^{\eps,h}\}_{k\ge0}$ form a geometric sequence
\begin{align}\label{eq:geometric-gamma}
    \gamma_k^{\eps,h} = \gamma_0^{\eps,h}\big(1-\gamma_0^{\eps,h}\big)^k \qquad \text{for}\ k\geq0,
\end{align}
where $\gamma_0^{\eps,h}$ satisfies
\begin{align}\label{eq:geometric-gamma-assm}
    0 < \gamma_0^{\eps,h} <1 \quad \text{and} \quad \frac{1-\gamma_0^{\eps,h}}{\gamma_0^{\eps,h}} \leq c\,\frac{\eps}{h} \qquad \text{for all }  \eps,h>0,
\end{align}
and the constant $c>0$ is independent of $\eps,h$.
The quadrature weights specified by \crefrange{eq:geometric-gamma}{eq:geometric-gamma-assm} satisfy \crefrange{ass:pos-mono}{ass:gamma-mom}; the condition $\frac{1-\gamma_0^{\eps,h}}{\gamma_0^{\eps,h}} \leq c\,\frac{\eps}{h}$ in \cref{eq:geometric-gamma-assm} is exactly the condition $\sum_{k=0}^{\infty} k \gamma_k^{\eps,h} \leq c_\gamma \frac{\eps}h$ in \cref{ass:gamma-mom}. Specifically, the exact quadrature weights \cref{eq:quad} satisfy \crefrange{eq:geometric-gamma}{eq:geometric-gamma-assm} with the constant $c=1$.

We supplement our numerical scheme \crefrange{eq:godunov}{eq:ini_cond_discrete} with the quadrature weights specified by \crefrange{eq:geometric-gamma}{eq:geometric-gamma-assm}.
Then \cref{eq:num_nonlocal_W} implies the following identity that relates $\rho_j^n$ to $W_j^n$ and $W_{j+1}^n$:
\begin{align} \label{eq:exp-form3}
    W_{j+1}^n - W_j^n = \frac{\gamma_0^{\eps,h}}{1-\gamma_0^{\eps,h}} (W_j^n-\rho_j^n),
\end{align}
which gives
\begin{align}\label{eq:exp-form4}
    \rho_j^n = \frac1{\gamma_0^{\eps,h}}W_j^n - \frac{1-\gamma_0^{\eps,h}}{\gamma_0^{\eps,h}}W_{j+1}^n.
\end{align}
Using \crefrange{eq:godunov}{eq:num_nonlocal_W} and \cref{eq:exp-form4}, we obtain  
\begin{align*}
    W_j^{n+1} &= \sum_{k=0}^\infty \gamma_k^{\eps,h} \rho_{j+k}^{n+1} \\
    &= W_j^n + \lambda \sum_{k=0}^\infty \gamma_k^{\eps,h} \left( \rho_{j+k-1}^n V(W_{j+k}^n) -\rho_{j+k}^n V(W_{j+k+1}^n) \right) \\
    &= W_j^n + \lambda \sum_{k=0}^\infty \big(1-\gamma_0^{\eps,h}\big)^{k+1} \left( W_{j+k+1}^n V(W_{j+k+1}^n) - W_{j+k}^n V(W_{j+k}^n) \right) \\
    &\qquad\qquad - \lambda \sum_{k=0}^\infty \big(1-\gamma_0^{\eps,h}\big)^k \left( W_{j+k}^n V(W_{j+k+1}^n) - W_{j+k-1}^n V(W_{j+k}^n) \right).
\end{align*}
Then, summation by parts yields
\begin{align*}
    &\sum_{k=0}^\infty \!\big(1-\gamma_0^{\eps,h}\big)^{k+1} \!\left( W_{j+k+1}^n V(W_{j+k+1}^n) - W_{j+k}^n V(W_{j+k}^n) \right) = \sum_{k=0}^\infty \gamma_k^{\eps,h} W_{j+k}^n V(W_{j+k}^n) - W_j^n V(W_j^n), \\
    &\sum_{k=0}^\infty \!\big(1-\gamma_0^{\eps,h}\big)^k \!\left( W_{j+k}^n V(W_{j+k+1}^n) - W_{j+k-1}^n V(W_{j+k}^n) \right) = \sum_{k=0}^\infty \gamma_k^{\eps,h} W_{j+k}^n V(W_{j+k+1}^n) - W_{j-1}^nV(W_j^n),
\end{align*}
which lead to
\begin{align}\label{eq:W-1}
 W_j^{n+1} &= W_j^n + \lambda \left( W_{j-1}^n - W_j^n \right) V(W_j^n)  - \lambda \sum_{k=0}^\infty \gamma_k^{\eps,h}  W_{j+k}^n \left( V(W_{j+k+1}^n) - V(W_{j+k}^n) \right).
\end{align}
We observe that the particular structure of the exponential kernel and its suitable discretization, as specified by \crefrange{eq:geometric-gamma}{eq:geometric-gamma-assm}, enable the derivation of the scheme \cref{eq:W-1} for $\{W_j^n\}_{j\in\mathbb{Z}}^{n\geq0}$. This scheme serves as a numerical discretization of \cref{eq:Wevo} for $W_\varepsilon$ and is conservative.

Owing to \cref{eq:W-1}, we can prove the TVD property more simply compared to \cref{lm:TVD-x-2}. 

\begin{lemma}[TVD in space]\label{lm:TVD-x}
Let us assume that \crefrange{ass:ic}{ass:V} hold, the quadrature weights satisfy \crefrange{eq:geometric-gamma}{eq:geometric-gamma-assm}, and the CFL condition \cref{ass:CFL-max} holds. 
Let $\{\rho_j^n\}_{j\in\mathbb{Z}}^{n\geq0}$ and $\{W_j^n\}_{j\in\mathbb{Z}}^{n\geq0}$ be the numerical solutions constructed with the numerical scheme \crefrange{eq:godunov}{eq:ini_cond_discrete}. Then the spatial $\mathrm{TV}$-estimate \cref{eq:TVD-x-2} holds.
\end{lemma}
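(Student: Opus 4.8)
The plan is to exploit the fact that, under \crefrange{eq:geometric-gamma}{eq:geometric-gamma-assm}, the update rule \cref{eq:W-1} is conservative and written \emph{purely in $W$}, so that---unlike in \cref{lm:TVD-x-2}---no analogue of \cref{lm:use_for_tv} is needed to handle a $\rho$-dependent nonlocal term. I would recast the scheme in incremental (Harten-type) form. Writing $\delta_i^n \coloneqq W_{i+1}^n - W_i^n$ and introducing the difference quotient $a_i^n \coloneqq \big(V(W_{i+1}^n)-V(W_i^n)\big)/\delta_i^n$ (set to $0$ when $\delta_i^n = 0$), which satisfies $a_i^n \le 0$ and $\abs{a_i^n} \le \norm{V'}_{\mathrm{L}^\infty}$, I subtract \cref{eq:W-1} at indices $j+1$ and $j$, shift the summation index in the ``forward'' copy of the nonlocal sum, and telescope using $\gamma_k^{\eps,h}\to 0$ as $k\to\infty$. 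This produces a representation
\begin{align*}
    W_{j+1}^{n+1} - W_j^{n+1} &= \lambda V(W_j^n)\,\delta_{j-1}^n + \Big(1 - \lambda V(W_{j+1}^n) + \lambda \gamma_0^{\eps,h} W_j^n a_j^n\Big)\delta_j^n \\
    &\qquad - \lambda \sum_{l=1}^\infty \big(\gamma_{l-1}^{\eps,h} - \gamma_l^{\eps,h}\big)\, W_{j+l}^n a_{j+l}^n\, \delta_{j+l}^n .
\end{align*}

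The second step is to check that every coefficient above is nonnegative. The tail coefficients $-\lambda\big(\gamma_{l-1}^{\eps,h}-\gamma_l^{\eps,h}\big)W_{j+l}^n a_{j+l}^n$ are $\ge 0$ because the geometric weights are nonincreasing, $0 \le W_{j+l}^n \le 1$ by the maximum principle (\cref{lem:maxm_prcp} and \cref{rk:W}), and $a_{j+l}^n \le 0$; the coefficient $\lambda V(W_j^n)$ of $\delta_{j-1}^n$ is $\ge 0$ since $V \ge 0$; and the diagonal coefficient obeys $1 - \lambda V(W_{j+1}^n) + \lambda \gamma_0^{\eps,h} W_j^n a_j^n \ge 1 - \lambda\norm{V}_{\mathrm{L}^\infty} - \lambda \norm{V'}_{\mathrm{L}^\infty} \ge 0$, using $0 < \gamma_0^{\eps,h} < 1$, $0 \le W_j^n \le 1$, $\abs{a_j^n} \le \norm{V'}_{\mathrm{L}^\infty}$ and precisely the CFL condition \cref{ass:CFL-max}.

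The third step is to sum over $j\in\mathbb{Z}$, apply the triangle inequality (legitimate since all coefficients are nonnegative), and interchange the order of summation: then $\sum_j \abs{W_{j+1}^{n+1}-W_j^{n+1}}$ is bounded by $\sum_m \abs{\delta_m^n}$ weighted by the total mass that reaches $\delta_m^n$, namely $\lambda V(W_{m+1}^n) + \big(1 - \lambda V(W_{m+1}^n) + \lambda \gamma_0^{\eps,h} W_m^n a_m^n\big) - \lambda W_m^n a_m^n \sum_{l\geq 1}\big(\gamma_{l-1}^{\eps,h}-\gamma_l^{\eps,h}\big)$, which collapses to exactly $1$ because the tail telescopes, $\sum_{l\geq 1}\big(\gamma_{l-1}^{\eps,h}-\gamma_l^{\eps,h}\big)=\gamma_0^{\eps,h}$. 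Hence $\sum_j\abs{W_{j+1}^{n+1}-W_j^{n+1}} \le \sum_j\abs{W_{j+1}^n-W_j^n}$. Iterating in $n$ and then using that $W_j^0 = \sum_{k\geq 0}\gamma_k^{\eps,h}\rho_{j+k}^0$ is a convex combination of cell averages---neither convolution against a probability sequence nor the cell-averaging in \cref{eq:ini_cond_discrete} increases the discrete total variation---gives $\sum_j\abs{W_{j+1}^0-W_j^0}\le\sum_j\abs{\rho_{j+1}^0-\rho_j^0}\le\mathrm{TV}(\rho_0)$, which is precisely \cref{eq:TVD-x-2}.

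There is no serious obstacle here, which is exactly the point: because \cref{eq:W-1} already expresses the scheme through $W$ alone, the argument reduces to a one-line incremental-form decomposition together with sign checks, with no need for \cref{lm:use_for_tv}. The only delicate parts are the index bookkeeping in the shift-and-telescope step that yields the incremental form, and verifying that the column sums collapse to $1$ via the identity $\sum_{l\geq 1}(\gamma_{l-1}^{\eps,h}-\gamma_l^{\eps,h})=\gamma_0^{\eps,h}$; nonnegativity of the diagonal coefficient is where the CFL condition \cref{ass:CFL-max} and the uniform $\mathrm{L}^\infty$-bound on $W$ enter.
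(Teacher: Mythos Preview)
Your proposal is correct and follows essentially the same route as the paper: both proofs rewrite \cref{eq:W-1} in incremental (Harten-type) form using the difference quotient $a_j^n=\alpha_j^n$, verify nonnegativity of all coefficients via the maximum principle and the CFL condition \cref{ass:CFL-max}, and then observe that the column sums collapse to $1$ by the telescoping identity $\sum_{l\ge 1}(\gamma_{l-1}^{\eps,h}-\gamma_l^{\eps,h})=\gamma_0^{\eps,h}$. The only cosmetic difference is that you make the telescoping and the role of the $\mathrm{L}^\infty$-bound on $W$ slightly more explicit.
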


\begin{proof}

Let us introduce the notation $\alpha_j^n \coloneqq \frac{V(W_{j+1}^n) - V(W_j^n)}{W_{j+1}^n - W_j^n}$ for $j\in\mathbb{Z},\,n\geq0$, and rewrite \cref{eq:W-1} as 
\begin{align*}
    W_j^{n+1} &= W_j^n + \lambda \left( W_{j-1}^n  - W_j^n \right) V(W_j^n) - \lambda \sum_{k=0}^\infty \gamma_k^{\eps,h} \alpha_{j+k}^n W_{j+k}^n \left( W_{j+k+1}^n - W_{j+k}^n \right).
\end{align*}
Then a straightforward calculation gives
\begin{align*}
    W^{n+1}_{j+1} - W^{n+1}_j =& \lambda V(W_j^n) (W_j^n-W_{j-1}^n) + \left( 1 - \lambda V(W_{j+1}^n) + \lambda \gamma_0\alpha_j^nW_j^n \right) (W_{j+1}^n-W_j^n) \\
    &\qquad + \lambda \sum_{k=1}^\infty \big(\gamma_k^{\eps,h} - \gamma_{k-1}^{\eps,h}\big) \alpha_{j+k}^n W_{j+k}^n (W_{j+k+1}^n - W_{j+k}^n),
\end{align*}
which yields
\begin{align*}
    \sum_{j \in \mathbb Z} |W^{n+1}_{j+1} - W^{n+1}_j| &\leq \sum_{j \in \mathbb Z} \Big( \lambda V(W_{j+1}^n) + 1 - \lambda V(W_{j+1}^n) +  \lambda \gamma_0\alpha_j^nW_j^n  \\ & \qquad\qquad + \lambda \sum_{k=1}^\infty \big(\gamma_k^{\eps,h}  - \gamma_{k-1}^{\eps,h}\big) \alpha_j^nW_j^n \Big) |W^n_{j+1} - W^n_j| \\
    &= \sum_{j \in \mathbb Z} |W^n_{j+1} - W^n_j|,
\end{align*}
where we have used the CFL condition \cref{ass:CFL-max} to ensure $1 - \lambda V(W_{j+1}^n) + \lambda \gamma_0\alpha_j^nW_j^n \geq0$.
Finally, by induction, we have 
\begin{align*}
    \sum_{j \in \mathbb Z} |W^n_{j+1} - W^n_j| \leq  \sum_{j \in \mathbb Z}  |W_{j+1}^0-W_j^0| \leq \sum_{j \in \mathbb Z} |\rho^0_{j+1} - \rho^0_j| \leq \mathrm{TV}(\rho_0).
\end{align*}
\end{proof}

Owing to \cref{eq:exp-form3}, we can estimate the $\mathrm{L}^1$-distance between $\rho_j^n$ and $W_j^n$.

\begin{lemma}[$\mathrm{L}^1$-deviation estimate between $\rho_j^n$ and $W_j^n$]\label{lm:limit-same}
Let us suppose that the quadrature weights satisfy \crefrange{eq:geometric-gamma}{eq:geometric-gamma-assm}. Let $\{\rho_j^n\}_{j\in\mathbb{Z}}^{n\geq0}$ and $\{W_j^n\}_{j\in\mathbb{Z}}^{n\geq0}$ be the numerical solutions constructed with the numerical scheme \crefrange{eq:godunov}{eq:ini_cond_discrete}. Then the following $\mathrm{L}^1$-estimate holds:
    \begin{align}\label{eq:L1_W_rho_num}
        \sum_{j \in \mathbb Z} |W_j^n-\rho_j^n| h \leq c\,\eps  \sum_{j \in \mathbb Z} |W^n_{j+1} - W^n_j|,
    \end{align}
    where the constant $c>0$ is as in \cref{eq:geometric-gamma-assm}, independent of $\eps,h$.
\end{lemma}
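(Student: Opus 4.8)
The plan is immediate once we exploit the identity \cref{eq:exp-form3}, which holds whenever the quadrature weights are of the geometric type \cref{eq:geometric-gamma} (as it was derived directly from \cref{eq:num_nonlocal_W} and the recursion $\gamma_{k+1}^{\eps,h} = (1-\gamma_0^{\eps,h})\gamma_k^{\eps,h}$). Rearranging \cref{eq:exp-form3} gives the pointwise relation $W_j^n - \rho_j^n = \frac{1-\gamma_0^{\eps,h}}{\gamma_0^{\eps,h}}\,(W_{j+1}^n - W_j^n)$ for every $j\in\mathbb{Z}$ and $n\ge0$. Taking absolute values, summing over $j\in\mathbb{Z}$, and multiplying through by $h$ yields the exact identity $h\sum_{j\in\mathbb{Z}}|W_j^n-\rho_j^n| = \frac{1-\gamma_0^{\eps,h}}{\gamma_0^{\eps,h}}\,h\sum_{j\in\mathbb{Z}}|W_{j+1}^n-W_j^n|$.

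It then remains only to control the scalar prefactor $\frac{1-\gamma_0^{\eps,h}}{\gamma_0^{\eps,h}}$, and this is precisely the content of the assumption \cref{eq:geometric-gamma-assm}, namely $\frac{1-\gamma_0^{\eps,h}}{\gamma_0^{\eps,h}} \le c\,\frac{\eps}{h}$ with $c>0$ independent of $\eps$ and $h$. Substituting this bound gives $h\sum_{j\in\mathbb{Z}}|W_j^n-\rho_j^n| \le c\,\frac{\eps}{h}\,h\sum_{j\in\mathbb{Z}}|W_{j+1}^n-W_j^n| = c\,\eps\sum_{j\in\mathbb{Z}}|W_{j+1}^n-W_j^n|$, which is exactly \cref{eq:L1_W_rho_num}.

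There is no genuine obstacle here: the estimate is a purely algebraic consequence of the fact that, for exponential/geometric weights, $\rho_j^n$ is an affine combination of two consecutive values of $W^n$ (equivalently, $W_j^n - \rho_j^n$ is proportional to the discrete slope of $W^n$), combined with the moment-type control \cref{eq:geometric-gamma-assm}, which for geometric weights is equivalent to \cref{ass:gamma-mom} as noted in the text. If one prefers not to invoke \cref{eq:exp-form3} as a black box, the same relation is re-derived in one line from \cref{eq:num_nonlocal_W} by a summation by parts in $k$ using $\gamma_{k+1}^{\eps,h} = (1-\gamma_0^{\eps,h})\gamma_k^{\eps,h}$; the rest of the argument is unchanged.
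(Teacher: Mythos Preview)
Your proof is correct and follows precisely the same approach as the paper, which simply states that \cref{eq:L1_W_rho_num} follows directly from \cref{eq:exp-form3} and \cref{eq:geometric-gamma-assm}. You have merely spelled out the one-line computation in full detail.
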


\begin{proof}
    The inequality \cref{eq:L1_W_rho_num} follows directly from \cref{eq:geometric-gamma-assm} and \cref{eq:exp-form3}.
\end{proof}

As a consequence of \cref{lm:limit-same}, we can state the counterpart of \cref{th:main-2} for the exponential kernel, which also addresses the question of the convergence of $\rho_{\eps,h}$ as $\eps,h\searrow 0$.

\begin{theorem}[Convergence (exponential kernel)]\label{th:main-1}
 Let us assume that \crefrange{ass:ic}{ass:V} hold, the quadrature weights satisfy \crefrange{eq:geometric-gamma}{eq:geometric-gamma-assm}, and the CFL condition \cref{ass:CFL-max} holds with the CFL ratio $\lambda\coloneqq \tau/h$ fixed.
 Let us consider the numerical solutions $\{\rho_j^n\}_{j\in\mathbb{Z}}^{n\geq0}$ and $\{W_j^n\}_{j\in\mathbb{Z}}^{n\geq0}$ constructed with the numerical scheme \crefrange{eq:godunov}{eq:ini_cond_discrete}, and let $\rho_{\eps,h}$ and $W_{\eps,h}$ be the piecewise constant reconstructions of $\{\rho_j^n\}_{j\in\mathbb{Z}}^{n\geq0}$ and $\{W_j^n\}_{j\in\mathbb{Z}}^{n\geq0}$, respectively, i.\,e., 
\begin{align*}
     \rho_{\eps,h} &\coloneqq  \sum_{n=0}^\infty \sum_{j\in\mathbb{Z}} \rho_j^n \cdot \mathds{1}_{[n\tau,(n+1)\tau[\times[(j-\frac12)h,(j+\frac12)h[}, \\
    W_{\eps,h} &\coloneqq \sum_{n=0}^\infty \sum_{j\in\mathbb{Z}} W_j^n \cdot \mathds{1}_{[n\tau,(n+1)\tau[\times[(j-\frac12)h,(j+\frac12)h[}.
\end{align*}
Then, as $\eps,h\searrow 0$, both approximate solutions $W_{\varepsilon, h}$ and $\rho_{\varepsilon, h}$ converge strongly in $\mathrm{L}^1_{\mathrm{loc}}$ to the unique entropy solution $\rho$ of \cref{eq:cl}.
Moreover, the following error estimates hold: 
\begin{align}
\|W_{\eps,h}(t,\cdot)-\rho(t,\cdot)\|_{\mathrm{L}^1} &\leq K\left( \eps+h+\sqrt{\eps\, t}+\sqrt{h\,t} \right) \, \mathrm{TV}(\rho_0), \quad \textnormal{for every } \eps,h > 0, \, t > 0, \label{eq:conv-W-exp} \\
\|\rho_{\eps,h}(t,\cdot)-\rho(t,\cdot)\|_{\mathrm{L}^1} &\leq K\left( \eps+h+\sqrt{\eps\, t}+\sqrt{h\,t} \right)\, \mathrm{TV}(\rho_0), \quad \textnormal{for every } \eps,h > 0, \, t > 0, \label{eq:conv-rho}
\end{align}
where the constant $K>0$ only depends on $\lambda$, $\|V\|_{\mathrm{L}^\infty}$, $\|V'\|_{\mathrm{L}^\infty}$, and $c_\gamma$ (as in \cref{eq:geometric-gamma-assm}).
\end{theorem}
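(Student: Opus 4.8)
The plan is to deduce \cref{th:main-1} from the machinery already developed for \cref{th:main-2}, the only genuine subtlety being that \cref{th:main-1} is stated under the weaker CFL condition \cref{ass:CFL-max} rather than \cref{ass:CFL}. First I would record that the geometric quadrature weights specified by \crefrange{eq:geometric-gamma}{eq:geometric-gamma-assm} satisfy \crefrange{ass:pos-mono}{ass:gamma-mom} (with $c_\gamma$ taken to be the constant $c$ of \cref{eq:geometric-gamma-assm}, since the bound $\sum_{k\ge0} k\gamma_k^{\eps,h}\le c\,\eps/h$ there is precisely \cref{ass:gamma-mom}). Thus, were one willing to impose the stronger CFL \cref{ass:CFL}, the estimate \cref{eq:conv-W-exp} and the $\mathrm{L}^1_{\mathrm{loc}}$-convergence of $W_{\eps,h}$ would already be contained in \cref{th:main-2}; the point of \cref{th:main-1} is that the exponential structure lets one run the same argument under \cref{ass:CFL-max}.

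For the $W_{\eps,h}$ part under \cref{ass:CFL-max}, I would revisit the chain of lemmas behind \cref{th:main-2} and check that each one survives with \cref{ass:CFL-max} in place of \cref{ass:CFL} once the spatial TVD bound is supplied by \cref{lm:TVD-x} instead of \cref{lm:TVD-x-2}. Concretely: the maximum principle \cref{lem:maxm_prcp} and the $\mathrm{L}^\infty$- and $\mathrm{L}^1$-bounds in \cref{rk:W} already require only \cref{ass:CFL-max}; the spatial estimate \cref{eq:TVD-x-2} holds under \cref{ass:CFL-max} by \cref{lm:TVD-x} (using the geometric form of the weights); the proof of the temporal estimate \cref{lm:TVD-t-general} uses only \cref{eq:W-2}, summation by parts, the maximum principle and the spatial bound \cref{eq:TVD-x-2}, hence goes through unchanged; the compactness argument \cref{lm:limit} then applies verbatim; the discrete entropy inequality \cref{lm:discrete-entropy-2} is established under the conditions of \cref{lem:maxm_prcp}, i.e.\ under \cref{ass:CFL-max}; and consequently the entropy-admissibility argument \cref{lem:entropy_admissibility-2} and the Kuznetsov-type rate \cref{th:rate} — whose proofs invoke only \cref{lem:maxm_prcp}, \cref{lm:use_for_tv}, \cref{lm:discrete-entropy-2}, \cref{lm:TVD-t-general}, the spatial bound \cref{eq:TVD-x-2} and \cref{lm:kut} — carry over without modification. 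This gives the strong $\mathrm{L}^1_{\mathrm{loc}}$-convergence of $W_{\eps,h}$ to the unique entropy solution $\rho$ of \cref{eq:cl} together with the rate \cref{eq:conv-W-exp}.

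Finally, the statement for $\rho_{\eps,h}$ follows by comparison with $W_{\eps,h}$. By \cref{lm:limit-same} together with the spatial TVD bound \cref{eq:TVD-x-2}, for every $t>0$ with $t\in[n\tau,(n+1)\tau[$ one has
\begin{align*}
    \|\rho_{\eps,h}(t,\cdot)-W_{\eps,h}(t,\cdot)\|_{\mathrm{L}^1} = h\sum_{j\in\mathbb{Z}}|\rho_j^n-W_j^n| \le c\,\eps \sum_{j\in\mathbb{Z}}|W_{j+1}^n-W_j^n| \le c\,\eps\,\mathrm{TV}(\rho_0).
\end{align*}
Combining this with \cref{eq:conv-W-exp} through the triangle inequality yields \cref{eq:conv-rho} (with $K$ enlarged by $c$); and since $\|\rho_{\eps,h}-W_{\eps,h}\|_{\mathrm{L}^1([0,T]\times\mathbb{R})}\le c\,\eps\,T\,\mathrm{TV}(\rho_0)\to0$ as $\eps\searrow0$ while $W_{\eps,h}\to\rho$ in $\mathrm{L}^1_{\mathrm{loc}}$, it also yields the strong $\mathrm{L}^1_{\mathrm{loc}}$-convergence of $\rho_{\eps,h}$ to $\rho$.

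I expect the only real work to be the bookkeeping in the second paragraph: verifying that the extra factor $2\|V'\|_{\mathrm{L}^\infty}$ in \cref{ass:CFL} (compared to \cref{ass:CFL-max}) is needed \emph{solely} in the general-kernel TVD argument of \cref{lm:TVD-x-2} — where it secures the sign of the coefficient $1+\lambda(2\gamma_0^{\eps,h}-\gamma_1^{\eps,h})\rho_j^n\alpha_j^n+\lambda(V(W_{j+1}^n)-V(W_j^n))$ — and that \cref{lm:TVD-x} removes this need for the exponential kernel. Once that is in place, every downstream estimate depends on \cref{ass:CFL} only through that TVD bound, and the reductions above become routine.
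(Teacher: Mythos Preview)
Your proposal is correct and follows essentially the same approach as the paper: reduce to the machinery behind \cref{th:main-2}, then pass from $W_{\eps,h}$ to $\rho_{\eps,h}$ via the deviation estimate \cref{lm:limit-same} and the spatial TVD bound. If anything, you are more careful than the paper itself about the CFL discrepancy: the paper's proof simply asserts that ``the assumptions used in \cref{th:main-2} are satisfied'' and defers the justification that \cref{ass:CFL-max} suffices (via \cref{lm:TVD-x} replacing \cref{lm:TVD-x-2}) to \cref{rk:CFL}, whereas you trace explicitly through the chain \cref{lem:maxm_prcp}--\cref{lm:TVD-t-general}--\cref{lm:limit}--\cref{lm:discrete-entropy-2}--\cref{lem:entropy_admissibility-2}--\cref{th:rate} to verify that the stronger CFL is invoked only in the general-kernel TVD argument.
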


\begin{proof}
The assumptions used in \cref{th:main-2} are satisfied. In particular, the conditions \crefrange{eq:geometric-gamma}{eq:geometric-gamma-assm} on quadrature weights imply \crefrange{ass:pos-mono}{ass:gamma-mom}.

We observe that the only additional claims in \cref{th:main-1} compared to \cref{th:main-2} concern the convergence of $\rho_{\varepsilon,h}$ and the convergence rate estimate in \cref{eq:conv-rho}. These follow directly from the conclusions of \cref{th:main-2} (namely, from \cref{eq:conv-W-exp}), thanks to \cref{lm:limit-same}, which holds due to the relation \cref{eq:exp-form3}. Indeed, for every $\varepsilon,h > 0$ and $t > 0$, we compute
\begin{align*}
\|\rho_{\varepsilon,h}(t,\cdot) - \rho(t,\cdot) \|_{\mathrm{L}^1}
&\le \|W_{\varepsilon,h}(t,\cdot) - \rho(t,\cdot) \|_{\mathrm{L}^1}
+ \| W_{\varepsilon,h}(t,\cdot) - \rho_{\varepsilon,h}(t,\cdot) \|_{\mathrm{L}^1} \\
&\le K\left( \varepsilon + h + \sqrt{\varepsilon\, t} + \sqrt{h\, t} \right) \mathrm{TV}(\rho_0)
+ c\varepsilon\, \mathrm{TV}(\rho_0),
\end{align*}
where we used the triangle inequality, \cref{eq:conv-W-exp}, \cref{lm:limit-same}, and the spatial TV-estimate in \cref{lm:TVD-x}.
\end{proof}

\begin{remark}[CFL conditions]\label{rk:CFL}
    The maximum principle in \cref{lem:maxm_prcp} is a prerequisite for all results in this work, and so is the CFL condition \cref{ass:CFL-max}. The spatial TV-estimate in \cref{lm:TVD-x-2} needs a stronger CFL condition stated in \cref{ass:CFL}, which is thus used in the statement of \cref{th:main-2}. 
    On the other hand, in the specific case of the exponential kernel, the CFL condition \cref{ass:CFL-max} is sufficient for the same spatial TV-estimate in \cref{lm:TVD-x}, and thus \cref{th:main-1} holds.
\end{remark}

\begin{remark}[Alternative quadrature weights] 
Alternatively, we can discretize \cref{eq:WWxq} as
\begin{align*}
    \frac{W_{j+1}^n - W_j^n}{h} = \frac1\eps (W_j^n - \rho_j^n).
\end{align*}
This discretization is equivalent to using the quadrature weights
\begin{align*}
    \gamma_k^{\eps,h} \coloneqq \frac{h}{\eps} \left( 1-\frac{h}{\eps} \right)^k,
\end{align*}
which satisfy \crefrange{eq:geometric-gamma}{eq:geometric-gamma-assm} when $h<\eps$.
The conclusion of \cref{th:main-1} then holds, provided that the spatial mesh size $h$ does not exceed the nonlocal horizon $\varepsilon$. Hence, we achieve conditional asymptotic compatibility.
\end{remark}

\subsection{Consistency with the entropy admissibility condition for \texorpdfstring{$\rho_{\eps,h}$}{rho\_eps,h}}
\label{ssec:entropy-consistency}

As demonstrated in \cref{th:main-1}, both $\rho_{\eps,h}$ and $W_{\eps,h}$ converge to the unique entropy solution of \cref{eq:cl}, with the entropy admissibility for the limit of $W_{\eps,h}$ proved using the discrete entropy inequality \crefrange{eq:discrete-entropy-2}{eq:entropy-flux-2} in \cref{lm:discrete-entropy-2}. Yet, it is useful to give a direct proof of the entropy admissibility for the limit of $\rho_{\eps,h}$, which aligns with a continuous entropy inequality for $\rho_\eps$, adapted from \cite{MR4110434,MR4283539,MR4651679} and tailored to the exponential kernel:
\begin{align}\label{eq:entropy-exp}
\begin{aligned}
    \partial_t \eta(\rho_\varepsilon) + \partial_x \left( \psi(\rho_\varepsilon) + \eta(\rho_\varepsilon) (V(W_\varepsilon) - V(\rho_\varepsilon)) + Q(W_\varepsilon) - Q(\rho_\varepsilon) \right) \le 0,
   \end{aligned}
\end{align}
where $(\eta,\psi)$ is an entropy-entropy flux pair, and $Q$ satisfies $Q'(\xi)\coloneqq P(\xi)V'(\xi)$ with $P(\xi) \coloneqq \xi\,\eta'(\xi) - \eta(\xi)$.

We outline below the key (formal) computation to obtain \cref{eq:entropy-exp}:
\begin{align*}
&\partial_t \eta(\rho_\varepsilon) + \partial_x \psi(\rho_\varepsilon)\\
&= \eta'(\rho_\varepsilon) \partial_x \big( \rho_\varepsilon (V(\rho_\varepsilon) - V(W_\varepsilon)) \big) \\
&= \eta'(\rho_\varepsilon) \partial_x \rho_\varepsilon (V(\rho_\varepsilon) - V(W_\varepsilon)) + \eta'(\rho_\varepsilon) \rho_\varepsilon \partial_x (V(\rho_\varepsilon) - V(W_\varepsilon)) \\
&= \partial_x \eta(\rho_\varepsilon) (V(\rho_\varepsilon) - V(W_\varepsilon)) + \eta'(\rho_\varepsilon) \rho_\varepsilon \partial_x (V(\rho_\varepsilon) - V(W_\varepsilon)) \\
&= \partial_x \big( \eta(\rho_\varepsilon)(V(\rho_\varepsilon) - V(W_\varepsilon)) \big) + \big( \eta'(\rho_\varepsilon) \rho_\varepsilon - \eta(\rho_\varepsilon) \big) \partial_x (V(\rho_\varepsilon) - V(W_\varepsilon)) \\
\intertext{(using \( P(\xi) = \xi\,\eta'(\xi) - \eta(\xi) \))}
&= \partial_x \big( \eta(\rho_\varepsilon)(V(\rho_\varepsilon) - V(W_\varepsilon)) \big) 
+ \big( P(\rho_\varepsilon) - P(W_\varepsilon) \big) \partial_x (V(\rho_\varepsilon) - V(W_\varepsilon)) \\
&\qquad + P(W_\varepsilon) \partial_x (V(\rho_\varepsilon) - V(W_\varepsilon)) \\
\intertext{(using \( Q'(\xi) = P(\xi) V'(\xi) \)):}
&= \partial_x \big( \eta(\rho_\varepsilon)(V(\rho_\varepsilon) - V(W_\varepsilon)) \big) 
+ \partial_x \big(Q(\rho_\varepsilon) - Q(W_\varepsilon) \big)
+ V'(W_\varepsilon) \big( P(W_\varepsilon) - P(\rho_\varepsilon) \big) \partial_x W_\varepsilon \\
&= \partial_x \big( \eta(\rho_\varepsilon)(V(\rho_\varepsilon) - V(W_\varepsilon)) + Q(\rho_\varepsilon) - Q(W_\varepsilon) \big)
+ \frac{1}{\varepsilon} V'(W_\varepsilon) \big( P(W_\varepsilon) - P(\rho_\varepsilon) \big)(W_\varepsilon - \rho_\varepsilon),
\end{align*}
where the last term in the last line is non-positive because $V'\leq0$ and $P'(\xi)=\xi\,\eta''(\xi)\ge0$ when $\xi\geq0$, yielding \cref{eq:entropy-exp}.

Taking Kru\v{z}kov's entropy function $\eta_c(\xi)\coloneqq |\xi-c|$, for any constant $c\in\R$, we have $\psi_c(\xi)=\sgn{\xi-c}\cdot(\xi V(\xi)-cV(c))$, $P_c(\xi)=c\cdot\sgn{\xi-c}$, and $Q_c(\xi)=-c|V(\xi)-V(c)|$. In this case, we can rewrite \cref{eq:entropy-exp} as 
\begin{align}\label{eq:entr-k}
    \partial_t |\rho_\varepsilon-c| + \partial_x \left( |\rho_\varepsilon-c|V(W_\varepsilon) - c|V(W_\varepsilon)-V(c)| \right) \leq 0.
\end{align}
Therefore, we can define $\Psi_c(\rho,W)\coloneqq |\rho-c|V(W) - c|V(W)-V(c)|$ as a \emph{nonlocal} entropy flux function, which satisfies $\Psi_c(\rho,\rho)=\psi_c(\rho)$, i.\,e., it is compatible with the local entropy flux function. 

With these observations in mind, in the following lemma, we prove a discrete entropy inequality that aligns with \cref{eq:entr-k}.

\begin{lemma}[Discrete entropy inequality] \label{lm:entro-disc-2}  
Let us assume that \crefrange{ass:ic}{ass:V} hold, the quadrature weights satisfy \crefrange{eq:geometric-gamma}{eq:geometric-gamma-assm}, and the CFL condition \cref{ass:CFL-max} holds. 
Let $\{\rho_j^n\}_{j\in\mathbb{Z}}^{n\geq0}$ and $\{W_j^n\}_{j\in\mathbb{Z}}^{n\geq0}$ be the numerical solutions constructed with the numerical scheme \crefrange{eq:godunov}{eq:ini_cond_discrete}. Then the following discrete entropy inequality holds: 
\begin{align}\label{eq:discrete-entropy}
    \frac{|\rho_j^{n+1}-c|-|\rho_j^n-c|}{\tau} + \frac{\Psi_c(\rho_j^n, W_{j+1}^n) - \Psi_c(\rho_{j-1}^n, W_j^n)}{h} \leq 0,
\end{align}
for all $c \in \R$ and  $\Psi_c(\rho,W)\coloneqq |\rho-c|V(W) - c|V(W)-V(c)|$.
\end{lemma}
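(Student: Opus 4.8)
The plan is to prove the discrete entropy inequality \cref{eq:discrete-entropy} directly, by expanding $\rho_j^{n+1}-c$ through the scheme \cref{eq:godunov} and then running a short case analysis on the signs of $\rho_j^n-c$ and $\rho_j^{n+1}-c$, mirroring the formal computation that produced \cref{eq:entr-k}. Throughout I abbreviate $V_i\coloneqq V(W_i^n)$ and use the scheme in the form $\rho_j^{n+1}=\rho_j^n\,(1-\lambda V_{j+1})+\lambda\,\rho_{j-1}^n V_j$, where $1-\lambda V_{j+1}\ge 0$ by the CFL condition \cref{ass:CFL-max} and $V_i\ge 0$. The decisive structural input, valid for the geometric weights \cref{eq:geometric-gamma}, is the identity $W_j^n=\gamma_0^{\eps,h}\,\rho_j^n+(1-\gamma_0^{\eps,h})\,W_{j+1}^n$ (equivalently \cref{eq:exp-form3}): it forces $W_j^n$ to lie between $\rho_j^n$ and $W_{j+1}^n$, and it yields both $W_j^n-c=\gamma_0^{\eps,h}(\rho_j^n-c)+(1-\gamma_0^{\eps,h})(W_{j+1}^n-c)$ and $|W_j^n-W_{j+1}^n|=\gamma_0^{\eps,h}\,|\rho_j^n-W_{j+1}^n|$. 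Since $V$ lives on $[0,1]$ and $0\le\rho_j^n\le 1$ by \cref{lem:maxm_prcp}, it suffices to argue for $c\in[0,1]$.

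I would first dispose of the cases in which $\rho_j^n$ and $\rho_j^{n+1}$ lie on the same side of $c$, say both $\ge c$ (the case ``both $\le c$'' being symmetric). There $|\rho_j^{n+1}-c|-|\rho_j^n-c|=\rho_j^{n+1}-\rho_j^n=\lambda(\rho_{j-1}^n V_j-\rho_j^n V_{j+1})$ is an \emph{exact} identity; inserting $\Psi_c(\rho,W)=|\rho-c|\,V(W)-c\,|V(W)-V(c)|$, using $\rho_{j-1}^n-|\rho_{j-1}^n-c|\le c$ together with $V_j\ge 0$, and cancelling, the left-hand side of \cref{eq:discrete-entropy} is dominated by $2\lambda c\big((V_j-V(c))^{+}-(V_{j+1}-V(c))^{+}\big)$. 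This is $\le 0$ because $\xi\mapsto(V(\xi)-V(c))^{+}$ is non-increasing (as $V$ is), $W_j^n$ lies between $\rho_j^n$ and $W_{j+1}^n$, and $\rho_j^n\ge c$: either $W_{j+1}^n\ge c$, so $W_j^n\ge c$ and $(V_j-V(c))^{+}=0$, or $W_{j+1}^n<c\le\rho_j^n$, so $W_j^n\ge W_{j+1}^n$ and $(V_j-V(c))^{+}\le(V_{j+1}-V(c))^{+}$. When $\rho_j^n\le c$ the identical steps reduce \cref{eq:discrete-entropy} to $(V(c)-V_j)^{+}\le(V(c)-V_{j+1})^{+}$, handled the same way.

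The remaining, and more delicate, cases are those with a sign change; I would do $\rho_j^n<c<\rho_j^{n+1}$ (the case $\rho_j^n>c>\rho_j^{n+1}$ being symmetric). Here $|\rho_j^{n+1}-c|-|\rho_j^n-c|=\rho_j^{n+1}+\rho_j^n-2c$ is again exact, and after substituting \cref{eq:godunov} and the definition of $\Psi_c$ almost everything cancels: using $\rho_{j-1}^n-|\rho_{j-1}^n-c|\le c$ once more, $\tau$ times the left-hand side of \cref{eq:discrete-entropy} is bounded by $2(\rho_j^n-c)(1-\lambda V_{j+1})$ plus a nonnegative correction, which appears only when $W_j^n<c$ and equals $2\lambda c\,(V(W_j^n)-V(c))$ if $W_{j+1}^n>c$ and $2\lambda c\,(V(W_j^n)-V(W_{j+1}^n))$ if $W_{j+1}^n\le c$. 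The first summand is $\le 0$ since $\rho_j^n<c$ and $1-\lambda V_{j+1}\ge 0$; the correction is absorbed by writing it as $\|V'\|_{\mathrm{L}^\infty}$ times an argument-difference of $V$ that, by $|W_j^n-W_{j+1}^n|=\gamma_0^{\eps,h}|\rho_j^n-W_{j+1}^n|$ and $W_j^n-c=\gamma_0^{\eps,h}(\rho_j^n-c)+(1-\gamma_0^{\eps,h})(W_{j+1}^n-c)$, is $\le c-\rho_j^n$, so (using $c\le 1$) the correction is $\le\lambda\|V'\|_{\mathrm{L}^\infty}(c-\rho_j^n)$, whence
\[
(\rho_j^n-c)(1-\lambda V_{j+1})+\tfrac12\cdot(\text{correction})\le(c-\rho_j^n)\big(\lambda\|V\|_{\mathrm{L}^\infty}+\lambda\|V'\|_{\mathrm{L}^\infty}-1\big)\le 0
\]
by the CFL condition \cref{ass:CFL-max}.

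I expect this sign-change case to be the main obstacle: applying the triangle inequality to $\rho_j^{n+1}-c$ there is far too lossy, so one must keep the exact algebraic identity and use the \emph{precise} constant in \cref{ass:CFL-max} — this is exactly where it is needed, and where a weaker CFL bound of the form $\lambda\|V\|_{\mathrm{L}^\infty}\le 1$ would not suffice. The rest is the elementary but slightly lengthy bookkeeping of enumerating the sub-cases according to the positions of $\rho_{j-1}^n$, $W_j^n$, $W_{j+1}^n$ relative to $c$ and checking that each collapses to one of the two mechanisms above.
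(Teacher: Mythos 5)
Your proof is correct, and it is organized genuinely differently from the paper's. The paper first peels off the neighbour term by the triangle inequality, writing $|\rho_j^{n+1}-c|\le\lambda V(W_j^n)\,|\rho_{j-1}^n-c|+|R|$ with $R=(1-\lambda V(W_{j+1}^n))(\rho_j^n-c)+\lambda c\,(V(W_j^n)-V(W_{j+1}^n))$, and then bounds $|R|$ by a four-way case analysis on the positions of $W_j^n$ and $W_{j+1}^n$ relative to $c$; there, \cref{eq:exp-form3} is used to transfer the sign of $W_j^n-c$ to $\rho_j^n-c$, and the CFL condition \cref{ass:CFL-max} fixes the sign of the auxiliary quantity $R_1$. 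You instead split on the sign pattern of $\rho_j^n-c$ and $\rho_j^{n+1}-c$, so that $|\rho_j^{n+1}-c|-|\rho_j^n-c|$ becomes an exact linear expression, handle the neighbour cell by the elementary bound $\rho_{j-1}^n-|\rho_{j-1}^n-c|\le c$, and invoke \cref{eq:exp-form3} purely as a betweenness statement ($W_j^n$ is a convex combination of $\rho_j^n$ and $W_{j+1}^n$). I checked your key reductions --- the same-side cases collapse to $2\lambda c\bigl((V(W_j^n)-V(c))^{+}-(V(W_{j+1}^n)-V(c))^{+}\bigr)\le 0$, and the sign-change cases to $(c-\rho_j^n)\bigl(\lambda\|V\|_{\mathrm{L}^\infty}+\lambda\|V'\|_{\mathrm{L}^\infty}-1\bigr)\le 0$ --- and they are sound. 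A pleasant by-product of your arrangement is that the same-side cases require no CFL restriction at all (only $V\ge 0$, $V'\le 0$, and betweenness), so the full constant in \cref{ass:CFL-max} is isolated in the two sign-change cases, whereas in the paper it is consumed inside Cases 3--4 of the $|R|$ estimate. Both arguments ultimately rest on the same two ingredients, \cref{eq:exp-form3} and \cref{ass:CFL-max}, and neither extends beyond the geometric quadrature weights \crefrange{eq:geometric-gamma}{eq:geometric-gamma-assm}.
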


\begin{proof}
Noting that $0\leq\rho_j^n,W_j^n\leq1$ for all $j,n$ (from \cref{lem:maxm_prcp} and \cref{rk:W}), it suffices to show \cref{eq:discrete-entropy} for $c\in(0,1)$.
From \cref{eq:godunov}, we write
\begin{align*}
\rho_j^{n+1} = \rho_j^n+\lambda \left( (\rho_{j-1}^n-c) V(W_j^n) - (\rho_j^n-c) V(W_{j+1}^n) + c \left(V(W_j^n)-V(W_{j+1}^n)\right) \right),
\end{align*}
which implies
\begin{align*}
|\rho_j^{n+1}-c| &\leq \lambda V(W_j^n) |\rho_{j-1}^n-c|  + \left| \left(1-\lambda V(W_{j+1}^n)\right) (\rho_j^n-c)  + \lambda c \left(V(W_j^n)-V(W_{j+1}^n)\right) \right|.
\end{align*}
We denote $R\coloneqq (1-\lambda V(W_{j+1}^n)) (\rho_j^n-c)  + \lambda c (V(W_j^n)-V(W_{j+1}^n))$ and discuss the following cases. 

\begin{enumerate}[label={\uline{Case \arabic*.}}]
    \item \label{it:c1} If 
$W_{j+1}^n>W_j^n>c$, we have
\[ \left|V(W_j^n)-V(W_{j+1}^n)\right| = V(W_j^n)-V(W_{j+1}^n) = \left|V(W_{j+1}^n)-V(c)\right| - \left|V(W_j^n)-V(c)\right|, \]
yielding
\begin{align}\label{eq:R_estm}
    |R| \leq \left(1-\lambda V(W_{j+1}^n)\right) |\rho_j^n-c| + \lambda c \left(\left|V(W_{j+1}^n)-V(c)\right| - \left|V(W_j^n)-V(c)\right|\right).
\end{align}
\item \label{it:c2} If $W_{j+1}^n<W_j^n \leq c$, we have
\[ \left|V(W_j^n)-V(W_{j+1}^n)\right| = V(W_{j+1}^n) - V(W_j^n) = \left|V(W_{j+1}^n)-V(c)\right| - \left|V(W_j^n)-V(c)\right|, \]
which also yields \cref{eq:R_estm}. 
\item \label{it:c3} If $W_j^n>c$ and $W_{j+1}^n\leq W_j^n$,  the identity \cref{eq:exp-form3} implies $\rho_j^n\geq W_j^n>c$, and consequently
\[ \left|V(W_j^n)-V(c)\right| \leq \left|V(\rho_j^n)-V(c)\right| \leq \|V'\|_{\mathrm{L}^\infty} |\rho_j^n-c|. \]
Let $R_1\coloneqq \left(1-\lambda V(W_{j+1}^n)\right) (\rho_j^n-c)  + \lambda c \left(V(W_j^n)-V(c)\right)$, using the CFL condition \cref{ass:CFL-max}, we have
\begin{align*}
    \lambda c\left|V(W_j^n)-V(c)\right| \leq \left(1-\lambda V(W_{j+1}^n)\right) |\rho_j^n-c|,
\end{align*}
thus $\sgn{R_1}=\sgn{\rho_j^n-c}=1$ and
\begin{align*}
    |R| &\leq R_1  + \lambda c \left|V(W_{j+1}^n)-V(c)\right| \\
    &= \left(1-\lambda V(W_{j+1}^n)\right) (\rho_j^n-c)  + \lambda c \left(V(W_j^n)-V(c)\right) + \lambda c \left|V(W_{j+1}^n)-V(c)\right| \\
    &= \left(1-\lambda V(W_{j+1}^n)\right) |\rho_j^n-c| - \lambda c \left|V(W_j^n)-V(c)\right| + \lambda c \left|V(W_{j+1}^n)-V(c)\right|; 
\end{align*}
so we still obtain \cref{eq:R_estm}.

\item \label{it:c4} If $W_j^n \leq c$ and $W_{j+1}^n\geq W_j^n$, by similar arguments as in \cref{it:c3}, we obtain $\rho_j^n\leq W_j^n \leq c$ and $\sgn{R_1}=\sgn{\rho_j^n-c}=-1$, giving
\begin{align*}
    |R| &\leq -R_1  + \lambda c \left|V(W_{j+1}^n)-V(c)\right| \\
    &= -\left(1-\lambda V(W_{j+1}^n)\right) (\rho_j^n-c)  -\lambda c \left(V(W_j^n)-V(c)\right) + \lambda c \left|V(W_{j+1}^n)-V(c)\right| \\
    &= \left(1-\lambda V(W_{j+1}^n)\right) |\rho_j^n-c| - \lambda c \left|V(W_j^n)-V(c)\right| + \lambda c \left|V(W_{j+1}^n)-V(c)\right|;
\end{align*}
so we still obtain \cref{eq:R_estm}.
\end{enumerate}
In summary, the estimate \cref{eq:R_estm} holds in all cases, and it gives
\begin{align*}
    |\rho_j^{n+1}-c| & \leq |\rho_j^n-c| + \lambda \Big( |\rho_{j-1}^n-c| V(W_j^n) - |\rho_j^n-c|V(W_{j+1}^n) \\ &\qquad\qquad - c \left(\left|V(W_j^n)-V(c)\right| - \left|V(W_{j+1}^n)-V(c)\right|\right) \Big) \\
    &= |\rho_j^n-c| + \lambda \big(\Psi_c(\rho_{j-1}^n, W_j^n) - \Psi_c(\rho_j^n, W_{j+1}^n)\big).
\end{align*}
Therefore, we deduce the discrete entropy inequality \cref{eq:discrete-entropy}.
\end{proof}

\subsection{\texorpdfstring{$\mathrm{L}^1$}{L-1}-contraction}
\label{ssec:L1-contr}
This subsection is devoted to the $\mathrm{L}^1$-contraction property of $W_\eps$, the nonlocal impact specified in \cref{eq:W} based on \cref{eq:cle}, motivated by its critical role in scalar conservation laws. For a scalar conservation law with a (locally) Lipschitz continuous flux function $f:\R \to \R$,
\begin{align*}
    \partial_t u +\partial_xf(u)=0,  \qquad (t,x) \in \R_+\times \R,
\end{align*}
considering Kru\v{z}kov's entropy-entropy flux pair $(\eta_c,\psi_c)$, where $\eta_c(\xi)\coloneqq|\xi-c|$ and $\psi_c(\xi)\coloneqq\sgn{\xi-c}\cdot(f(\xi)-f(c))$, which yields 
\begin{align*}
    \partial_t |u-c| + \partial_x \left( \sgn{u-c}(f(u)-f(c)) \right) \leq0,
\end{align*}
in the sense of distributions in $\R_+\times \R$.
Then by employing the ``doubling of variables''  technique, in \cite{zbMATH03341462}, Kru\v{z}kov obtained 
\begin{align*}
    \partial_t |u-\tilde{u}| + \partial_x \left( \sgn{u-\tilde{u}}(f(u)-f(\tilde{u})) \right) \leq0,
\end{align*}
for any pair of entropy solutions $u$ and $\tilde{u}$, which then gives
\begin{align*}
    \frac{\mathrm d}{\mathrm dt}\int_\mathbb{R} |u-\tilde{u}|\,\mathrm dx \leq 0.
\end{align*}
This $\mathrm{L}^1$-contraction property was also noticed by Volpert in \cite{zbMATH03269216} for BV initial data. We refer to \cite{MR2103702} for further discussions. From this property, a comparison principle and a priori bounds for the $\mathrm{L}^1$, $\mathrm{L}^{\infty}$, and $\mathrm{BV}$ norms of the solutions follow.\footnote{~Let us recall a relevant result from \cite{zbMATH01222044}: 
Let us suppose that $T: \mathrm{L}^1\left(\mathbb{R}\right) \rightarrow \mathrm{L}^1\left(\mathbb{R}\right)$ satisfies the following conditions:
\begin{enumerate}[label={\arabic*.}]
    \item \label{it:L1.1} for all $u \in \mathrm{L}^1\left(\mathbb{R}\right)$,  we have $\int_{\mathbb{R}} T u=\int_{\mathbb{R}} u$;
    \item \label{it:L1.2}  for all $u, \, v \in \mathrm{L}^1\left(\mathbb{R}\right)$, with $u \geq v$ a.\,e., we have $T u \geq T v$ a.\,e.;
    \item \label{it:L1.3} for all $h \in \mathbb{R}$ and all $u \in \mathrm{L}^1\left(\mathbb{R}\right)$, we have $T(u(\cdot-h))=(T u)(\cdot-h)$.
\end{enumerate} 
Then, for all $u \in \mathrm{L}^1\left(\mathbb{R}\right)$, the following maximum and minimum principles hold: 
$$
\esssup T u \leq \operatorname{ess} \sup u \quad \text { and } \quad \essinf T u \geq \operatorname{ess} \inf u.
$$
By Crandall--Tartar's lemma (see \cite{zbMATH03703495}), conditions \cref{it:L1.1}  and \cref{it:L1.2}  on $\mathrm{L}^1(\Omega)$ of any measure space $(\Omega, \mathrm{d} \mu)$ are equivalent to \cref{it:L1.1} and
\begin{enumerate}
    \item[2'.] for all $u,\, v \in \mathrm{L}^1(\Omega)$, we have $\|T u-T v\|_{\mathrm{L}^1(\Omega)} \leq\|u-v\|_{\mathrm{L}^1(\Omega)}$.
\end{enumerate}
Thus, on $\mathrm{L}^1(\Omega)$, non-expansive mappings that preserve the integral are the same as order-preserving mappings that preserve the integral.}

For the nonlocal conservation law \cref{eq:cle}, the violation of the TVD property for $\rho_\eps$ implies the loss of the $\mathrm{L}^1$-contraction property. In contrast, under the exponential kernel, we can establish the $\mathrm{L}^1$-contraction property for $W_\eps$, which satisfies the evolution equation \cref{eq:Wevo}, improving the existing $\mathrm{L}^1$-stability estimates for \cref{eq:cle} in the literature. We also note that, for a different nonlocal conservation law inspired by \cref{eq:cle}, an $\mathrm{L}^1$-contraction estimate was proven in \cite{coclite2025upwind}.

\begin{theorem}[$\mathrm{L}^1$-contraction for $W_\eps$]\label{th:L1.contr}
Let us assume that \cref{ass:V} holds, the nonlocal kernel $\gamma \coloneqq \mathds{1}_{]-\infty,0]}(\cdot) \exp(\cdot)$, and the initial data $\rho_0,\, \widetilde{\rho}_0$ satisfy \cref{ass:ic} with $\rho_0-\widetilde{\rho}_0\in\mathrm{L}^1(\R)$. Let $\rho_\eps, \, \widetilde{\rho}_\varepsilon$ be the solutions of \cref{eq:cle}, with the nonlocal impacts $W_\eps,\, \widetilde{W}_\varepsilon$ defined as in \cref{eq:W}.
Then the following $\mathrm{L}^1$-contraction property holds: 
\[
\|W_\varepsilon(t,\cdot) - \widetilde{W}_\varepsilon(t,\cdot)\|_{\mathrm{L}^1} \le \|W_\eps(0,\cdot) - \widetilde{W}_\eps(0,\cdot)\|_{\mathrm{L}^1}.
\]
\end{theorem}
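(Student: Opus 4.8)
The plan is to reduce the claim to a one–line Kru\v{z}kov-type stability estimate for the closed evolution equation \cref{eq:Wevo} satisfied by $W_\eps$, after first rewriting its nonlocal source in a form that exhibits a crucial cancellation. The opening move is purely algebraic: with $g(\xi)\coloneqq \xi V(\xi) - G(\xi)$ and $G(\xi)\coloneqq\int_0^\xi V(s)\,\mathrm{d}s$ we have $V'(W_\eps)\,\partial_y W_\eps\, W_\eps = \partial_y\big(g(W_\eps)\big)$ (since $g'(\xi)=\xi V'(\xi)$), so integrating by parts in $y$ turns the right-hand side of \cref{eq:Wevo} into $\frac1\eps\big(g(W_\eps) - g(W_\eps)*\gamma_\eps\big)$, where $\gamma_\eps(z)=\frac1\eps e^{z/\eps}\mathds{1}_{]-\infty,0]}(z)$ and $(f*\gamma_\eps)(x)=\frac1\eps\int_x^\infty e^{(x-y)/\eps}f(y)\,\mathrm{d}y$. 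Since $\partial_x(f*\gamma_\eps)=-\frac1\eps(f-f*\gamma_\eps)$ for this kernel, \cref{eq:Wevo} thus takes the (conservative) form
\[
    \partial_t W_\eps + \partial_x G(W_\eps) = \frac1\eps\big(g(W_\eps) - g(W_\eps)*\gamma_\eps\big) \;=\; -\partial_x\big(g(W_\eps)*\gamma_\eps\big).
\]
Two structural facts will drive the estimate: (i) $g$ is non-increasing on $[0,1]$, since $g'(\xi)=\xi V'(\xi)\le 0$ there and $W_\eps,\widetilde{W}_\eps$ take values in $[0,1]$ by the maximum principle; and (ii) after the Galilean change of variables $x\mapsto x-c_0t$ — which replaces $V$ by $V+c_0$, leaves $g$ unchanged, and preserves the $\mathrm{L}^1$-distance between the two nonlocal impacts — one may assume $V\ge 0$ on $[0,1]$, so that $G$ is non-decreasing.

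Next I would run the standard Kru\v{z}kov stability computation. Since $W_\eps$ is Lipschitz in $(t,x)$ (by \cref{eq:WWxq}, $\partial_x W_\eps=\frac1\eps(W_\eps-\rho_\eps)$ is bounded, hence so is $\partial_t W_\eps$ via \cref{eq:Wevo}), I can subtract the equations above for $W_\eps$ and $\widetilde{W}_\eps$, multiply by $\sgn{W_\eps-\widetilde{W}_\eps}$, integrate against a spatial cut-off, and let the cut-off tend to $\R$, using that $W_\eps(t,\cdot)-\widetilde{W}_\eps(t,\cdot)\in\mathrm{L}^1(\R)$ (which propagates from $t=0$ by the known $\mathrm{L}^1$-stability of \cref{eq:cle}). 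The flux contribution $\int_\R\sgn{W_\eps-\widetilde{W}_\eps}\,\partial_x\big(G(W_\eps)-G(\widetilde{W}_\eps)\big)\,\mathrm{d}x$ vanishes: $G$ being non-decreasing, this integrand agrees a.e.\ with $\partial_x\big|G(W_\eps)-G(\widetilde{W}_\eps)\big|$, whose integral over $\R$ is $0$ because $|G(W_\eps)-G(\widetilde{W}_\eps)|\le\|V\|_{\mathrm{L}^\infty}|W_\eps-\widetilde{W}_\eps|$ is in $\mathrm{L}^1$ and of bounded variation. One is left with
\[
    \frac{\mathrm{d}}{\mathrm{d}t}\|W_\eps(t,\cdot)-\widetilde{W}_\eps(t,\cdot)\|_{\mathrm{L}^1}\le\frac1\eps\int_\R\sgn{W_\eps-\widetilde{W}_\eps}\Big[\big(g(W_\eps)-g(\widetilde{W}_\eps)\big)-\big(g(W_\eps)-g(\widetilde{W}_\eps)\big)*\gamma_\eps\Big]\,\mathrm{d}x .
\]

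The last step is to show this right-hand side is $\le 0$, which is where the structure pays off. By (i), $\sgn{W_\eps-\widetilde{W}_\eps}\big(g(W_\eps)-g(\widetilde{W}_\eps)\big)=-\big|g(W_\eps)-g(\widetilde{W}_\eps)\big|$, so the non-convolved piece equals $-\frac1\eps\int_\R\big|g(W_\eps)-g(\widetilde{W}_\eps)\big|\,\mathrm{d}x$; and for the convolved piece, using $\gamma_\eps\ge 0$ with $\|\gamma_\eps\|_{\mathrm{L}^1}=1$,
\[
    -\frac1\eps\int_\R\sgn{W_\eps-\widetilde{W}_\eps}\Big(\big(g(W_\eps)-g(\widetilde{W}_\eps)\big)*\gamma_\eps\Big)\,\mathrm{d}x\le\frac1\eps\int_\R\Big(\big|g(W_\eps)-g(\widetilde{W}_\eps)\big|*\gamma_\eps\Big)\,\mathrm{d}x=\frac1\eps\int_\R\big|g(W_\eps)-g(\widetilde{W}_\eps)\big|\,\mathrm{d}x .
\]
The two pieces cancel, whence $\frac{\mathrm{d}}{\mathrm{d}t}\|W_\eps(t,\cdot)-\widetilde{W}_\eps(t,\cdot)\|_{\mathrm{L}^1}\le 0$; integrating in $t$ yields the theorem. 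I expect the real content to sit entirely in the first step: seeing that, for the exponential kernel, the source in \cref{eq:Wevo} is $\frac1\eps(g(W_\eps)-g(W_\eps)*\gamma_\eps)$ with $g$ \emph{monotone} on the invariant range $[0,1]$, which is precisely what lets the ``local'' dissipation absorb the ``nonlocal'' excess. The remaining ingredients (Lipschitz regularity of $W_\eps$, propagation of $\mathrm{L}^1$-integrability of the difference, justification of the cut-off limit) are routine; as a safety net that bypasses the regularity bookkeeping, the same cancellation can be carried out directly on the conservative scheme \cref{eq:W-1} for $\{W_j^n\}$ and then passed to the limit $h\searrow 0$ at fixed $\eps$, using the uniform bounds already established (\cref{lem:maxm_prcp}, \cref{lm:TVD-x}) and the standard convergence of the scheme to the nonlocal solution for fixed $\eps$.
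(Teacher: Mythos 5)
Your proposal is correct and follows essentially the same route as the paper's proof: your $g$ (with $g'(\xi)=\xi V'(\xi)$) and $G$ (with $G'=V$) coincide, up to additive constants, with the paper's $Q$ and $R$, and the decisive cancellation — the local term contributing $-\tfrac1\eps\int|g(W_\eps)-g(\widetilde W_\eps)|$ by monotonicity of $g$, absorbed exactly by the convolved term via $\gamma_\eps\ge0$, $\|\gamma_\eps\|_{\mathrm{L}^1}=1$ and Fubini — is precisely the paper's $I_{21}+I_{22}\le0$ argument. The only differences are cosmetic (you integrate by parts in the source before testing rather than after, and dispose of the flux term via a Galilean shift instead of the chain rule for the sign function), so no further comparison is needed.
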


\begin{proof}
We take functions $Q$ and $R$ that satisfy $Q'(\xi)\coloneqq\xi\, V'(\xi)$ and $R'(\xi)\coloneqq V(\xi)$.
Noting that $W_\eps,\widetilde{W}_\eps \in \mathrm{W}^{1,\infty}(\R_+\times\R)$ and using \cref{eq:Wevo}, the direct analogue of \cref{eq:cle} and \cref{eq:W}, we obtain
\begin{align*}
    &\frac{\dd}{\dd t} \int_{\mathbb{R}} \left|W_\eps(t,x)-\widetilde{W}_\eps(t,x)\right|\dx \\
    &= \int_{\mathbb{R}} \sgn{W_\eps(t,x)-\widetilde{W}_\eps(t,x)} \partial_t \left( W_\eps(t,x)-\widetilde{W}_\eps(t,x) \right) \dx \\
    &= -\int_{\mathbb{R}} \sgn{W_\eps(t,x)-\widetilde{W}_\eps(t,x)} \partial_x \left( R(W_\eps(t,x)) - R(\widetilde{W}_\eps(t,x)) \right) \dx \\
    &\qquad -\frac1\eps \int_{\mathbb{R}} \sgn{W_\eps(t,x)-\widetilde{W}_\eps(t,x)} \int_x^\infty \exp\left(\frac{x-y}{\eps}\right) \partial_y \left( Q(W_\eps(t,y)) - Q(\widetilde{W}_\eps(t,y)) \right) \dd y \dx \\
    &\eqqcolon I_1+I_2.
\end{align*}
For the first term, we have\footnote{~Here, we observe that $\operatorname{sign}(v)' = \delta_{\{v=0\}}v'$ holds (in the sense of distributions) for any smooth function $v$ (cf.~\cite[Lemma 2]{MR542510} and \cite[Remark 7.1]{MR4789921}). To apply this to our setting, we may employ approximations (by regularizing the initial data) for $W_\eps,\,\widetilde{W}_\eps$ as in \cite[Theorem 3.1]{MR4651679} or \cite[Lemma 3.1]{MR4855163}.}
\begin{align*}
    I_1 = \int_{\mathbb{R}} 2\delta_{\left\{W_\eps(t,x)-\widetilde{W}_\eps(t,x) = 0\right\}} \left( R(W_\eps(t,x)) - R(\widetilde{W}_\eps(t,x)) \right) \partial_x\left( W_\eps(t,x) - \widetilde{W}_\eps(t,x) \right) \dx = 0.
\end{align*}
For the second term, we use Fubini's theorem and integration by parts (noticing that the boundary terms vanish as $W_\eps,\,\widetilde{W}_\eps \in \mathrm{W}^{1,\infty}(\R_+\times\R)$) to deduce  
\begin{align*}
    I_2 &= -\frac1\eps \int_{\mathbb{R}} \partial_y \left( Q(W_\eps(t,y)) - Q(\widetilde{W}_\eps(t,y)) \right) \int_{-\infty}^y \exp\left(\frac{x-y}{\eps}\right) \sgn{W_\eps(t,x)-\widetilde{W}_\eps(t,x)} \dx \dd y \\
    &= \frac1\eps \int_{\mathbb{R}} \left( Q(W_\eps(t,y)) - Q(\widetilde{W}_\eps(t,y)) \right) \sgn{W_\eps(t,y)-\widetilde{W}_\eps(t,y)} \dd y \\
    &\qquad  - \frac1{\eps^2} \int_{\mathbb{R}} \left( Q(W_\eps(t,y)) - Q(\widetilde{W}_\eps(t,y)) \right) \int_{-\infty}^y \exp\left(\frac{x-y}{\eps}\right) \sgn{W_\eps(t,x)-\widetilde{W}_\eps(t,x)} \dx \dd y \\
    &\eqqcolon I_{21} + I_{22}.
\end{align*}
Noting that $Q$ is a decreasing function, we have
\begin{align*}
    I_{21} = - \frac1\eps \int_{\mathbb{R}} \left| Q(W_\eps(t,y)) - Q(\widetilde{W}_\eps(t,y)) \right| \dd y;
\end{align*}
on the other hand,
\begin{align*}
    |I_{22}| &\leq \frac1{\eps^2} \int_{\mathbb{R}} \left| Q(W_\eps(t,y)) - Q(\widetilde{W}_\eps(t,y)) \right| \int_{-\infty}^y \exp\left(\frac{x-y}{\eps}\right) \dx \dd y \\
    &= \frac1\eps \int_{\mathbb{R}} \left| Q(W_\eps(t,y)) - Q(\widetilde{W}_\eps(t,y)) \right| \dd y.
\end{align*}
Therefore, $I_2=I_{21}+I_{22}\leq0$ and we conclude that
\begin{align*}
    \frac{\dd}{\dd t} \int_{\mathbb{R}} \left|W_\eps(t,x)-\widetilde{W}_\eps(t,x)\right|\dx \leq 0.
\end{align*}
\end{proof}

\section{Numerical experiments}
\label{sec:nums}

We supplement the paper with a series of numerical experiments that illustrate our theoretical findings and suggest further conjectures on the approximate solutions produced by the numerical scheme \crefrange{eq:godunov}{eq:ini_cond_discrete}.
These experiments, organized in \cref{ssec:num-convergence} to demonstrate the asymptotically compatible convergence rates (cf.~\cref{th:main-2} and \cref{th:main-1}) and \cref{ssec:num-stability} to investigate stability properties including the TVD property (cf.~\cref{lm:TVD-x-2} and \cref{lm:TVD-x}) and the entropy condition (cf.~\cref{lm:discrete-entropy-2} and \cref{lm:entro-disc-2}), rely on the following settings.

We use three representative initial data, detailed as follows:  
\begin{itemize}
    \item \emph{Riemann shock:}
    \begin{align}\label{eq:ini_riemann_shock}
        \rho_0(x) \coloneqq 0.7\,\mathds{1}_{[0,\infty[}(x);
    \end{align}
    \item \emph{Riemann rarefaction:}
    \begin{align}\label{eq:ini_riemann_rarefaction}
        \rho_0(x) \coloneqq 0.65\,\mathds{1}_{]-\infty,0]}(x) + 0.35\,\mathds{1}_{[0,\infty[}(x);
    \end{align}
    \item \emph{Bell-shaped:} 
    \begin{align}\label{eq:ini_bellshape}
	\rho_0(x) \coloneqq 0.4 + 0.4 \, \exp\left(-100x^2\right).
    \end{align}
\end{itemize}
We also adopt the \textit{Greenshields velocity function}~\cite{greenshields1935study}, $V(\xi) \coloneqq 1 -\xi$, unless otherwise indicated.
Furthermore, we use the following nonlocal kernels:
\begin{itemize}
    \item \emph{exponential kernel:} 
    \begin{align}
        \gamma(z) &\coloneqq \displaystyle \exp(z) \, \mathds{1}_{]-\infty,0]}(z);
        \label{eq:gamma-expo} 
    \end{align}
    
    \item \emph{linear kernel:} 
    \begin{align}
        \gamma(z) &\coloneqq \displaystyle 2(z+1) \, \mathds{1}_{]-1,0]}(z);
        \label{eq:gamma-lin} 
    \end{align}
    
    \item \emph{constant kernel:} 
    \begin{align}
        \gamma(z) &\coloneqq \displaystyle \mathds{1}_{]-1,0]}(z);
        \label{eq:gamma-const} 
    \end{align}
\end{itemize}
discretized with their exact quadrature weights from \cref{ex:exact} unless otherwise specified.
Along with these, we fix the CFL ratio $\lambda=0.25$ and restrict the scheme's implementation to a finite computational domain and use constant extensions outside.

\subsection{Convergence rates}
\label{ssec:num-convergence}

In the following experiments, we examine the asymptotically compatible convergence rates of the approximate solution $W_{\eps,h}$, produced by the scheme \crefrange{eq:godunov}{eq:ini_cond_discrete}, to the entropy solution $\rho$ of \cref{eq:cl} as $\eps,h\searrow0$. We also evaluate the impact of nonlocal kernels, quadrature weights, and velocity functions on the convergence rates. Further, we investigate the convergence from $\rho_{\eps,h}$ to $\rho$.

\begin{example}
First of all, we offer a visual comparison between the approximate solution $W_{\eps,h}$ (with $\eps=5\times10^{-3}$ and $h=10^{-3}$) and the entropy solution $\rho$ of \cref{eq:cl}, using the linear kernel \cref{eq:gamma-lin} and initial data \crefrange{eq:ini_riemann_shock}{eq:ini_bellshape}. Snapshots of $W_{\eps,h}$ at times $t=0,0.5,1$ and of $\rho$ at $t=1$ (computed on a finer mesh) are shown in \cref{fig:exp_0}.
\end{example}

From \cref{fig:exp_0}, we see that $W_{\eps,h}$ closely approximates $\rho$ at $t=1$ across all initial data with the small $\eps$ and $h$, and the difference appears as a slight smoothing effect on $\rho$. Moreover, the temporal evolution of $W_{\eps,h}$ mimics the dynamics of \cref{eq:cl} that include a moving shock wave for the Riemann initial data \cref{eq:ini_riemann_shock}, a centered rarefaction wave for the Riemann initial data \cref{eq:ini_riemann_rarefaction}, and the formation of a shock for the smooth bell-shaped initial data \cref{eq:ini_bellshape}.

\begin{figure}[htbp]
\centering
	\begin{subfigure}{.32\textwidth}
	\includegraphics[width=\textwidth]{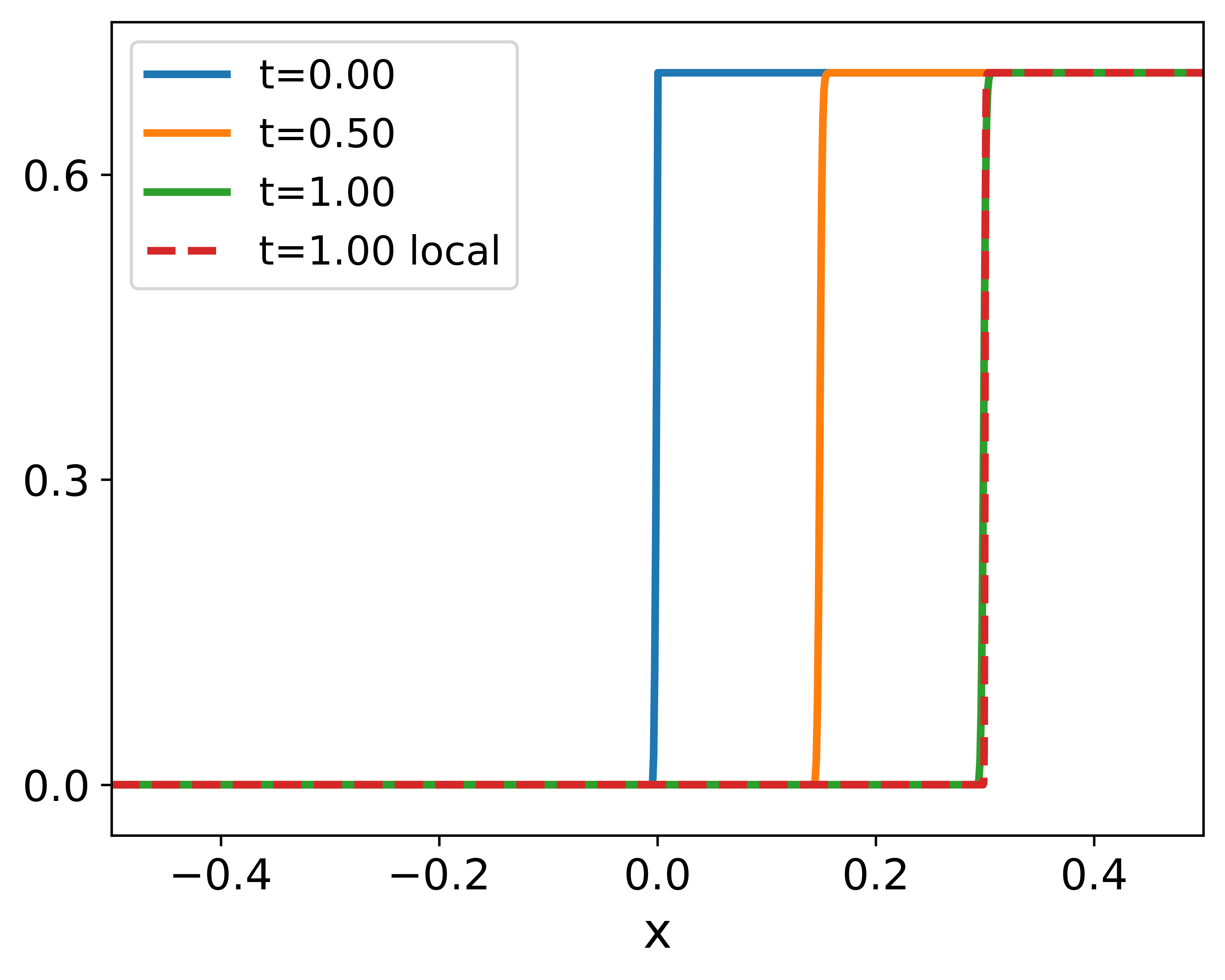}
	\end{subfigure}
	\begin{subfigure}{.32\textwidth}
	\includegraphics[width=\textwidth]{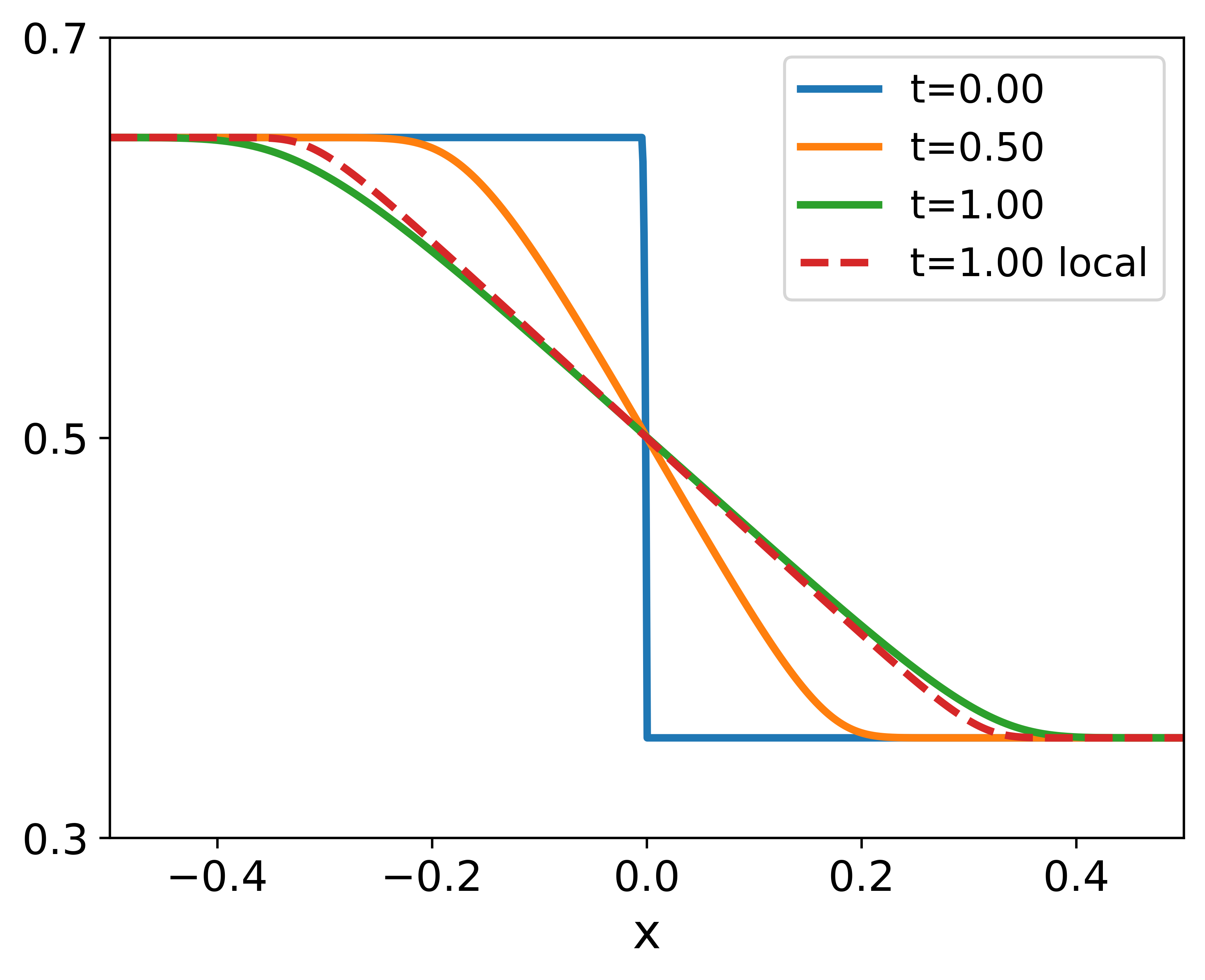}
	\end{subfigure}
    \begin{subfigure}{.32\textwidth}
	\includegraphics[width=\textwidth]{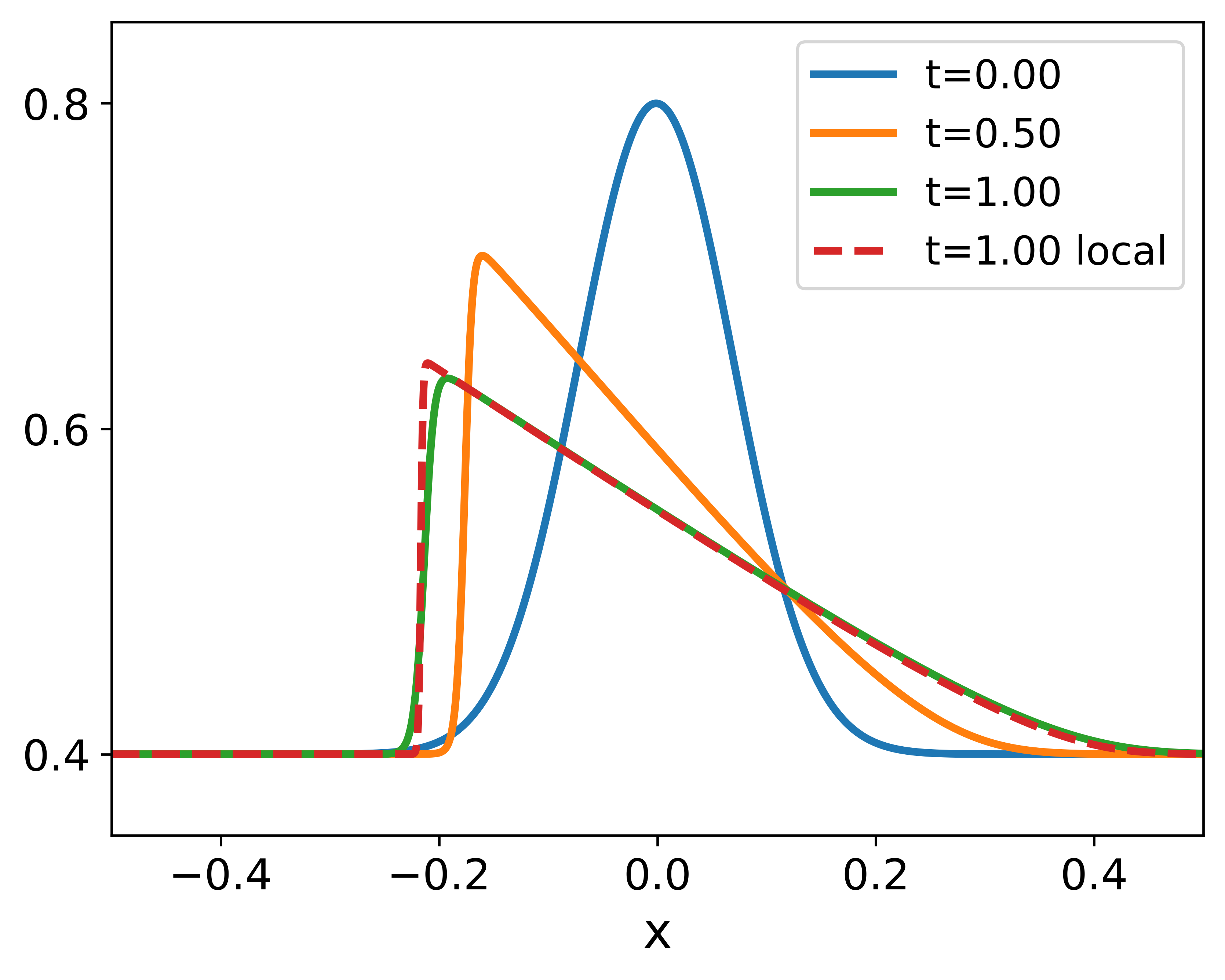}
	\end{subfigure}
	\caption{Snapshots of $W_{\eps,h}$ at times $t=0,0.5,1$ and of $\rho$ at $t=1$ for the Riemann shock initial data \cref{eq:ini_riemann_shock} (\textsc{left}), the Riemann rarefaction initial data \cref{eq:ini_riemann_rarefaction} (\textsc{middle}), and the bell-shaped initial data \cref{eq:ini_bellshape} (\textsc{right}).}
    \label{fig:exp_0}
\end{figure}

\begin{example}
Next, we examine the convergence rate from $W_{\eps,h}$ to $\rho$ using the initial data \crefrange{eq:ini_riemann_shock}{eq:ini_bellshape} and nonlocal kernels \crefrange{eq:gamma-expo}{eq:gamma-const}, and along the following limiting paths:
\begin{itemize}
    \item $\eps=h\searrow 0$: in this case, with the linear kernel \cref{eq:gamma-lin} or the constant kernel \cref{eq:gamma-const}, the scheme simplifies to a monotone three-point scheme solving \cref{eq:cl} (see \cref{rk:hfixed});
    \item $\eps=5h\searrow 0$;
    \item $\eps=\sqrt{h}\searrow 0$: in this case, $\frac{\eps}h=\frac1{\sqrt{h}}\nearrow\infty$.
\end{itemize}
In \cref{fig:exp_1}, we plot the $\mathrm{L}^1$ error $\norm{W_{\eps,h}(t,\cdot)-\rho(t,\cdot)}_{\mathrm{L}^1}$ at $t=1$ against $h^{-1}$ on a log-log scale, with $\rho$ computed on a finer mesh, and include reference lines with slopes $-1$, $-\frac12$, and $-\frac14$ to indicate expected convergence rates.

Additionally, we evaluate the impact of nonlinear velocity functions:
\begin{itemize}
    \item Krystek's velocity function~\cite{krystek1980syntetyczny}: $V(\xi)\coloneqq (1-\xi)^4$;
    \item Underwood's velocity function~\cite{underwood1961speed}: $V(\xi)\coloneqq \exp(-\xi)$;
\end{itemize}
applied to the initial data \crefrange{eq:ini_riemann_shock}{eq:ini_bellshape} with the linear kernel \cref{eq:gamma-lin}. In \cref{fig:exp_3}, we present convergence plots similar to those in \cref{fig:exp_1}.
\end{example}

The left panel of \cref{fig:exp_1} reveals that, for a moving shock wave in $\rho$, the error of $W_{\varepsilon,h}$ decays at a rate $h$ for $\varepsilon=h$ and $\varepsilon=5h$, and at a rate $\sqrt{h}$ for $\varepsilon=\sqrt{h}$, indicating a potential convergence rate of $\eps+h$, which exceeds the $\sqrt{\varepsilon}+\sqrt{h}$ estimate in \cref{th:main-2}. The middle panel shows that, for a centered rarefaction wave in $\rho$, the error decay lies between $h$ and $\sqrt{h}$ for $\varepsilon=h$ and $\varepsilon=5h$, and between $\sqrt{h}$ and $h^{\frac14}$ for $\varepsilon=\sqrt{h}$, suggesting a rate between $\varepsilon+h$ and $\sqrt{\varepsilon}+\sqrt{h}$. The right panel exhibits similar trends for a shock formed from the bell-shaped initial data. These findings align with \cref{th:main-2}, with observed convergence rates lying between $\varepsilon+h$ and $\sqrt{\varepsilon}+\sqrt{h}$. Remarkably, the non-convex constant kernel \cref{eq:gamma-const} leads to quadrature weights violating the assumption \cref{ass:convexity}, falling outside the scope of \cref{th:main-2}, yet its conclusion remains valid. The convergence reaches a rate of $h$ along $\varepsilon=h$ in shock and bell-shaped cases, but declines slightly below $h$ for a centered rarefaction wave in $\rho$, consistent with discussions in \cref{rk:hfixed} and \cref{rk:local-convergence-references}.

Moreover, from \cref{fig:exp_3} we observe that, with nonlinear velocity functions $V(\xi)=(1-\xi)^4$ and $V(\xi)=\exp(-\xi)$, the convergence rates remain consistent, reinforcing the robustness of these convergence rates in velocity models.

\begin{figure}[htbp]
\centering
    \begin{subfigure}{.32\textwidth}
	\includegraphics[width=\textwidth]{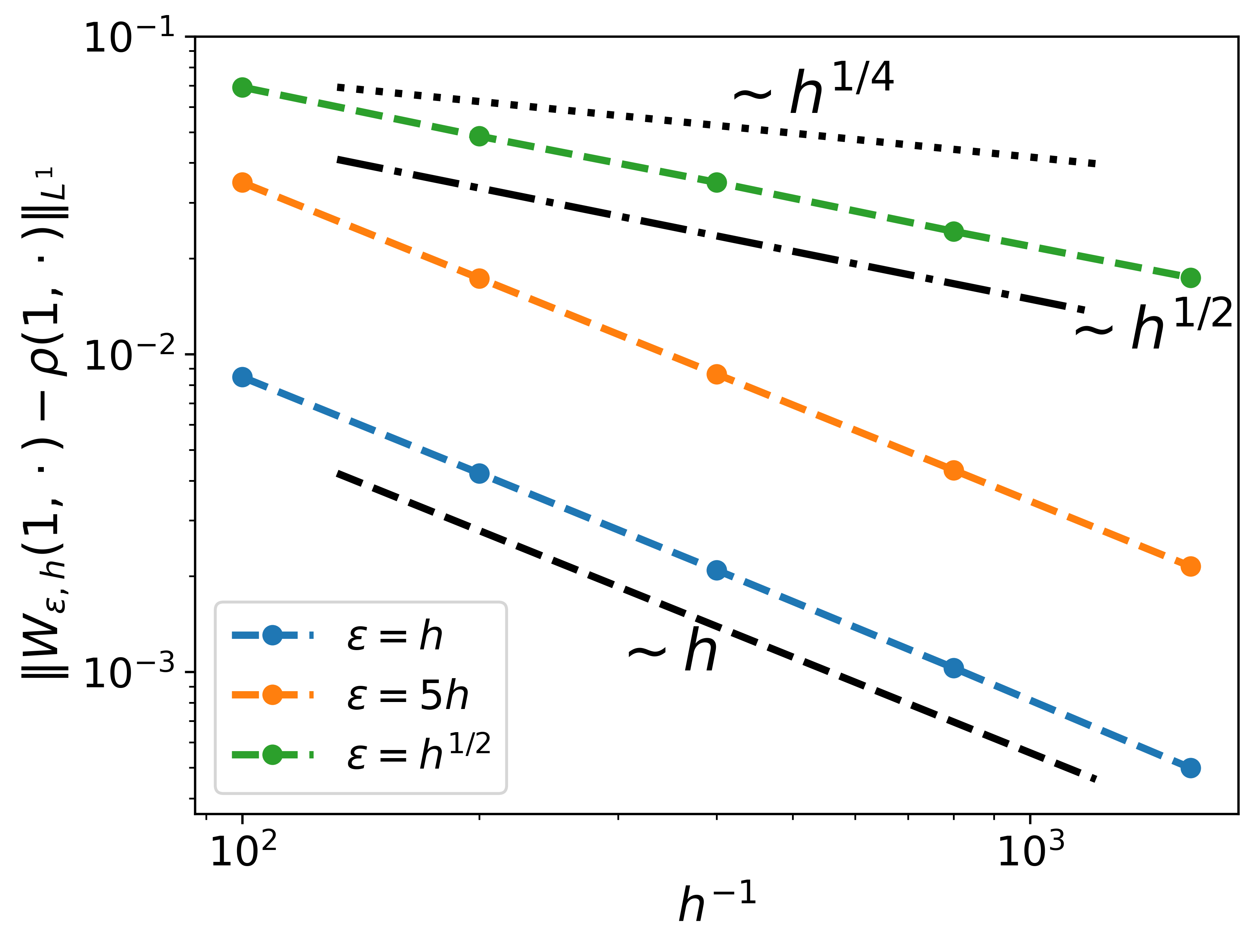}
	\end{subfigure}
	\begin{subfigure}{.32\textwidth}
	\includegraphics[width=\textwidth]{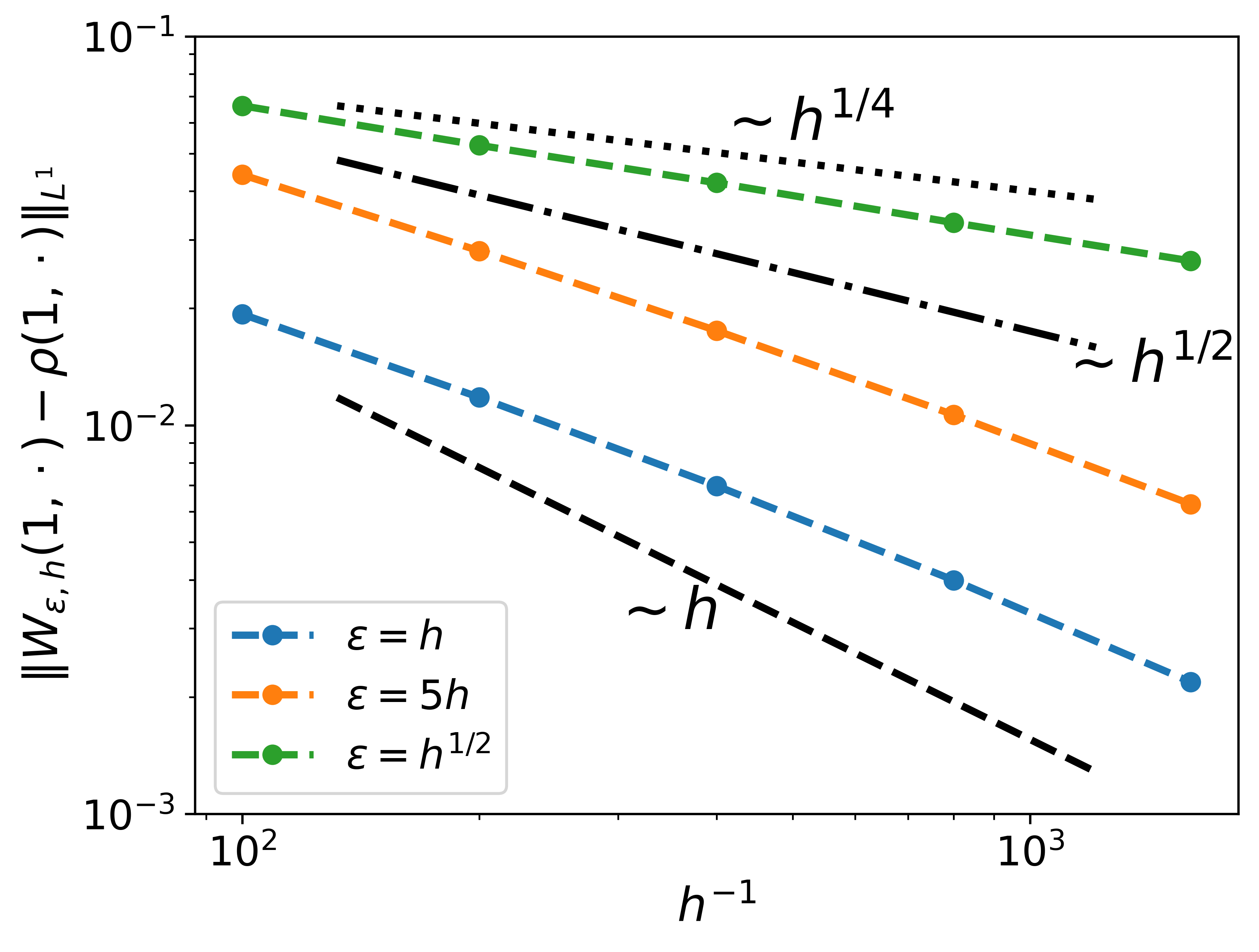}
	\end{subfigure}
    \begin{subfigure}{.32\textwidth}
	\includegraphics[width=\textwidth]{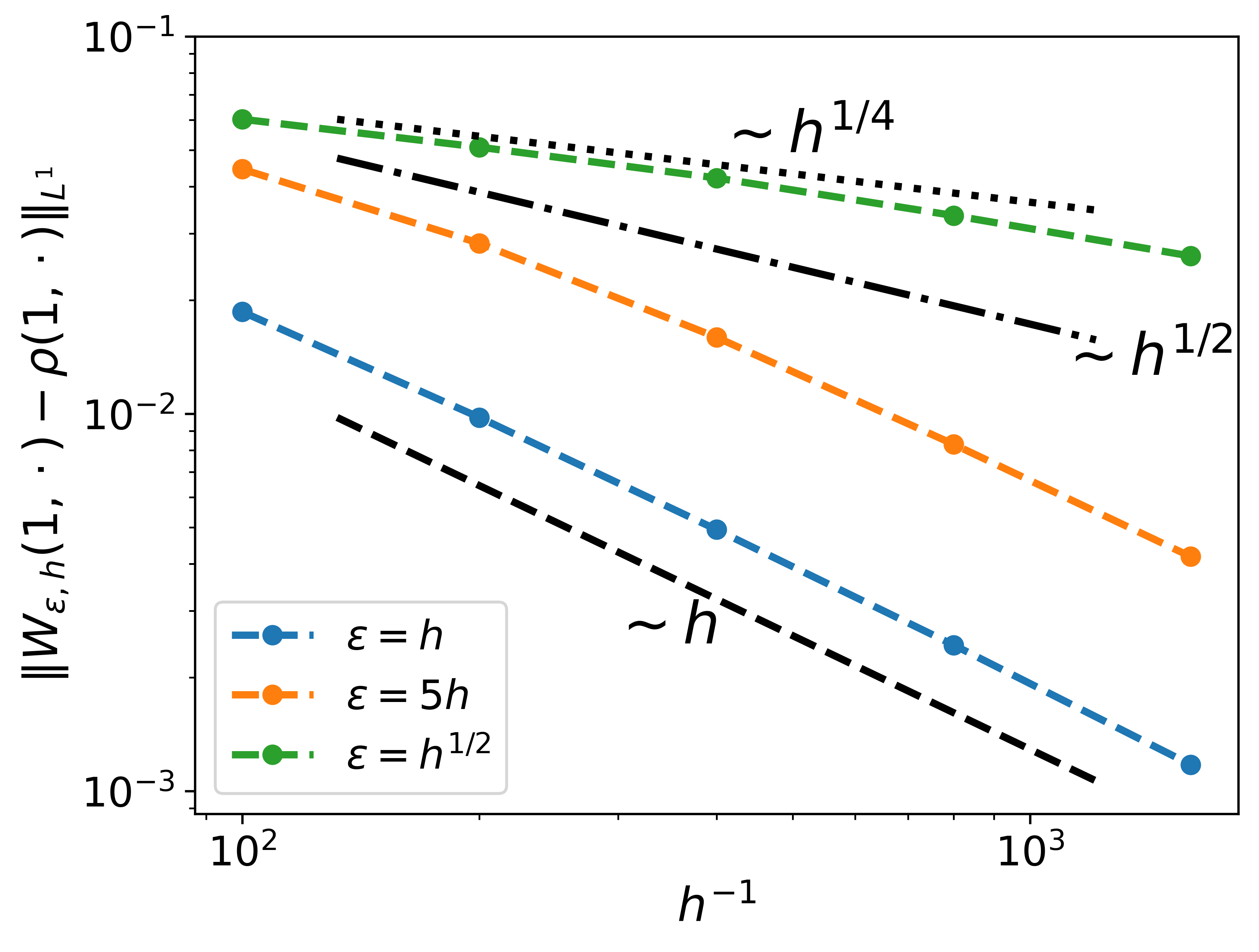}
	\end{subfigure}
	\begin{subfigure}{.32\textwidth}
	\includegraphics[width=\textwidth]{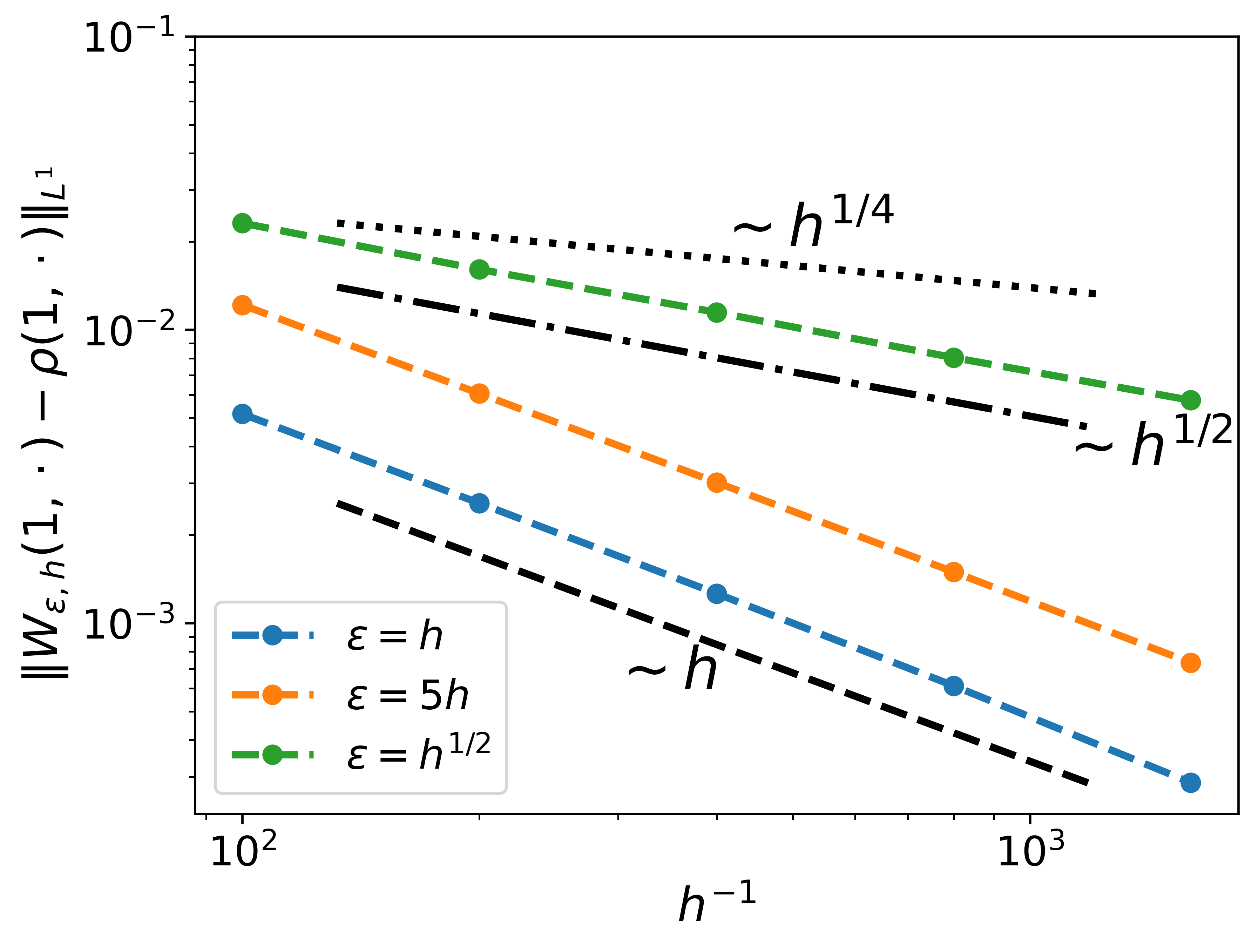}
	\end{subfigure}
	\begin{subfigure}{.32\textwidth}
	\includegraphics[width=\textwidth]{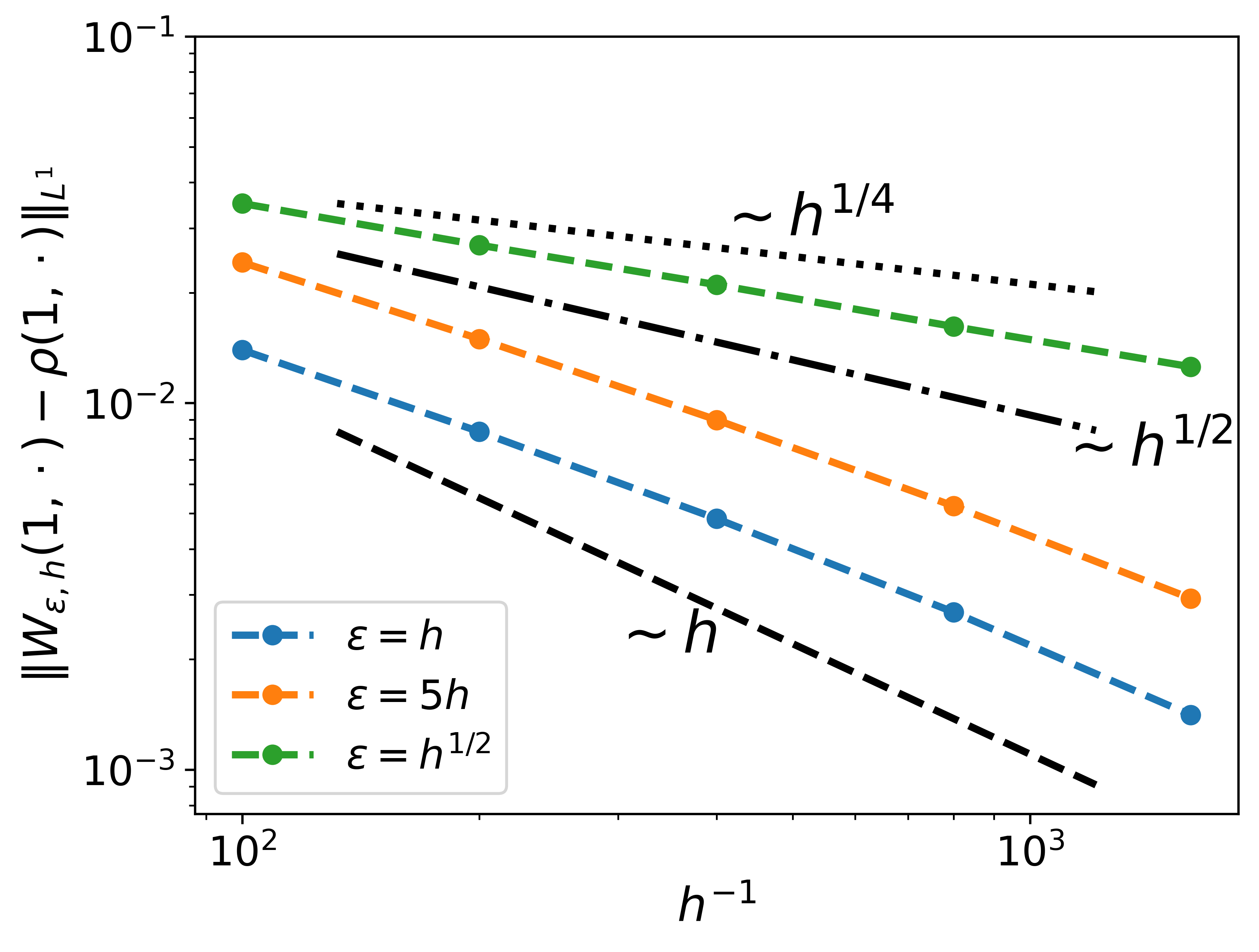}
	\end{subfigure}
    \begin{subfigure}{.32\textwidth}
	\includegraphics[width=\textwidth]{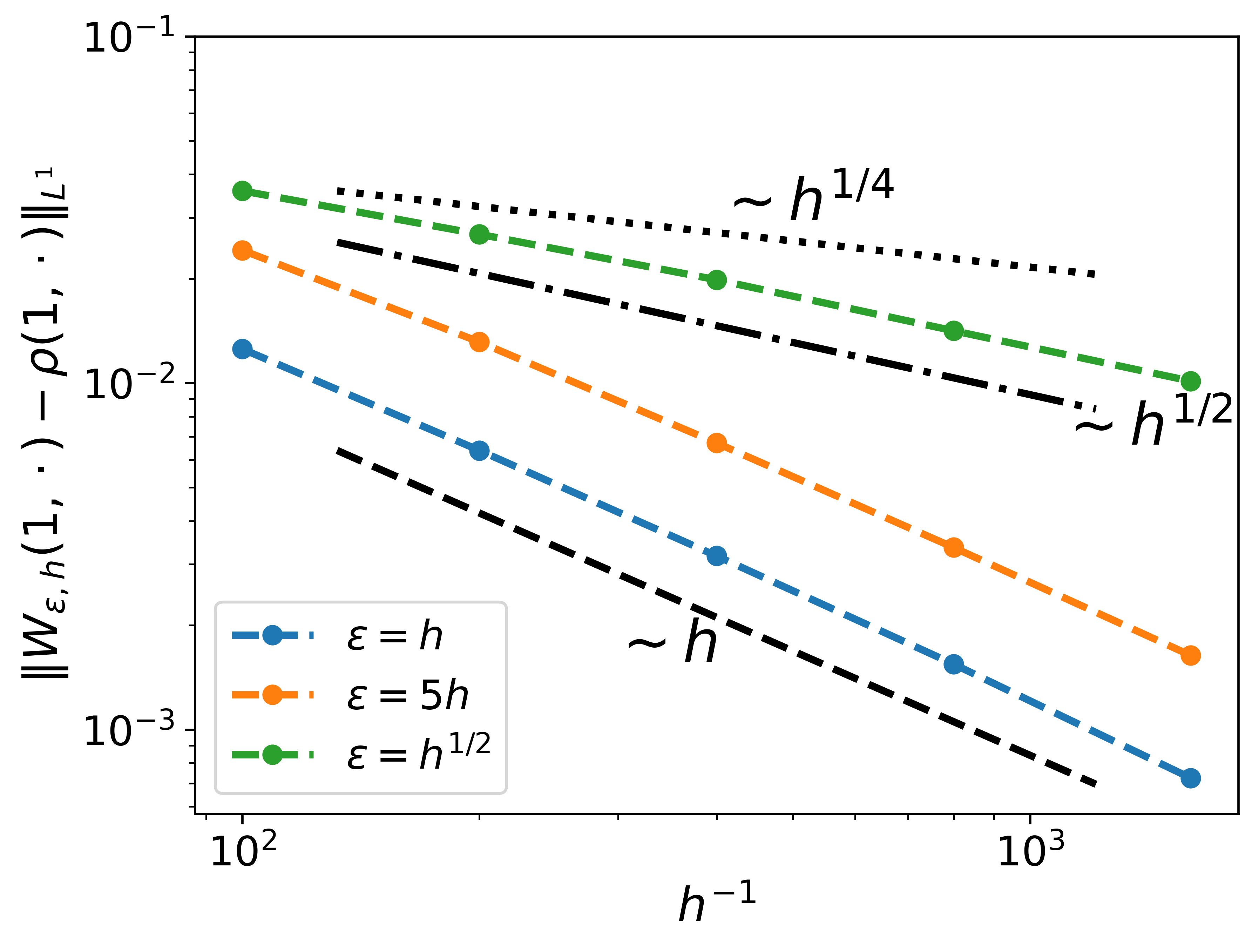}
	\end{subfigure}
	\begin{subfigure}{.32\textwidth}
	\includegraphics[width=\textwidth]{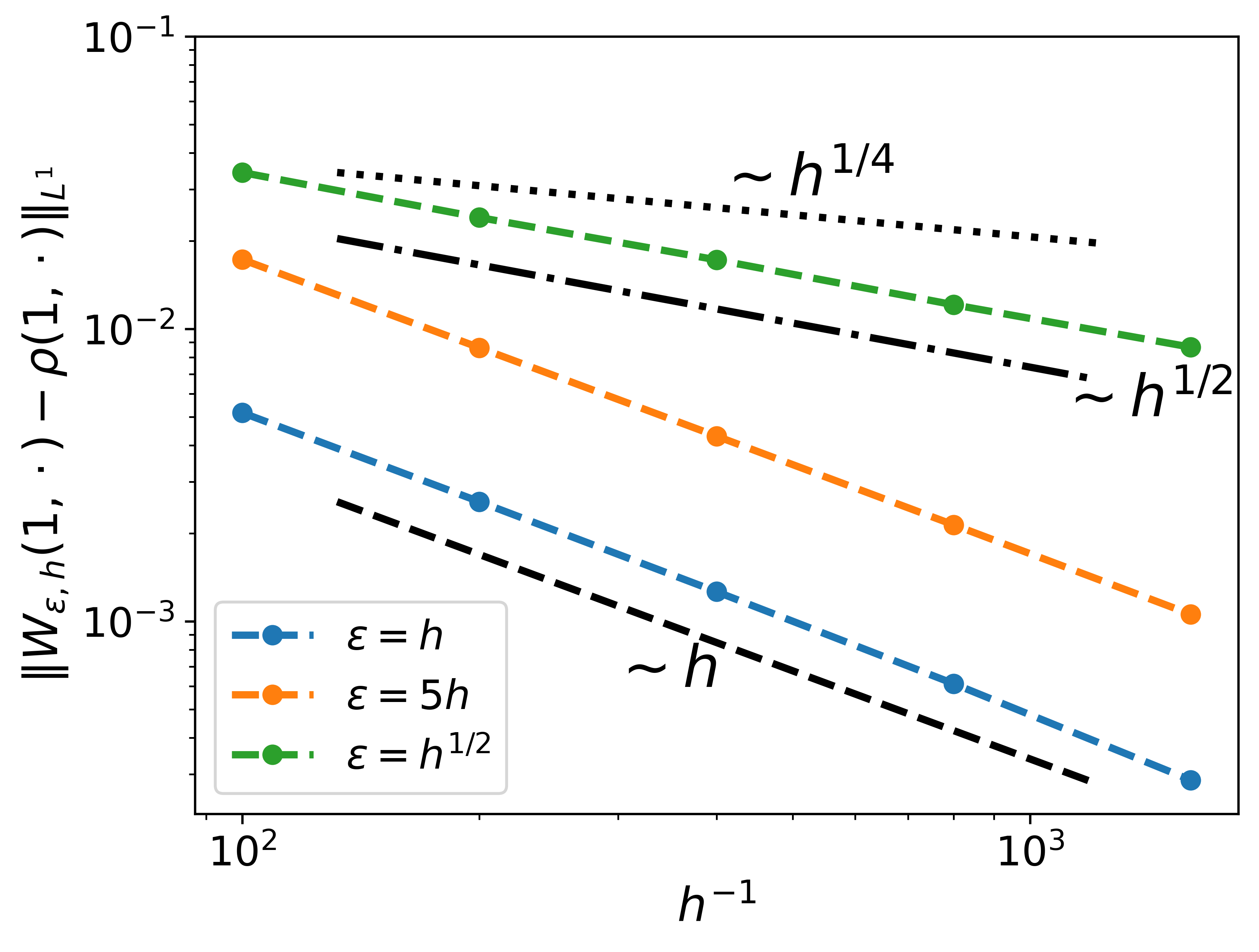}
	\end{subfigure}
	\begin{subfigure}{.32\textwidth}
	\includegraphics[width=\textwidth]{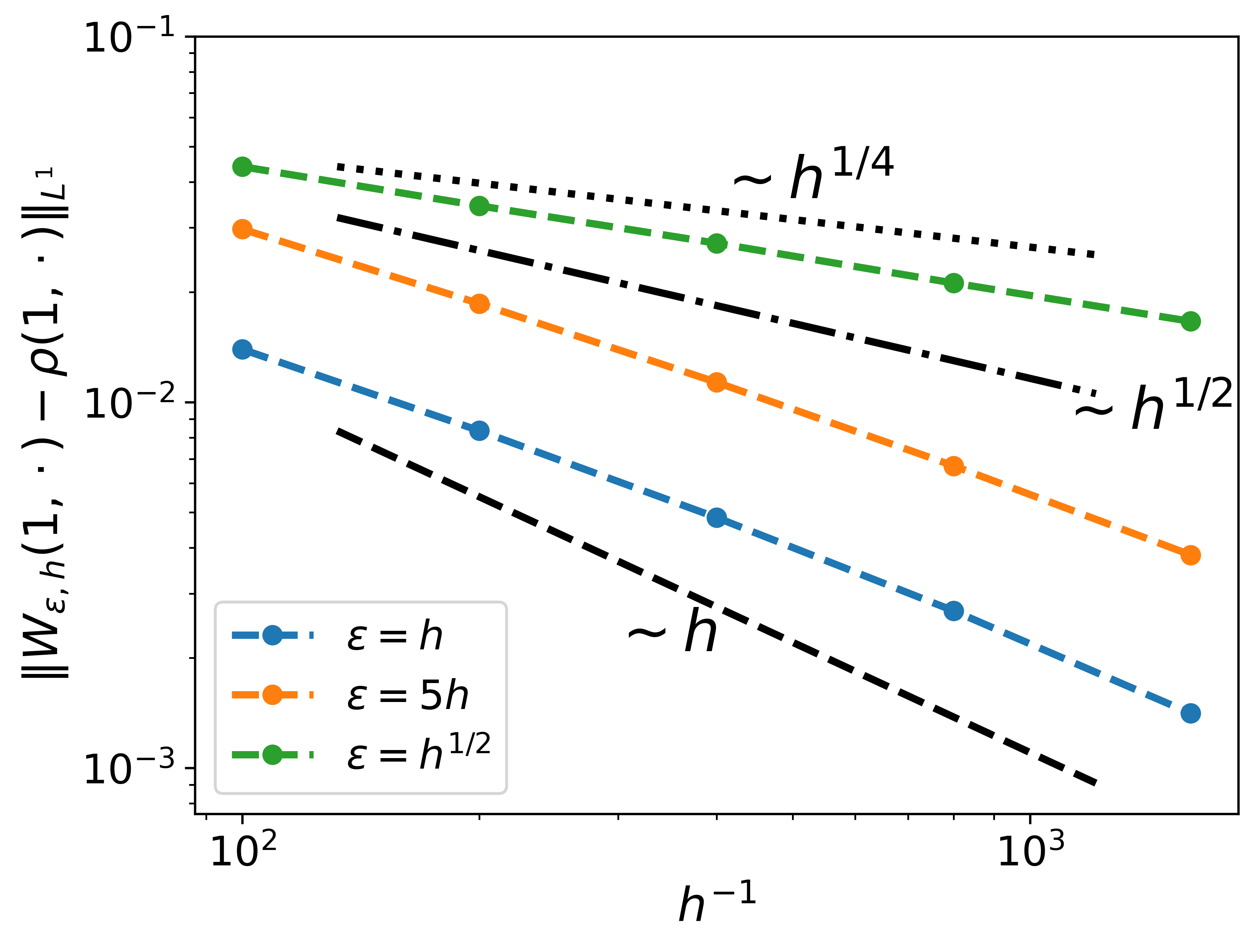}
	\end{subfigure}
    \begin{subfigure}{.32\textwidth}
	\includegraphics[width=\textwidth]{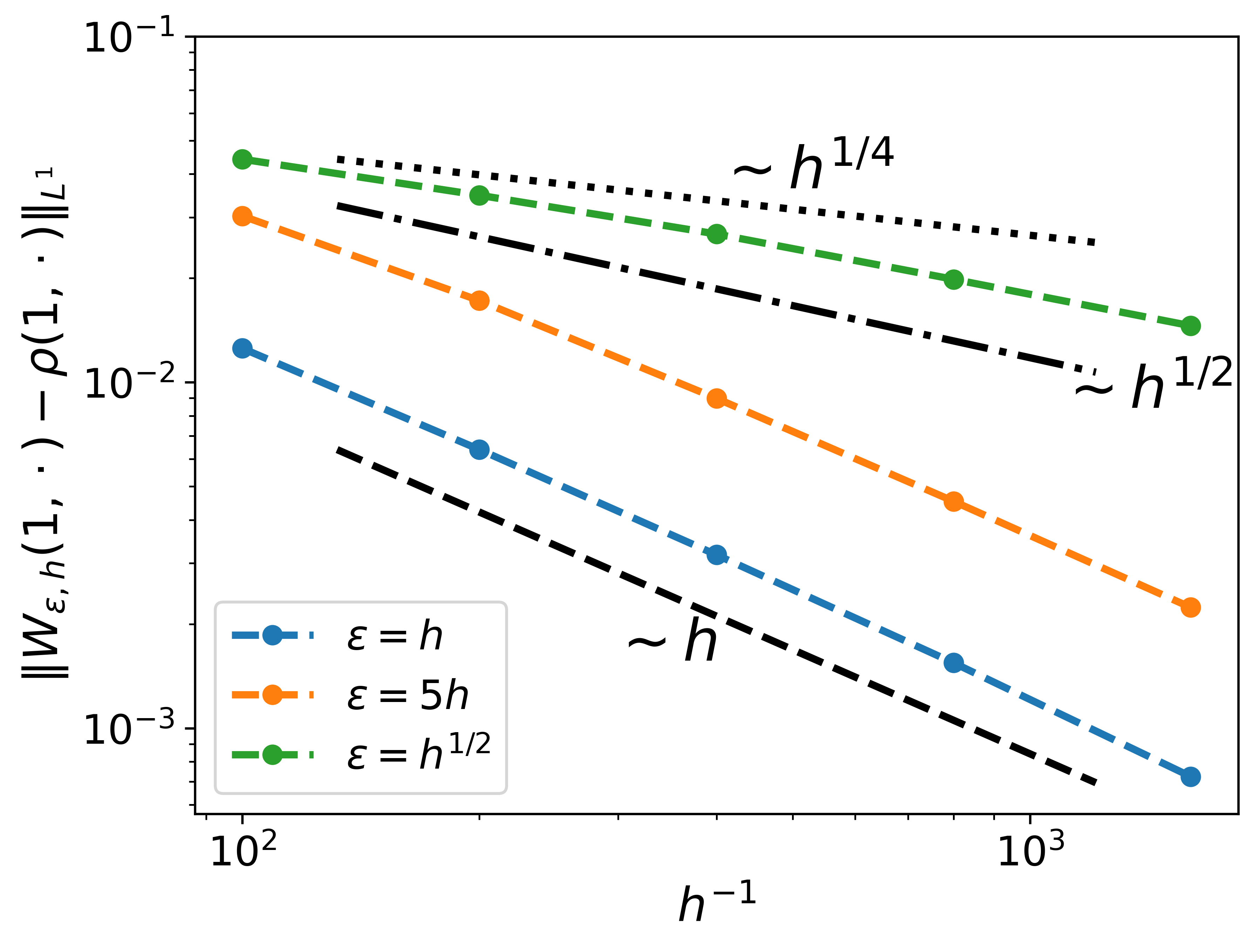}
	\end{subfigure}
	\caption{Convergence from $W_{\eps,h}$ to $\rho$ for the Riemann shock initial data \cref{eq:ini_riemann_shock} (\textsc{left}), the Riemann rarefaction initial data  \cref{eq:ini_riemann_rarefaction} (\textsc{middle}), and the bell-shaped initial data \cref{eq:ini_bellshape} (\textsc{right}) with the exponential kernel \cref{eq:gamma-expo} (\textsc{top}), the linear kernel \cref{eq:gamma-lin} (\textsc{middle}), and the constant kernel \cref{eq:gamma-const} (\textsc{bottom}).}
    \label{fig:exp_1}
\end{figure}

\begin{figure}[htbp]
\centering
	\begin{subfigure}{.32\textwidth}
	\includegraphics[width=\textwidth]{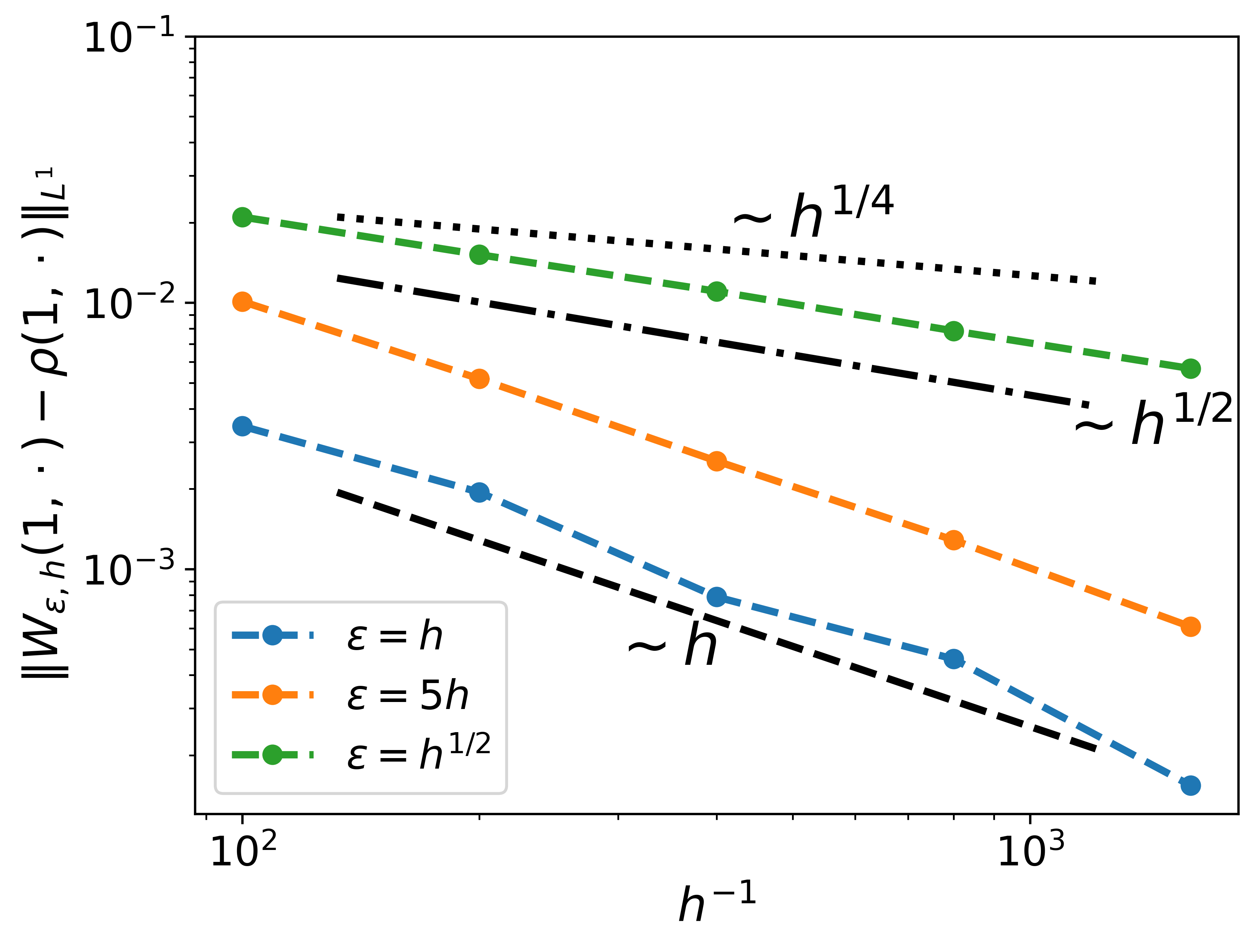}
	\end{subfigure}
	\begin{subfigure}{.32\textwidth}
	\includegraphics[width=\textwidth]{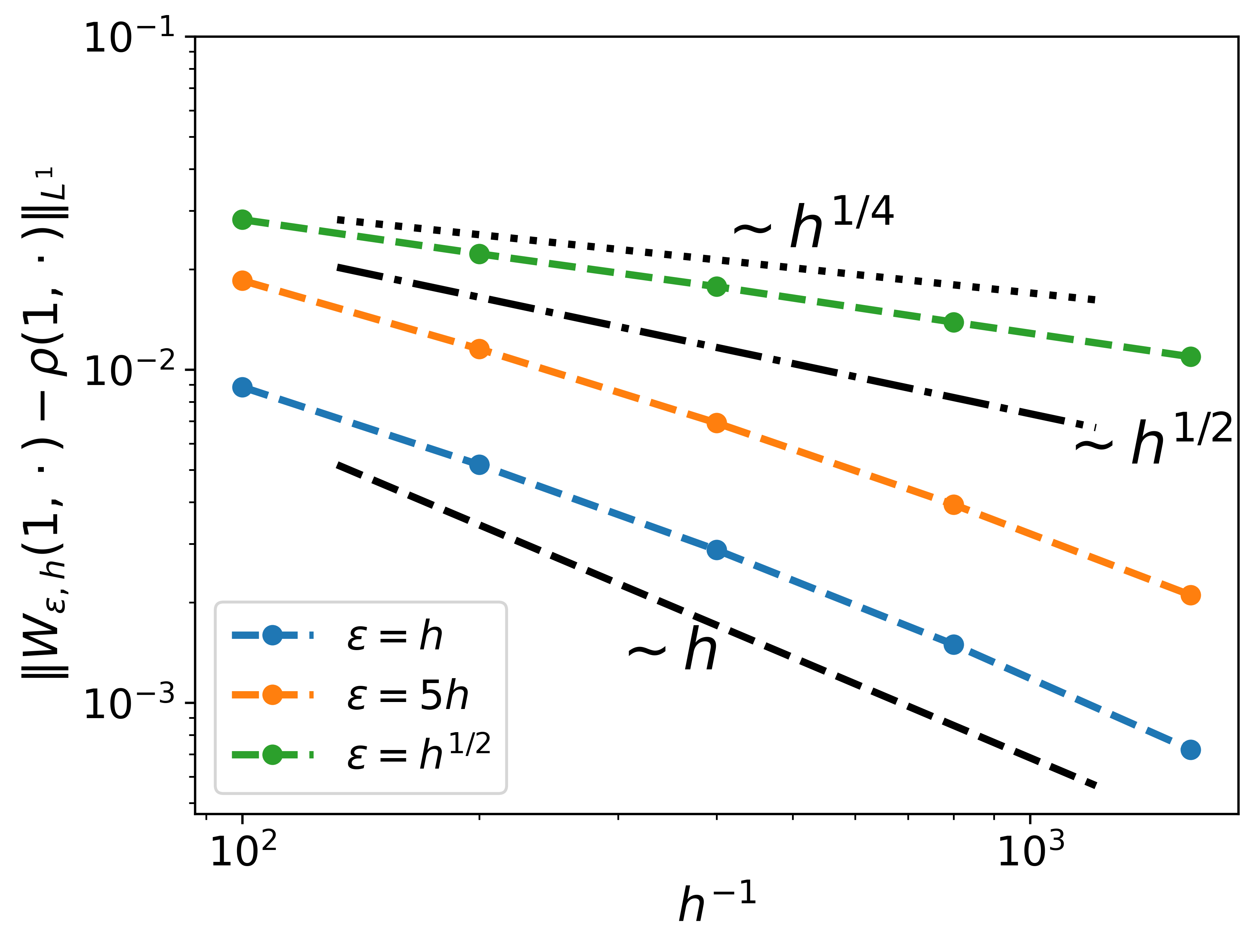}
	\end{subfigure}
	\begin{subfigure}{.32\textwidth}
	\includegraphics[width=\textwidth]{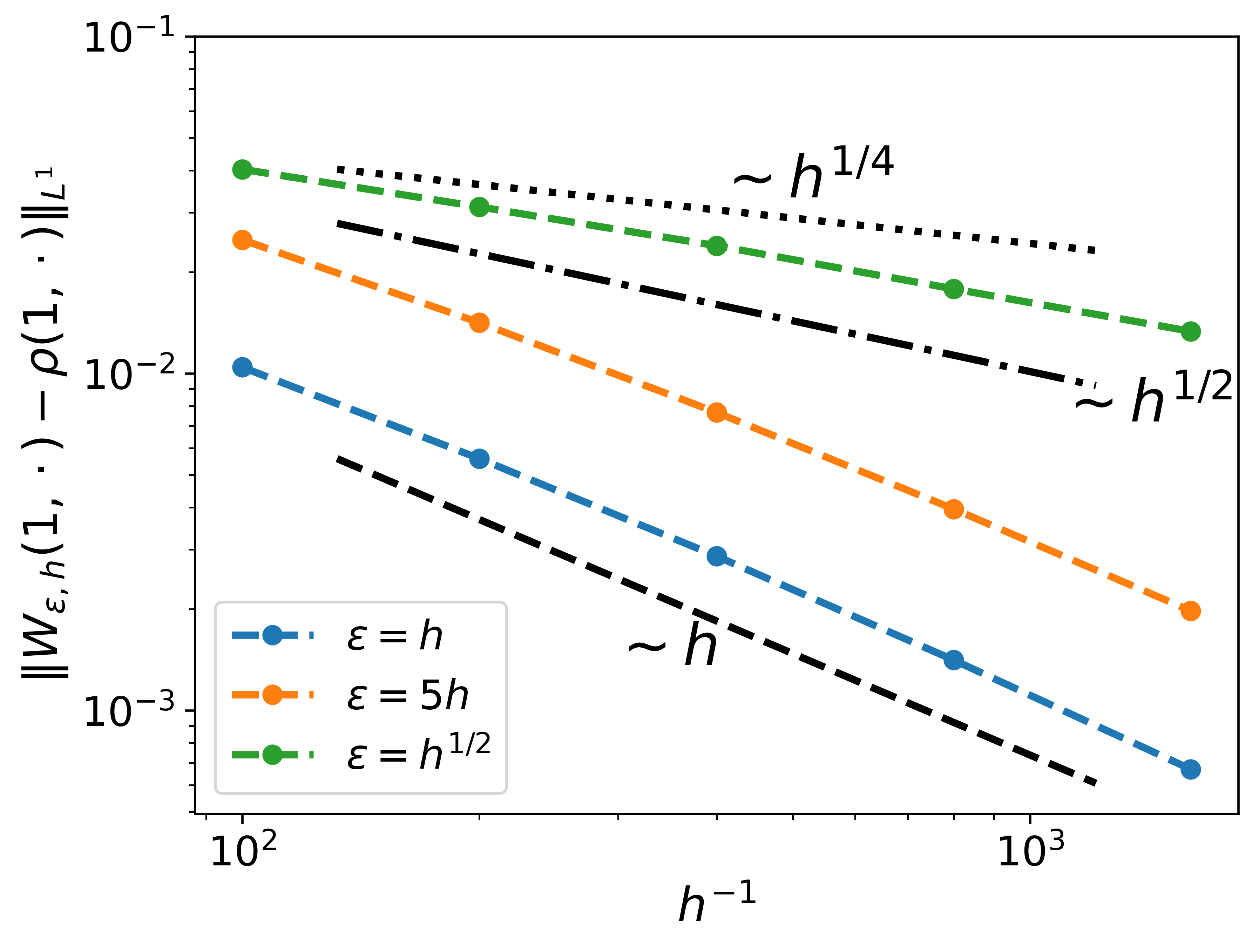}
	\end{subfigure}
	\begin{subfigure}{.32\textwidth}
	\includegraphics[width=\textwidth]{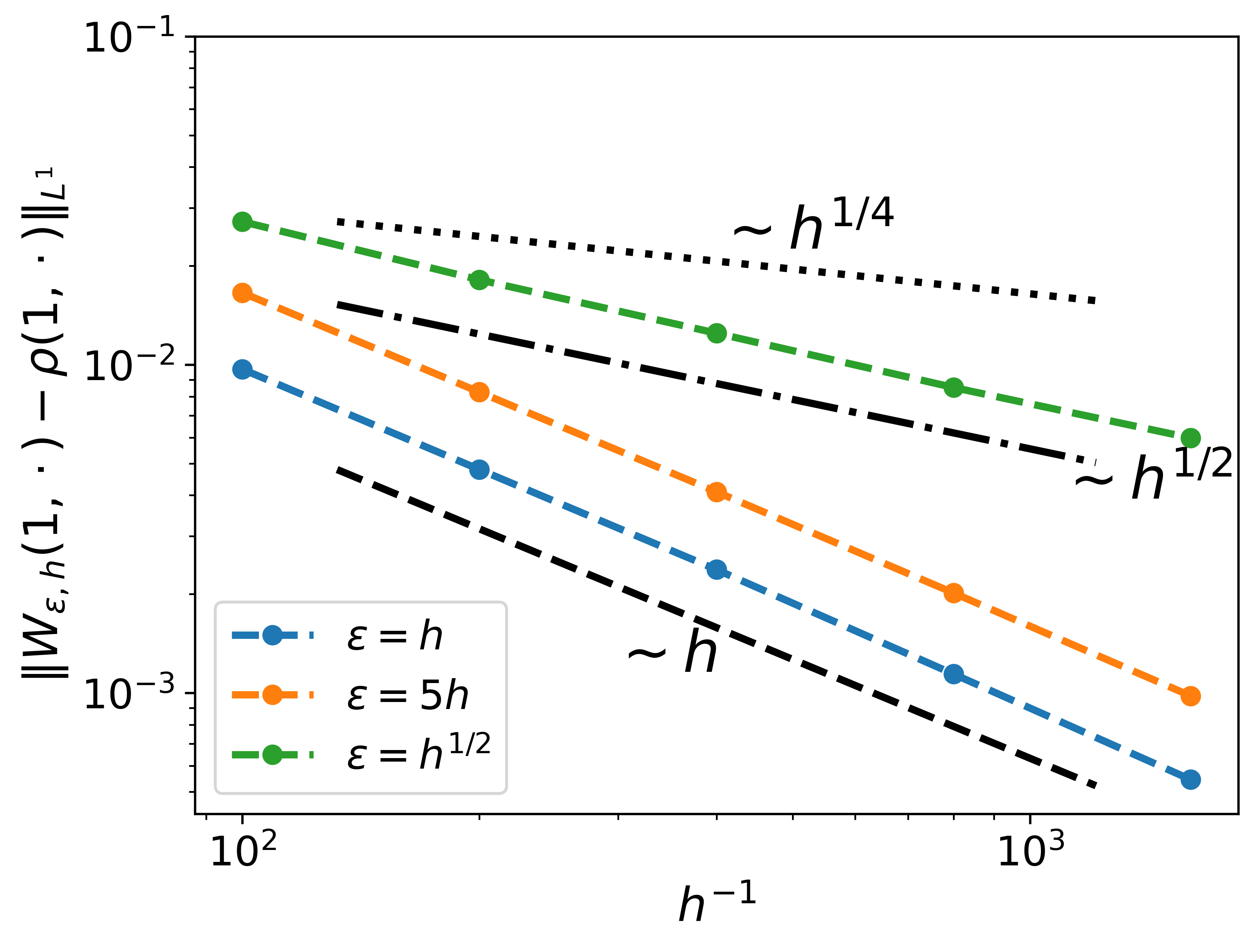}
	\end{subfigure}
	\begin{subfigure}{.32\textwidth}
	\includegraphics[width=\textwidth]{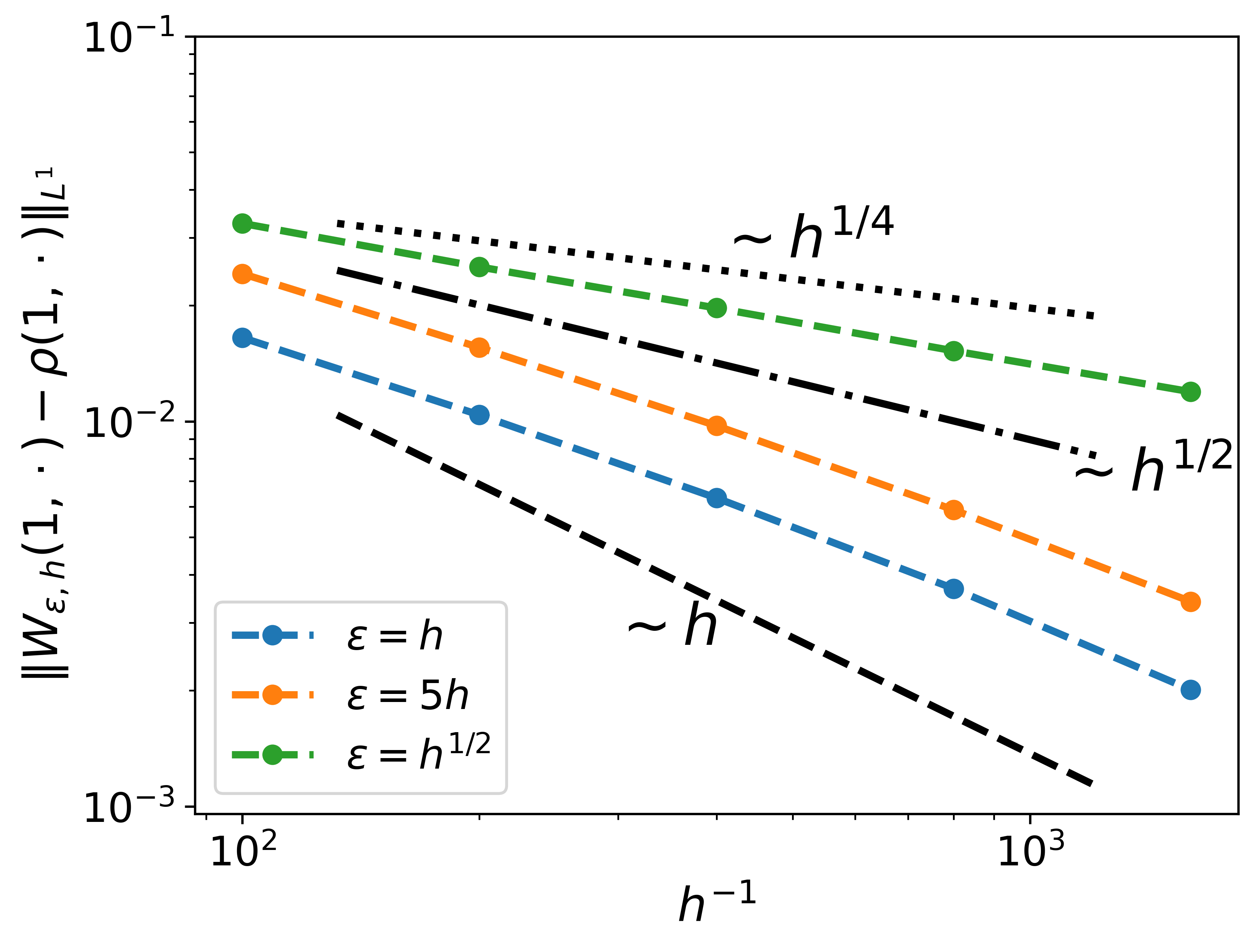}
	\end{subfigure}
	\begin{subfigure}{.32\textwidth}
	\includegraphics[width=\textwidth]{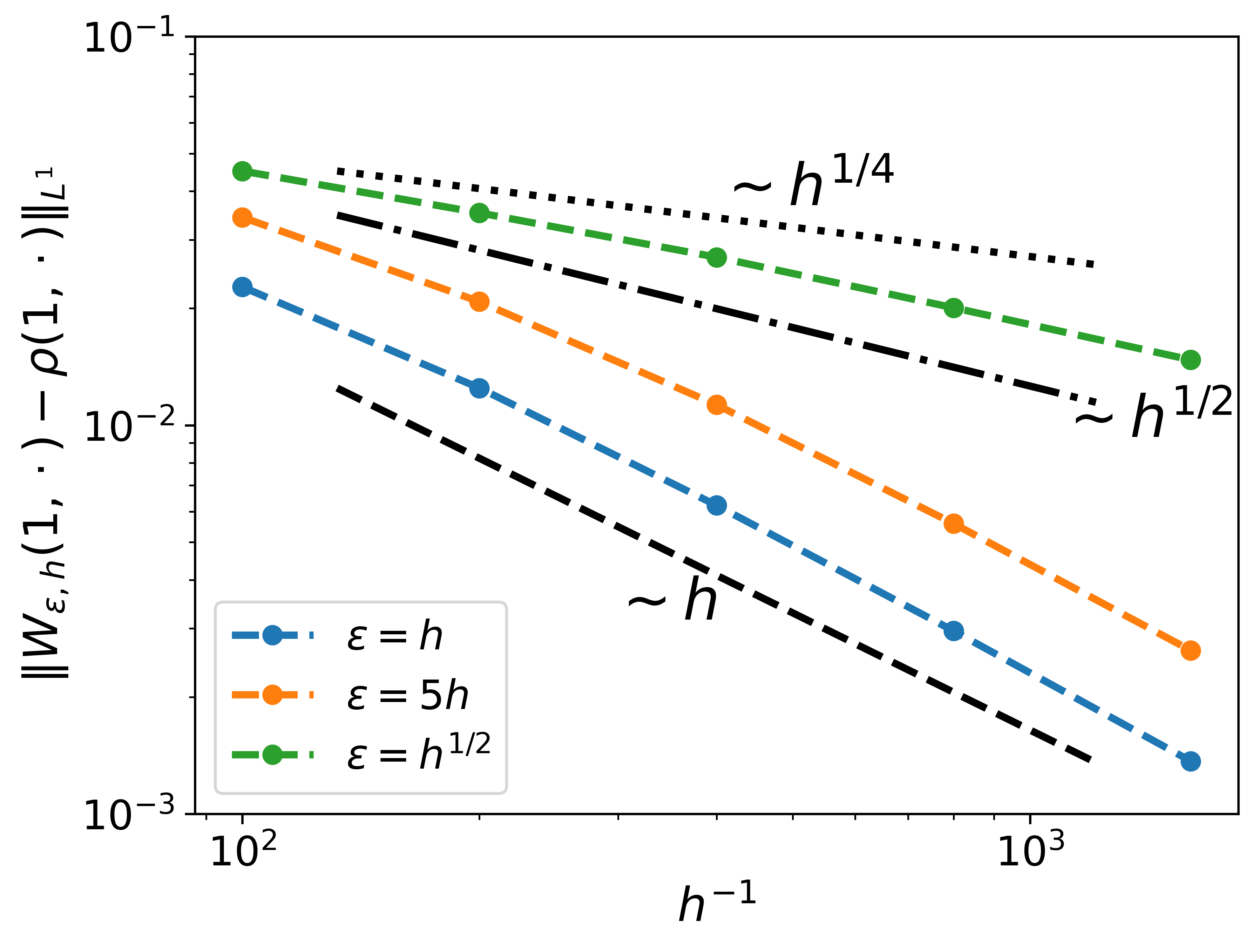}
	\end{subfigure}
	\caption{Convergence from $W_{\eps,h}$ to $\rho$ for the Riemann shock initial data \cref{eq:ini_riemann_shock} (\textsc{left}), the Riemann rarefaction initial data \cref{eq:ini_riemann_rarefaction} (\textsc{middle}), and the bell-shaped initial data \cref{eq:ini_bellshape} (\textsc{right}) with $V(\xi)=(1-\xi)^4$ (\textsc{top}) and $V(\xi)=\exp(-\xi)$ (\textsc{bottom}) for the velocity function.}
    \label{fig:exp_3}
\end{figure}

\begin{example}
In this experiment, we evaluate the impact of alternative quadrature weights beyond the exact ones.
Specifically, we use the linear kernel \cref{eq:gamma-lin} and assess the Riemann quadrature weights $\tilde{\gamma}^{\eps,h}_k$ and the normalized Riemann quadrature weights $\gamma^{\eps,h}_k$ defined in \cref{ex:Riemann}, where the former sums to $1+\frac{h}\eps$ and the latter to 1.
In \cref{fig:exp_2}, we present convergence plots akin to \cref{fig:exp_1} across the initial data \crefrange{eq:ini_riemann_shock}{eq:ini_bellshape} for these weights. Since the Riemann quadrature weights $\tilde{\gamma}^{\eps,h}_k$ fail the normalization condition \cref{ass:normalize} and may lead $W_{\eps,h}$ to violate the maximum principle, we adopt $V(\xi)\coloneqq(1-\xi)^+$ to prevent negative velocities.
\end{example}

The top row of \cref{fig:exp_2} reveals that, for Riemann quadrature weights $\tilde{\gamma}^{\varepsilon,h}_k$, the error of $W_{\varepsilon,h}$ stagnates along $\varepsilon=h$ and $\varepsilon=5h$, as $\sum_{k=0}^\infty \tilde{\gamma}^{\varepsilon,h}_k$ equals 2 and 1.2, respectively, failing the normalization condition \cref{ass:normalize} and leading to persistent overestimates of the nonlocal quantity $W$ in \cref{eq:num_nonlocal_W}, thus preventing convergence. For $\varepsilon=\sqrt{h}$ with $\sum_{k=0}^\infty \tilde{\gamma}^{\varepsilon,h}_k=1+\sqrt{h}$, the violation of the normalization condition with magnitude $\sqrt{h}$ decays to zero as $h\searrow0$, leading to an error decay rate of $\sqrt{h}$ for $W_{\varepsilon,h}$ as $h\searrow0$. The bottom row shows that, for normalized Riemann quadrature weights $\gamma^{\varepsilon,h}_k$, convergence rates align with those in \cref{fig:exp_1} using exact quadrature weights, confirming the sufficiency of the normalization condition. These findings align with \cite{MR4742183}, underscoring the critical role of the normalization condition in ensuring asymptotic compatibility. Moreover, they indicate that conditional asymptotic compatibility may hold if the violation of the normalization condition vanishes along specific limiting paths.

\begin{figure}[htbp]
\centering
    \begin{subfigure}{.32\textwidth}
	\includegraphics[width=\textwidth]{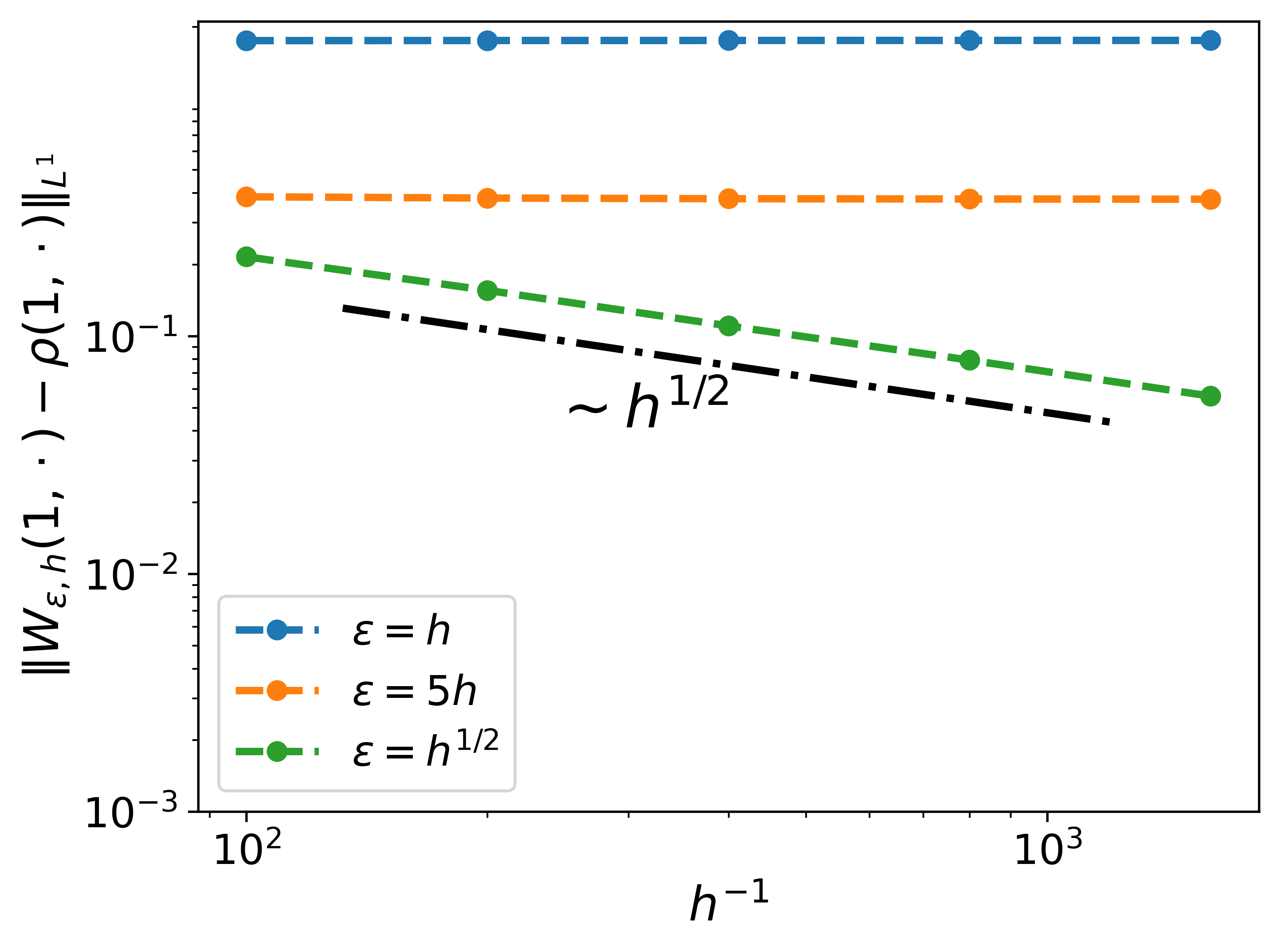}
	\end{subfigure}
	\begin{subfigure}{.32\textwidth}
	\includegraphics[width=\textwidth]{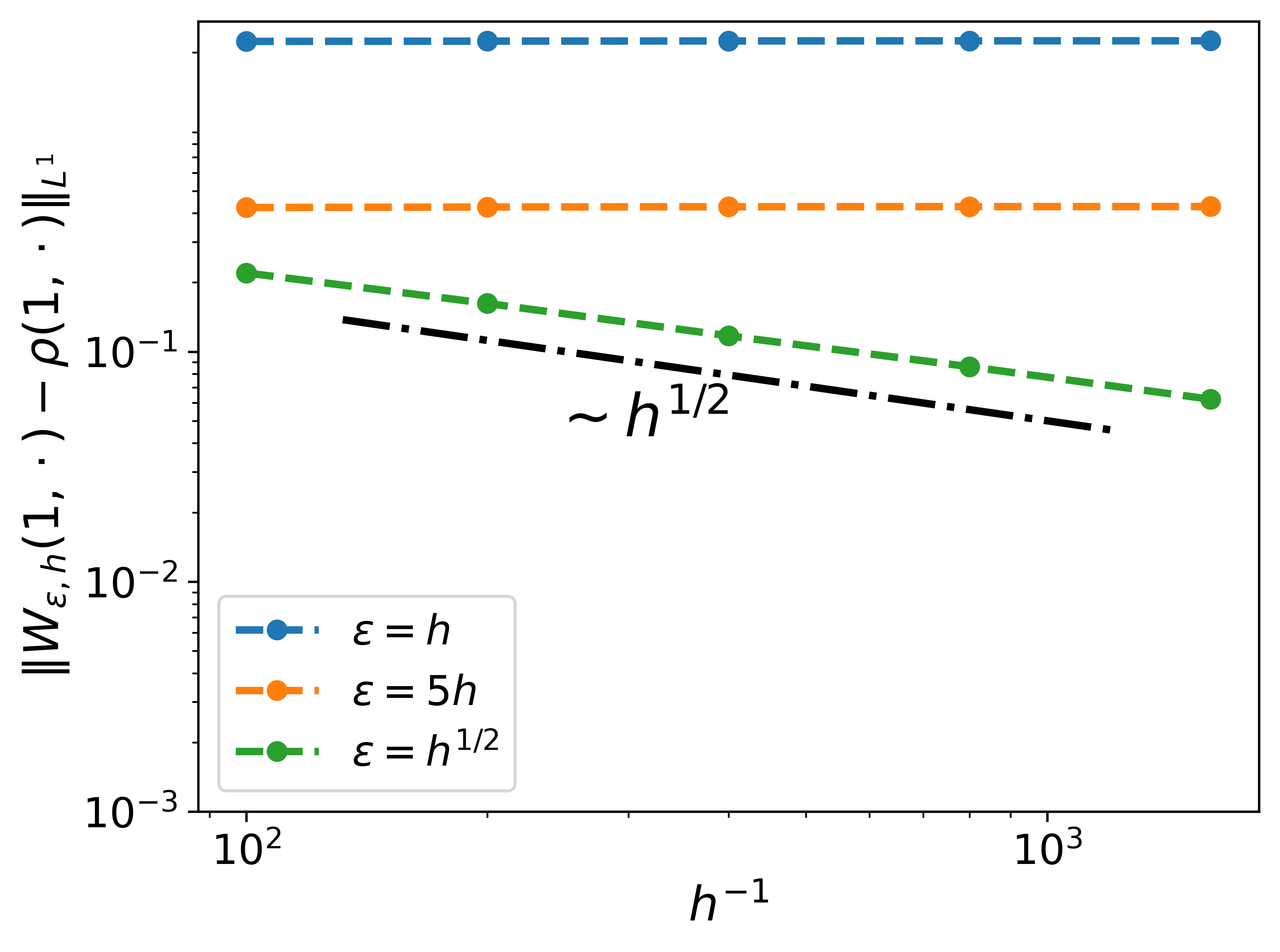}
	\end{subfigure}
    \begin{subfigure}{.32\textwidth}
	\includegraphics[width=\textwidth]{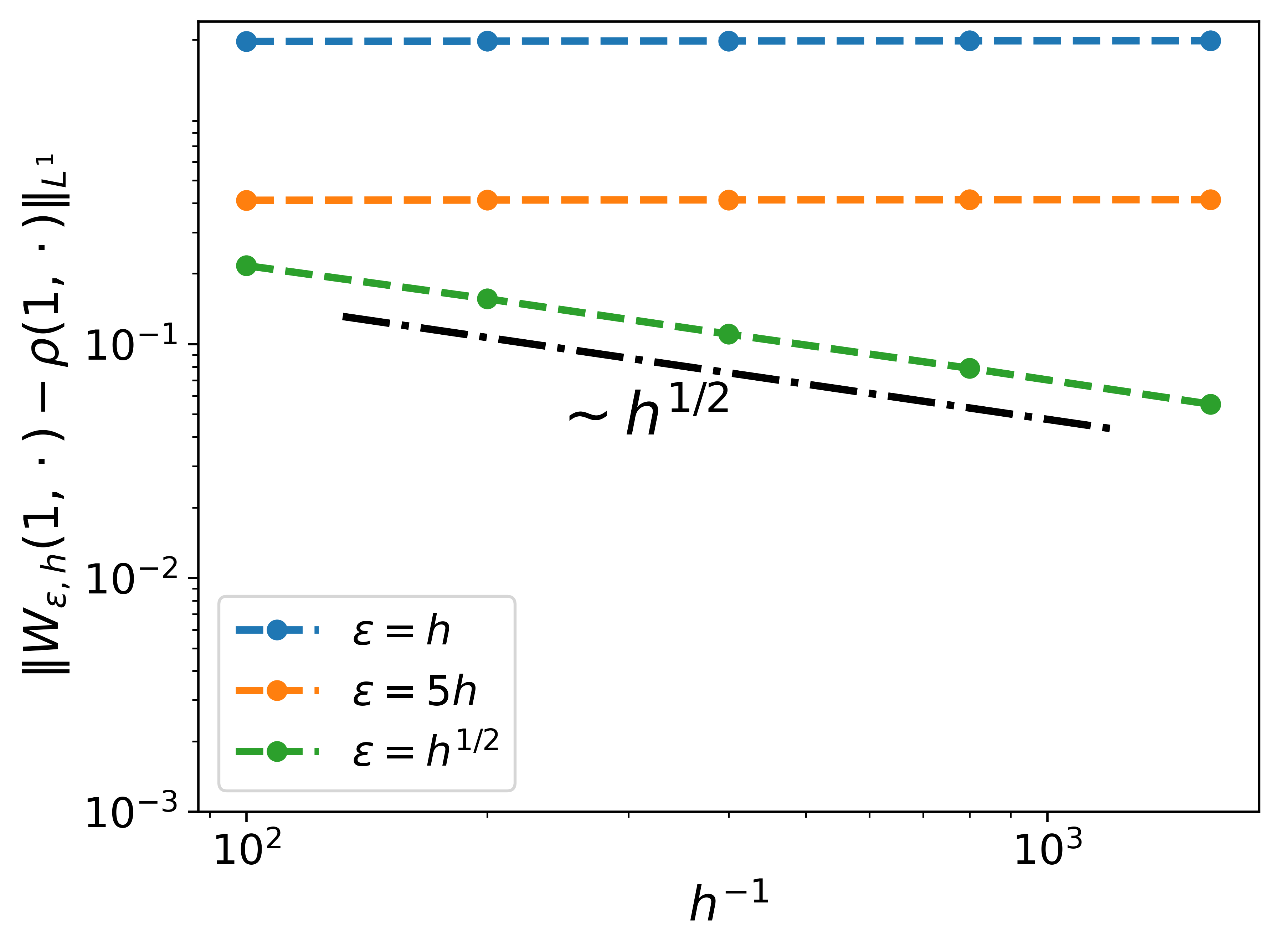}
	\end{subfigure}
    \begin{subfigure}{.32\textwidth}
	\includegraphics[width=\textwidth]{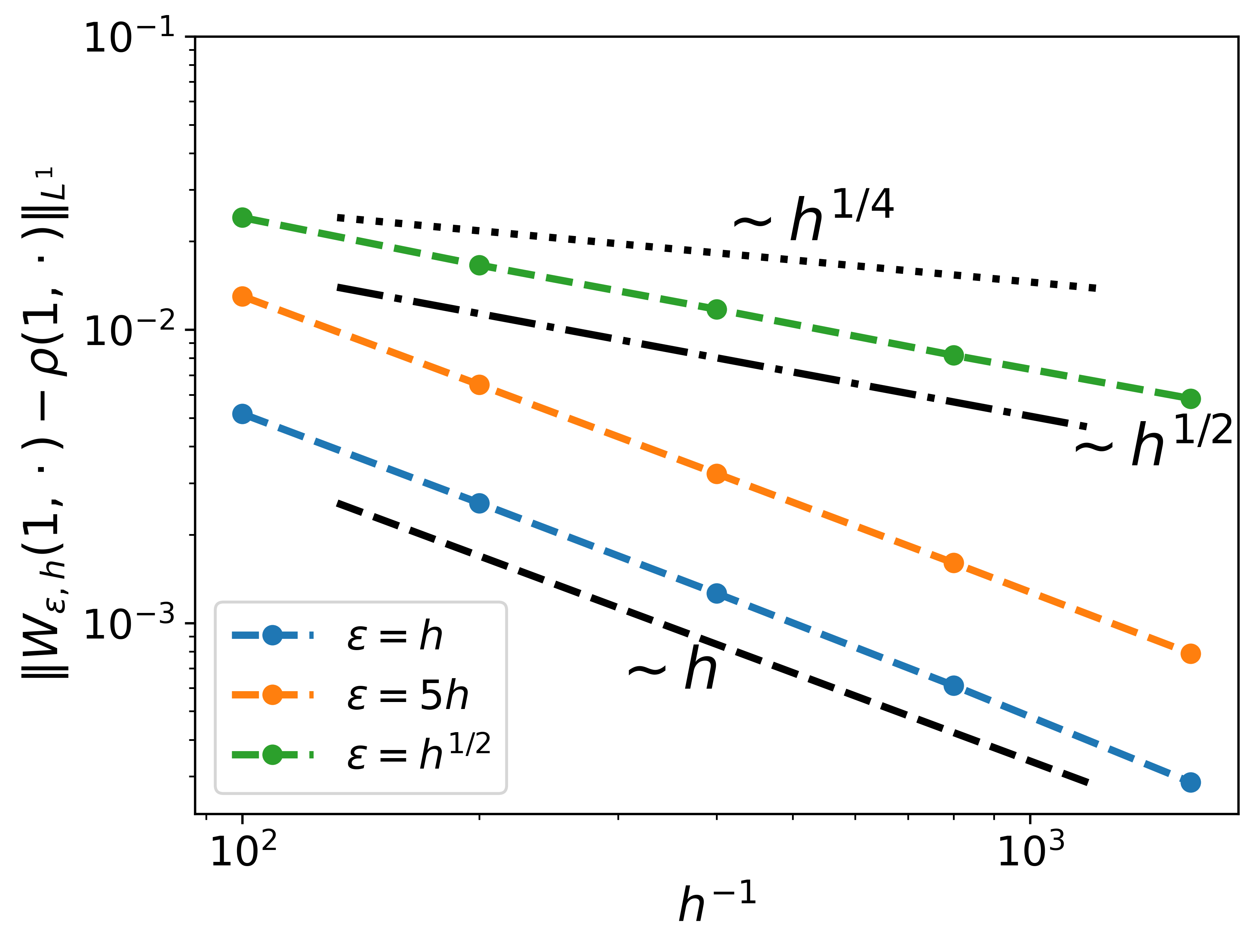}
	\end{subfigure}
	\begin{subfigure}{.32\textwidth}
	\includegraphics[width=\textwidth]{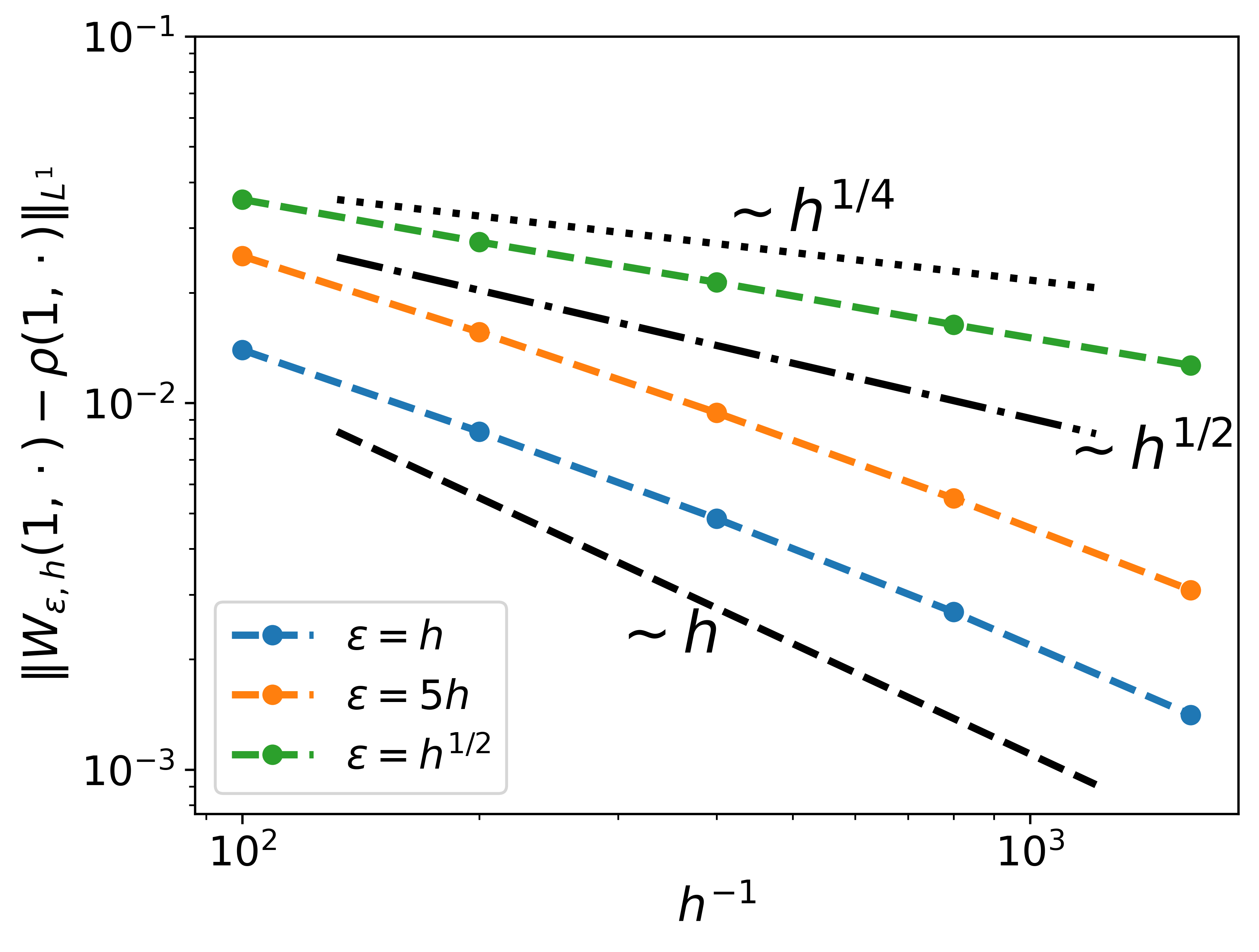}
	\end{subfigure}
    \begin{subfigure}{.32\textwidth}
	\includegraphics[width=\textwidth]{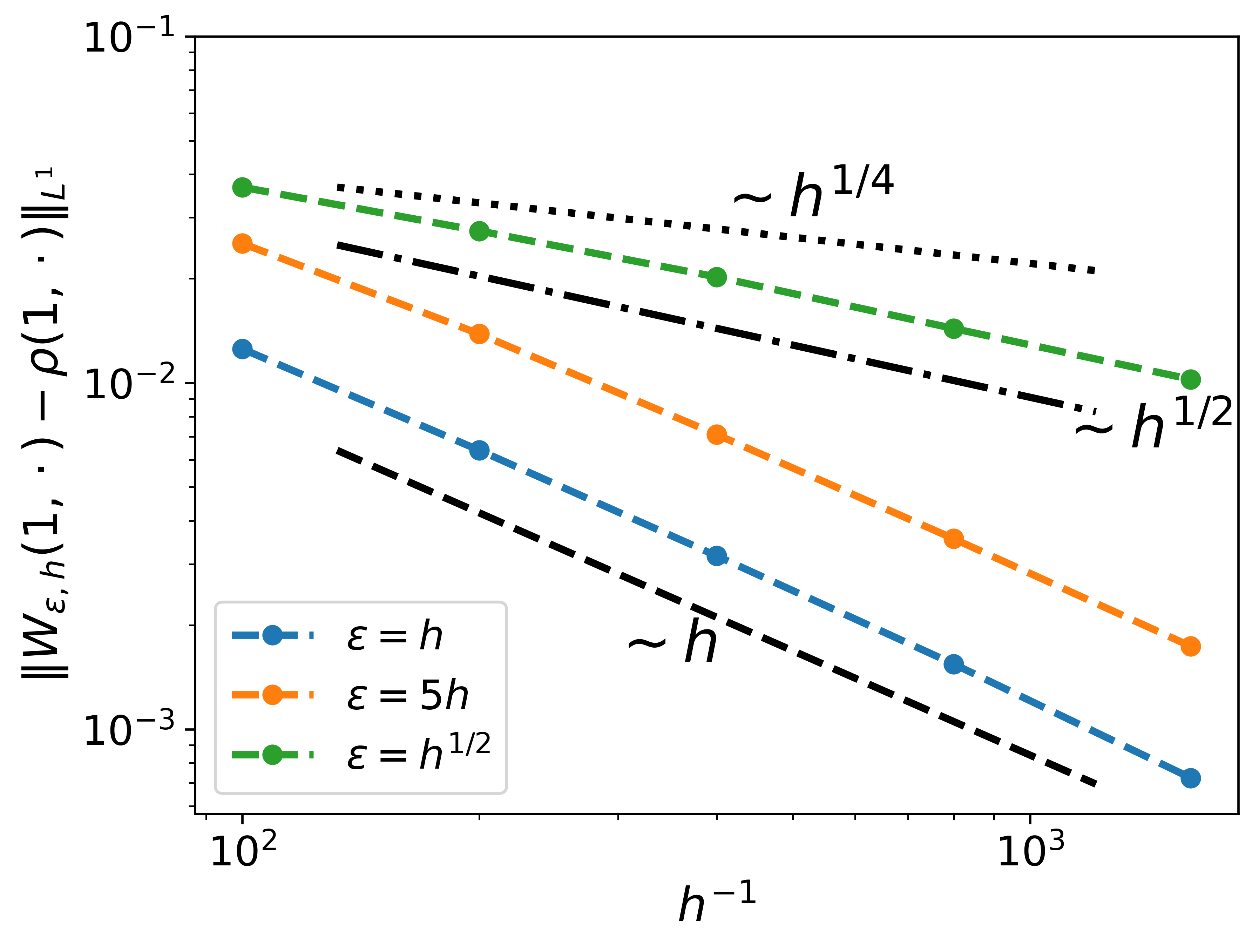}
	\end{subfigure}
	\caption{Convergence from $W_{\eps,h}$ to $\rho$ for the Riemann shock initial data \cref{eq:ini_riemann_shock} (\textsc{left}), the Riemann rarefaction initial data \cref{eq:ini_riemann_rarefaction} (\textsc{middle}), and the bell-shaped initial data \cref{eq:ini_bellshape} (\textsc{right}) with the Riemann quadrature weights $\tilde{\gamma}^{\eps,h}_k$ (\textsc{top}) and the normalized Riemann quadrature weights $\gamma^{\eps,h}_k$ (\textsc{bottom}) defined in \cref{ex:Riemann}.}
    \label{fig:exp_2}
\end{figure}

\begin{example}
Finally, we examine the convergence from $\rho_{\eps,h}$ to $\rho$ using the initial data \crefrange{eq:ini_riemann_shock}{eq:ini_bellshape} and nonlocal kernels \crefrange{eq:gamma-expo}{eq:gamma-const}. In \cref{fig:exp_4}, we present convergence plots akin to \cref{fig:exp_1}.
\end{example}

The convergence behaviors in \cref{fig:exp_4} mirror those in \cref{fig:exp_1}, suggesting that the convergence result in \cref{th:main-2} may apply to $\rho_{\varepsilon,h}$ for a broader range of kernels, despite the theoretical result applying only to the exponential kernel (cf.~\cref{th:main-1}). Moreover, with the Riemann shock initial data \cref{eq:ini_riemann_shock} and along $\eps=\sqrt{h}$, the convergence rate from $\rho_{\eps,h}$ to $\rho$ exceeds $\sqrt{h}$, implying convergence rates possibly surpassing $\eps+h$ in certain regimes despite the scheme's first-order nature.

We defer the investigation of diverse convergence behaviors across different initial data, nonlocal kernels, limiting paths, and transitions from $W_{\varepsilon,h}$ to $\rho_{\varepsilon,h}$ to future research.

\begin{figure}[htbp]
\centering
    \begin{subfigure}{.32\textwidth}
	\includegraphics[width=\textwidth]{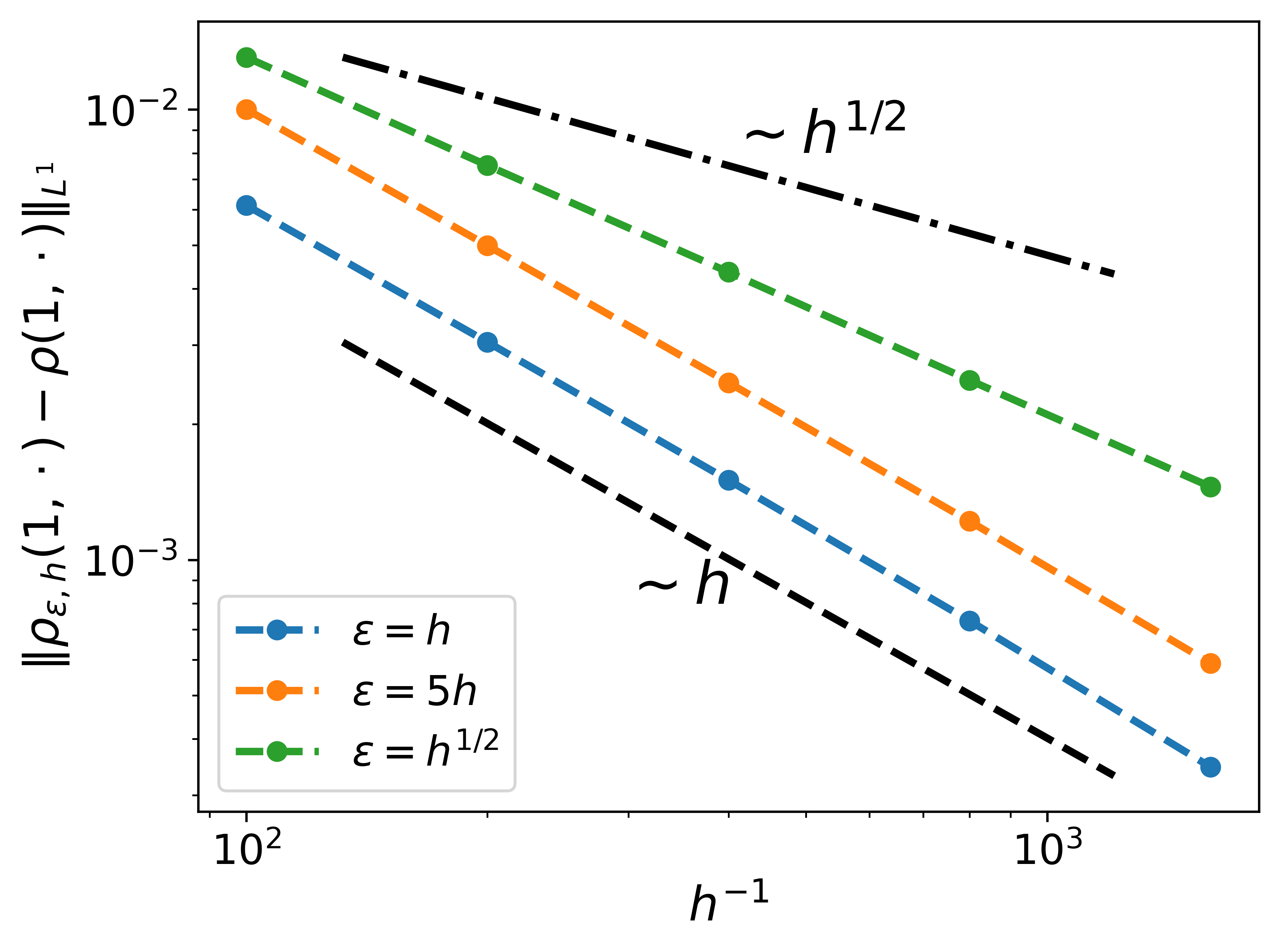}
	\end{subfigure}
	\begin{subfigure}{.32\textwidth}
	\includegraphics[width=\textwidth]{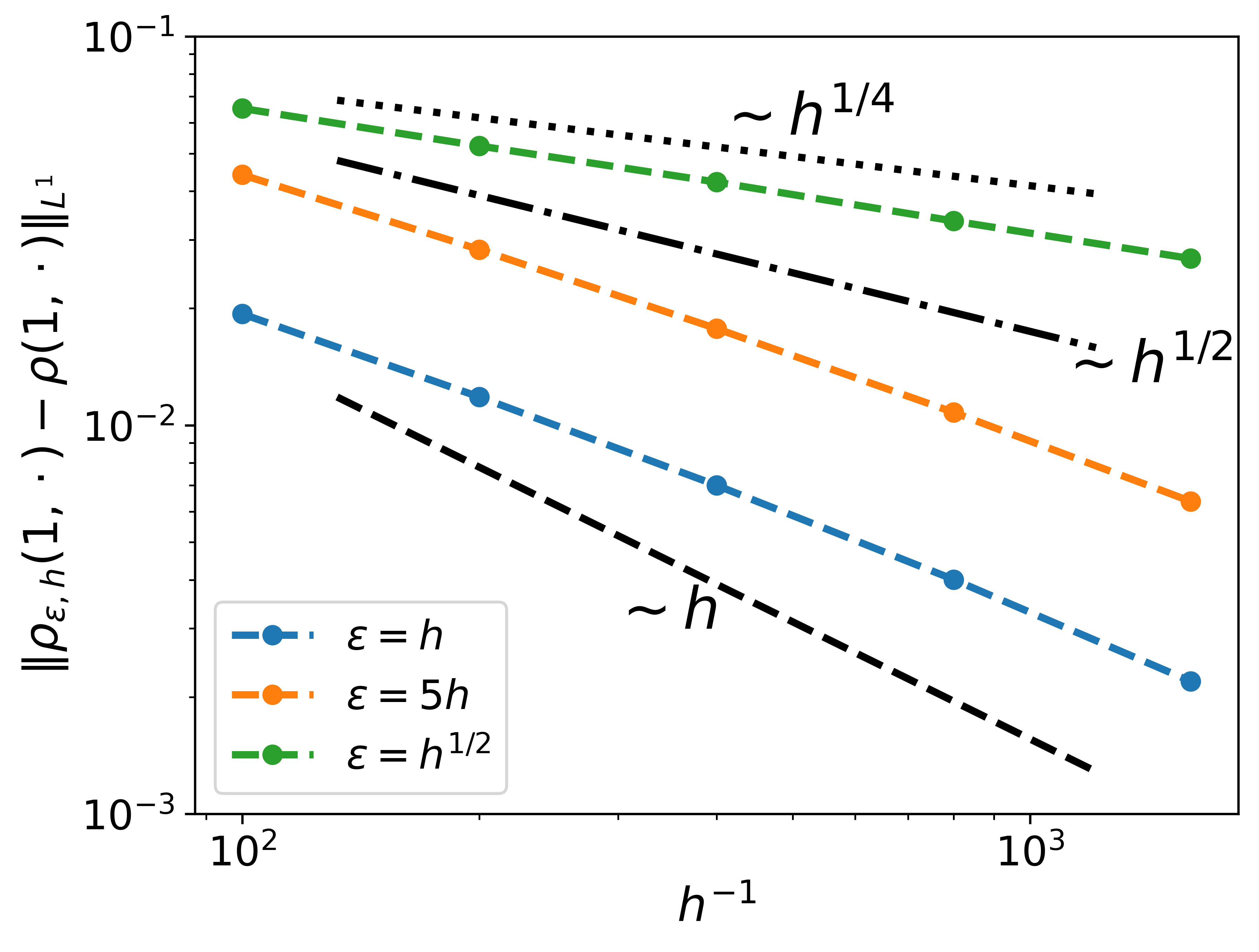}
	\end{subfigure}
    \begin{subfigure}{.32\textwidth}
	\includegraphics[width=\textwidth]{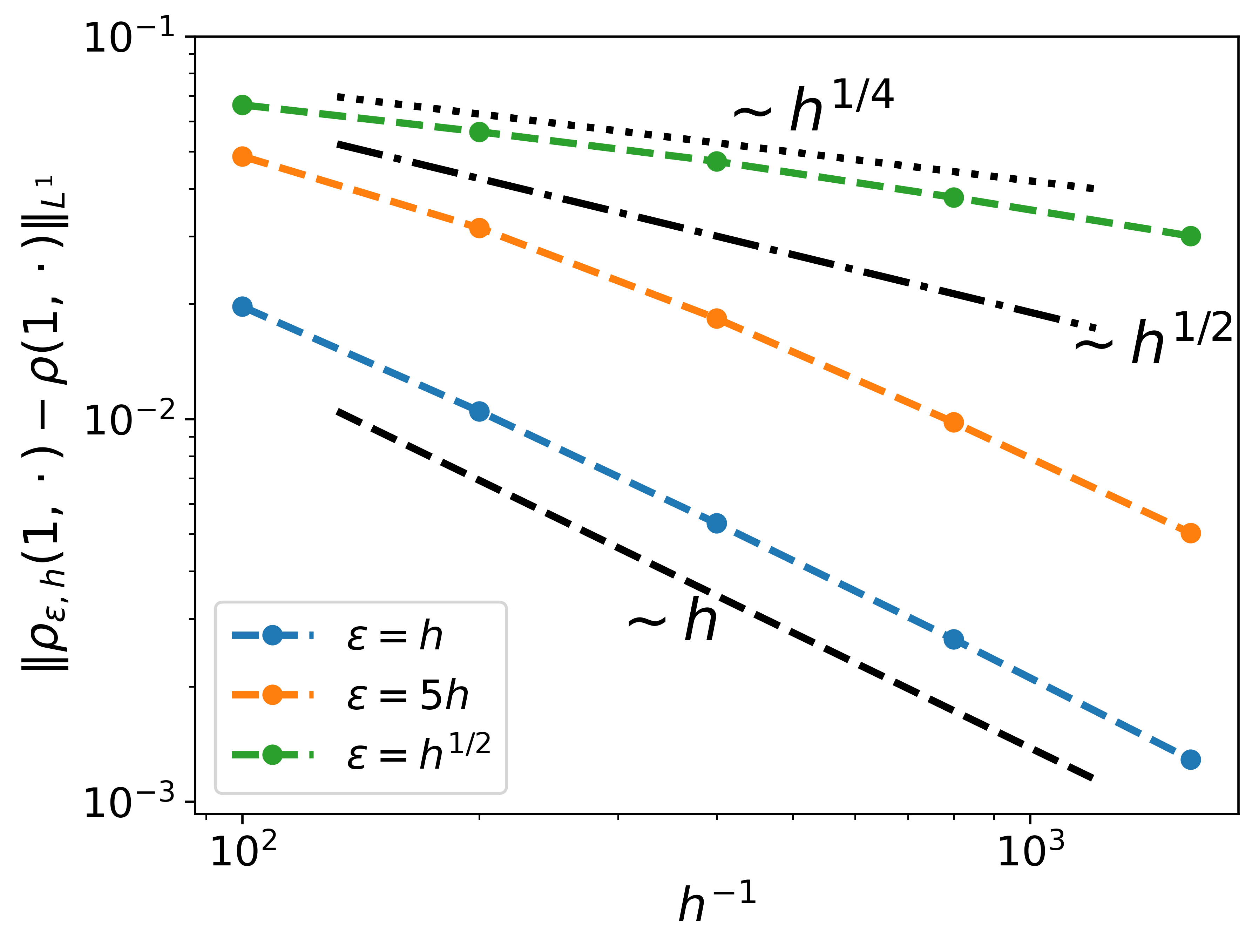}
	\end{subfigure}
	\begin{subfigure}{.32\textwidth}
	\includegraphics[width=\textwidth]{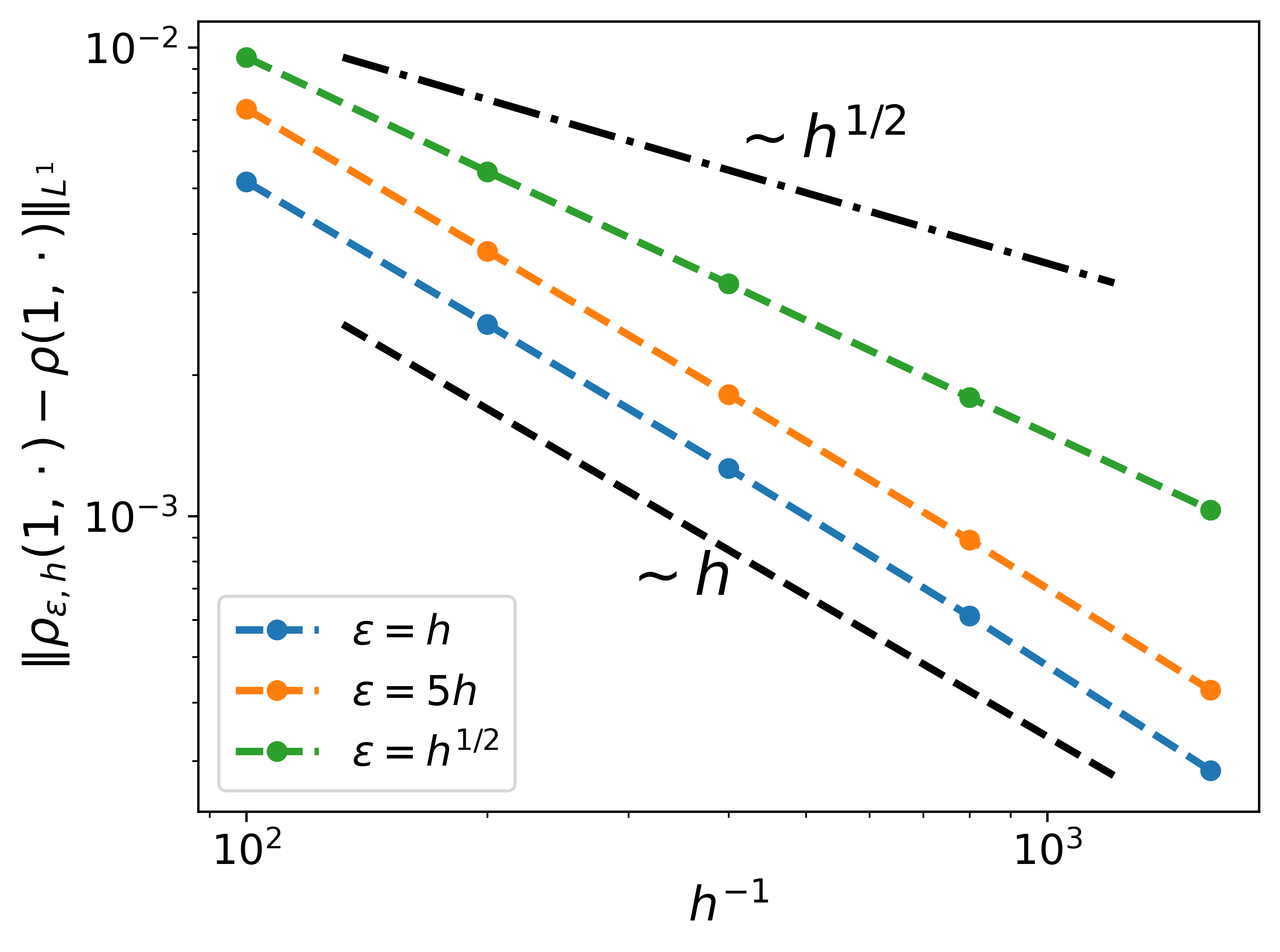}
	\end{subfigure}
	\begin{subfigure}{.32\textwidth}
	\includegraphics[width=\textwidth]{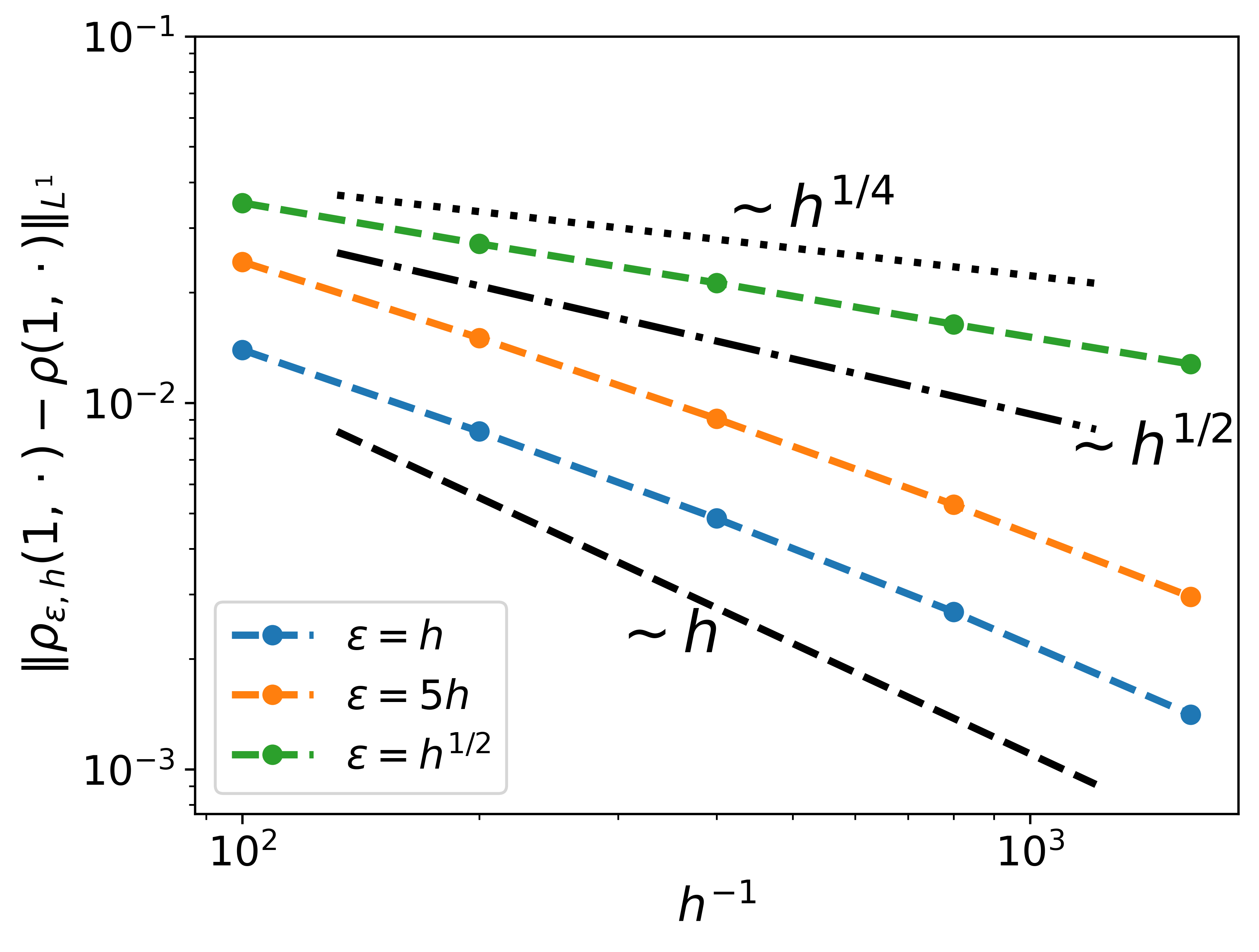}
	\end{subfigure}
    \begin{subfigure}{.32\textwidth}
	\includegraphics[width=\textwidth]{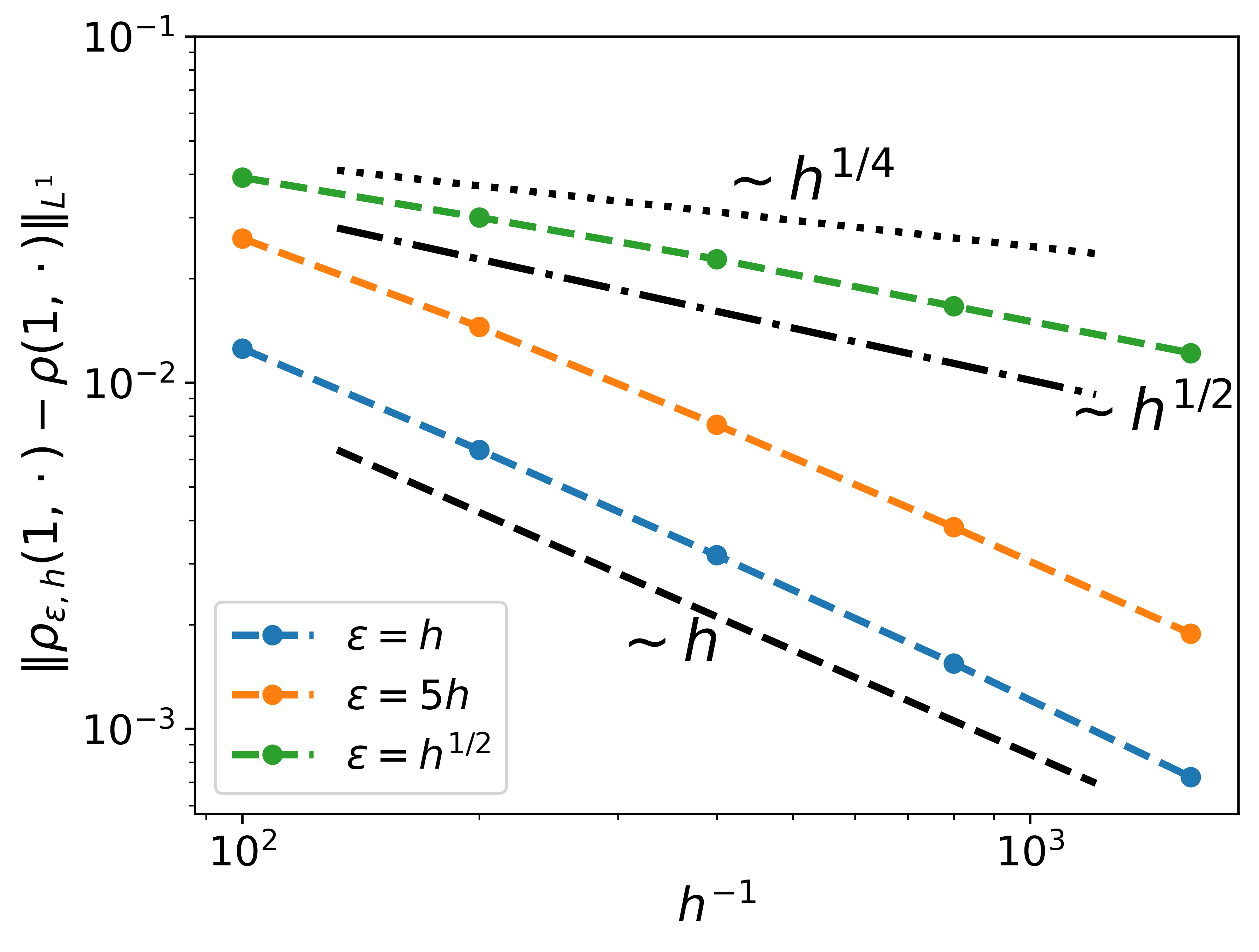}
	\end{subfigure}
	\begin{subfigure}{.32\textwidth}
	\includegraphics[width=\textwidth]{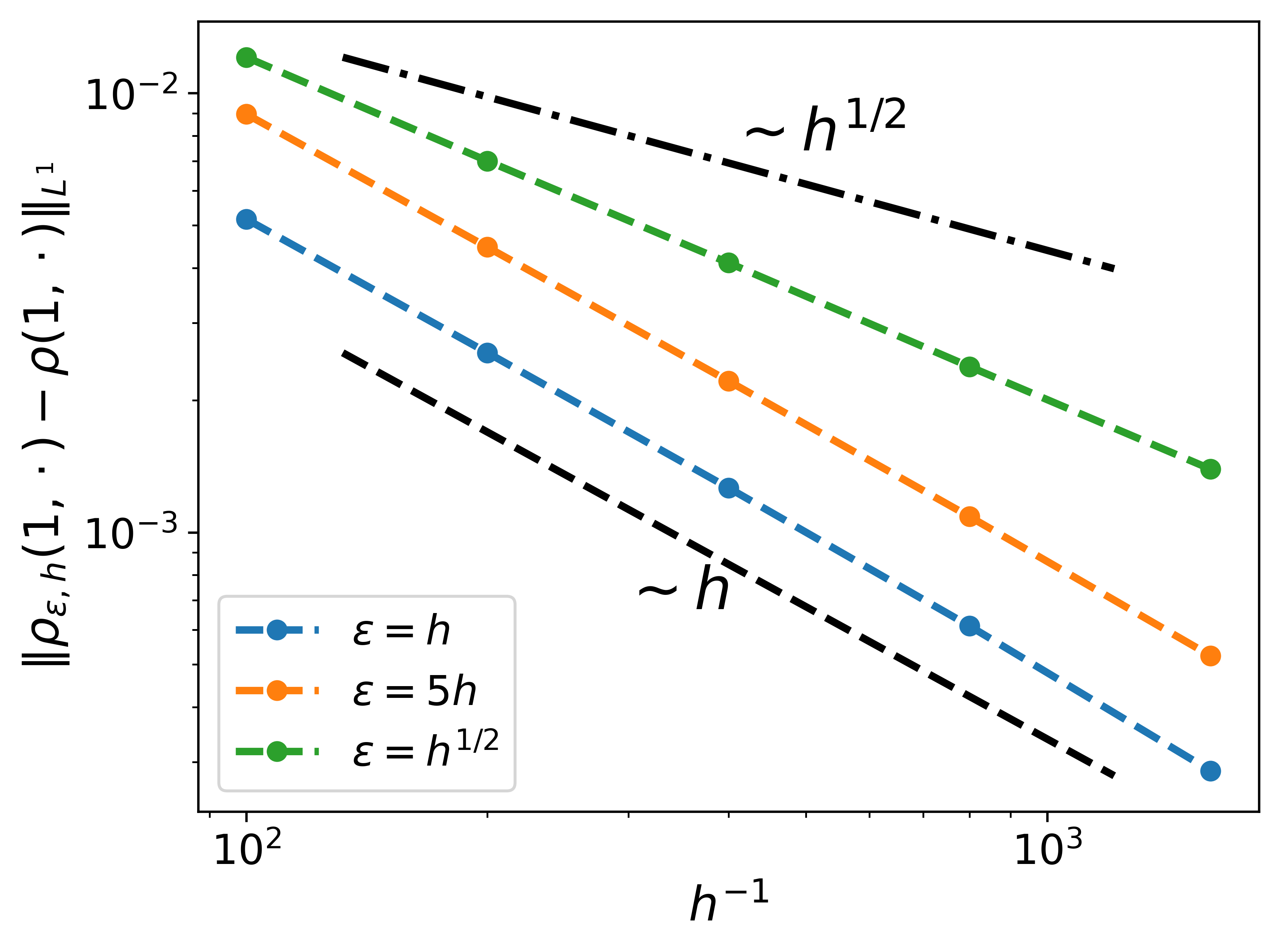}
	\end{subfigure}
	\begin{subfigure}{.32\textwidth}
	\includegraphics[width=\textwidth]{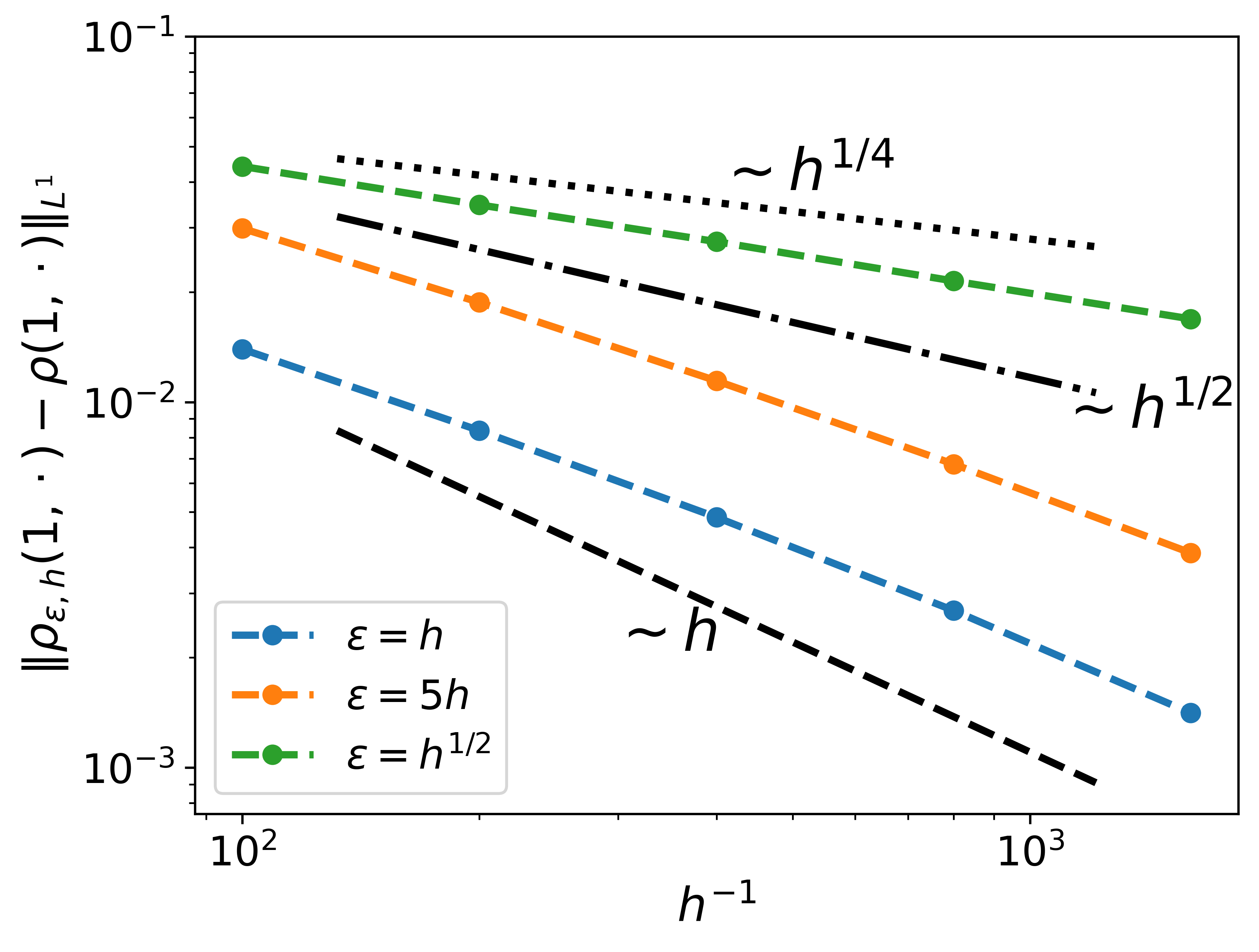}
	\end{subfigure}
    \begin{subfigure}{.32\textwidth}
	\includegraphics[width=\textwidth]{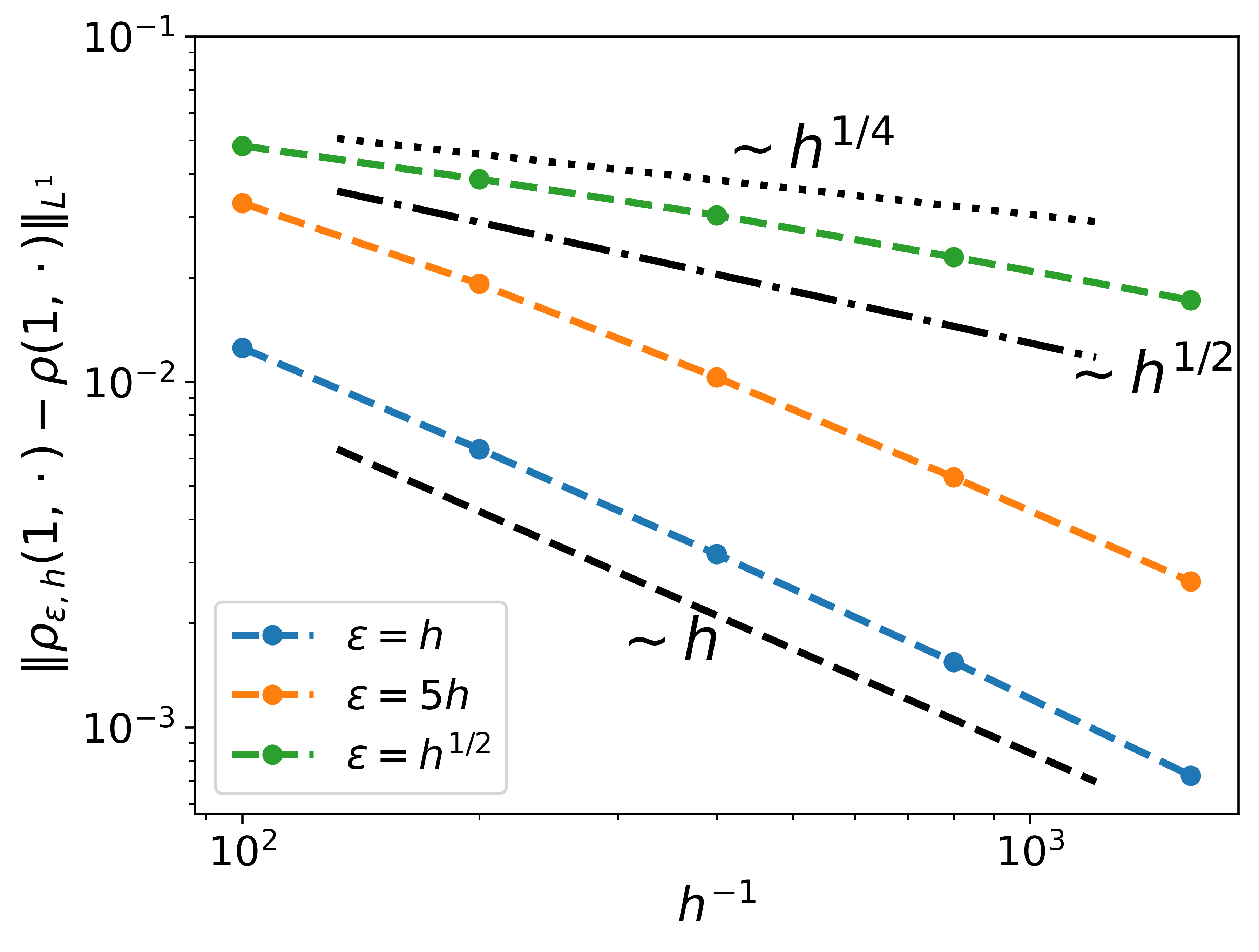}
	\end{subfigure}
	\caption{Convergence from $\rho_{\eps,h}$ to $\rho$ for the Riemann shock initial data \cref{eq:ini_riemann_shock} (\textsc{left}), the Riemann rarefaction initial data  \cref{eq:ini_riemann_rarefaction} (\textsc{middle}), and the bell-shaped initial data \cref{eq:ini_bellshape} (\textsc{right}) with the exponential kernel \cref{eq:gamma-expo} (\textsc{top}), the linear kernel \cref{eq:gamma-lin} (\textsc{middle}), and the constant kernel \cref{eq:gamma-const} (\textsc{bottom}).}
    \label{fig:exp_4}
\end{figure}

\subsection{Stability properties}
\label{ssec:num-stability}

In the following experiments, we explore the stability of numerical solutions, examining their TVD property and entropy condition with respect to Kru\v{z}kov's entropies, while also shedding light on the stability of their continuous counterparts with specific initial data.

We begin by addressing the TVD property. It is established in \cite{MR4300935} that the piecewise constant initial data
\begin{align}\label{eq:tvinc_ini}
    \rho_0^\delta(x) \coloneqq 0.5\,\mathds{1}_{]-\delta,-\delta/2[}(x) + \mathds{1}_{[0,\infty[}(x),
\end{align}
where $\delta\in]0,\eps]$, induces total variation increase in the solution $\rho_\eps$ of  \cref{eq:cle}. In the following experiment, we investigate numerical solutions using this initial data, providing visual insight into the total variation increase mechanism of $\rho_{\varepsilon,h}$ while confirming the TVD property of $W_{\varepsilon,h}$ across specific $\varepsilon,\delta$ values with a fixed mesh size $h$.
\begin{example}
We set the mesh size $h=2\times10^{-3}$. Initially, we use $\eps=0.2$, $\delta=0.2$ in the initial data \cref{eq:tvinc_ini}, and nonlocal kernels \crefrange{eq:gamma-expo}{eq:gamma-const}. In the top row of \cref{fig:exp_S1}, we present snapshots of $\rho_{\eps,h}$ at times $t=0,0.1,0.4,1.6$. Subsequently, we apply a sequence of $\eps$ values, $\eps=0.2,0.1,0.05$, with $\delta=\eps$ in the initial data \cref{eq:tvinc_ini} and the same nonlocal kernels. In the bottom row of \cref{fig:exp_S1}, we plot the total variation of $\rho_{\eps,h}$ and $W_{\eps,h}$ versus time $t\in[0,1.6]$, using solid and dashed lines, respectively.
\end{example}

The top row of \cref{fig:exp_S1} shows that the evolution of $\rho_{\varepsilon,h}$ is consistent across all nonlocal kernels: the initial shock at $x=0$ remains stationary, and the initial rectangular profile for $x\in]-\delta,-\delta/2[$ develops into a right-moving wave with a growing peak, which then merges with the stationary shock, resulting in a standing shock on the negative axis. The bottom row indicates that, for all cases, the total variation of $\rho_{\varepsilon,h}$ increases initially for $t<t_\varepsilon$ before decreasing to $1$ for $t>t_\varepsilon$, where $t_\varepsilon$, the time of wave-shock connection at $x=0$, decreases as $\varepsilon=\delta$ diminishes. In contrast, for all cases, the total variation of $W_{\varepsilon,h}$ is non-increasing over $t\in[0,1.6]$, remaining constant at $1$ for the exponential and constant kernels, and decreasing rapidly to this value for the linear kernel. These findings support and extend the analytical insights in \cite{MR4300935} on the total variation increase of $\rho_\eps$ with initial data \cref{eq:tvinc_ini}. They also suggest conjectures on the TVD property of $W_{\varepsilon,h}$ with a broader range of kernels, including the non-convex constant kernel, and the total variation bounded (TVB) property, which is weaker than TVD, may hold for $\rho_{\varepsilon,h}$.

\begin{figure}[htbp]
\centering
	\begin{subfigure}{.32\textwidth}
	\includegraphics[width=\textwidth]{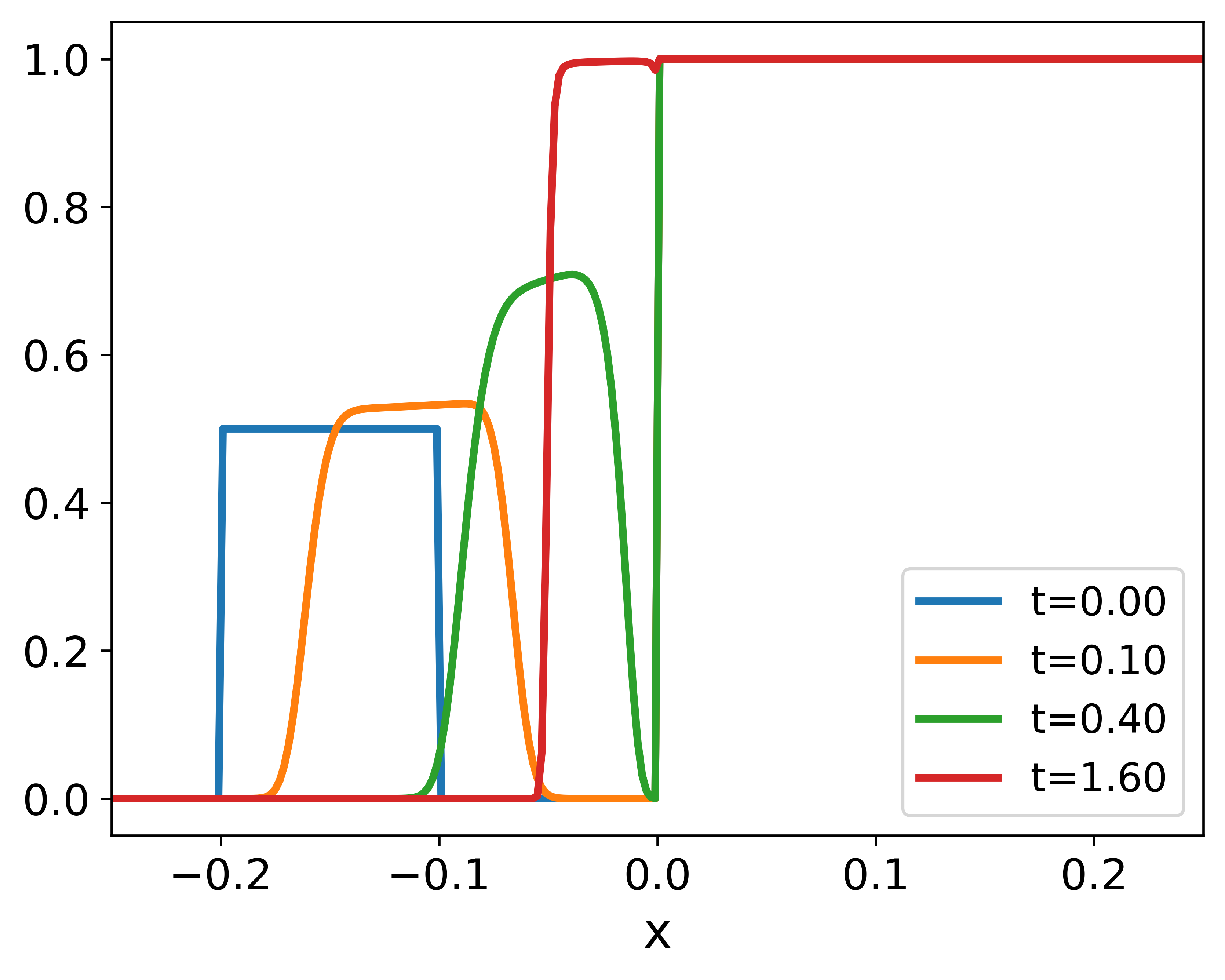}
	\end{subfigure}
	\begin{subfigure}{.32\textwidth}
	\includegraphics[width=\textwidth]{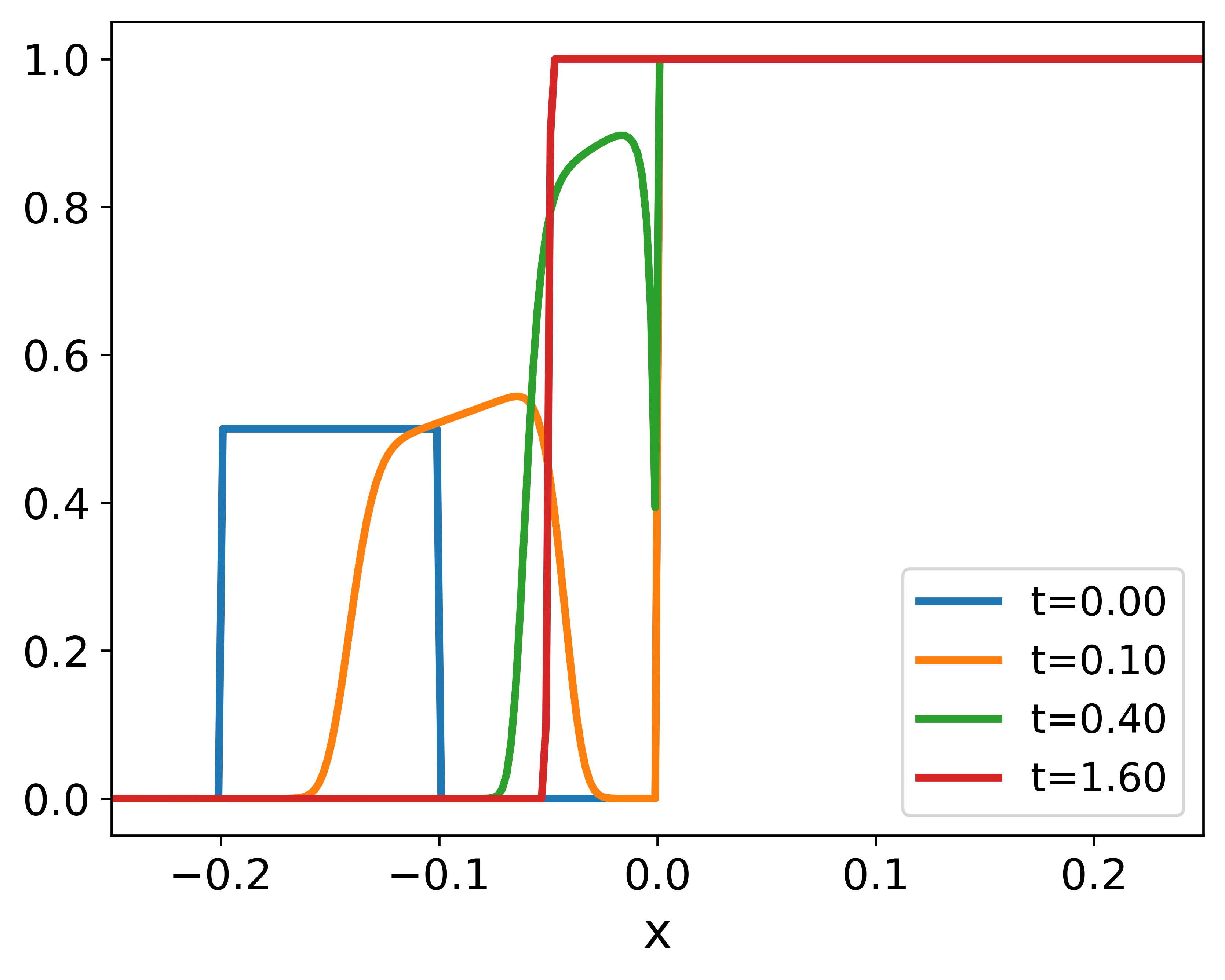}
	\end{subfigure}
	\begin{subfigure}{.32\textwidth}
	\includegraphics[width=\textwidth]{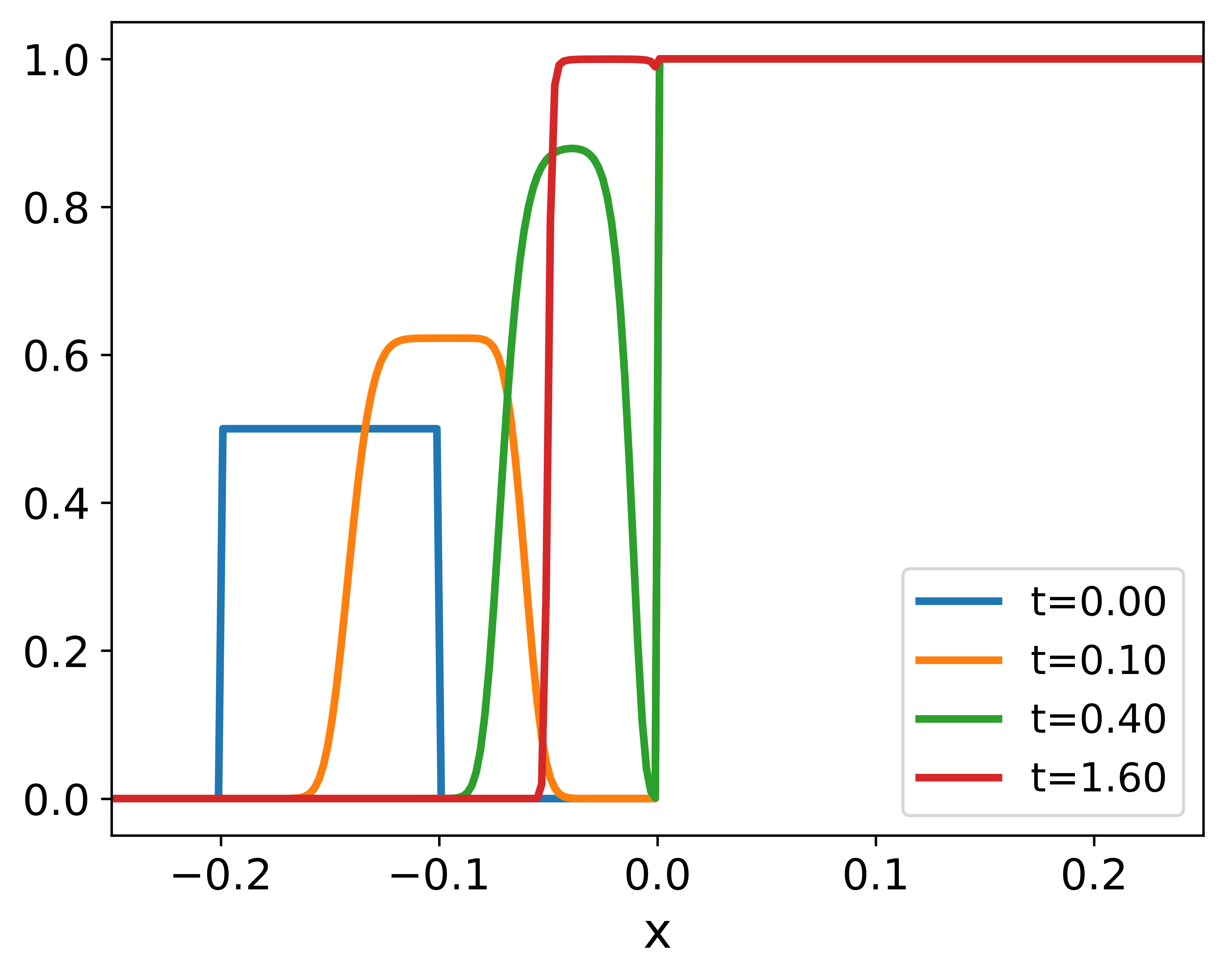}
	\end{subfigure}
	\begin{subfigure}{.32\textwidth}
	\includegraphics[width=\textwidth]{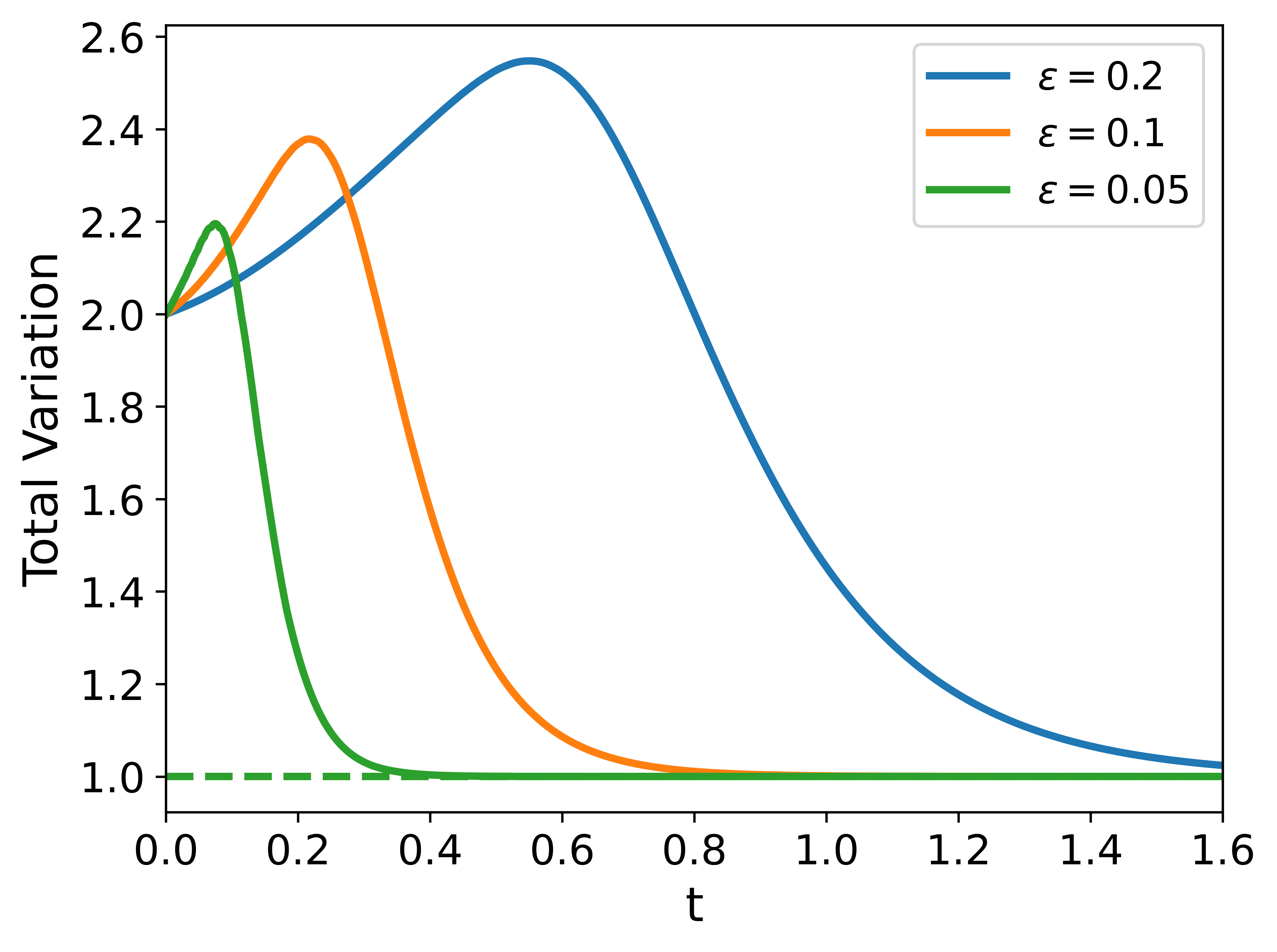}
	\end{subfigure}
	\begin{subfigure}{.32\textwidth}
	\includegraphics[width=\textwidth]{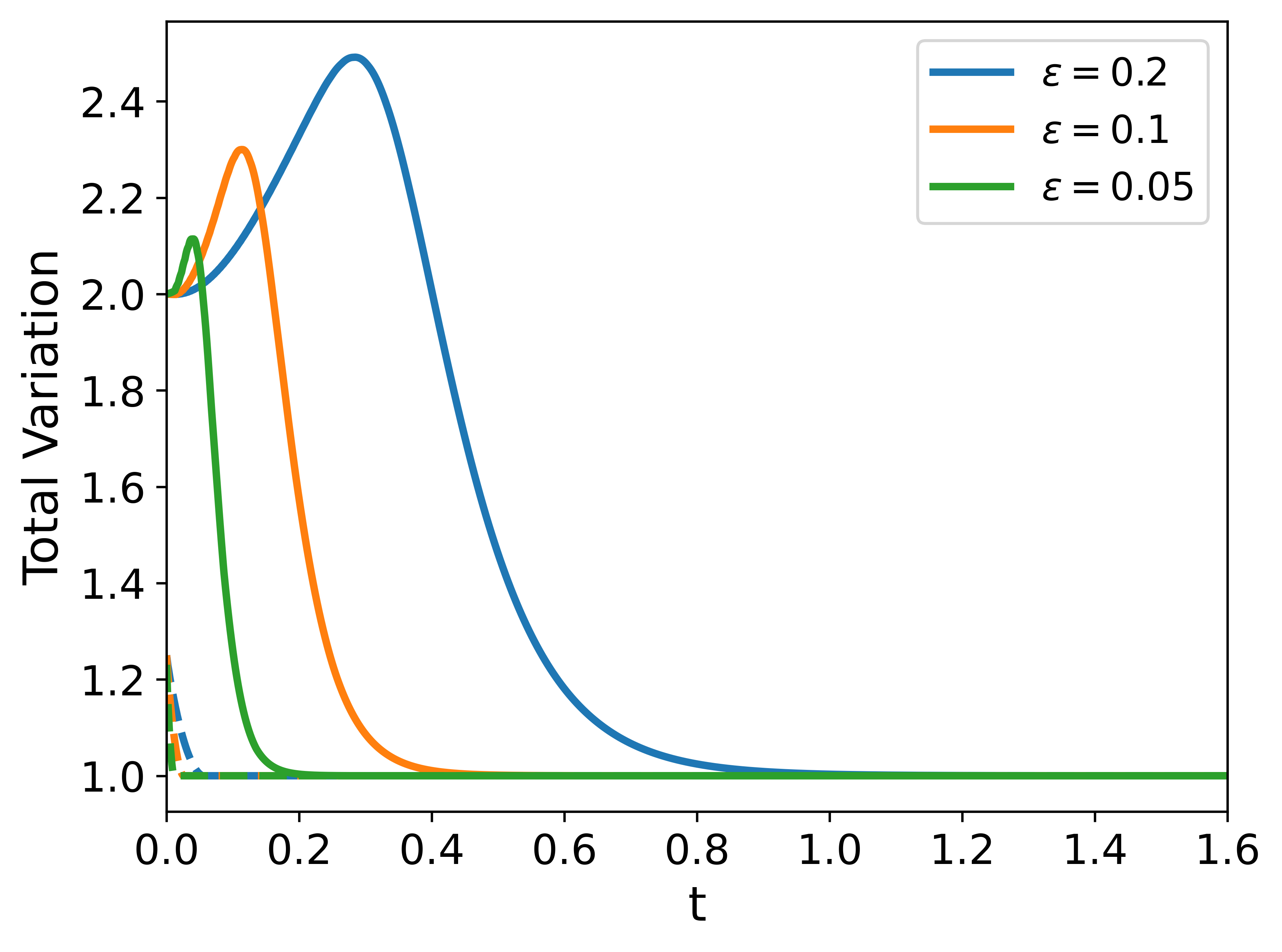}
	\end{subfigure}
	\begin{subfigure}{.32\textwidth}
	\includegraphics[width=\textwidth]{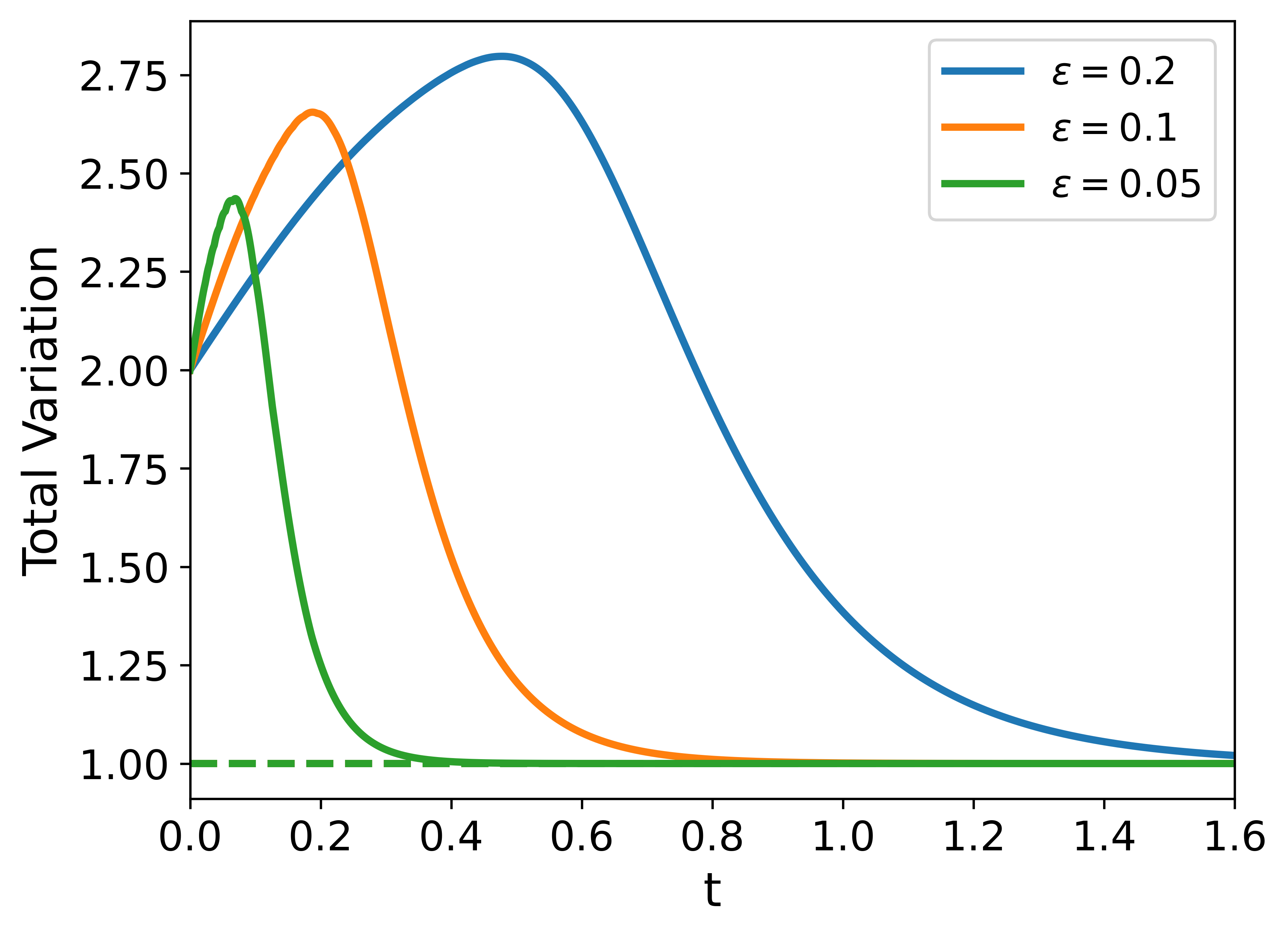}
	\end{subfigure}
	\caption{\textsc{Top:} Snapshots of $\rho_{\varepsilon,h}$ for $\varepsilon=0.2$ at selected times; \textsc{Bottom:} Total variation of $\rho_{\varepsilon,h}$ (solid lines) and $W_{\varepsilon,h}$ (dashed lines) versus time $t$ for $\varepsilon=0.2,0.1,0.05$. Initial data are \cref{eq:tvinc_ini} with $\delta=\varepsilon$, the nonlocal kernels \crefrange{eq:gamma-expo}{eq:gamma-const}, and the mesh size $h=2\times10^{-3}$.} 
	\label{fig:exp_S1}
\end{figure}

Next, we examine the entropy condition. As noted in \cref{rk:hfixed}, in the local limit where $W_j^n=\rho_j^n$, the scheme \crefrange{eq:godunov}{eq:num_nonlocal_W} reduces to the three-point monotone scheme \cref{eq:local_scheme} for solving \cref{eq:cl}. Therefore, the numerical solution satisfies a discrete entropy condition derived from \cref{eq:local_scheme}:
\begin{align*}
E_{j,n}^\rho \coloneqq \frac{|\rho_j^{n+1} - c| - |\rho_j^n - c|}{\tau} + \frac{ \Psi_c(\rho_j^n, \rho_{j+1}^n) - \Psi_c(\rho_{j-1}^n, \rho_j^n) }{h} \leq 0 \quad \text{for all } j, n,
\end{align*}
where $\Psi_c(\rho_{j-1}^n, \rho_j^n) \coloneqq (\rho_{j-1}^n \vee c) V(\rho_j^n \vee c) - (\rho_{j-1}^n \wedge c) V(\rho_j^n \wedge c)$, for any $c \in \mathbb{R}$. Equivalently,
\begin{align*}
E_{j,n}^W \coloneqq \frac{|W_j^{n+1} - c| - |W_j^n - c|}{\tau} + \frac{ \Psi_c(W_j^n, W_{j+1}^n) -\Psi_c(W_{j-1}^n, W_j^n) }{h} \leq 0 \quad \text{for all } j, n.
\end{align*}
Here, $E_{j,n}^\rho$ and $E_{j,n}^W$ measure the local entropy condition violation, staying non-positive in the local case. In the nonlocal case, \cref{lm:discrete-entropy-2} and \cref{lm:entro-disc-2} derive nonlocal entropy conditions for $W_{\eps,h}$ with general convex kernels and for $\rho_{\eps,h}$ with the exponential kernel \cref{eq:gamma-expo}, both aligning with the local case as $\eps\searrow0$, where the positive parts of $E_{j,n}^\rho$ and $E_{j,n}^W$ vanish for all $j,n$. This motivates the evaluation metrics
\begin{align}\label{eq:entropy_error}
\mathcal{E}^\rho \coloneqq \tau h \sum_{j,n} E_{j,n}^\rho, \qquad \mathcal{E}^W \coloneqq \tau h \sum_{j,n} E_{j,n}^W
\end{align}
to quantify the local entropy condition violation for $\rho_{\eps,h}$ and $W_{\eps,h}$, respectively. 

\begin{example}
We fix the mesh size $h=2\times10^{-3}$ and select $\varepsilon=2\times10^{-l}$ for $l=1,2,3$. For each $\varepsilon$, we compute numerical solutions $\rho_{\varepsilon,h}$ and $W_{\varepsilon,h}$ over the time horizon $t\in[0,1]$ using the initial data \crefrange{eq:ini_riemann_shock}{eq:ini_bellshape} and nonlocal kernels \crefrange{eq:gamma-expo}{eq:gamma-const}. We then evaluate the metrics $\mathcal{E}^\rho$ and $\mathcal{E}^W$ defined in \cref{eq:entropy_error} with $c=0.5$ in Kru\v{z}kov's entropy. The results are presented in \cref{tab:entropy}.
\end{example}

The results in \cref{tab:entropy} show that the local entropy condition violation for $\rho_{\eps,h}$ and $W_{\eps,h}$, quantified by \(\mathcal{E}^\rho\) and \(\mathcal{E}^W\) respectively, decreases as $\varepsilon$ approaches zero. These findings confirm the asymptotic compatibility of the nonlocal entropy conditions derived in \cref{lm:discrete-entropy-2} and \cref{lm:entro-disc-2}, suggesting that such conditions may extend to both $W_{\eps,h}$ and $\rho_{\eps,h}$ for a class of kernels including all convex kernels and the constant kernel \cref{eq:gamma-const}.
Furthermore, the local entropy condition is fully satisfied for both $\rho_{\eps,h}$ and $W_{\eps,h}$ (with \(\mathcal{E}^\rho=\mathcal{E}^W=0\)) when using the linear and constant kernels \crefrange{eq:gamma-lin}{eq:gamma-const} with $\eps=h=2\times10^{-3}$, confirming the local limit behavior in \cref{rk:hfixed}.

\begin{table}[htbp]
    \centering
    \setlength{\tabcolsep}{4pt}
    \renewcommand{\arraystretch}{1.3}
    \begin{tabular}{ccccccccccc}
        \toprule
        \textbf{} & \textbf{} & \multicolumn{3}{c}{\textbf{Initial data \cref{eq:ini_riemann_shock}}} & \multicolumn{3}{c}{\textbf{Initial data \cref{eq:ini_riemann_rarefaction}}} & \multicolumn{3}{c}{\textbf{Initial data \cref{eq:ini_bellshape}}} \\ \cmidrule{3-11}
        \textbf{\(\varepsilon\)} & \textbf{Metric} & \textbf{Exp.} & \textbf{Linear} & \textbf{Const.} & \textbf{Exp.} & \textbf{Linear} & \textbf{Const.} & \textbf{Exp.} & \textbf{Linear} & \textbf{Const.} \\ \midrule
        \multirow{2}{*}{2e-1} & \(\mathcal{E}^\rho\) & 8.3e-3 & 5.5e-3 & 8.2e-3 & 9.4e-3 & 5.8e-3 & 5.5e-2 & 4.6e-2 & 1.1e-2 & 2.5e-2 \\
        & \(\mathcal{E}^W\) & 2.2e-2 & 2.0e-2 & 2.1e-2 & 1.0e-3 & 8.5e-4 & 7.4e-3 & 2.3e-2 & 7.5e-3 & 1.5e-2 \\ \midrule
        \multirow{2}{*}{2e-2} & \(\mathcal{E}^\rho\) & 1.2e-4 & 0 & 0 & 6.2e-4 & 1.9e-4 & 1.1e-3 & 4.5e-3 & 2.8e-3 & 3.5e-3 \\
        & \(\mathcal{E}^W\) & 1.7e-2 & 6.5e-3 & 8.0e-3 & 1.6e-4 & 1.1e-4 & 3.0e-4 & 4.1e-3 & 2.8e-3 & 3.5e-3 \\ \midrule
        \multirow{2}{*}{2e-3} & \(\mathcal{E}^\rho\) & 0 & 0 & 0 & 3.3e-5 & 0 & 0 & 8.0e-4 & 0 & 0 \\
        & \(\mathcal{E}^W\) & 5.0e-4 & 0 & 0 & 3.9e-5 & 0 & 0 & 8.4e-4 & 0 & 0 \\ \bottomrule
    \end{tabular}
    \caption{Local entropy condition violation for initial data \crefrange{eq:ini_riemann_shock}{eq:ini_bellshape} with nonlocal kernels \crefrange{eq:gamma-expo}{eq:gamma-const}, evaluated by metrics \(\mathcal{E}^\rho\) and \(\mathcal{E}^W\) defined in \cref{eq:entropy_error} with $c=0.5$ in Kru\v{z}kov's entropy.}
    \label{tab:entropy}
\end{table}

\section{Conclusions and future directions}
\label{sec:conclusion}

In this work, we studied a Godunov-type numerical discretization for a class of nonlocal conservation laws modeling traffic flows. We proved asymptotic compatibility of the discretization, i.\,e., as the nonlocal parameter $\eps$ and mesh size $h$ vanish, the discretization converges to the entropy solution of the respective (local) scalar conservation law, with an explicit convergence rate in terms of both $\eps$ and $h$.
These results justify that the numerical discretization can provide robust numerical computation for the model under variations of the nonlocal parameter, which is of both theoretical and practical significance.

The results of this study open several avenues for future research. First, extending the results to nonlocal kernels with weaker properties, such as constant kernels, and to initial data with unbounded variation, is suggested by our numerical experiments. Second, applying the asymptotic compatibility framework to other first-order finite-volume methods or higher-order DG and WENO methods, could improve accuracy and broaden applicability. Third, investigating the $\mathrm L^1$-contraction property at both continuous and discrete levels for a broader class of kernels, like all convex kernels, presents a promising direction. Finally, we are interested in extending these results to more general nonlocal conservation laws in diverse applications, such as those with nonlocal fluxes of the form $f(\rho_\eps)V(W_\eps[\rho_\eps])$ for nonlinear $f$, or to nonlocal systems of conservation laws.

\vspace{0.5cm}

\section*{Acknowledgments}

N.~De Nitti is a member of the Gruppo Nazionale per l’Analisi Matematica, la Probabilità e le loro Applicazioni (GNAMPA) of the Istituto Nazionale di Alta Matematica (INdAM). He has been partially supported by the Swiss State Secretariat for Education, Research and Innovation (SERI) under contract number MB22.00034 through the project TENSE.

N.~De Nitti thanks The Chinese University of Hong Kong, where part of this work was carried out, for their kind hospitality. 

K.~Huang was supported by a Direct Grant of Research (2024/25) from The Chinese University of Hong Kong.

We thank A.~Coclite, G.~M.~Coclite, and M.~Colombo for their feedback on this manuscript.

\vspace{0.5cm}

\printbibliography

\vfill 

\end{document}